\date{}
\newcommand{\fbracket}[1]{\left\{ #1 \right\}}
\newcommand{\bracket}[1]{\left( #1 \right)}
\newcommand{\bbracket}[1]{\left[ #1 \right]}
\newcommand{\pa}{\tr}
\newcommand{\Lag}{\mscr{L}}
\newcommand{\dpa}[1]{\frac{\partial}{\partial #1}}
\newcommand{\vac}{\lvert \emptyset \rangle}
\newcommand{\zbar}{\br{z}}
\newcommand{\wbar}{\br{w}}
\newcommand{\Res}{\op{Res}}
\newcommand{\Obs}{\op{Obs}}
\newcommand{\eps}{\epsilon}
\newcommand{\g}{\mathfrak{g}}
\newcommand{\Ool}{\Oo_{loc}}
\newcommand{\E}{\mscr{E}}
\newcommand{\A}{\mscr A}
\newcommand{\what}{\widehat}
\newcommand{\tr}{\triangle}
\newcommand{\til}{\widetilde}
\newcommand{\mscr}{\mathscr}
\newcommand{\br}{\overline}
\newcommand{\iso}{\cong}
\newcommand{\C}{\mathbb C}
\newcommand{\Q}{\mbb Q}
\newcommand{\Oo}{\mscr O}
\newcommand{\Z}{\mathbb Z}
\newcommand{\into}{\hookrightarrow}
\newcommand{\Gr}{\operatorname {Gr}}
\newcommand{\op}{\operatorname}
\newcommand{\mbf}{\mathbf}
\newcommand{\mbb}{\mathbb}
\newcommand{\mf}{\mathfrak}
\newcommand{\mc}{\mathcal}
\newcommand{\ip}[1]{\left\langle #1 \right\rangle}
\newcommand{\abs}[1]{\left| #1 \right|}
\newcommand{\R}{\mbb R}
\renewcommand{\d}{\mathrm{d}}
\renewcommand{\epsilon}{\varepsilon}
\newcommand{\mbar}{\br {\mc M}}
\newcommand{\dbar}{\br{\partial}}
\renewcommand{\Im}{\op{Im}}
\newcommand{\cinfty}{C^{\infty}}
\newcommand{\EA}[1]{ W\left( #1  \right) }
\newcommand{\PV}{\mathrm{PV}}
\newcommand{\Fields}{\E}
\newcommand{\F}{\mbf{F}}
\DeclareMathOperator{\Sym}{Sym}
\DeclareMathOperator{\Tr}{Tr} 
\DeclareMathOperator{\Ker}{Ker}
\newtheoremstyle{thm}
  {7pt}
  {7pt}
  {\itshape}
  {}
  {\bf}
  {.}
  {5pt}
  {\thmnumber{#2 }\thmname{#1}\thmnote{ (#3)}}
\newtheoremstyle{def}
  {7pt}
  {10pt}
  {\itshape}
  {}
  {\bf}
  {.}
  {5pt}
  {\thmnumber{#2} \thmname{#1}\thmnote{ (#3)}}
\newtheoremstyle{rem}
  {4pt}
  {7pt}
  {}
  {}
  {\itshape}
  {:}
  {3pt}
  {}
\newtheoremstyle{texttheorem}
  {8pt}
  {8pt}
  {\itshape}
  {}
  {\bf}
  {. \hspace{5pt}}
  {3pt}
  {}
\theoremstyle{thm}
\newtheorem*{theorem*}{Theorem}
\newtheorem*{ucorollary}{Corollary}
\newtheorem{theorem}{Theorem}[subsection]
\newtheorem{thm-def}{Theorem/Definition}[theorem]
\newtheorem{proposition}[theorem]{Proposition}
\newtheorem{lemma}[theorem]{Lemma}
\newtheorem{corollary}[theorem]{Corollary}
\numberwithin{equation}{subsection}
\theoremstyle{def}
\newtheorem{definition}[theorem]{Definition}
\theoremstyle{rem}
\newtheorem*{remark}{Remark}
\theoremstyle{texttheorem}
\title{Quantum BCOV theory on Calabi-Yau manifolds and the higher genus $B$-model}
\author{Kevin Costello and Si Li}
\begin{document}
\maketitle
\section{Introduction}
Since it was first formulated more than twenty years ago \cite{CanOssGre91,GrePle90}, the mirror symmetry conjecture has had a huge influence on mathematics.   The original form of the conjecture states that one can count numbers of rational curves on a Calabi-Yau $3$-fold $X$ by analyzing the variations of Hodge structure of a mirror Calabi-Yau $3$-fold $X^\vee$.   Soon after this form of the conjecture was first formulated, it was rigorously proved by Givental \cite{Giv98} and Lian-Liu-Yau \cite{LiaLiuYau97}.  

This paper is concerned with mirror symmetry at higher genus.   The count of rational curves on a Calabi-Yau variety $X$ is referred to as the genus $0$ $A$-model on $X$.  The theory of Gromov-Witten invariants allows one to define the $A$-model at all genera; the genus $g$ $A$-model is concerned with counts of genus $g$ curves mapping to $X$.  

The $B$-model at genus $0$ on the mirror Calabi-Yau $X^\vee$ is described by the variations of Hodge structure of $X^\vee$ (as $X^\vee$ moves in the moduli space of Calabi-Yau varieties).   The higher genus $B$-model is, however, much more mysterious.

One mathematical approach to the higher genus $B$-model is categorical. Kontsevich's homological mirror symmetry conjecture states that the mirror symmetry phenomenon is explained by an equivalence of Calabi-Yau categories, between the Fukaya category of $X$ and the derived category of coherent sheaves on the mirror $X^\vee$.  From this point of view, it is natural to expect to be able to construct the all-genus $B$-model directly from the category of coherent sheaves on $X^\vee$, or, indeed, from any Calabi-Yau category.  A candidate for the $B$-model partition function associated to a Calabi-Yau category was proposed on \cite{Cos07a, Cos05, KonSoi06} based on a classification of a class of $2$-dimensional topological field theories.   Unfortunately, it is extremely difficult to compute this $B$-model partition function.

In the physics literature, the Bershadsk-Cecotti-Ooguri-Vafa \cite{BerCecOog94} initiated a differrent approach to the $B$-model.    BCOV suggested that we should think of the $B$-model as a \emph{quantum field theory on $X$}.  This is in contrast to most other formulations of mirror symmetry, where both the $A$- and $B$- model are viewed as as two-dimensional field theories.   The genus expansion of the $B$-model should correspond to the expansion in powers of the Planck constant $\hbar$ of this quantum field theory on $X$. The genus $0$ part of the $B$-model on a Calabi-Yau $X$ is concerned with deformations of the complex structure of $X$.  Thus, BCOV proposed a quantum field theory whose classical equations of motion describe the moduli space of deformations of complex structure of $X$.  They called this theory the Kodaira-Spencer theory of gravity.  We will call it the BCOV theory. 

Because the BCOV theory is a six-dimensional quantum field theory, it is very difficult to construct rigorously.  In this paper we initiate a mathematical analysis of the BCOV theory. 

\subsection{}
The original BCOV theory was defined only for Calabi-Yau $3$-folds. The first step in our work is to define a variant of the classical BCOV theory which works for Calabi-Yau varieties of any dimension.  The fields of our generalized BCOV theory on $X$ are
$$
\PV(X) [[t]] = \Omega^{0,\ast}(X, \wedge^\ast TX) [[t]] 
$$
that is, polyvector fields on $X$ with a formal parameter $t$ adjoined.    Our generalized BCOV theory is defined by an action functional $S$ on $\PV(X)[[t]]$ satisfying the classical master equation.  Our generalization of the classical BCOV theory is explained in section \ref{section classical bcov} 

\subsection{}
We then turn to the problem of quantization.   In section \ref{section quantization}, we give a general definition of a quantization of our generalized BCOV theory on a Calabi-Yau manifold, using the renormalization techniques developed in \cite{Cos11}.  Our notion of quantization relates to Givental's symplectic formalism for the study of the $A$-model \cite{Giv04}.  In section \ref{section givental}, we show that the quantum master equation in the quantum BCOV theory implies that the partition function is a state in a Fock space constructed from the cohomology of $X$.  

The first result of this paper -- which partially relies on some results from \cite{CosLi12} -- is the following. 
\begin{theorem*}
There exists a unique quantization of the BCOV theory on any elliptic curve $E$, subject to one additional constraint: the \emph{dilaton equation} (which is the mirror to the well-known dilaton equation in Gromov-Witten theory).
\end{theorem*}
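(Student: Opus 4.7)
\medskip

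\textbf{Proof strategy.}
The plan is to construct the quantization order by order in $\hbar$ using the obstruction-theoretic machinery of renormalization from \cite{Cos11}, and then to use the dilaton equation to kill the ambiguities that remain at each order. Writing a putative quantization as $S = S^{cl} + \sum_{g \geq 1} \hbar^g S^{(g)}$, I would proceed inductively: assume a quantization satisfying the quantum master equation has been constructed modulo $\hbar^{g+1}$ and analyze the failure of the QME modulo $\hbar^{g+2}$.

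\textbf{Step 1 (obstruction theory).} By the general framework, the obstruction to extending the quantization to the next order lies in a local cohomology group of the form $H^1$ of local functionals on $\PV(E)[[t]]$ with the differential induced by the classical BCOV action. Similarly, the ambiguity in the extension is a torsor over $H^0$ of the same complex. I would first set up these two groups precisely in the generalized BCOV context of Section \ref{section classical bcov}.

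\textbf{Step 2 (cohomology computation on $E$).} The heart of the argument is computing these local cohomology groups for an elliptic curve. Using the Hodge decomposition and the triviality of $T_E$, the Dolbeault-type complex of polyvector fields on $E$ simplifies dramatically, so I expect both groups to reduce to a very small finite-dimensional piece built from $H^*(E)$ and polynomials in $t$. This is where I would invoke the results from \cite{CosLi12}, which presumably identifies this cohomology with translation-invariant local functionals, computed via a Gelfand--Fuks / descent argument reducing the problem to Lie algebra cohomology of a formal disk with coefficients one can handle directly. I expect the obstruction group to vanish, giving existence of quantizations at every order, while the ambiguity group is nonzero but small.

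\textbf{Step 3 (imposing the dilaton equation).} At each order, the remaining ambiguity is a finite-dimensional space of translation-invariant local functionals. The dilaton equation, being the insertion of a specific distinguished field $t \cdot 1 \in \PV(E)[[t]]$, gives a linear constraint on each such ambiguity. I would verify that the dilaton equation cuts out a unique element of this finite-dimensional space by checking that the pairing with the distinguished dilaton vector is nondegenerate on the ambiguity group. Combined with the vanishing of the obstruction, this yields both existence and uniqueness.

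\textbf{Main obstacle.} The hard step is Step 2: the cohomology calculation of translation-invariant local functionals on $\PV(E)[[t]]$ and the verification that obstructions vanish. This requires a careful analysis of the BCOV differential including the $t$-direction, and is precisely where the input from \cite{CosLi12} becomes essential; without that vanishing result the quantization could be genuinely obstructed at some finite order.
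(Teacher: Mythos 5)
Your overall architecture (order-by-order obstruction theory, a local-cohomology computation via jets and Lie algebra cohomology, the dilaton equation as the extra input) matches the paper's, but two specific points in your plan do not survive contact with the actual degree bookkeeping, and one of them is a genuine gap. First, you have the roles of the two cohomology groups reversed. Since $\F_g$ must sit in cohomological degree $4-4g$ and Hodge weight $2-2g$, the deformations at genus $g$ live in $H^{4-4g}(\Obs_{2-2g,2-2g}(E))$ and the obstructions in $H^{5-4g}(\Obs_{2-2g,2-2g}(E))$. The vanishing theorem one can actually prove (the paper's Theorem \ref{theorem obstruction vanishing elliptic}, proved in the Appendix of this paper by descent to a $D_E$-module $L_\infty$ algebra of jets and an explicit Lie algebra cohomology computation --- not in \cite{CosLi12}) kills exactly the deformation group, in the lexicographic range $a\le 0$, $(i,b)\le(0,0)$; the obstruction group sits at $(i,b)=(1,2-2g)$, outside that range, and is \emph{not} shown to vanish. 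Existence is therefore not a cohomology-vanishing statement here: what is outsourced to \cite{CosLi12} is the vanishing of the actual obstruction classes, via a construction of the quantization. So your expectation in Step 2 that ``the obstruction group vanishes, while the ambiguity group is nonzero but small'' is the opposite of what the computation gives.

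Second, and more seriously, your Step 3 mechanism for using the dilaton equation would not work as stated. The dilaton equation is a homotopical condition on the whole family $\{\F[L]\}$ (triviality of the first-order deformation by $\partial_{Dil} - 2\hbar^2\frac{\partial}{\partial\hbar}\hbar^{-1}$, with $\partial_{Dil}=\partial_{t\cdot 1}-\op{Eu}$), not a linear equation you can test by ``pairing the ambiguity with a distinguished dilaton vector''; there is no such nondegenerate pairing in sight, and the ambiguity group \emph{before} imposing the dilaton condition is not cut down to a point by any pointwise linear constraint. The way the dilaton equation actually enters is upstream: one builds it into the obstruction-deformation complex by passing to the homotopy $(2g-2)$-eigenspace of $\partial_{Dil}$, i.e.\ working with
\begin{equation*}
\bigl( \Ool(\E)[\eps],\; Q + \{I,-\} + \eps\,(\partial_{Dil} + 2 - 2g) \bigr),
\end{equation*}
further decomposed by Hodge weight, and then the vanishing theorem for this modified complex gives uniqueness (indeed uniqueness up to contractible choice) directly, with no residual ambiguity left to fix by hand. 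Without restricting to this dilaton eigenspace the deformation group need not vanish, which is precisely why the dilaton equation appears as the extra hypothesis in the statement; your proposal recognizes this role informally but does not supply a workable mechanism for it.
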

The existence part of the proof will appear in \cite{CosLi12}, as part of our more general analysis. The uniqueness part is proved using obstruction theory.  The proof is presented in section \ref{section elliptic quantization} (although some detailed computations of Lie algebra cohomology are placed in an Appendix). 

\subsection{}
Our interpretation of BCOV theory has a close relationship to Givental's symplectic formalism \cite{Giv01,CoaGiv01,Giv04}, and to Barannikov's work \cite{Bar99,Bar00} on semi-infinite variations of Hodge structure.      In sections \ref{section givental} and \ref{section correlation} we show that the partition function for our version of BCOV theory provides a state in the Fock space built from $H^\ast(X)((t))$, equipped with the symplectic pairing defined by the formula
$$
\omega ( \alpha t^k, \beta t^l ) = \int_X \alpha \wedge \beta \op{Res} (t^{2-d} f(t) g(-t) \d t ).
$$
Here $d$ is the complex dimension of $X$, and $H^\ast(X)$ is the de Rham cohomology. 

In order to define the correlation functions of our theory, we need to choose a polarization of this symplectic vector space.   This is because the choice of a polarization allows us to identify the Fock space with a symmetric algebra, and the correlation functions are then defined as the Taylor coefficients of the partition function.  

 As we explain in section \ref{section correlation}, the Hodge filtration on $H^\ast(X)$ yields a Lagrangian subspace of this symplectic vector space, and so one half of a polarization.  In order to get a complete polarization, we need to choose a splitting of the Hodge filtration.   

 If $\br{F}$ denotes a filtration complementary to the Hodge filtration on $H^\ast(X)$, we define in section \ref{section correlation} correlation  functions
$$
\ip{-}_{g,n}^{X, \br{F} } : \left( H^\ast (X, \wedge^\ast T X ) [[t]]\right)^{\otimes n} \to \C.  
$$
Here $H^i(X,\wedge^j TX)$ denotes the coherent cohomology of $X$ with coefficients in the $j$'th exterior power of the tangent bundle of $X$.   Note that this algebra of polyvector fields on $X$ is a differential graded Frobenius algebra, with a trace given by the holomorphic volume form on $X$.

\subsection{}
Mirror symmetry predicts that the correlators of the BCOV theory on $X$ (with an appropriate choice of splitting of the Hodge filtration) coincide with the Gromov-Witten invariants of the mirror $X^\vee$.  
In a companion paper \cite{Li11} by the second-named author, this result is proved for the elliptic curve $E$.  
 
The $A$-model correlators (or Gromov-Witten invariants) of a projective variety $X^\vee$ are the linear maps
$$
\ip{-}_{g,n,d}^{X^\vee;A} : \left( H^\ast (X^\vee, \wedge^\ast T^\ast X^\vee) [[t]]\right)^{\otimes n} \to \C
$$
defined by by
$$
\ip{\alpha_1 t^{k_1}, \ldots, \alpha_n t^{k_n} }_{g,n,d}^{X^\vee; A} = \int_{[\mbar_{g,n,d}(X^\vee)]_{virt}} \left( \psi_1^{k_1} \op{ev}_1^\ast \alpha_1  \right) \cdots \left (\psi_n^{k_n} \op{ev}_n^\ast \alpha_n \right) 
$$
where $\mbar_{g,n,d}(X^\vee)$ is the Kontsevich moduli space of stable maps of degree $d$, $[\mbar_{g,n,d}(X^\vee)]_{virt}$ is the virtual fundamental class of this space, $\op{ev}_i : \mbar_{g,n,d}(X^\vee) \to X^\vee$ are the evaluation maps, and $\psi_i$ is the first Chern class of the cotangent line to the $i$'th marked point of $\mbar_{g,n,d}(X^\vee)$.

\subsection{}
In this paper, we prove that the correlators of the BCOV theory on an elliptic curve $E$ satisfy Virasoro constraints identical to those satisfied by the Gromov-Witten invariants of the dual elliptic curve $E^\vee$.

The Virasoro constraints for Gromov-Witten invariants were proposed in \cite{EguHorXio97} (and are still conjectural, in general).  In the case of a point, these Virasoro constraints are equivalent to Kontsevich's theorem \cite{Kon92} on intersection numbers on the moduli space of curves. 

The Virasoro in the case of an elliptic curve can be expressed in the following simple form:
$$
\ip{1 \cdot t^{l} , \alpha_1, \ldots, \alpha_n }_{g,d,n+1}^{E^\vee; A} 
=  \sum_{i = 1}^n \binom{l + \op{HW}(\alpha) }{l}  \ip{ \alpha_1 , \ldots,t^{l - 1} \alpha_i , \ldots, \alpha_n  }_{g,d,n}^{E^\vee; A}.
$$
Here $\alpha \in H^{\ast,\ast}(E)$ is an arbitrary element. The Hodge weight $\op{HW}(\alpha)$ is defined by $\op{HW}(\alpha) = p-1$ if $\alpha H^{p,q}(E)$. 
 
A second main theorem of this paper is as follows.
\begin{theorem*}
For any choice of splitting $\br{F}$ of the Hodge filtration on the cohomology of $E$, the $B$-model correlators satisfy the Virasoro constraints
$$
\ip{1 \cdot t^{l} , \alpha_1, \ldots, \alpha_n } _{g,n+1}^{E, \br{F}} 
=  \sum_{i = 1}^n \binom{l + \op{HW}(\alpha) }{l}  \ip{ \alpha_1 , \ldots,t^{l - 1} \alpha_i , \ldots, \alpha_n  } _{g,n}^{E, \br{F}} .
$$
Here, the Hodge weight of an element $\alpha \in \PV^{p,q}(E)$ is $\op{HW}(\alpha) = p -1$.  
\end{theorem*}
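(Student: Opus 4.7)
The plan is an obstruction-theoretic argument that leverages the uniqueness theorem for the BCOV quantization of $E$ together with the Ward identity coming from a classical symmetry of the BCOV action. For each $l \geq 1$, define the Virasoro anomaly
$$
A_{g,n+1}^l(\alpha_1, \ldots, \alpha_n) \;=\; \ip{1 \cdot t^l, \alpha_1, \ldots, \alpha_n}_{g,n+1}^{E,\br{F}} - \sum_{i=1}^n \binom{l + \op{HW}(\alpha_i)}{l} \ip{\alpha_1, \ldots, t^{l-1}\alpha_i, \ldots, \alpha_n}_{g,n}^{E,\br{F}},
$$
and show it vanishes for all $g,n,l$. A first useful simplification: since $\binom{l-1}{l}=0$ for $l \geq 1$, polyvectors in $\PV^{0,*}(E)$ drop out of the sum on the right, so the identity effectively constrains only insertions of $\alpha_i \in \PV^{1,*}(E)$.

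I would then reformulate the Virasoro identity as an operator equation $\widehat{L}_l \, Z_E = 0$ on the partition function $Z_E$, viewed as a state in the Fock space over $H^*(E)((t))$ constructed in section \ref{section givental}. The operator $\widehat{L}_l$ is quadratic in creation/annihilation operators and, in coordinates adapted to the splitting $\br{F}$, has an explicit form reminiscent of Givental's $A$-model Virasoro operators (which is the content of the mirror comparison, though it is not needed for the proof itself). The base case $l = 1$ reduces to the dilaton equation imposed on the quantization, so by induction on $l$ it suffices to treat $l \geq 2$. To do this, I would search for a classical symmetry $V_l$ of the BCOV action $S$ on $\PV(E)[[t]]$ of the schematic form $V_l \sim t^l\,\partial_t + (\text{correction in } \PV(E))$, whose classical Lie derivative of $S$ produces a local functional encoding $\widehat{L}_l$ modulo classical-master-equation-exact terms. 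The existence of such a $V_l$ should be a direct computation with the explicit form of $S$ from section \ref{section classical bcov}, the binomial coefficient arising naturally from the action of $t^l \partial_t$ on $t^k \alpha$ combined with the $\PV$-weight shift.

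The main obstacle is then to extend $V_l$ to a symmetry of the full \emph{quantum} BCOV action, equivalently to verify the corresponding quantum Ward identity on $Z_E$. The obstruction to this extension lives in a local Lie algebra cohomology group of the BCOV theory on $E$, controlled by essentially the same complex used in the uniqueness proof of section \ref{section elliptic quantization} and the Appendix. One must perform the computation in the specific bidegree and $t$-weight dictated by $\widehat{L}_l$ for each $l \geq 2$; the expected vanishing exploits triviality of $K_E$, one-dimensionality of $H^{p,q}(E)$, and the collapse of the Hodge weight to the two cases $p \in \{0,1\}$. Careful bookkeeping of the $t$-filtration is essential, because the Fock space is graded by total $t$-degree while $\widehat{L}_l$ shifts this grading by $l-1$, and this is the step I expect to require the most care — indeed the whole Virasoro constraint is an infinite family of identities, and one must rule out a matching infinite family of potential anomalies using these cohomology computations.
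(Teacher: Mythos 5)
Your proposal follows essentially the same route as the paper: the constraint with a $1\cdot t^l$ insertion is encoded by an explicit operator (the paper's $\mc{L}_{l-1}[L] = -l!\,\partial_{(1\cdot t^{l})} + E_{l-1}$, which is exactly your ``$t^l\partial_t$ plus correction'' acting on fields together with an affine shift), one checks it is a symmetry of the classical BCOV action compatible with the RG flow and quantum master equation, and the failure of the quantum Ward identity is an obstruction class in the same dilaton/Hodge-weight-graded complex $\Obs_{a,b}(E)$ used for uniqueness, whose vanishing (Theorem \ref{theorem obstruction vanishing elliptic}, proved via the Lie algebra cohomology computation in the Appendix) kills the whole infinite family of potential anomalies; homotopy-invariance of the correlators for a fixed splitting $\br{F}$ then gives the stated identities. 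This matches the paper's argument in section \ref{section virasoro}, so no substantive difference to report.
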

The proof, presented in section \ref{section virasoro}, is by obstruction theory. 

\subsection{}
The Virasoro constraints allow us to reduce the calculation of the even $B$-model correlators\footnote{Even means that all of the inputs are of even cohomological degree.} to the calculation of certain primitive correlators
$$
\ip{\omega t^{k_1}, \ldots, \omega t^{k_n}}_{g,n}^{E, \br{F}; B}
$$
where 
$$
\omega \in H^1 (E, TE)
$$
is the class with $\Tr (\omega) = 1$.   In \cite{OkoPan06}, the corresponding $A$-model correlators are called the \emph{stationary sector}.    

We next show that the correlators in the stationary sector are modular forms.  Let $E_\tau$ denote the elliptic curve with modulus $\tau \in \mbb{H}$.  For $\sigma \in \mbb{H}$, let $\br{F}_\sigma$ denote the splitting on the Hodge filtration on $H^1(E_\tau)$ arising from the complex conjugate to the Hodge filtration on $H^1(E_\sigma)$.  This makes sense, because there is a canonical isomorphism $H^1(E_\tau) \iso H^1(E_\sigma)$, arising from parallel transport along a path connecting $\tau$ and $\sigma$ in $\mbb{H}$.

We will let
$$\ip{-}_{g,n}^{E_\tau, \br{\sigma}; B} : \left( H^\ast(E_\tau, \wedge^\ast T E_\tau ) [[t]] \right)^{\otimes n} \to \C$$
denote the correlators for this splitting.  

One can check easily that the $\sigma \to i \infty$ limit of the space $\br{F}^1_{\sigma} \subset H^1(E_\tau)$ still defines a splitting of the Hodge filtration. We will let  $\ip{-}_{g,n}^{E_\tau, \infty; B} $ denote the correlators defined using this limiting splitting. 
\begin{theorem*}
The stationary correlators 
$$
\ip{\omega t^{k_1}, \ldots, \omega t^{k_n}}_{g,n}^{E, \br{\sigma}; B}
$$
are holomorphic functions of $\tau$ and anti-holomorphic functions of $\sigma$. Further, they are modular functions on $\mbb{H} \times \mbb{H}$ (using the diagonal $SL_2(\Z)$-action) of weight $(2g-2+2n,0)$.  In other words, if 
$$
A = \left( \begin{array}{c c} 
a & b\\
c & d
\end{array} \right) \in SL_2(\Z), 
$$
then
\begin{multline*}
\ip{1 \cdot t^{k_1}, \ldots, 1 \cdot t^{k_m}, \omega \cdot t^{l_1}, \ldots, \omega \cdot t^{l_n} }_{g,n+m}^{E_{A (\tau)}, \br{A(\sigma)} } \\ = (c \tau +d)^{2g-2+2n} \ip{1 \cdot t^{k_1}, \ldots, 1 \cdot t^{k_m}, \omega \cdot t^{l_1}, \ldots, \omega \cdot t^{l_n} }_{g,n+m}^{E_\tau, \br{\sigma} }.
\end{multline*}
\end{theorem*}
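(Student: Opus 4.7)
The plan is to separate the theorem into the analyticity claims (holomorphy in $\tau$, anti-holomorphy in $\sigma$) and the modular transformation formula. Analyticity is essentially built into the construction: the quantum BCOV theory on $E_\tau$ depends on $\tau$ only through holomorphic data (the complex structure and holomorphic volume form $dz$), so no anti-holomorphic $\tau$-dependence enters the Feynman integrals defining the correlators. The splitting $\br F_\sigma$ is by definition the complex conjugate of the Hodge filtration on $H^\ast(E_\sigma)$, so it depends anti-holomorphically on $\sigma$, and the correlators inherit this.

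For the modular transformation, the strategy is to exploit the naturality of the entire construction under the isomorphism $\phi_A : E_\tau \to E_{A\tau}$, $\phi_A(z) = z/(c\tau+d)$. Pullback by $\phi_A$ identifies the BCOV theory on $E_{A\tau}$ equipped with its standard volume form $dz_{A\tau}$ with the BCOV theory on $E_\tau$ equipped with the rescaled volume form $\phi_A^\ast dz_{A\tau} = (c\tau+d)^{-1} dz_\tau$, and by the $SL_2(\Z)$-equivariance of the Gauss--Manin connection it sends the splitting $\br F_{A\sigma}$ on $H^\ast(E_{A\tau})$ to $\br F_\sigma$ on $H^\ast(E_\tau)$. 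The formal parameter $t$ and the unit $1\in H^0(E)$ are $\phi_A$-invariant, while a direct computation using $\op{Im}(A\tau) = \op{Im}(\tau)/\abs{c\tau+d}^2$ and the representative $\omega_\tau = \tfrac{1}{2i\,\op{Im}(\tau)}\, d\br z \otimes \partial_z$ yields $\phi_A^\ast \omega_{A\tau} = (c\tau+d)^2 \omega_\tau$. Multilinearity of the correlator then accounts for a factor of $(c\tau+d)^{2n}$ from the $n$ stationary insertions.

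The remaining factor $(c\tau+d)^{2g-2}$ must come from the scaling of the genus-$g$ correlator under the rescaling of volume form $\Omega \mapsto \lambda \Omega$ with $\lambda = (c\tau+d)^{-1}$. A power-counting argument in the BCOV Feynman diagrams predicts exactly this: the trace pairing scales as $\Omega^{\otimes 2}$, so each cubic interaction vertex picks up $\lambda^2$, the propagator (inverse to a kinetic operator that also scales as $\Omega^{\otimes 2}$) picks up $\lambda^{-2}$, and a connected diagram of genus $g$ has $v-e = 1-g$, giving total scaling $\lambda^{2-2g} = (c\tau+d)^{2g-2}$. Combined with the previous paragraph and the naturality identification, this reproduces the modular transformation rule with the claimed weight. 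The absence of a $(c\br\sigma+d)$-factor is consistent with the fact that the splitting data $\br F_\sigma$ carries no intrinsic $\Omega$-normalization.

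The main technical obstacle I anticipate is making this scaling argument rigorous at the level of the renormalized partition function: one has to verify that the formal $\Omega^{2-2g}$-scaling at each genus survives the regularization procedure of \cite{Cos11} and the choice of counterterms, or equivalently that the unique quantization supplied by the dilaton equation can be arranged to be equivariant under the volume-rescaling symmetry. This is essentially a BV-cohomological statement; once it is in hand, the full modular transformation follows formally from naturality.
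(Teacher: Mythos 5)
Your reduction of modularity to the behavior under rescaling of the holomorphic volume form is the same route the paper takes (the factor $(c\tau+d)^{2n}$ from the $\Tr$-normalized insertions $\omega$, the factor $(c\tau+d)^{2g-2}$ from the genus-$g$ scaling under $\Omega\mapsto\lambda\Omega$), but the step you yourself flag as ``the main technical obstacle'' is exactly the content of the proof, and your Feynman-diagram power counting does not close it: the scale-$L$ effective interactions $\F_g[L]$ are not literal sums of convergent diagrams built from the classical vertices, so a $v-e=1-g$ count is not an argument about the renormalized theory. The paper closes the gap without proving any equivariance of a chosen quantization and without power counting: given the quantization $\{\F^{E,\lambda\Omega}[L]\}$ for $(E,\lambda\Omega)$, set $R_\lambda(\F^{E,\Omega})[L]=\sum_g \hbar^g\lambda^{2g-2}\F_g^{E,\lambda\Omega}[L]$. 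Since $K_L$ and $P(\eps,L)$ scale by $\lambda^{-2}$ and the prefactor amounts to the substitution $\hbar\mapsto\lambda^{2}\hbar$, this collection satisfies the renormalization group equation and quantum master equation for $(E,\Omega)$, and its classical limit is $I$ because $I^{E,\lambda\Omega}=\lambda^{2}I^{E,\Omega}$; the dilaton and grading axioms are preserved as well. Hence $R_\lambda(\F^{E,\Omega})$ is again a quantization, and the \emph{uniqueness up to homotopy} established by obstruction theory forces it to be homotopic to $\F^{E,\Omega}$. Since correlators depend only on the homotopy class and the splitting $\br{F}$, one gets $\lambda^{2g-2}\ip{-}_{g,n}^{E,\lambda\Omega,\br{F}}=\ip{-}_{g,n}^{E,\Omega,\br{F}}$ directly. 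This is the missing idea in your write-up: you should invoke uniqueness, not attempt to arrange an equivariant renormalization.

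There is a second gap in your treatment of analyticity. The claim that the quantum theory ``depends on $\tau$ only through holomorphic data'' is false at the level of the objects actually constructed: the quantization uses the gauge-fixing operator $\dbar^\ast$ of a K\"ahler metric, and the heat kernel, propagator and each $\F[L]$ depend on $\br{\tau}$ through that metric. What is holomorphic is the class of the partition function in the Fock space $\mc{F}(H^\ast\mc{S}_+(E_\tau))[\hbar^{-1}]$, and establishing this requires an argument: the paper quantizes the relative theory over the base ring $\Omega^{0,\ast}(\mbb{H})$, notes that the obstruction groups $H^\ast(\Obs_{a,b}(E_\tau))$ form holomorphic (hence trivial) bundles on $\mbb{H}$ so that the family obstruction is a holomorphic section vanishing because it vanishes fibrewise, and then uses uniqueness to conclude that $Z(E_\tau)$ is a holomorphic section of the Fock bundle. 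Anti-holomorphy in $\sigma$ is likewise not just ``inherited'': the paper proves the correlators are polynomials in $(\tau-\br{\sigma})^{-1}$ by showing they depend algebraically on an algebraic family of splittings of the Hodge filtration. Your pullback computation $\phi_A^\ast\omega_{A\tau}=(c\tau+d)^{2}\omega_\tau$ and the identification of $\br{F}_{A(\sigma)}$ with $\br{F}_\sigma$ under $\phi_A$ are correct and match the paper's use of naturality, but without the two ingredients above the proof is incomplete.
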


\subsection{}
Consider the generating function
$$
\ip{\omega t^{k_1}, \dots, \omega t^{k_n}}_{g,n}^{E^\vee, A} (q) = \sum q^d \ip{\omega t^{k_1}, \dots, \omega t^{k_n}}_{g,n,d}^{E^\vee, A} 
$$
of the $A$-model stationary correlation functions.  (Here $\omega \in H^2(E^\vee)$ is the fundamental class). Okounkov and Pandharipande show that this generating function is a quasi-modular form of weght $2g-2+n$, and they give an explicit formula for this form.

In \cite{Li11}, the corresponding $B$-model correlators are determined, and the following theorem is shown.  
\begin{theorem*}[Li]
The $A$-model and $B$-model stationary correlators coincide:
$$
\ip{\omega t^{k_1}, \dots, \omega t^{k_n}}_{g,n}^{E^\vee, A} (e^{2 \pi i \tau}) = \ip{\omega t^{k_1}, \dots, \omega t^{k_n}}_{g,n}^{E, \infty;  B}. 
$$
Here, the $B$-model correlators are defined with respect to the limiting splitting $\lim_{\sigma \to i \infty} \br{F}_\sigma$ of the Hodge filtration of $E$.
\end{theorem*}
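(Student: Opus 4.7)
The plan is to exploit the rigidity already built up in this paper---Virasoro constraints on both sides, together with the modularity of the $B$-model stationary correlators---to reduce the identification to an explicit Feynman-diagram computation on $E_\tau$, which is the strategy carried out in the companion paper.

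First, I would apply the Virasoro constraints of the preceding theorem to the $B$-side, and the matching $A$-side Virasoro relations (known in the elliptic case by the work of Okounkov-Pandharipande), to reduce the problem to the stationary sector. Because the two recursions have identical combinatorial form, their agreement propagates immediately from stationary correlators to all correlators; thus it suffices to establish
\[
\ip{\omega t^{k_1}, \ldots, \omega t^{k_n}}_{g,n}^{E^\vee, A}(e^{2\pi i\tau}) = \ip{\omega t^{k_1}, \ldots, \omega t^{k_n}}_{g,n}^{E, \infty; B}.
\]

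Second, I would compute the right-hand side as a sum of Feynman integrals on $E_\tau$. With the limiting splitting $\br F_\infty$, the propagator is the Green's kernel for $\bar\partial$ on $E_\tau$ whose holomorphic part is expressed via the Weierstrass $\zeta$-function and Eisenstein series. Each stable graph $\Gamma$ of genus $g$ with $n$ external legs---labelled by the insertions $\omega t^{k_i}$, with the descendants $t^{k_i}$ producing higher holomorphic derivatives at the legs---contributes an elliptic integral over $E_\tau^{|V(\Gamma)|}$ of a product of propagators. Summing over stable graphs and invoking the modularity theorem of the excerpt (whose $\sigma \to i\infty$ limit turns the modular forms of weight $(2g-2+2n,0)$ into quasi-modular forms in $\tau$), the total packages as a $\mbb Q$-linear combination in the ring $\mbb Q[E_2, E_4, E_6]$.

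Third, I would match this against the explicit quasi-modular form computed by Okounkov-Pandharipande for the $A$-side. Both sides live in the same ring of quasi-modular forms of the same weight (up to the normalization matching the classes $\omega_A$ and $\omega_B$) and have the same polynomial dependence in $(k_1,\ldots,k_n)$, so the comparison reduces to verifying a finite collection of initial coefficients in the $q$-expansion---most conveniently by computing a few low-complexity graphs on the $B$-side and matching them against low-genus low-degree Gromov-Witten invariants on the $A$-side. The main obstacle is step two: controlling the combinatorics of the stable graph sum together with the analytic regularization of the propagator on the diagonal, and recognizing the resulting elliptic integrals as concrete combinations of Eisenstein series and their $\tau$-derivatives. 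Once this explicit quasi-modular expression is in hand, the Virasoro and modularity rigidity make the final identification essentially algebraic.
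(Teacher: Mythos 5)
Note first that this paper does not actually prove the statement: it is quoted from the companion paper \cite{Li11}, where the $B$-model stationary correlators are computed explicitly and compared with the Okounkov--Pandharipande formula. So there is no in-paper argument to measure you against; your outline points toward Li's strategy (explicit evaluation of the $B$-side at the limiting splitting, comparison with the known quasi-modular $A$-side), but as written it has two problems. A minor one: your opening reduction is redundant, even circular in spirit --- the theorem is already a statement purely about the stationary sector, and the use of Virasoro constraints to propagate agreement from the stationary sector to all even correlators is a separate corollary drawn \emph{after} this theorem, not an ingredient of its proof.

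The substantive gap is the claim that ``Virasoro and modularity rigidity make the final identification essentially algebraic'' and that the comparison ``reduces to verifying a finite collection of initial coefficients.'' The Virasoro constraints relate correlators containing an insertion $1 \cdot t^l$ to correlators without it; they impose no relations among the stationary correlators themselves, so they provide no rigidity inside the stationary sector. Modularity does place both sides, for fixed $(g,n,k_1,\dots,k_n)$, in a finite-dimensional space of quasi-modular forms of weight $2g-2+2n$, but the number of $q$-coefficients one must check grows with the weight, hence with $g$ and $n$; checking ``a few low-complexity graphs'' against ``low-genus low-degree Gromov--Witten invariants'' therefore cannot give a uniform proof for the infinite family of identities. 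What is actually required is a closed-form evaluation of the $B$-model graph sum and an exact match with the Okounkov--Pandharipande formula --- and note that within the present paper the quantization on $E_\tau$ is known only abstractly (existence is quoted from \cite{CosLi12}, uniqueness by obstruction theory), so there is no explicit vertex/counterterm data to feed into Feynman rules. Producing that explicit data and evaluating the resulting sums is precisely the content of \cite{Li11}; in other words, your second step \emph{is} the whole proof, and the rigidity invoked in your third step does not reduce its difficulty.
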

Since the stationary correlators and the Virasoro constraints determine all even correlators, we see that all $A$- and $B$-model even correlators coincide.  

Li also shows that certain generalized Virasoro constraints hold on the $B$-model; the $A$-model version of these generalized Virasoro constraints were proved by Okounkov and Pandharipande \cite{OkoPan06a}.  These generalized Virasoro constraints allow one to reduce the computation of an arbitrary (not necessarily even) correlator to the stationary sector.  Thus, Li's results prove that \emph{all} correlators of the $A$ and $B$-model agree. 

A more precise statement of Li's theorem is as follows. Let 
$$
\Phi_\tau :   \oplus H^i(E^\vee, \wedge^j T^\ast E^\vee ) [-i-j] \to \oplus H^i(E_\tau, \wedge^j T E_\tau )[-i-j]
$$  
be the unique isomorphism of commutative bigraded algebras which is compatible with the trace on both sides.    Let us extend $\Phi_\tau$ to a $\C[[t]]$-linear isomorphism
$$
  H^\ast( E^\vee, \wedge^\ast T^\ast E)[[t]] \to H^\ast(E, \wedge^\ast TE) [[t]].
$$
\begin{ucorollary}[Li]
For all 
$$a_1, \ldots, a_n \in H^\ast(E^\vee, \wedge^\ast T^\ast E^\vee) [[t]],$$
the $A$-model correlators are the same as the $\br{\sigma} \to i \infty$ limit of the $B$-model correlators:
$$
\sum_d e^{2 \pi i d \tau} \ip{a_1, \ldots, a_n}^{E^\vee; A}_{g,n,d} = \ip{\Phi_\tau(a_1), \ldots, \Phi_\tau(a_n) }_{g,n}^{E_\tau, \infty ; B}.
$$
\end{ucorollary}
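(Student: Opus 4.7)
The plan is to combine three results stated earlier in the excerpt: the stationary sector agreement proved in Li's theorem immediately above, the Virasoro constraints of our second main theorem together with their $A$-model counterpart due to Okounkov--Pandharipande, and Li's generalized Virasoro constraints together with their $A$-model counterpart also due to Okounkov--Pandharipande. The idea is a downward induction that peels off descendant insertions one at a time until only stationary correlators remain, at which point Li's theorem applies directly.

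First I would check that $\Phi_\tau$ intertwines the recursions on the two sides. As the unique isomorphism of commutative bigraded Frobenius algebras respecting the trace, $\Phi_\tau$ sends $1$ to $1$, sends the $A$-model fundamental class $\omega \in H^2(E^\vee)$ to the unique class in $H^1(E_\tau, T E_\tau)$ with $\Tr(\omega) = 1$, and preserves the Hodge weight $\op{HW}(\alpha) = p - 1$. Being $\C[[t]]$-linear, it commutes with multiplication by $t$. Consequently, the $A$-model Virasoro recursion applied to $a_1, \dots, a_n$ is term by term the image under $\Phi_\tau$ of the $B$-model Virasoro recursion of our second main theorem applied to $\Phi_\tau(a_1), \dots, \Phi_\tau(a_n)$.

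Granting this compatibility, I would induct on a suitable complexity measure of the insertions (for instance, total $t$-degree plus a weight for non-stationary insertions). The ordinary Virasoro constraint removes any insertion of the form $1 \cdot t^l$ with $l \ge 1$ at the cost of decreasing the total $t$-degree; Li's generalized Virasoro constraint, together with its $A$-model counterpart, handles insertions involving the other cohomology classes. At each step, identical recursions on the two sides preserve the identity being proved. The induction terminates at correlators $\ip{\omega t^{k_1}, \dots, \omega t^{k_n}}$ in the stationary sector, where the preceding theorem of Li yields exactly the desired identity between the $q$-expansion of the $A$-model correlator and the $\br{\sigma} \to i\infty$ limit of the $B$-model correlator.

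The main obstacle is the combinatorial bookkeeping in the previous paragraph: one must verify that Li's generalized Virasoro on the $B$-side has precisely the same shape as Okounkov--Pandharipande's on the $A$-side once translated through $\Phi_\tau$, and that the induction strictly decreases the chosen complexity measure. This amounts to matching structure constants (in particular the binomial factors involving Hodge weights) on the two sides, and relies on $\Phi_\tau$ being chosen as the unique trace-preserving Frobenius isomorphism. Once these compatibilities are in place, everything else is a formal consequence of the three theorems cited above.
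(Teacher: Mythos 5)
Your proposal follows essentially the same route as the paper: the text surrounding the corollary derives it exactly by combining Li's stationary-sector agreement with the Virasoro constraints (proved here for the $B$-model, by Okounkov--Pandharipande for the $A$-model) and Li's generalized Virasoro constraints, which reduce arbitrary correlators to the stationary sector. The detailed bookkeeping you flag (matching the recursions through $\Phi_\tau$) is precisely what is deferred to the companion paper \cite{Li11}, so your outline matches the paper's argument.
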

Thus, this theorem states that the generating function for the Gromov-Witten invariants of $E^\vee$, where degree $d$ invariants are accompanied by a coefficient of $e^{2 \pi i d \tau} = q^d$, coincides with the  $B$-model correlators of the elliptic curve $E_\tau$.  Note that the left hand side of this equality is \emph{a priori} only defined as a formal expansion in the parameter $q = e^{2 \pi i \tau}$.  The right hand side, however, is defined for all values of $\tau$ in the upper half-plane.  Thus, this equality should be understood as saying that the left hand side is an expansion of the right hand side as $\tau \to i \infty$ (or, equivalently, as $q \to 0$).

\tableofcontents

\section{Classical BCOV theory}
\label{section classical bcov}
In this section we will describe our generalization of the BCOV theory, at the classical level.  This classical theory is defined on a Calabi-Yau manifold of any dimension.

Thus, let $X$ be a Calabi-Yau manifold of dimension $d$. For simplicity we will assume that $X$ is compact, although this is not necessary.

Let 
$$
\PV^{i,j} (X)  = \Omega^{0,j} (X, \wedge^i T X ) 
$$
denote the space of polyvector fields on $X$.  $\PV(X)$ is a differential bi-graded algebra; the differential is the operator 
$$
\dbar : \PV^{i,j} (X) \rightarrow \PV^{i,j+1} (X).
$$
and the algebra structure arises from wedging polyvector fields.  The cohomological degree of an element of $\PV^{i,j}(X)$ is $i + j$. 

Give the algebra $\Omega^{\ast,\ast}(X)$ of forms on $X$ the $\dbar$ differential.  With this differential, $\Omega^{\ast,\ast}(X)$ is a differential bi-graded module for the differential bi-graded algebra $\PV(X)$.  The module structure is given by $\Omega^{0,\ast}(X)$ linear map
\begin{align*}
\PV^{k,l} (X) \otimes \Omega^{i,j} ( X)  & \to \Omega^{i - k, j + l } ( X) \\
\alpha \otimes \phi &\mapsto \alpha \vee \phi.
\end{align*}
This map is the unique $\Omega^{0,\ast}(X)$ linear extension of the contraction map
$$
\PV^{k,0}(X) \otimes \Omega^{i,0}(X) \to \Omega^{i-k,0}(X).
$$

Let 
$$
\omega \in \Omega^{d,0} ( X )
$$
denote the holomorphic volume form on $X$.   

Contracting with $\omega$ yields an isomorphism
\begin{align*}
\alpha & \mapsto \alpha \vee \omega \\
\PV^{i,j}(X) & \iso \Omega^{d-i,j}(X).
\end{align*}
Define an operator 
$$
\partial : \PV^{i,j}(X) \to \PV^{i-1,j}(X)
$$
by the formula
$$
(\partial \alpha) \vee \omega =  \partial ( \alpha \vee \omega).
$$
Thus, $\partial$ is the operator on $\PV^{\ast,\ast}(X)$ which corresponds to the holomorphic de Rham differential $\partial$ on $\Omega^{\ast,\ast}(X)$ under the isomorphism $\PV^{i,j}(X) \iso \Omega^{d-i,j}(X)$. 

\subsection{}
The operator $\partial$ endows the space $\PV(X)$ of polyvector fields on $X$ of with the structure of a bigraded Batalin-Vilkovisky algebra.   This means that $\partial$ is an order two differential operator with respect to the algebra structure on $\PV(X)$.

If $\alpha \in \PV(X)$, let 
$$\alpha_l : \PV(X) \to \PV(X)$$
denote the operator of left multiplication with $\alpha$.  The statement that $\partial$ is an order two differential operator means that, for all $\alpha,\beta,\gamma$, 
$$
[[[ \partial, \alpha_l ] , \beta_l ], \gamma_l ]   = 0.
$$

It follows from this that there is a uniquely defined bilinear map
$$
\{-,-\} : \PV^{i,j}(X) \otimes \PV^{k,l}(X) \to \PV^{ i + k - 1, j + l } ( X) 
$$
which satisfies
$$
[[ \partial, \alpha_l ], \beta_l ]  \gamma = \{\alpha,\beta\} \gamma
$$
for all $\alpha,\beta,\gamma \in \PV(X)$. 

It follows from the fact that $\partial^2 = 0$ that $\{-,-\}$ is a Gerstenhaber bracket on the bi-graded algebra $\PV(X)$ (see \cite{Get94} for details).

\subsection{}
Define a map
$$
\Tr : \PV(X) \to \C
$$
by
$$
\Tr (\alpha ) =  \begin{cases} 
 \int_X  ( \alpha \vee \omega) \wedge \omega  & \text{ if } \alpha \in \PV^{d,d}(X) \\
0 & \text{ if } \alpha \not\in \PV^{d,d}(X).
\end{cases}
$$
The pairing
\begin{align*}
\PV^{i,j}(X) \otimes \PV^{d-i,d-j}(X) & \to \C \\ 
\alpha \otimes \beta & \to \Tr ( \alpha \beta) 
\end{align*}
is non-degenerate (that is, it has no kernel). 
\begin{lemma}
The operator $\dbar$ is skew self adjoint for the pairing $\Tr (\alpha \beta)$, and the operator $\partial$ is self adjoint for this pairing.
\end{lemma}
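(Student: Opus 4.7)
The plan is to translate both statements into assertions about differential forms using the isomorphism
$$\alpha \mapsto \tilde\alpha := \alpha \vee \omega, \qquad \PV^{i,j}(X) \xrightarrow{\sim} \Omega^{d-i,j}(X),$$
and then reduce to Stokes' theorem on the compact manifold $X$. By definition this isomorphism intertwines $\partial$ on polyvector fields with the holomorphic de Rham $\partial$ on forms, and since $\dbar\omega=0$ it also intertwines $\dbar$ with $\dbar$.

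The first preliminary step is to rewrite the trace pairing in terms of forms: for $\alpha,\beta\in\PV$ with $\alpha\beta\in\PV^{d,d}$, I would verify the identity
$$\Tr(\alpha\beta) \;=\; \pm \int_X \tilde\alpha \wedge \tilde\beta,$$
where the sign depends only on the bidegrees of $\alpha$ and $\beta$. This follows by writing $\Tr(\alpha\beta)=\int_X (\alpha\vee\tilde\beta)\wedge\omega$ and applying the basic identity $(\alpha\vee\phi)\wedge\omega = \pm\,\phi\wedge\tilde\alpha$, which is elementary from the definition of interior multiplication on the exterior algebra.

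For the $\dbar$-statement I exploit the fact that $\dbar$ is a graded derivation of the product on $\PV$. Then skew self-adjointness follows at once from the vanishing $\Tr(\dbar\gamma)=0$ applied to $\gamma=\alpha\beta$. To establish this vanishing, only $\gamma\in\PV^{d,d-1}$ matters; setting $\phi=\tilde\gamma\in\Omega^{0,d-1}$, we have
$$\Tr(\dbar\gamma)=\int_X \dbar\phi\wedge\omega=\int_X\dbar(\phi\wedge\omega),$$
and since $\partial(\phi\wedge\omega)\in\Omega^{d+1,d-1}=0$ the $\dbar$ here coincides with $d$, so the integral is zero by Stokes.

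The $\partial$-statement is the more subtle one because $\partial$ is \emph{not} a derivation of the product on $\PV$, so the above shortcut is unavailable. Instead I work entirely on the form side. Using Step 1 twice together with $\widetilde{\partial\alpha}=\partial\tilde\alpha$, both $\Tr((\partial\alpha)\beta)$ and $\Tr(\alpha(\partial\beta))$ become (up to explicit signs) the integrals of $\partial\tilde\alpha\wedge\tilde\beta$ and $\tilde\alpha\wedge\partial\tilde\beta$. Since $\tilde\alpha\wedge\tilde\beta$ lies in bidegree $(d-1,d)$, its $\dbar$ vanishes by type, so $\partial(\tilde\alpha\wedge\tilde\beta)=d(\tilde\alpha\wedge\tilde\beta)$, and Stokes gives $\int_X\partial(\tilde\alpha\wedge\tilde\beta)=0$. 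This produces the desired adjointness relation on $\PV$.

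The main obstacle, and the only genuine work in the proof, is the sign bookkeeping: one must verify that the sign discrepancy $c(\alpha,\beta)$ between $\Tr(\alpha\beta)$ and $\int_X\tilde\alpha\wedge\tilde\beta$ transforms under the shifts $\alpha\rightsquigarrow\partial\alpha$, $\beta\rightsquigarrow\partial\beta$ exactly so as to convert the skew relation $\int_X \partial\tilde\alpha\wedge\tilde\beta = -(-1)^{|\tilde\alpha|}\int_X\tilde\alpha\wedge\partial\tilde\beta$ on forms into self-adjointness for $\partial$ on $\PV$, while preserving skew self-adjointness for $\dbar$. This is a direct computation from the bidegree shifts and the antisymmetry of the interior product.
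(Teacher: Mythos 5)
The paper states this lemma without proof, so there is nothing to compare against line by line; your argument is the standard one and it is correct: transport everything through $\alpha\mapsto\alpha\vee\omega$, note that $\dbar$ (resp.\ $\partial$) on $\PV(X)$ corresponds to $\dbar$ (resp.\ $\partial$) on $\Omega^{\ast,\ast}(X)$, and reduce both adjointness statements to $\int_X \dbar(\mu)=\int_X\partial(\mu)=0$ for $\mu$ of type $(d,d-1)$ or $(d-1,d)$, where the missing holomorphic or antiholomorphic degree forces $\dbar\mu$ or $\partial\mu$ to equal $d\mu$ and Stokes applies on the compact $X$. Your observation that the $\dbar$ case can bypass the wedge-commutation identity entirely (because $\dbar$ is a derivation of the product on $\PV(X)$, so one only needs $\Tr\circ\dbar=0$), while the $\partial$ case cannot (since $\partial$ is only an order-two operator on $\PV(X)$) and must be pushed to the form side, is exactly the right structural point. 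One small caution: the identity $(\alpha\vee\phi)\wedge\omega=\pm\,\phi\wedge(\alpha\vee\omega)$ is \emph{not} valid for arbitrary bidegrees (e.g.\ for $\alpha$ a vector field and $\phi$ a function the left side vanishes while the right side need not); it holds precisely when the holomorphic degrees are complementary, which is the only case contributing to $\Tr$, so your Step 1 should be stated with that restriction. The remaining sign bookkeeping you defer is routine and depends only on bidegrees, as you say, so the proof is complete in substance.
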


\subsection{}
The space of fields for the original BCOV theory is the subspace
$$
\mc{H} = \left(\op{Ker} \partial \right)[2]\subset \PV(X) [2]. 
$$
(The shift of two puts the deformations of complex structure of $X$ -- which lie in $\Omega^{0,1}(X, TX)$ -- in degree $0$). 

The original BCOV action on $\mc{H}$ is defined by the formula 
$$
S(a) = \frac{1}{2}Tr (a \dbar \partial^{-1} a ) + \frac{1}{6} \Tr (a^3).
$$

\subsection{}
There are many difficulties with the BCOV action, as originally defined.  The most obvious difficulty is that it involves the expression $\partial^{-1}$.  This difficulty can be resolved by restricting attention to those fields $a$ in $\Im \partial \subset \Ker \partial$.

The second difficulty is that the space of fields is $\Ker \partial$.   Note that $\Ker \partial$ is the global sections of a sheaf of cochain complexes on $X$, whose sections on an open subset $U \subset X$ is simply
$$
\Ker \partial \mid_U \subset \PV(U).
$$
Note that this sheaf of cochain complexes is \emph{not} fine: the hypercohomology with coefficients in this sheaf does not coincide with the cohomology of $Ker \partial \subset \PV(X)$.

In this paper we will consider a modification of the BCOV theory which works on a Calabi-Yau manifold of any dimension.  In this modified theory, the space of fields is the \emph{derived} version of the kernel of $\partial$.

\subsection{}
The operator
$$
\partial : \PV(X) \to \PV(X)
$$
is a cochain map of cohomological degree $-1$.  Thus, $\partial$ can be viewed as an action of the Abelian Lie algebra $\C[1]$ situated in degree $-1$, or, equivalently, as an action of the commutative algebra $\C[\eps]$ where $\eps$ is placed in degree $-1$.  

The derived version of the kernel of $\partial$ is the homotopy fixed points for the action of $\C[1]$.  Abstractly, we can define this as the derived tensor product
$$
\PV(X) \otimes^{\mbb L}_{\C[\eps]} \C.
$$
A concrete model for this complex is the complex
$$
\PV(X)[[t]]
$$
with differential $\dbar - t \partial$.   We will take this complex to be the space of fields for our modified BCOV theory (after, as before, shifting by $2$).  Thus, let
$$
\Fields(X) = \PV(X)[[t]] [2],
$$
and let 
$$
Q = \dbar - t \partial : \Fields(X) \to \Fields(X)
$$
be the differential on $\Fields(X)$.  Note that $\Fields(X)$ is a topological vector space, and in fact a nuclear Fr\'{e}chet space.  

\subsection{}
We are interested constructing a local action functional on $\Fields(X)$ which satisfies the classical master equation.  Before we do this, we need to introduce some notation for various classes of functions on $\Fields(X)$.  

Let $\br{\PV}(X)$ denote the space of distributional polyvector fields on $X$.  Thus, if $\mc{D}(X)$ denotes the space of distributions on $X$,
$$
\br{\PV}(X) = \PV(X) \otimes_{\cinfty(X)} \mc{D}(X).
$$
Let 
$$
\br{\Fields}(X) = \br{\PV}(X)[[t]][2].
$$
We will give $\br{\Fields}(X)$ the differential $Q = \dbar - t \partial$, as before.

Note that both $\Fields(X)$ and $\br{\Fields}(X)$ are cochain complexes of topological vector space in a natural way; in fact, both are complete nuclear spaces.   The category of complete nuclear spaces is a symmetric monoidal category, with the completed projective tensor product. Further, the category of nuclear spaces is closed under all products and under countable coproducts.  If $V$ is a nuclear space, we will let  
$$
\what{\Sym}^\ast V = \prod_i \Sym^i (V) 
$$
denote the completed symmetric algebra on $V$, where $\Sym^i V$ is defined using the completed projective tensor product.

Now suppose that $V$ is a nuclear space with the property that the strong dual $V^\ast$ of $V$ is also a nuclear space. (The spaces $\Fields(X)$ and $\br{\Fields}(X)$ both have this property).  Then we define the algebra of formal power series on $V$ by
$$
\Oo(V) = \what{\Sym}^\ast (V^\ast).
$$ 
This definition extends, in a straightfoward way, to cochain complexes of nuclear spaces.  We are particularly interested in the spaces $\Oo(\Fields(X))$ and $\Oo(\br{\Fields}(X))$ of functions on spaces of fields on $X$. 

We will let 
$$
Q : \Oo(\Fields(X)) \to \Oo(\Fields(X))
$$ 
denote the differential induced from the differential $Q$ on $\Fields(X)$.

The pairing
$$
\ip{a,b} = \op{Tr} (a b ) 
$$
of cohomological degree $-2d $ on $\PV(X)$ leads to an isomorphism
$$
\PV(X)^\ast = \br{\PV}(X) [2d]
$$
and so to an isomorphism
$$
\Fields(X)^\ast = t^{-1} \br{\PV}(X)[t^{-1}] [2d -4] .
$$
Here, $t$ is given degree $2$ as usual, and we view $t^{-1} \C[t^{-1}]$ as the dual to $\C[[t]]$ via the residue pairing. 

Thus,
$$
\Oo(\Fields(X)) = \what{\Sym}^\ast \left( t^{-1} \br{\PV}(X)[t^{-1}] [2d -4] \right).
$$ 
Similarly, 
$$
\Oo(\br{\Fields}(X)) = \what{\Sym}^\ast \left( t^{-1} \PV(X)[t^{-1}] [2d -4] \right).
$$

\subsection{}
If $f \in \Fields(X)$, then there is a derivation 
$$
\frac{\partial}{\partial f} : \Oo(\Fields(X)) \to \Oo(\Fields(X)).
$$
This is the continuous unique derivation which, on the generators $\Fields(X)^\ast$ of $\Oo(\Fields(X))$, is given by pairing with $f$.  Thus, if $f$ has cohomological degree $i$, the derivation $\frac{\partial}{\partial f}$ also has cohomological degree $i$.

If $\Phi \in \Oo(\Fields(X))$, and then we can define the Taylor coefficients of $\Phi$ as follows.  These are continuous linear maps
$$
D_n \Phi : \Fields(X)^{\otimes n} \to \C
$$
which are uniquely defined by the property that
$$
D_n \Phi ( f_1 \otimes \cdots \otimes f_n) = \left( \frac{\partial}{\partial f_1} \cdots \frac{\partial}{\partial f_n} \Phi \right)(0).
$$
One has
$$
\Phi(f) = \sum \frac{1}{n!} D_n\Phi (f^{\otimes n}).
$$
(once both sides are suitable interpreted, using, for example, the functor of points formalism \cite{DelMor99}).

\subsection{}
We are particularly interested in a subspace 
$$\Ool(\Fields(X)) \subset \Oo(\Fields(X))$$
of \emph{local functionals}.  
\begin{definition}
A functional $\Phi \in \Oo(\Fields(X))$ is \emph{local} if the Taylor coefficients
$$
D_n \Phi : \Fields(X)^{\otimes n} \to \C,
$$
which are distributions on $X^n$ with coefficients in a certain vector bundle, have the following properties.
\begin{enumerate}
\item $D_n \Phi$ is supported in the small diagonal $X \subset X^n$.
\item The wave-front set (or microsupport) of $D_n \Phi$ is the conormal bundle to the small diagonal in $X$. 
\end{enumerate}
\end{definition}
The second condition is equivalent to the statement that we can find differential operators
$$
A_1, \ldots, A_n : \Fields(X) \to \cinfty(X)
$$
such that
$$
D_n (\Phi_1 \otimes \cdots \otimes \Phi_n) = \int_X A_1 (\Phi_1) \cdots A_n (\Phi_n) \d\op{Vol}
$$
for some volume form $\d\op{Vol}$ on $X$. 

The space $\Ool(\Fields(X))$ is a subcomplex of $\Oo(\Fields(X))$, when the latter is equipped with the differnetial $Q = \dbar - t \partial$. 

\subsection{}
We want to construct a function $I \in \Ool(\Fields(X))$ which satisfies the \emph{classical master equation}.  The classical master equation is the Maurer-Cartan equation for a certain Lie bracket on $\Ool(\Fields(X))$.

We will first describe a Lie bracket on the subalgebra
$$\Oo(\br{\Fields}(X)) \subset \Oo(\Fields(X)).$$ This is the subalgebra consisting of all funcitonals $\Phi$ whose Taylor coefficients, which are \emph{a priori} distributional sections of a vector bundle on $X$, are in fact smooth sections.

The Lie bracket on $\Oo(\br{\Fields}(X))$ will be a Poisson bracket of cohomological degree $2d - 5$. We will define it on the generators of $\Oo(\br{\Fields}(X))$; the Leibniz rule allows one to extend it to all elements of $\Oo(\br{\Fields}(X))$.

Recall that
$$
\br{\Fields}(X)^\ast = t^{-1} \PV(X) [t^{-1} ] [2 d - 4 ].
$$
The Poisson bracket on $\br{\Fields}(X)^\ast$ is defined by the pairing
$$
f(t) \alpha \otimes g(t) \beta \to \frac{1}{2 \pi i } \Res_{t = 0} t f(t) g(-t) \op{Tr} (\partial \alpha) \beta.  
$$
Note that this is a symmetric map of cohomological degree $2 d - 5$, and so induces a Poisson bracket on $\Oo(\br{\Fields}(X))$ of degree $2 d - 5$.  

Although this Poisson bracket does not extend to $\Oo(\Fields(X))$, it is clear that it extends to a bilinear map\footnote{This bilinear map is only separetely continuous, and so does not extend to the completed projective tensor product.}
$$
\Oo(\Fields(X)) \times \Oo(\br{\Fields}(X)) \to \Oo(\Fields(X)).
$$
In particular, any $S \in \Ool(\Fields(X))$ defines a continuous linear map
$$
\{S , - \} : \Oo(\br{\Fields}(X)) \to \Oo(\Fields(X)).
$$ 
\begin{lemma}
For all $S \in \Ool(\Fields(X))$, the map $\{S,-\}$ extends uniquely to a continuous map
$$
\Oo(\Fields(X)) \to \Oo(\Fields(X)).
$$
Further, $T \in \Ool(\Fields(X)) \subset \Oo(\Fields(X))$, then $\{S,T\} \in \Ool(\Fields(X))$. 
\end{lemma}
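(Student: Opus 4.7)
\emph{Proof proposal.}

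The plan is to extend $\{S,-\}$ to $\Oo(\Fields(X))$ by a formula exploiting the locality of $S$, and then to verify locality of $\{S,T\}$ by support and wave-front considerations.

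First, recall that the Poisson bracket on $\Oo(\br{\Fields}(X))$ is a biderivation; for fixed $S$, the map $\{S,-\}$ restricted to $\Oo(\br{\Fields}(X))$ is a derivation, hence determined by its action on generators $\ell \in \br{\Fields}(X)^*$. The Poisson pairing extends (separately continuously, by the footnote above) to a bilinear map $\Fields(X)^* \otimes \br{\Fields}(X)^* \to \C$, and hence dualizes to a continuous linear map $P \colon \Fields(X)^* \to \br{\Fields}(X)$. For smooth $\ell$ one recovers the formula $\{S, \ell\}(\phi) = \frac{\partial}{\partial P(\ell)} S \big|_\phi$.

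To extend to $\Oo(\Fields(X))$, I would take the same formula for $\ell \in \Fields(X)^*$ distributional: now $P(\ell) \in \br{\Fields}(X)$ is a distributional field, so a priori the directional derivative $\frac{\partial}{\partial P(\ell)} S$ is not defined. The key observation is that \emph{locality of $S$ makes this derivative meaningful}. By the characterization of locality each Taylor coefficient admits a representation
\[
D_n S(\phi_1, \ldots, \phi_n) = \int_X A_1(\phi_1) \cdots A_n(\phi_n) \, \d \op{Vol}
\]
with differential operators $A_i$. Inserting a distributional $v \in \br{\Fields}(X)$ in the $i$-th slot gives $A_i(v)$, a distribution on $X$, paired against $\prod_{j \neq i} A_j(\phi_j)$, which is smooth since the other inputs are smooth test fields. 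This pairing defines $\{S, \ell\}$ as an element of $\Oo(\Fields(X))$. Extending by the Leibniz rule produces a continuous derivation $\{S, -\} \colon \Oo(\Fields(X)) \to \Oo(\Fields(X))$. Uniqueness is then forced, since any continuous extension must agree on generators, and the derivation / continuity properties determine it completely.

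For the second claim, I would compute $D_n\{S,T\}$ explicitly. By Leibniz, each Taylor coefficient is a finite sum, over ordered pairs of slots $(i,j)$ and integers $k+l = n+1$, of a contraction: one slot of $D_k S$ (a distribution on $X^k$ supported on the small diagonal with conormal wave-front set) is paired with one slot of $D_l T$ (likewise) via the Poisson kernel. After the residue in $t$, this kernel is the operator $\partial$ acting at $\delta_{\Diag}$ on $X \times X$. The contraction therefore forces the $X^k$-diagonal and the $X^l$-diagonal to meet along the identified slots, so the resulting distribution is supported on the small diagonal of $X^{k+l-1} = X^n$. A wave-front set computation via H\"ormander's composition criterion — the conormal to the $X^k$-diagonal, the conormal to $\Diag \subset X \times X$ coming from the kernel, and the conormal to the $X^l$-diagonal compose to the conormal of the $X^n$-diagonal — yields the required microsupport bound and hence locality of $\{S,T\}$.

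The hard part will be verifying the topological statements on the nuclear Fr\'echet space $\what{\Sym}^\ast(\Fields(X)^*)$: in particular, that the Leibniz extension converges and gives a continuous derivation, and that the H\"ormander composition indeed yields exactly the conormal to the small diagonal (not something strictly larger, which would spoil the locality condition). These are in the spirit of the framework of \cite{Cos11} but require careful bookkeeping of the distributional kernels and their microsupports.
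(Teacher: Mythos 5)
The paper gives no argument of its own for this lemma---it simply cites \cite{Cos11}, Chapter 5---and your sketch is essentially the argument given there: locality lets you insert the distributional field $P(\ell)$ into one slot of each Taylor coefficient $\int_X A_1(\phi_1)\cdots A_n(\phi_n)\,\d\op{Vol}$, continuity plus the derivation property then force the extension uniquely, and the bracket of two local functionals is again local because the contraction kernel $(\partial\otimes 1)\delta_{\Diag}$ preserves diagonal support and conormal wave-front sets. Two cosmetic points only: your slot count should read $k+l=n+2$ (the two contracted slots are integrated out, leaving a distribution on $X^{k+l-2}=X^n$), and the H\"ormander composition only needs to give \emph{containment} in the conormal to the small diagonal, which it does, so the worry at the end is harmless.
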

\begin{proof}
See \cite{Cos11}, Chapter 5.
\end{proof}

\begin{definition}
A function $S \in \Ool(\Fields(X))$ satisfies the \emph{classical master equation} if
$$
Q S + \tfrac{1}{2} \{S,S\} = 0.
$$
\end{definition}
\subsection{}
Now we are ready to define the classical action functional for our generalized BCOV theory. 
\begin{definition}
Define the classical BCOV action functional $I \in \Ool(\Fields(X))$ by saying that the Taylor coefficients
$$
D_n I : \Fields(X)^{\otimes n} \to \C
$$
are defined by
$$
D_n I ( t^{k_1} \alpha_1, \ldots, t^{k_n} \alpha_n) = \begin{dcases}
\int_{\mbar_{0,n}} \psi_1^{k_1} \cdots \psi_n^{k_n} \Tr ( \alpha_1 \cdots \alpha_n) & \text{ if } n \ge 3 \\
0 & \text{ if } n < 3.
\end{dcases}
$$
\end{definition}

\begin{lemma}
$I$ satisfies the classical master equation
$$
Q I + \tfrac{1}{2} \{I,I\} = 0. 
$$
\end{lemma}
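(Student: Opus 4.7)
The plan is to verify the classical master equation Taylor-coefficient by Taylor-coefficient: for each $n\geq 3$ and inputs $\phi_i = t^{k_i}\alpha_i$, show that $D_n(QI + \tfrac12\{I,I\})(\phi_1,\ldots,\phi_n) = 0$. The two operators $\dbar$ and $-t\partial$ in $Q = \dbar - t\partial$ play very different roles, so I would treat them separately.

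First, the $\dbar$-contribution to $D_n(QI)$ equals $\int_{\mbar_{0,n}}\prod_j\psi_j^{k_j}\Tr\bigl(\dbar(\alpha_1\cdots\alpha_n)\bigr)$ after using that $\dbar$ is a graded derivation of the wedge product of polyvector fields. This vanishes by the preceding lemma, since $\Tr\circ\dbar=0$ (the skew self-adjointness of $\dbar$ with respect to $\Tr$ is just Stokes' theorem). So only the $-t\partial$ part matters, contributing
$$-\sum_i\pm\int_{\mbar_{0,n}}\psi_i^{k_i+1}\prod_{j\neq i}\psi_j^{k_j}\,\Tr(\alpha_1\cdots\partial\alpha_i\cdots\alpha_n).$$
Because $\partial$ is a BV operator of order two, the BV identity gives $\sum_i\pm(\partial\alpha_i)\alpha_{\hat i} = \partial(\alpha_1\cdots\alpha_n) - \sum_{i<j}\pm\{\alpha_i,\alpha_j\}\alpha_{\hat i\hat j}$. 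Using $\Tr\circ\partial = 0$, one reorganizes $D_n(QI)$ as $-\sum_{i<j}\pm\int_{\mbar_{0,n}}\psi_i^{k_i+1}\psi_j^{k_j}\cdots\Tr(\{\alpha_i,\alpha_j\}\alpha_{\hat i\hat j})$ plus boundary-like terms coming from unequal $\psi$-exponents between the two indices.

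Second, I would compute $D_n\{I,I\}$ directly from the Leibniz rule applied to the Poisson bracket. The propagator $\Res_{t=0} t f(t) g(-t)$ is nonzero only when $f = g = t^{-1}$, so the bracket contracts pairs of inputs sitting in the $t^{0}$-component of $\Fields$, and the induced propagator on $\PV(X)\otimes\PV(X)$ is dual to the pairing $\Tr(\partial\alpha\cdot\beta)$. By Leibniz, $\tfrac12 D_n\{I,I\}$ becomes a sum over partitions $A\sqcup B = \{1,\ldots,n\}$ with $|A|,|B|\geq 2$ of products of integrals
$$\int_{\mbar_{0,|A|+1}}\!\!\prod_{j\in A}\psi_j^{k_j}\cdot\int_{\mbar_{0,|B|+1}}\!\!\prod_{j\in B}\psi_j^{k_j}\,\cdot\,\Tr\bigl(\alpha_A\cdot e_\mu\bigr)\,\Tr\bigl(\tilde e_\mu\cdot\alpha_B\bigr),$$
with dual bases $\{e_\mu\},\{\tilde e_\mu\}$ of $\PV(X)$ under $\Tr(\partial\cdot\,\cdot)$; the gluing legs carry no $t$-power, so the corresponding $\psi_*$-classes are absent.

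Third, I would match the two expressions using the genus-zero topological recursion relation: on $\mbar_{0,n}$, for any indices $j,k\neq i$,
$$\psi_i \;=\; \sum_{\substack{A\sqcup B=\{1,\ldots,\hat\imath,\ldots,n\}\\ j,k\in A}} \bigl[D_{A\sqcup\{*\}\,|\,B\sqcup\{*,i\}}\bigr].$$
Applying this once to a factor $\psi_i$ inside $\psi_i^{k_i+1}$ from Step 2 rewrites the $\mbar_{0,n}$-integral as a sum of integrals over boundary divisors, i.e.\ precisely over products $\mbar_{0,|A|+1}\times\mbar_{0,|B|+1}$ glued at a node. On each product, the remaining $\psi_j$'s pull back to $\psi_j$'s on the corresponding factor, and the propagator sum $\sum_\mu e_\mu\otimes\tilde e_\mu$ correctly reproduces the contraction that turned $\Tr(\{\alpha_i,\alpha_j\}\alpha_{\hat\imath\hat\jmath})$ into the product of two traces in the $\{I,I\}$-expression. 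Summing over $i$ and comparing, the two expressions cancel.

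The main obstacle, as always in this type of argument, is the sign bookkeeping and the precise identification of the propagator with $\partial$-inversion: one must check that the only pairing allowed by the residue $\Res\,t^{1-a-b}$ (i.e.\ $a=b=1$) is compatible with the fact that $\psi$-class pullback under gluing kills the gluing $\psi_*$. The geometric content, beyond the algebraic BV identity and $\Tr\circ\dbar = \Tr\circ\partial = 0$ from the preceding lemma, is entirely contained in the genus-zero TRR.
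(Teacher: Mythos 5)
Your argument is correct and follows essentially the same route as the paper: $\dbar I = 0$ reduces $QI$ to the $-t\partial$ term, the order-two (BV) property of $\partial$ together with $\Tr(\partial \alpha)\beta = -\tfrac{1}{2}\Tr\{\alpha,\beta\}$ converts it into bracket terms, $\{I,I\}$ expands as a sum over partitions with no $\psi$-power on the gluing leg (the residue forces the pairing corresponding to $\tau_0$ at each node), and the genus-zero TRR matches the two sides. The only differences are cosmetic: you state the TRR geometrically via boundary divisors and phrase the bracket contraction with a formal dual-basis kernel, whereas the paper writes it directly as $\Tr\bigl(\partial \prod_{i\in J}\alpha_i\bigr)\prod_{j\in J^c}\alpha_j$.
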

\begin{proof} 
In what follows, we will use the notation 
$$
\ip{ \tau_{k_1} \dots \tau_{k_n} } = \int_{\mbar_{0,n}} \psi_1^{k_1} \dots \psi_n^{k_n}. 
$$
 Note that $\dbar I=0$. Thus, we only need to consider the term $- t \partial$ in the differential $Q$. We have
\begin{eqnarray*}
     &&(QI)(t^{k_1}\alpha_1, \cdots, t^{k_n}\alpha_n)\\
   &=&-\sum_{i} \pm \ip{\tau_{k_1}\cdots\tau_{k_i+1}\cdots \tau_{k_n}}_0
     \Tr \alpha_1\cdots\partial\alpha_i\cdots\alpha_n\\
     &=&{1\over 2}\sum_{i} \pm \ip{\tau_{k_1}\cdots\tau_{k_i+1}\cdots \tau_{k_n}}_0
     \Tr \{\alpha_i,\alpha_1\cdots\hat \alpha_i\cdots\alpha_n\}\\
&=&{1\over 2} \sum_{i\neq j}\pm \ip{\tau_{k_1}\cdots\tau_{k_i+1}\cdots
    \tau_{k_n}}_0 \Tr
    \{\alpha_i,\alpha_j\}\alpha_1\cdots\hat{\alpha_i}\cdots\hat{\alpha_j}\cdots\alpha_n
\end{eqnarray*}
where we have used the formula
$$
   \Tr (\pa \alpha)\beta=-{1\over2} \Tr\{\alpha, \beta\}
$$
which follows from the BV relation $\pa(\alpha\beta)=(\pa
\alpha)\beta+(-1)^{|\alpha|}\alpha \pa \beta+\fbracket{
\alpha,\beta}$ and the self-adjointness of $\pa$ with respect to the
trace pairing. On the other hand,
\begin{eqnarray*}
     &&   \{I,I\} (t^{k_1}\alpha_1, \cdots,
t^{k_n}\alpha_n)\\
     &=&\sum_{J\subset \{1,\cdots, n\}}\pm \langle \tau_0 \prod_{i\in J}\tau_{k_i} \rangle_0
\langle \tau_0 \prod_{j\in J^c}\tau_{k_j} \rangle_0 \Tr
\bracket{\partial \prod_{i\in J}\alpha_i}\prod_{j\in J^c}\alpha_j\\
&=&-{1\over 2}\sum_{J\subset \{1,\cdots, n\}}\pm \langle \tau_0
\prod_{i\in J}\tau_{k_i} \rangle_0 \langle \tau_0 \prod_{j\in
J^c}\tau_{k_j} \rangle_0 \Tr \fbracket{ \prod_{i\in J}\alpha_i,
\prod_{j\in J^c}\alpha_j}\\ &=&-{1\over 2}\sum_{J\subset \{1,\cdots,
n\}}\pm \langle \tau_0 \prod_{i\in J}\tau_{k_i} \rangle_0 \langle
\tau_0 \prod_{j\in J^c}\tau_{k_j} \rangle_0 \sum_{i\in J, j\in J^c}
\Tr \{\alpha_i,
\alpha_j\}\alpha_1\cdots\hat{\alpha_i}\cdots\hat{\alpha_j}\cdots\alpha_n
\\ &=& -{1\over 2(n-2)}\sum_{J\subset \{1,\cdots, n\}}\pm \langle
\tau_0 \prod_{i\in J}\tau_{k_i} \rangle_0 \langle \tau_0 \prod_{j\in
J^c}\tau_{k_j} \rangle_0 \sum_{i\in J, j\in J^c} \sum_{k\neq i,j}\Tr
\{\alpha_i,
\alpha_j\}\alpha_1\cdots\hat{\alpha_i}\cdots\hat{\alpha_j}\cdots\alpha_n\\
&=&-{1\over 2(n-2)}\sum_{i,j,k}\sum_{\{i,k\}\subset J, j\in J^c}\pm
\langle \tau_0 \prod_{i\in J}\tau_{k_i} \rangle_0 \langle \tau_0
\prod_{j\in J^c}\tau_{k_j} \rangle_0 \Tr \{\alpha_i,
\alpha_j\}\alpha_1\cdots\hat{\alpha_i}\cdots\hat{\alpha_j}\cdots\alpha_n\\
&&-{1\over 2(n-2)}\sum_{i,j,k}\sum_{i\in J, \{j,k\}\subset J^c}\pm
\langle \tau_0 \prod_{i\in J}\tau_{k_i} \rangle_0 \langle \tau_0
\prod_{j\in J^c}\tau_{k_j} \rangle_0 \Tr \{\alpha_i,
\alpha_j\}\alpha_1\cdots\hat{\alpha_i}\cdots\hat{\alpha_j}\cdots\alpha_n
\\
&=&-{1\over 2(n-2)}\sum_{i,j,k} \pm\langle \tau_{k_1}\cdots
\tau_{k_j+1}\cdots \tau_{k_n} \rangle_0  \Tr
    \{\alpha_i,\alpha_j\}\alpha_1\cdots\hat{\alpha_i}\cdots\hat{\alpha_j}\cdots\alpha_n\\&&-{1\over 2(n-2)}\sum_{i,j,k}\pm\langle \tau_{k_1}\cdots \tau_{k_i+1}\cdots \tau_{k_n} \rangle_0  \Tr
    \{\alpha_i,\alpha_j\}\alpha_1\cdots\hat{\alpha_i}\cdots\hat{\alpha_j}\cdots\alpha_n\\
&=&- \sum_{i,j}\pm \ip{\tau_{k_1}\cdots\tau_{k_i+1}\cdots
    \tau_{k_n}}_0 \Tr
    \{\alpha_i,\alpha_j\}\alpha_1\cdots\hat{\alpha_i}\cdots\hat{\alpha_j}\cdots\alpha_n
\end{eqnarray*}
where we have used the topological recursion relation
$$
   \ip{\tau_{k_1+1}\tau_{k_2}\cdots \tau_{k_n}}_0=\sum_{1\in I, \{2,3\}\subset I^c} \ip{\tau_0 \prod_{i\in I}\tau_{k_i}}_0\ip{\tau_0\prod_{j\in I^c}\tau_{k_j}}
$$
The classical master equation now follows.
\end{proof}
\begin{remark}
We will give another construction of the classical action functional (and another proof that the master equation holds) in section \ref{section_cone}.
\end{remark}

\section{Quantization}

\label{section quantization}
In this section we will explain what it means to quantize the BCOV theory on a Calabi-Yau $X$.

We will use the formalism for renormalization of quantum field theories developed in \cite{Cos11}.  This involves the choice of an additional datum: a \emph{gauge fixing operator} for the theory.  However, as explained in \cite{Cos11}, the notion of quantization is independent, up to homotopy, of the gauge fixing operator chosen.   More precisely, it is shown in \cite{Cos11} that there is a simplicial set of quantization, which is a fibration over the simplicial set of gauge fixing conditions.  
Further, the simplicial set of gauge fixing conditions is normally contractible (and is contractible in the example under consideration).  Since the simplicial set of quantizations fibres over that of gauge fixing conditions, any two fibres are homotopy equivalent, in a way canonical up to all higher homotopies. 

In what follows, we will first discuss the notion of quantization with a fixed  gauge fixing condition, arising from a K\"ahler metric on our Calabi-Yau $X$.  Later we will explain how to relate quantizations for different gauge fixing conditions. 

\subsection{}
Thus, let us choose a K\"ahler metric on $X$. As before, let $\PV(X)$ denote the polyvector fields on $X$.  The choice of metric leads to an operator
$$
\dbar^\ast : \PV^{i,j}(X) \to \PV^{i-1,j}(X).
$$
Let
$$
K_t \in \oplus_{i,j} \PV^{i,j}(X) \otimes \PV^{d-i,d-j}(X)
$$
denote the heat kernel for the operator $[\dbar,\dbar^\ast]$. 

As before, let
$$
\Fields(X) = \PV(X)[[t]] [2]. 
$$
We will consider $\PV(X)[2]$ as the subspace $t^0 \PV(X)[2]$ of $\Fields(X)$.  In this way, the heat kernel $K_t$ can be viewed as an element of $\Fields(X) \otimes \Fields(X)$.

We will let
$$
\Delta_L : \Oo(\Fields(X)) \to \Oo(\Fields(X))
$$
be the operator of contracting with 
$$(\partial \otimes 1) K_L.$$
Thus, $\Delta_L$ is the unique continuous order $2$ differential operator with the property that its restriction to 
$$
\Sym^2 (\Fields(X)^\ast) \subset \Oo(\Fields(X))
$$
is given by pairing with $(\partial \otimes 1) K_L$.  This operator is of cohomological degree $2d-5$.  

If $\Phi, \Psi \in \Oo(\Fields)$, let
$$
\{\Phi,\Psi\}_L = \Delta_L ( \Phi \Psi ) -  \Delta_L ( \Phi) \Psi - (-1)^{\abs{\Phi}} \Phi \Delta_L \Psi.
$$
This defines a Poisson bracket of cohomological degree $2d - 5$.

\subsection{}
We will let
$$
P(\eps,L) = \int_\eps^L \left( \dbar^\ast \partial \otimes 1 \right) K_u \d u \in \Fields \otimes \Fields.
$$
If we set $\eps = 0$ and $L = \infty$, then we find the propagator $P(0,\infty)$ for the BCOV theory.

Let
$$
Q : \Oo(\Fields(X)) \to \Oo(\Fields(X)) 
$$
denote the differential which arises from the differential $\dbar - t \partial$ on $\Fields(X)$.

Let
$$
\partial_{P(\eps,L)} : \Oo(\Fields(X)) \to \Oo(\Fields(X)) 
$$
be the operator corresponding to contracting with $P(\eps,L)$.

We have
\begin{equation}
[Q, \partial_{P(\eps,L)} ] = \Delta_\eps - \Delta_L \label{eqn_dagger} \tag{$\dagger$}
\end{equation}
Thus, $\partial_{P(\eps,L)}$ is a homotopy between the operators $\Delta_L$ and $\Delta_\eps$. 
\subsection{}
Let 
$$\Oo^+(\Fields(X)) [[\hbar]] \subset \Oo(\Fields(X))[[\hbar]]$$
be the subspace consisting of elements which are at least cubic.  $\Phi \in \Oo^+(\Fields(X)) [[\hbar]]$,  Let
$$
\EA{P(\eps,L), \Phi} = \hbar \log \exp \left ( \hbar \partial_{P(\eps,L)} \right) \exp \left( \Phi / \hbar \right) \in \Oo^+(\Fields(X)) [[\hbar]].
$$
Sending 
$$
\Phi \mapsto \EA{P(\eps,L), \Phi}
$$
is the renormalization group flow from scale $\eps$ to scale $L$.  This is discussed in much greater detail in \cite{Cos11}.

\subsection{}
Now we have the notation necessary to give a definition of a \emph{quantization} of the BCOV theory on a Calabi-Yau manifold $X$.

\begin{definition}
A quantization of the BCOV theory on $X$ is given by a K\"ahler metric $g$ on $X$, and a collection of functionals
$$
\F[L] = \sum \hbar^g \F_g[L] \in \Oo (\Fields(X))[[\hbar]]
$$
for each $L \in \R_{> 0}$, with the following properties.
\begin{enumerate}
\item  The renormalization group flow equation. This says that 
$$
\F[L] = \EA{P(\eps,L), \F[\eps]}
$$
for all $L$ and $\eps$. 
\item The quantum master equation. This says that
$$
Q \F[L] + \hbar \Delta_L \F[L] + \frac{1}{2} \{\F[L],\F[L]\}_L = 0,
$$
or, equivalently,
$$
(Q + \hbar \Delta_L ) \exp ( \F[L] / \hbar ) = 0.
$$
The homotopy property (\ref{eqn_dagger}) implies that, if $\F[L]$ satisfies the renormalization group equation, then the QME at scale $L$ is equivalent to the QME at scale $\eps$. 
\item The locality axiom, as in \cite{Cos11}. This says that $\F[L]$ has a small $L$ asymptotic expansion in terms of local functionals. 
\item The $L \to 0$ limit of $\F_0[L]$ is the BCOV interaction $I \in \Ool(\Fields(X))$. 
\item The function $\F_g$ is of cohomological degree
$$
(\dim X - 3)(2g-2) .
$$
(This condition corresponds to the fact that, in the $A$ model, the moduli space of maps $\br{\mscr{M}}_{g,n}(X)$ is of dimension $( 3 - \op{dim} X  ) ( 2g - 2 ) + 2 n$.   The $2n$ term does not appear in our expression because we shifted the grading of $\PV(X)$). 
\item
We will give $\Fields(X) = \PV(X)[[t]] [2]$ an additional grading, which we call Hodge grading,  by saying that something in 
$$t^m \Omega^{0,\ast} (\wedge^k T X) = \PV^{k,\ast}(X)$$ has Hodge weight $k + m - 1$.    We will let $HW(\alpha)$ denote the Hodge weight of an element $\alpha \in \E$.

Then, the functions $\F_g$ must be of Hodge weight
$$
(\op{dim} X - 3) (g - 1) .
$$
(This condition matches up with the $A$ model fact that the fundamental class of $\br{\mscr{M}}_{g,n}(X)$ is of Hodge type 
$$((3 - \op{dim} X ) (g - 1) + n,(3 - \op{dim} X ) (g - 1) + n).$$ 
\end{enumerate}
\end{definition}

\subsection{}
Note that one can consider families of quantizations over any differential graded base ring.  Indeed, if $A$ is a such a dg base ring, then we can consider our space of functionals to be $\Oo(E) \otimes A [[\hbar]]$, and we can modify our differential $Q$ to be $\dbar - t \partial + \d_A$, where $\d_A$ is the differential on $A$. 

The main theorems of \cite{Cos11} apply when we work over a certain class of differential graded commutative Fr\'echet algebras.  These Fr\'echet algebras are associated to data $(Z,A,I,\d)$ where $Z$ is an auxiliary manifold with corners, $A$ is a finite rank bundle of graded commutative on $Z$, $I \subset A$ is a bundle of nilpotent maximal ideals whose quotient is the trivial bundle, and $\d : \Gamma(Z,A) \to \Gamma(Z,A)$ is a derivation.   This data yields a differential graded Fr\'echet algebra $\A = \Gamma(Z,A)$.      

By considering suitable base rings, one defines a notion of homotopy of quantizations of the BCOV theory.
\begin{definition}
Let $\{\F[L], g\}$, $\{\F'[L], g\}$ be two different quantizations defined using the same metric $g$. 
A homotopy between two quantizations $\{\F[L], g\}$, $\{\F'[L],g\}$ is a family of quantizations over the differential graded base ring $\Omega^\ast([0,1])$, satisfying the obvious analogues of the axioms listed above, and which restricts to $\{\F[L]\}$ at $0$ and to $\{\F'[L]\}$ at $1$.  
\end{definition} 
More generally, one can consider families over $\Omega^\ast ( \Delta^n)$, where $\Delta^n$ is the $n$-simplex.  In this way we see that the set of quantizations of the BCOV theory has a natural enlargement to a simplicial set. 

\subsection{}
So far we have been considering quantizations with a fixed K\"ahler metric $g$ on $X$.  We can also consider homotopies between quantizations which have different metric. 

Thus, let $g_0,g_1$ be K\"ahler metrics on $X$ and let $\{\F[L]_0, g_0\}$, $\{\F[L]_1, g_1\}$ be quantizations defined with respect to the metric $g_0$ and $g_1$.  Let $g_s$ for $s \in [0,1]$ be a smooth family of metrics connecting $g_0$ to $g_1$.  Then, as explained in \cite{Cos11}, Chapter 5, one can define, in this setting, a family of heat kernels and propagators in $\E \otimes \E \otimes \Omega^\ast([0,1])$, which use the family $g_s$ of metrics as a gauge fixing conditions.   The heat kernel $K_{L, s, \d s}$ for this family is defined to be the heat kernel for the operator $[\dbar+ \d_{dR}, \dbar^\ast_s]$.  Here $\d_{dR}$ is the de Rham differential on $\Omega^\ast([0,1])$, and 
$$
\dbar^\ast_s : \E (X) \to \E(X) \otimes \cinfty( [0,1]_s ) 
$$
is the family version of the $\dbar^\ast$ operator for the family of metrics $\{\g_s \mid s \in [0,1]\}$. 

This allows one to define the notion of quantization for the family of gauge fixing conditions $\dbar^\ast_s$.   Such a quantization is given by a family of functions
$$
\F[L] \in \Oo (\E(X)) \otimes \Omega^\ast ( [0,1]_s ) [[\hbar]],
$$
as before, which satisfy all of the axioms listed earlier, except using the family heat kernel $K_{L, s, \d s}$ in place of the usual heat kernel $K_{L}$. 
\begin{definition}
A homotopy between quantizations $\{\F[L]_0, g_0\}$ and $\{\F[L]_1, g_1\}$ is a smooth family $g_t$ of metrics connecting $g_0$ and $g_1$, together with a quantization over $\Omega^\ast([0,1])$.  
\end{definition}
More generally, this construction allows one to define a simplicial set of quantizations, which we denote $\op{Quant}$.  The $0$-simplices are quantizations $\{\F[L], g\})$ where $g$ is a K\"ahler metric on $X$ and $\{\F[L]\}$ is a quantization defined using this metric.  The higher simplices are given by families of metrics parametrized by $\Delta^n$ and a corresponding family of quantizations.  

Let $\op{Met}(X)$ denote the simplicial set whose $n$-simplices are smooth families of K\"ahler metrics on $X$.  In \cite{Cos11}, Chapter 5, the following theorem is shown.
\begin{theorem}
The map $\op{Quant} \to \op{Met}(X)$ is a fibration of simplicial sets.
\end{theorem}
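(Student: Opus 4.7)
The plan is to verify the Kan horn-filling condition directly: for each horn inclusion $\Lambda^n_k \hookrightarrow \Delta^n$, every commutative square with vertical maps $\Lambda^n_k \to \op{Quant}$ and $\Delta^n \to \op{Met}(X)$ should admit a diagonal filler. Unpacking definitions, I would take as input a smooth family of K\"ahler metrics $\{g_\sigma\}_{\sigma \in \Delta^n}$ on $X$ (with associated family of gauge fixing operators $\dbar^\ast_\sigma$), together with a family of quantizations $\{\F[L]\}$ defined over the sub-horn $\Lambda^n_k$; the task is to extend $\{\F[L]\}$ to a quantization over all of $\Delta^n$ using the family heat kernel $K_{L,\sigma,\d\sigma}$ for the operator $[\dbar + \d_{dR}, \dbar^\ast_\sigma]$ acting on $\Fields(X) \otimes \Omega^*(\Delta^n)$.

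First I would carry out the extension by induction on the order in $\hbar$. The base case $g=0$ is automatic: $\F_0[L]$ is determined over the full simplex by applying the renormalization group flow $\EA{P(0,L),\cdot}$ to the fixed classical interaction $I$ constructed in the previous section. For the inductive step, assume $\F_{<g}[L]$ has been constructed over all of $\Delta^n$ (satisfying the RG equation, locality, and the QME modulo $\hbar^{g+1}$) and that $\F_g[L]$ is prescribed over $\Lambda^n_k$. Viewing the RG equation as an ODE in $L$ with smooth coefficients in $\sigma$, the problem at order $\hbar^g$ reduces to extending a single local counterterm-corrected functional in $\Ool(\Fields(X)) \otimes \Omega^*(\Lambda^n_k)$ to an element of $\Ool(\Fields(X)) \otimes \Omega^*(\Delta^n)$, subject to a QME whose error term is polynomial in $\F_{<g}[L]$ and hence already known on the full simplex.

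Next I would identify the obstruction. After choosing any naive smooth extension of the horn data (which exists because smooth forms on $\Lambda^n_k$ always admit smooth extensions to $\Delta^n$), the failure of the QME at order $\hbar^g$ defines a cocycle in $\Ool(\Fields(X)) \otimes \Omega^*(\Delta^n, \Lambda^n_k)$ for the total differential $Q + \d_{dR}$, where $\Omega^*(\Delta^n, \Lambda^n_k)$ denotes forms vanishing on the horn. The crucial input is that $(\Delta^n, \Lambda^n_k)$ is contractible as a topological pair, so the relative de Rham complex is acyclic and admits an explicit contracting homotopy $h$. Tensoring $h$ with the identity on $\Ool(\Fields(X))$ produces a chain-level primitive for the obstruction; I would subtract this primitive from the naive extension, then transport the corrected local functional to scale $L$ via $\EA{P(0,L),\cdot}$. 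This yields an $\F_g[L]$ over $\Delta^n$ extending the given horn data and restoring the QME modulo $\hbar^{g+1}$, completing the inductive step.

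The hard part will be ensuring that the renormalization and counterterm machinery commutes with the family direction $\sigma$: one must check that the counterterms produced by the small-$L$ asymptotic expansion of $\F_g[L]$ depend smoothly on $\sigma$, respect the grading constraints, and reduce on $\Lambda^n_k$ to the counterterms already implicit in the prescribed data. This is exactly the parametrized counterterm construction carried out in Chapter 5 of \cite{Cos11}; once in place, the induction on $\hbar$ closes and the fibration property follows. A technical aside worth flagging is that one should also check that the RG flow preserves the Hodge-weight and cohomological-degree constraints in the definition of a quantization, but this is immediate since both $P(\eps,L)$ and $K_L$ are homogeneous of the correct weights.
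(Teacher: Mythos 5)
The paper itself offers no proof of this theorem: it is quoted verbatim from \cite{Cos11}, Chapter 5, so there is no in-paper argument to compare against. Your outline is essentially a reconstruction of the argument given there — horn-filling by induction on the order in $\hbar$, with the extension problem at each order converted into killing an obstruction class in local functionals tensored with relative forms $\Omega^\ast(\Delta^n, \Lambda^n_k)$, whose acyclicity (via a contracting homotopy on the form factor, extended by Koszul-signed tensoring) does all the work, independently of the possibly large cohomology of $\Ool(\Fields(X))$ itself. That is the right shape of argument, and your observation that the base case $g=0$ needs no counterterms because the tree-level RG flow of $I$ is finite is also correct.

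Two caveats. First, the cocycle measuring the failure of the QME at order $\hbar^g$ is closed not for $Q + \d_{dR}$ but for the full linearized differential $Q + \{I,-\} + \d_{dR}$ (plus bracket terms with the lower-order quantum corrections); this does not damage your argument, since the contracting homotopy $1 \otimes h$ on the completed tensor product with the acyclic relative-forms complex contracts the total differential regardless, but as written the identification of the relevant complex is wrong. Second, and more substantively, your inductive step quietly relies on the order-by-order correspondence between collections $\{\F[L]\}$ satisfying the RG equation and locality axiom on one hand, and single local functionals on the other, with counterterms depending smoothly on the simplex parameter $\sigma$ — you flag this as ``the hard part'' and defer it to \cite{Cos11}, Chapter 5. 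Since that parametrized counterterm construction is precisely the content of the cited proof, your proposal should be read as a correct outline of the strategy rather than an independent proof; relative to what the paper itself provides (a bare citation), it is nevertheless a faithful account of how the fibration property is actually established.
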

Since $\op{Met}(X)$ is contractible, this theorem tells us that any two fibres of this map are canonically homotopy equivalent.  Thus, we do not lose any generality if we fix any K\"ahler metric $g$ on $X$ and only consider quantizations using this metric.    In what follows, we will normally do this. 

\subsection{}
So far, we have defined the notion of quantization of the classical BCOV theory.    The axioms we have listed are the fundamental axioms of the quantum field theory formalism of \cite{Cos11} -- namely, the renormalization group equation and the quantum master equation -- as well as axioms corresponding to the dimension axioms of Gromov-Witten theory.  

However, the Gromov-Witten invariants of an algebraic variety have several other important axioms which we would like to consider in the $B$-model.  Two particularly important axioms are the \emph{string axiom} and the \emph{dilaton axiom}.   Let us recall the statement of these axioms.

Let $Y$ be a projective algebraic variety, and let $\mbar_{g,n,\beta} (Y)$ denote the Kontsevich moduli space of stable maps from curves of genus $g$ to $Y$, whose fundamental class is $\beta \in H_2(Y)$.  Let 
$$
[\mbar_{g,n,\beta} (Y)]^{vir} \in H_{(1-g)(\op{dim} Y - 3) + 2n + \int_\beta c_1(Y) } (\mbar_{g,n,\beta} (Y))
$$
denote the virtual fundamental class of this moduli space. 

Then, the Gromov-Witten invariants of $Y$ are defined by
$$
\ip{t^{k_1} \alpha_1, \dots, t^{k_n} \alpha_n   }^{GW}_{g,n,\beta} = \int_{[\mbar_{g,n,\beta} (Y)]^{vir}} \psi_1^{k_1} \op{ev}_1^\ast \alpha_1 \dots \psi_n^{k_n} \op{ev}_n^\ast \alpha_n
$$
where $\alpha_i \in H^\ast(Y)$,  $\op{ev}_i : \mbar_{g,n,\beta} (Y) \to Y$ are the evaluation maps, and $\psi_i \in H^2 (\mbar_{g,n,\beta} (Y) )$ is the pull-back of the $i$'th $\psi$ class on the usual Deligne-Mumford moduli space $\mbar_{g,n}$ of curves. 

The \emph{dilaton axiom} states that
$$
\ip{t \cdot 1, t^{k_1} \alpha_1, \dots, t^{k_n} \alpha_n   }^{GW}_{g,n+1,\beta} = (2g - 2 + n )\ip{t^{k_1} \alpha_1, \dots, t^{k_n} \alpha_n   }_{g,n,\beta}  
$$
where $1 \in H^0 ( Y)$ is the identity in the cohomology ring of $Y$.  

The \emph{string axiom} states that
$$
\ip{1, t^{k_1} \alpha_1, \dots, t^{k_n} \alpha_n   }^{GW}_{g,n+1,\beta} = \sum_{i = 1}^n \ip{t^{k_1} \alpha_1, \dots, t^{k_i - 1} \alpha_i , \dots, t^{k_n} \alpha_n   }_{g,n,\beta}  
$$
Thus, the string and dilaton axioms allow one to remove a single $t \cdot 1$ or $1$ from the correlators.  

\subsection{}
We would like to state the string and dilaton axioms for the $B$-model.   However, in the $B$-model, these axioms will only hold up to homotopy.  We will start by describing the dilaton axiom, as the string axiom in the $B$-model is more difficult to describe.    In order to state these axioms precisely, we need some lemmas.

Let $1 \in \E(X)$ be the element corresponding to the function $1$ in the ring $\PV^{0,0}(X) = \cinfty(X)$.     Similarly, for any $k$, we have an element $t^k 1 \in \E(X)$.  Thus, we get differential operators
$$
\partial_{t^k 1} = \frac{\partial}{\partial\left( t^k 1 \right) } : \Oo( \E(X)) \to \Oo( \E(X)) . 
$$
Note that $\partial_{t^k 1}$ is of cohomological degree $2k-2$.
\begin{lemma}
For any $k \ge 0$, the operator $\partial_{t^k 1}$ commutes with the operators $Q = \dbar - t \partial$, $\partial_{P(\eps,L)}$ and $\Delta_\eps$.   Further all of these operators preserve the subspace $\Ool(\E)$ of local functionals.
\end{lemma}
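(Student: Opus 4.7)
The plan is as follows. The key observation, which drives the entire argument, is that $t^k \cdot 1$ is $Q$-closed as an element of $\E(X)$: we have $\dbar 1 = 0$ since $1 \in \PV^{0,0}(X) = \cinfty(X)$ is the constant function, and $\partial 1 = 0$ trivially because $\partial : \PV^{0,0}(X) \to \PV^{-1,0}(X) = 0$. Hence $Q(t^k \cdot 1) = (\dbar - t\partial)(t^k \cdot 1) = 0$. This single identity is the only nontrivial input needed.

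From this I would deduce $[Q, \partial_{t^k 1}] = 0$ via the general identity $[Q, \partial_e] = \pm \partial_{Q e}$, valid for any fixed $e \in \E(X)$, which expresses the standard relation between the differential on $\Oo(\E(X)) = \what{\Sym}^\ast(\E(X)^\ast)$ induced by $Q : \E(X) \to \E(X)$ and a constant-coefficient derivation in the direction $e$. The commutators with $\Delta_\eps$ and $\partial_{P(\eps,L)}$ are even more formal: each is a constant-coefficient second-order differential operator on $\Oo(\E(X))$, obtained by contracting with a fixed kernel ($(\partial \otimes 1) K_\eps$ and $P(\eps,L)$ respectively) in a completion of $\E(X) \otimes \E(X)$, while $\partial_{t^k 1}$ is a constant-coefficient first-order operator. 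Any two constant-coefficient differential operators on a symmetric algebra graded-commute, so $[\partial_{t^k 1}, \Delta_\eps] = [\partial_{t^k 1}, \partial_{P(\eps,L)}] = 0$ without any further input (in particular, no special property of the heat kernel or propagator is needed).

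For the locality statement, $Q = \dbar - t\partial$ is a linear differential operator on $\E(X)$; applying it to a single slot of a local Taylor coefficient preserves both the diagonal-support and conormal-wavefront conditions, so $Q$ preserves $\Ool(\E(X))$. For $\partial_{t^k 1}$, the effect on Taylor coefficients is simply
\[
D_n(\partial_{t^k 1} \Phi)(\alpha_1, \ldots, \alpha_n) = D_{n+1} \Phi(t^k \cdot 1, \alpha_1, \ldots, \alpha_n),
\]
that is, evaluating one input at the smooth global section $t^k \cdot 1 \in \E(X)$; contracting a distribution supported on the small diagonal of $X^{n+1}$ against a smooth section in one slot yields a distribution supported on the small diagonal of $X^{n}$ with conormal wavefront set, so $\partial_{t^k 1}$ preserves $\Ool(\E(X))$ as well. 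I do not expect any real obstacle in the proof: the entire lemma reduces to the closedness $Q(t^k \cdot 1) = 0$ together with the constant-coefficient nature of the second-order operators $\Delta_\eps$ and $\partial_{P(\eps,L)}$.
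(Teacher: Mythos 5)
Your proof is correct and follows essentially the same route as the paper's: the paper likewise notes that $\Delta_\eps$, $\partial_{P(\eps,L)}$ and $\partial_{t^k 1}$ are constant-coefficient operators on $\Oo(\E(X))$ and hence commute, that any element of $\E(X)$ viewed as a derivation preserves local functionals, and that any $Q$-closed element commutes with $Q$. You simply make explicit the verification $Q(t^k\cdot 1)=0$ that the paper leaves implicit.
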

\begin{proof}
Indeed, $\Delta_\eps$, $\partial_{P(\eps,L)}$ and $\partial_{t^k 1}$ are all constant-coefficient differential operators on $\Oo(\E(X))$, which of course automatically commute.   Any element of $\E(X)$, when viewed as a derivation of $\Oo(\E(X))$, preserves the space of local functionals. Finally, any closed element of $\E(X)$, when viewed as a derivation, commutes with $Q$.   
\end{proof}

In order to discuss the dilaton axiom, we need to define additional derivations of $\Oo(\E(X))$.  Any continuous linear map $ A : \E(X) \to \E(X)$ defines a derivation $\partial_A$ of $\Oo(\E(X))$, characterized by the property that $\partial_A(\alpha) = A^\ast (\alpha)$ for each $\alpha \in \E(X)^\vee$. (Here $A^\ast$ indicates the adjoint of $A$).  

Let
$$
\op{Eu} :  \Oo(\E(X)) \to \Oo(\E(X)) 
$$
be the Euler derivation, defined as the derivation associated to the identity map $\op{Id} : \E(X) \to \E(X)$.  Note that $\op{Eu}( \alpha) = k \alpha$ if $\alpha$ is homogeneous of degree $k$ as a function on $\E(X)$.

\begin{lemma}
The operator  $\op{Eu}$ commutes with $Q$ and satisfies
\begin{align*}
[\op{Eu}, \Delta_\eps ] &= - 2 \Delta_\eps \\
[\op{Eu}, \partial_{P(\eps,L)} ] &= - 2 \partial_{P(\eps,L)}.
\end{align*}
Also, $\op{Eu}$ preserves the subspace $\Ool(\E(X))$ of $\Oo(\E(X))$ consisting of local functionals. 
\end{lemma}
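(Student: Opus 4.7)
The plan is to exploit the observation that $\op{Eu}$ simply measures polynomial degree on $\Oo(\E(X))$: on the homogeneous piece $\Sym^n(\E(X)^\vee)$ it is multiplication by $n$. First I would establish the following general principle. If $D : \Oo(\E(X)) \to \Oo(\E(X))$ is a constant-coefficient differential operator which lowers polynomial degree by $k$, then $[\op{Eu}, D] = -kD$. This is immediate from evaluating on a homogeneous $\Phi$ of degree $n$: $\op{Eu}(D\Phi) - D(\op{Eu}\,\Phi) = (n-k)\,D\Phi - n\,D\Phi = -k\,D\Phi$, and extending by linearity (with convergence handled via the $\prod_n \Sym^n$ completion).

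Second, I would verify that each of the three operators fits this template. The operator $Q$ is induced from the linear endomorphism $\dbar - t\partial$ of $\E(X)$, so as a derivation of $\Oo(\E(X))$ it preserves polynomial degree; hence $[\op{Eu}, Q] = 0$. The operators $\Delta_\eps$ and $\partial_{P(\eps,L)}$ are, by their definitions in the preceding subsections, the second-order constant-coefficient differential operators obtained by contracting with the tensors $(\partial \otimes 1)K_\eps$ and $P(\eps, L)$ in $\E(X)\otimes\E(X)$ respectively. Both therefore lower polynomial degree by exactly $2$, which gives $[\op{Eu}, \Delta_\eps] = -2\Delta_\eps$ and $[\op{Eu}, \partial_{P(\eps,L)}] = -2\partial_{P(\eps,L)}$.

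Third, for the locality statement, I would use that $\Ool(\E(X))$ inherits the polynomial-degree grading from $\Oo(\E(X))$: the axioms defining locality (support on the small diagonal, wavefront set normal to it) apply separately to each Taylor coefficient $D_n\Phi$, so the homogeneous components of a local functional are themselves local. Since $\op{Eu}$ acts by the scalar $n$ on the degree-$n$ piece, it tautologically preserves $\Ool(\E(X))$.

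I do not anticipate a genuine obstacle here; the entire argument is a bookkeeping consequence of recognizing $\op{Eu}$ as the polynomial-degree counter and $Q,\Delta_\eps,\partial_{P(\eps,L)}$ as constant-coefficient differential operators of definite order $0$, $2$, $2$. The only point requiring a small amount of care is ensuring the general principle is applied on the completed symmetric algebra rather than the algebraic one, but this poses no real difficulty since all three operators act component-wise with respect to the polynomial-degree grading.
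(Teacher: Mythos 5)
Your argument is correct and is exactly the ``trivial calculation'' the paper leaves to the reader: $\op{Eu}$ counts polynomial degree, $Q$ preserves it, and $\Delta_\eps$, $\partial_{P(\eps,L)}$ are constant-coefficient second-order operators lowering it by $2$, with locality preserved componentwise. There is no substantive difference from the paper's (omitted) proof.
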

\begin{proof}
This is a trivial calculation. 
\end{proof}
It follows that 
$$
\op{Eu} + 2 \hbar \frac{\partial}{\partial \hbar} : \Oo(\E) [[\hbar]]
$$
commutes with $Q$, $\hbar \partial_{P(\eps,L)}$ and with $\hbar \Delta_\eps$.  

\subsection{}
Now we have constructed derivations 
$$
\partial_{t^k \cdot 1} , \op{Eu} + 2\hbar \frac{\partial}{\partial \hbar }  : \Oo(\E)[[\hbar]] \to \Oo(\E)[[\hbar]]
$$
which commute with $Q$, $\hbar \partial_{P(\eps,L)}$ and $\hbar \Delta_\eps$.    
\begin{lemma}
\label{lemma_derivation_rgflow}
Let 
$$
D : \Oo(\E)[[\hbar]] \to \Oo(\E)[[\hbar]]
$$
be a derivation of cohomological degree $k$ which commutes with $Q$, $\hbar \partial_{P(\eps,L)}$ and $\hbar \Delta_\eps$.    Suppose also that $D$ preserves the subspace of local functionals.  

Let $\{\F[L] \in \Oo(\E)[[\hbar]] \mid L \in \R_{> 0} \}$ be a collection of functionals which satisfy the renormalization group equation,  the quantum master equation, and the locality axiom. Then, so does the collection of functionals
$$
\F[L] + \delta \hbar D \hbar^{-1} \F[L]
$$
where $\delta$ is a parameter of cohomological degree $-k$.
\end{lemma}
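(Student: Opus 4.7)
The approach is to work at the level of the ``density'' $\exp(\hbar^{-1}\F[L])$, where the three hypotheses on $D$ translate into clean commutation statements, after which every axiom collapses to a one-line check. Write $\F'[L] := \F[L] + \delta\,\hbar D\hbar^{-1}\F[L]$. Since $\delta^{2}=0$ and $D$ is a derivation, expanding the exponential and applying the Leibniz rule gives the key identity
$$
\exp\!\bigl(\tfrac{1}{\hbar}\F'[L]\bigr)
\;=\;\exp\!\bigl(\tfrac{1}{\hbar}\F[L]\bigr)\;+\;\delta\,D\!\left(\exp\!\bigl(\tfrac{1}{\hbar}\F[L]\bigr)\right)
\;=\;(1+\delta D)\exp\!\bigl(\tfrac{1}{\hbar}\F[L]\bigr),
$$
where the computation uses that $\delta D(\hbar^{-1}\F[L])\cdot\exp(\hbar^{-1}\F[L])=\delta D\exp(\hbar^{-1}\F[L])$.

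For the renormalization group flow I would then use the hypothesis $[D,\hbar\partial_{P(\eps,L)}]=0$ to commute $\exp(\hbar\partial_{P(\eps,L)})$ past $1+\delta D$:
$$
\exp(\hbar\partial_{P(\eps,L)})\exp\!\bigl(\tfrac{1}{\hbar}\F'[\eps]\bigr)
\;=\;(1+\delta D)\exp(\hbar\partial_{P(\eps,L)})\exp\!\bigl(\tfrac{1}{\hbar}\F[\eps]\bigr)
\;=\;(1+\delta D)\exp\!\bigl(\tfrac{1}{\hbar}\F[L]\bigr)
\;=\;\exp\!\bigl(\tfrac{1}{\hbar}\F'[L]\bigr),
$$
and take $\hbar\log$ of both sides to recover $\F'[L]=\EA{P(\eps,L),\F'[\eps]}$. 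For the quantum master equation, apply $Q+\hbar\Delta_L$ to $(1+\delta D)\exp(\hbar^{-1}\F[L])$; the graded commutation relations $[Q,D]=[\hbar\Delta_L,D]=0$ together with the fact that $\delta$ has cohomological degree $-|D|$ give $(Q+\hbar\Delta_L)\,\delta D=\delta D\,(Q+\hbar\Delta_L)$, so the QME for $\F[L]$ forces $(Q+\hbar\Delta_L)\exp(\hbar^{-1}\F'[L])=0$.

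The locality axiom is the one step that is not a pure commutation argument. It follows because $D$ preserves $\Ool(\E)$ by hypothesis: the given small-$L$ asymptotic expansion of $\F[L]$ by local functionals is mapped termwise to such an expansion of $\hbar D\hbar^{-1}\F[L]$, and the compatibility of $D$ with the renormalization group flow (already established above) ensures the expansion is consistent across scales. The only point requiring genuine care is the sign bookkeeping in the QME step when $|D|$ is odd, where one must check that the Koszul sign from pushing $Q+\hbar\Delta_L$ past $D$ is cancelled by the sign from moving it past the odd parameter $\delta$; granting this, everything reduces to the three commutation relations assumed in the statement.
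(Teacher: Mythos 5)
Your proposal is correct and follows essentially the same route as the paper: both pass to the density $\exp(\hbar^{-1}\F[L])$, use $\delta^{2}=0$ and the derivation property to write $\exp(\hbar^{-1}\F'[L])=(1+\delta D)\exp(\hbar^{-1}\F[L])$, and then commute $1+\delta D$ past $\exp(\hbar\partial_{P(\eps,L)})$ and $Q+\hbar\Delta_L$ using the assumed commutation relations. Your added remarks on locality and Koszul signs only make explicit points the paper leaves implicit.
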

\begin{proof}
The renormalization group flow operator
$$
\EA{P(\eps,L), \F } = \hbar \log \left( \exp ( \hbar \partial_{P(\eps,L)}  ) \exp ( \F / \hbar\right) 
$$
is defined entirely in terms of the differential operator $\hbar \partial_{P(\eps,L)}$.  The quantum master equation can be written as 
$$
(Q + \hbar \Delta_L ) \exp \left( \F[L] / \hbar \right) = 0.
$$
Thus, the QME is defined entirely in terms of $Q$ and $\hbar \Delta_L$. 

Now,  suppose that $\{\F[L] \in \Oo(\E(X))[[\hbar]] \mid L \in \R_{> 0}\}$ is a collection of functionals satisfying the renormalization group equation.
\begin{align*}
\exp \left( \hbar^{-1} \EA{P(\eps,L), \F[\eps] + \delta \hbar D \hbar^{-1} \F[\eps]  }\right) &= e^{\hbar \partial_{P(\eps,L)} }  \exp\left( \hbar^{-1} \F[\eps]  + \delta D \hbar^{-1} \F[\eps] \right)\\
&=  e^{\hbar \partial_{P(\eps,L)}  }  ( 1 + \delta D ) \exp \left( \F[\eps] / \hbar ) \right) \\
&=  ( 1 + \delta D ) e^{\hbar \partial_{P(\eps,L)}  }  \exp \left( \F[\eps] / \hbar ) \right) \\
&= ( 1 + \delta D ) \exp \left( \F[L] / \hbar \right).
\end{align*}
Thus, $\F[L] + \delta \hbar D  \hbar^{-1} \F[L]$ satisfies the renormalization group equation.  The proof that it satisfies the quantum master equation is identical. 
\end{proof}

\subsection{}
Thus, the operators $\partial_{t^k \cdot 1}$ and 
$$
\op{Eu} + 2 \hbar^2 \frac{\partial}{\partial \hbar } \hbar^{-1}
$$
all preserve the quantum master equation and the renormalization group equation.   

The dilaton equation will say that a quantization $\F[L]$ is fixed by certain combinations of these operators.  Let
$$
\partial_{Dil} = \partial_{t \cdot 1 } - \op{Eu}.
$$
Note that $\partial_{Dil} - 2 \hbar^2 \frac{\partial}{\partial \hbar } \hbar^{-1} $ preserves the renormalization group equation and the quantum master equation, and so acts on the space of theories.

\begin{lemma}
The operator $\partial_{Dil} - 2 \hbar^2 \frac{\partial}{\partial \hbar } \hbar^{-1} $ fixes the classical BCOV action functional $I \in \Ool(\E(X))$. 
\end{lemma}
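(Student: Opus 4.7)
The strategy is a direct calculation on the Taylor coefficients of $I$, reducing the identity to the classical dilaton equation for $\psi$-integrals on $\mbar_{0,n}$.

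First, I would unpack the two pieces. Since the classical action $I$ is independent of $\hbar$, the operator $-2\hbar^2 \tfrac{\partial}{\partial \hbar}\hbar^{-1}$ acts on $I$ as scalar multiplication: $-2\hbar^2 \tfrac{\partial}{\partial \hbar}(\hbar^{-1} I) = -2\hbar^2(-\hbar^{-2} I) = 2 I$. So the claim reduces to showing $\partial_{Dil}\, I = -2 I$, i.e.\ $\partial_{t\cdot 1}\,I - \op{Eu}(I) = -2 I$.

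Next, I would decompose $I = \sum_{n\ge 3} I_n$ into its homogeneous components $I_n \in \Sym^n(\E(X)^\vee)$. The Euler derivation acts simply: $\op{Eu}(I_n) = n\, I_n$. For the other term, I use the fact that $\partial_{t\cdot 1}$ is the continuous derivation of $\Oo(\E(X))$ characterized by inserting $t\cdot 1$ into one slot of the Taylor coefficients. Concretely, for any homogeneous degree-$n$ functional $\Phi_n$,
\begin{equation*}
D_{n-1}(\partial_{t\cdot 1}\Phi_n)(f_1,\ldots,f_{n-1}) \;=\; D_n\Phi_n(t\cdot 1, f_1,\ldots,f_{n-1}).
\end{equation*}
Applying this to $I_n$ and using the definition of the BCOV action gives
\begin{equation*}
D_{n-1}(\partial_{t\cdot 1} I_n)(t^{k_1}\alpha_1,\ldots,t^{k_{n-1}}\alpha_{n-1}) \;=\; \int_{\mbar_{0,n}}\psi_0\,\psi_1^{k_1}\cdots\psi_{n-1}^{k_{n-1}}\,\Tr(1\cdot \alpha_1\cdots \alpha_{n-1}),
\end{equation*}
since $1$ is the unit of the polyvector algebra. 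Now I invoke the classical dilaton equation on $\mbar_{0,n}$:
\begin{equation*}
\ip{\tau_1\,\tau_{k_1}\cdots\tau_{k_{n-1}}}_0 \;=\; (n-3)\ip{\tau_{k_1}\cdots\tau_{k_{n-1}}}_0,
\end{equation*}
valid for $n-1 \ge 3$ and vacuously true for $n=3$ (both sides vanish on the point $\mbar_{0,3}$). This shows $\partial_{t\cdot 1} I_n = (n-3)\, I_{n-1}$. Re-indexing by $m = n-1$, we obtain $\partial_{t\cdot 1} I = \sum_{m\ge 2}(m-2)\,I_m$, and since $I_2 = 0$, this equals $\sum_{m\ge 3}(m-2)\,I_m$.

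Combining, $\partial_{Dil}\,I = \sum_{m\ge 3}\bigl((m-2) - m\bigr)\,I_m = -2 I$, and adding the contribution $2I$ from the $\hbar$-piece yields zero. There is no real obstacle here — the only content is matching the combinatorial factor in the $\psi$-class dilaton equation with the Euler weight, which is essentially by construction since the action was defined via $\psi$-integrals on $\mbar_{0,n}$.
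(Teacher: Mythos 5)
Your proof is correct and follows exactly the paper's (much terser) argument: the paper simply notes that $I$ is built from $\psi$-class integrals over $\mbar_{0,n}$ and that these satisfy the dilaton equation, which is precisely the computation you spell out, including the eigenvalue $+2$ of $-2\hbar^2\frac{\partial}{\partial\hbar}\hbar^{-1}$ on the $\hbar$-independent $I$ cancelling $\partial_{Dil}I = -2I$.
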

\begin{proof}
The classical BCOV action was defined using integrals of $\psi$ classes over $\mbar_{0,n}$.  The fact that these integrals satisfy the dilaton equation immediately implies the lemma. 
\end{proof}
\begin{corollary}
Let $\{\F[L]\}$ be a quantization of the BCOV theory.  Then 
$$\{\F[L] + \delta \left(\partial_{Dil}-  2 \hbar^2 \frac{\partial}{\partial \hbar } \hbar^{-1} \right)  \F[L]  \}$$ 
is a quantization of the BCOV theory over $\C[\delta]/  \delta^2$ where $\delta$ is of cohomological degree $0$. 
\end{corollary}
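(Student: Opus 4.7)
The plan is to reduce the corollary to Lemma~\ref{lemma_derivation_rgflow} by writing the perturbing operator in the normal form $\hbar D \hbar^{-1}$ for a single derivation $D$ of $\Oo(\E(X))[[\hbar]]$. A direct computation on each genus component $\hbar^{g}\F_{g}[L]$ should yield
\[
\Bigl(\partial_{Dil} - 2\hbar^{2}\tfrac{\partial}{\partial\hbar}\hbar^{-1}\Bigr)\F[L] \;=\; \hbar D \hbar^{-1}\F[L],
\qquad
D := \partial_{t\cdot 1} - \op{Eu} - 2\hbar\tfrac{\partial}{\partial\hbar},
\]
so that the proposed deformation fits exactly the hypothesis of Lemma~\ref{lemma_derivation_rgflow} with $\delta$ of cohomological degree $0$.

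The main step will be to verify that $D$ commutes with $Q$, $\hbar\partial_{P(\eps,L)}$, and $\hbar\Delta_{\eps}$, and preserves $\Ool(\E(X))$. Each piece is already handled in the text: $\partial_{t\cdot 1}$ commutes with all three operators and preserves locality by the first lemma of this subsection; the Euler derivation $\op{Eu}$ satisfies $[\op{Eu},Q]=0$, $[\op{Eu},\Delta_{\eps}]=-2\Delta_{\eps}$, $[\op{Eu},\partial_{P(\eps,L)}]=-2\partial_{P(\eps,L)}$; and $2\hbar\tfrac{\partial}{\partial\hbar}$ is a derivation of $\C[[\hbar]]$ which commutes with every $\hbar$-independent operator and which obeys $[2\hbar\tfrac{\partial}{\partial\hbar},\hbar\Delta_{\eps}]=2\hbar\Delta_{\eps}$ and $[2\hbar\tfrac{\partial}{\partial\hbar},\hbar\partial_{P(\eps,L)}]=2\hbar\partial_{P(\eps,L)}$. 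The $-2$ contributed by $\op{Eu}$ therefore exactly cancels the $+2$ contributed by $2\hbar\tfrac{\partial}{\partial\hbar}$, which is the whole point of this particular combination; consequently $\op{Eu}+2\hbar\tfrac{\partial}{\partial\hbar}$, and hence $D$, commutes with each of $Q$, $\hbar\Delta_{\eps}$, $\hbar\partial_{P(\eps,L)}$, while locality is manifestly preserved. Lemma~\ref{lemma_derivation_rgflow} then produces the renormalization group equation, the quantum master equation, and the locality axiom for $\F[L]+\delta \hbar D\hbar^{-1}\F[L]$ over $\C[\delta]/\delta^{2}$ with no additional analytic input.

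The remaining quantization axioms should follow from bookkeeping plus one appeal to the preceding lemma. Each summand of $D$ has cohomological degree $0$ and Hodge weight $0$ (using $\op{HW}(t\cdot 1)=0$), so the perturbation $\delta\hbar D\hbar^{-1}\F_{g}[L]$ lies in the same bigraded piece as $\F_{g}[L]$, and axioms~(5) and~(6) of the definition of a quantization are preserved automatically. For the classical limit, the preceding lemma says that $\partial_{Dil}-2\hbar^{2}\tfrac{\partial}{\partial\hbar}\hbar^{-1}$ annihilates $I$; at $\hbar=0$ this reduces to $\partial_{Dil}I=-2I$, so
\[
\lim_{L\to 0}\bigl(\partial_{Dil}\F_{0}[L]+2\F_{0}[L]\bigr) \;=\; \partial_{Dil}I + 2I \;=\; 0,
\]
and the $L\to 0$ limit of the genus-zero part of the deformed action is still $I$, which is axiom~(4). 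I expect the only mildly delicate point to be the bookkeeping that identifies the deformation operator as $\hbar D\hbar^{-1}$ with the correct $D$; once that algebraic rearrangement is in place and the cancellation of $\pm 2$'s between $\op{Eu}$ and $2\hbar\tfrac{\partial}{\partial\hbar}$ is observed, the corollary is a formal consequence of results already established in this section.
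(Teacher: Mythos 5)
Your proposal is correct and follows essentially the same route as the paper: it reduces the statement to Lemma \ref{lemma_derivation_rgflow} by writing the perturbation as $\hbar D \hbar^{-1}$ with $D = \partial_{t\cdot 1} - \op{Eu} - 2\hbar\frac{\partial}{\partial\hbar}$, invokes the commutation lemmas for $\partial_{t\cdot 1}$ and $\op{Eu} + 2\hbar\frac{\partial}{\partial\hbar}$ to get the renormalization group equation, quantum master equation and locality, and uses the lemma that the dilaton operator fixes $I$ for the $L\to 0$ axiom. The only difference is that you spell out the grading bookkeeping that the paper dismisses as ``immediate,'' which is fine.
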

\begin{proof}
We have already seen that $\F[L] + \delta  \left( \partial_{Dil} - 2 \hbar^2 \frac{\partial}{\partial \hbar } \hbar^{-1} \right)  \F[L] $ satisfies the renormalization group equation, quantum master equation, and locality axiom.  Because the classical interaction $I$ satisfies the dilaton equation we see that, modulo $\hbar$, 
$$\lim_{L \to 0} \F_0[L] + \delta  \left( \partial_{Dil} - 2 \hbar^2 \frac{\partial}{\partial \hbar } \hbar^{-1} \right)  \F_0[L]   = I.$$  The remaining (dimension) axioms we imposed on a quantization are immediate. 
\end{proof}

\begin{definition}
A quantization $\{\F[L]\}$ of BCOV theory satisfies the \emph{dilaton axiom} if the family of quantizations $\F[L] + \delta \left(\partial_{Dil} - 2 \hbar^2 \frac{\partial}{\partial \hbar } \hbar^{-1}\right)  \F[L]$ over $\C[\delta]/\delta^2$ is homotopic to the trivial family $\F[L]$.
\end{definition}

\subsection{}
This definition is a little abstract.  Let us make it more concrete.   There are several (equivalent) ways of saying what it means to have a homotopy of the family $\{\F[L] + \delta \left(\partial_{Dil} -2 \hbar^2 \frac{\partial}{\partial \hbar } \hbar^{-1} \right) \F[L]\}$ of quantizations with the trivial family.  One is simply to have a family of quantizations over $\Omega^\ast ([0,1]) \otimes \C[\delta]/ \delta^2$, which modulo $\delta$ is a trivial family, and which specializes to $\{\F[L] + \delta \left(\partial_{Dil}-2 \hbar^2 \frac{\partial}{\partial \hbar } \hbar^{-1} \right) \F[L]\}$ at $t = 0$, and to the trivial family $\{\F[L]\}$ at $t = 1$.

Because we are working modulo $\delta^2$, we can use an alternative (and equivalent) definition of homotopy based on the familiar notion of cochain homotopy.   With this definition, a homotopy is given by a collection $\mbf{G}[L] \in \hbar \Oo(\E)[[\hbar]]$ of functionals such that $\F[L] +  \eps \delta \mbf{G}[L]$ satisfies the renormalization group equation and locality axiom (where $\eps$ is a parameter of cohomological degree $1$), and 
$$
Q \mbf{G}[L] + \hbar \Delta_L \mbf{G}[L] + \{F[L], \mbf{G}[L] \}_ L = \left( \partial_{Dil}-2 \hbar^2 \frac{\partial}{\partial \hbar } \hbar^{-1} \right) \F[L]. 
$$

\subsection{}

Let us now discuss the string equation, which is similar (but more complicated) to the dilaton equation. 

Let
$$
\partial_{t^{-1} \star }   : \Oo(\E(X)) \to \Oo(\E(X))
$$
be the derivation associated to the linear map sending 
$\phi \in \E(X) = \PV(X)[[t]][2]$ to $t^{-1} \phi$, if $\phi \in t \PV(X)[[t]]$, and to zero if $\phi \in \PV(X)$. 

Recall that the dilaton axiom stated that each $\F_g[L]$ satisfies a differential equation which is independent of $L$.  The reason that the string axiom is a bit more delicate is that the equation satisfied by $\F_g[L]$ depends on $L$. 

We let
$$
\partial_{Str}[0] = \partial_{1} - \partial_{t^{-1} \star } 
$$
where $\partial_1$ is the derivation associated to the element $1 \in \PV^{0,0}(X) \subset \PV(X)[[t]]$.    This is the scale $0$ string operator.

Let us define a derivation $Y[L] : \Oo(\E(X)) \to \Oo(\E(X))$ as the derivation associated to the linear map
$$
t^k \alpha \mapsto \partial_{k = 0} \dbar^\ast \partial \int_{u = 0}^L e^{- u[\dbar, \dbar^\ast] } \alpha. 
$$
Let
$$
\partial_{Str}[L] = \partial_{Str}[0] + Y[L]. 
$$
This is the scale $L$ string operator.

Suppose that $\{\F_g[L]\}$ is a quantization of the BCOV theory on $X$.     Then, define a family of functionals 
$$(\partial_{Str} \F_g ) [L] \in \Oo(\E(X)) \otimes \C[\delta] / \delta^2$$ by
$$
(\partial_{Str} \F_g ) [L] = 
\begin{cases}
(\partial_{Str} \F_0) [L] &= \F_0[L] \\
(\partial_{Str} \F_g)[L] &= \F_g[L] + \delta \partial_{Str}[L] \F_g[L] \text{ if } g > 0.  
\end{cases}
$$
(Here, $\delta$ is a parameter of cohomological degree $2$ and square $0$).

\begin{proposition}
Suppose that $\{\F[L]\}$ is a quantization of the BCOV theory on $X$.  Then, $\{ (\partial_{Str} \F )[L]\}$ is a family of quantizations over $\C[\delta]/\delta^2$. 
\end{proposition}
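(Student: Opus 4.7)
The plan is to verify directly the four axioms of a quantization for the modified family $(\partial_{Str}\F)[L]=\F[L]+\delta\,\partial_{Str}[L]\F[L]$ (in positive genus; the genus $0$ piece is unchanged) over $\C[\delta]/\delta^2$. The classical-limit axiom is immediate because $\F_0$ is untouched. The dimension and Hodge-weight axioms hold once $\delta$ is assigned cohomological degree $2$ (as stated) and Hodge weight $+1$, since each of the three derivations $\partial_1$, $\partial_{t^{-1}\star}$, and $Y[L]$ composing $\partial_{Str}[L]$ has cohomological degree $-2$ and Hodge weight $-1$. Locality is inherited because each of these is a derivation of $\Oo(\E(X))$ associated to a continuous linear map $\E(X)\to\E(X)$, and such derivations preserve $\Ool(\E(X))$, exactly as in the parallel argument for $\partial_{Dil}$.

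The heart of the proof is the renormalization group equation, which modulo $\delta^2$ reduces to the operator identity
\[
e^{\hbar\partial_{P(\eps,L)}}\,\partial_{Str}[\eps]\,e^{-\hbar\partial_{P(\eps,L)}}\,e^{\F[L]/\hbar} \;=\; \partial_{Str}[L]\,e^{\F[L]/\hbar}.
\]
Both $\partial_1$ and $\partial_{t^{-1}\star}$ commute with $\partial_{P(\eps,L)}$: the former because it is a constant-coefficient directional derivative, and the latter because $P(\eps,L)$ lies in the $t^0$-component of $\E(X)^{\otimes 2}$, which is annihilated by $t^{-1}\star$. Thus the conjugation acts nontrivially only on $Y[\eps]$, and by Hadamard's identity---using that two constant-coefficient order-two differential operators commute---the series truncates to $Y[\eps]+\hbar[\partial_{P(\eps,L)},Y[\eps]]$, the commutator being contraction with the kernel $M(\eps,L):=(Y[\eps]\otimes 1+1\otimes Y[\eps])P(\eps,L)\in\E(X)^{\otimes 2}$. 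Using the semigroup identity $e^{-u[\dbar,\dbar^\ast]}K_v=K_{u+v}$ together with Fubini inside the defining integral $P(\eps,L)=\int_\eps^L(\dbar^\ast\partial\otimes 1)K_u\,du$, the kernel $M(\eps,L)$ admits a clean closed-form description as an iterated heat-kernel integral.

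Acting on $e^{\F[L]/\hbar}$, the order-two operator of contraction with $M(\eps,L)$ produces a first-order contribution---the pairing of $M(\eps,L)$ with $\F[L]$---and a BV-type quadratic contribution pairing $\F[L]$ with itself through $M(\eps,L)$. The first-order contribution is designed to match $(Y[L]-Y[\eps])\F[L]$ term-by-term, by the very Fubini/semigroup rearrangement alluded to above; this is the identity that motivates the choice of scale-correction $Y[L]$. The quadratic BV contribution is absorbed using the quantum master equation for $\F[L]$: the bracket determined by $M(\eps,L)$ differs from the scale-$L$ minus scale-$\eps$ BV brackets by a term controlled by the homotopy identity $(\dagger)$, $[Q,\partial_{P(\eps,L)}]=\Delta_\eps-\Delta_L$, and on $\F[L]$ the QME trades $\tfrac12\{\F[L],\F[L]\}_L$ for $-Q\F[L]-\hbar\Delta_L\F[L]$, yielding the required cancellation. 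This matching of brackets against QME terms is the principal technical obstacle; it is the reason the analogous step was trivial in the dilaton case, where $\partial_{Dil}$ genuinely commutes with the RG kernel. Once the RG equation is in place, the QME for $(\partial_{Str}\F)[L]$ follows by the standard argument from $(\dagger)$, and in the $L\to 0$ limit collapses to the classical compatibility between $\partial_{Str}[0]$ and the BCOV action $I$---the B-model mirror of the string equation for $\psi$-class integrals over $\cmod_{0,n}$.
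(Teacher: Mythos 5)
Your overall shape (conjugate the string operator through the RG flow and control the correction terms) is reasonable, but the mechanism at the heart of your argument is not the one that actually operates here, and two of your reductions fail. First, the displayed operator identity is the RG statement for the \emph{uncorrected} deformation $\delta\,\partial_{Str}[L]\F[L]$, not for the family $(\partial_{Str}\F)[L]$ of the Proposition, in which the genus-zero term is deliberately left unmodified; that exclusion is not bookkeeping, it is the whole point, and without it the identity is false. Indeed, since the propagator is symmetric and one has the K\"ahler identities $\partial^2=(\dbar^\ast)^2=0$, $\partial\dbar^\ast+\dbar^\ast\partial=0$, the kernel $\bigl(Y[\eps]\otimes 1+1\otimes Y[\eps]\bigr)P(\eps,L)$ vanishes identically: every term contains $\dbar^\ast\partial\, e^{-v[\dbar,\dbar^\ast]}\,\dbar^\ast\partial=0$ after commuting the heat operator through. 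So $[\partial_{P(\eps,L)},Y[\eps]]=0$, your operator $\partial_{M(\eps,L)}$ is zero, and your identity would force $(Y[L]-Y[\eps])\F[L]=0$, which is false. (Your claim that the ``first-order contribution'' of a second-order contraction matches $(Y[L]-Y[\eps])\F[L]$ term-by-term also cannot work for $\hbar$-degree reasons: $\partial_M\F_g$ sits at order $\hbar^g$ while $(Y[L]-Y[\eps])\F_g$ enters at order $\hbar^{g-1}$.) The term $Y[L]-Y[\eps]$ in fact arises from conjugating the exceptional genus-zero cubic datum: the correct scale-covariant operator is $\mc{L}_{-1}[L]=-\partial_{Str}[L]+\hbar^{-1}\partial_{(1)}I_3$ of section \ref{section virasoro}, and it is the multiplication term $\hbar^{-1}\partial_{(1)}I_3$ whose Hadamard expansion under $e^{\hbar\partial_{P(\eps,L)}}$ produces exactly the first-order operator $\pm(Y[L]-Y[\eps])$ (the zeroth-order pieces vanish because the relevant supertraces of $\dbar^\ast\partial e^{-u[\dbar,\dbar^\ast]}$ are zero). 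Having the intertwining $e^{\hbar\partial_{P(\eps,L)}}\mc{L}_{-1}[\eps]=\mc{L}_{-1}[L]e^{\hbar\partial_{P(\eps,L)}}$, one still must relate that deformation to the one in the Proposition, which requires the exact scale-$L$ tree-level string identity $\partial_{Str}[L]\F_0[L]=\partial_{(1)}I_3$; none of this appears in your argument.

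Two further gaps: your locality claim is wrong for the $Y[L]$ piece, since the linear map $\alpha\mapsto\dbar^\ast\partial\int_0^L e^{-u[\dbar,\dbar^\ast]}\alpha\,\d u$ is a smoothing operator for $L>0$, so the associated derivation does \emph{not} preserve $\Ool(\E(X))$; the locality axiom for the deformed family needs a separate small-$L$ heat-kernel/Feynman-diagram estimate (the paper flags exactly this for $\mc{L}_{-1}[L]$ in section \ref{section virasoro}). Likewise, the quantum master equation does not ``follow from $(\dagger)$ once RG is in place'': one must separately check the compatibility of the full scale-$L$ string operator (including $Y[L]$ and the $I_3$ term) with $Q+\hbar\Delta_L$. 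For what it is worth, the paper itself gives no proof of this Proposition (it defers the computation), so there is nothing to compare against line by line; but the correct route is the one encoded in the Virasoro machinery of section \ref{section virasoro}, not the commutator $[\partial_{P(\eps,L)},Y[\eps]]$, which vanishes.
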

\begin{proof}
This is a reasonably straightfoward computation.  (More details to come in a later version).
\end{proof}

\subsection{}
Now suppose that $\{\F[L]\}$ is a quantization of BCOV theory on $X$ which satisfies the string and dilaton equations.  Let 
$$\mc{H} \left( \E(X) \right) = \op{Ker} [\dbar, \dbar^\ast ] \subset \E(X)$$
be the subspace of harmonic fields.   Note that the Hodge lemma implies that the operator $\partial$ is zero on the subspace $\mc{H} \left( \E (X)\right)$.  Thus, there is an isomorphism of cochain complexes
$$
\mc{H}\left( \E (X) \right) =  \oplus  H^\ast ( X, \wedge^\ast TX )  [[t]] [2].
$$
Note that, in addition, the BV operator $\Delta_\infty$ is zero.       Let
$$
\F^{\mc{H}} = \F[\infty] \mid_{\mc{H}(\E(X))} \in \Oo (\mc{H}(\E(X)) )[[\hbar]]
$$
be the restriction of $\F[\infty]$ to the space of harmonic fields.  This object allows us to define the \emph{correlators} for the quantization $\{\F[L]\}$ by
$$
\sum \hbar^g \ip{t^{k_1} \alpha_1, \ldots, t^{k_n} \alpha_n }^{\F}_{g,n}  =\left( \frac{\partial}{\partial (t^{k_1} \alpha_1 ) } \cdots \frac{\partial}{\partial (t^{k_1} \alpha_1 ) }  \F[\infty] \right) (0) \in \C[[\hbar]],
$$
for $\alpha_i \in H^\ast (X, \wedge^\ast TX)$.

The following lemma will be proved in section \ref{section givental}.
\begin{lemma}
The correlators $\ip{-}^{\F}_{g,n}$ do not change if we change the quantization $(\{\F[L]\}, g)$ to a homotopic quantization $(\{\F'[L]\}, g')$.  
\end{lemma}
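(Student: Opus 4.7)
The plan is to push the family QME to $L=\infty$ and restrict to harmonic fields, where every operator in the QME other than the de~Rham differential on $[0,1]$ vanishes. The conclusion will then follow because closedness in $\Omega^\ast([0,1])$ forces the zero-form component to be $s$-independent.

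First I would unpack the homotopy data. By definition, a homotopy between $(\{\F[L]\},g)$ and $(\{\F'[L]\},g')$ is a family quantization
$$
\til\F[L]\in\Oo(\E(X))\otimes\Omega^\ast([0,1])[[\hbar]]
$$
defined with respect to a smooth family $g_s$ of K\"ahler metrics, with associated family gauge-fixing $\dbar^\ast_s$. It satisfies the family RG equation and the family QME
$$
\bigl(Q + d_{dR} + \hbar\Delta_L\bigr)\exp(\til\F[L]/\hbar)=0,
$$
where the family BV operator $\Delta_L$ and bracket $\{-,-\}_L$ are built from the family heat kernel $K_{L,s,ds}$ for $[\dbar+d_{dR},\dbar^\ast_s]$, and the family specializes to $\F$ at $s=0$ and to $\F'$ at $s=1$.

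Next I would take $L\to\infty$ and restrict to the family of harmonic subspaces $\mc{H}_s\subset\PV(X)$. As $L\to\infty$ the family heat kernel converges to the projection kernel onto the $\mc{H}_s$, which lies in $\mc{H}_s\otimes\mc{H}_s$ for each $s$. Because the metric is K\"ahler and the holomorphic volume form $\omega$ is parallel (for the Ricci-flat representative), the K\"ahler/Hodge identities force $\partial$ to vanish on each $\mc{H}_s$; consequently $(\partial\otimes 1)K_{\infty,s,ds}=0$, so $\Delta_\infty=0$ and $\{-,-\}_\infty=0$ as operators on $\Oo(\E(X))\otimes\Omega^\ast([0,1])$. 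The differential $Q=\dbar-t\partial$ also acts as zero on $\mc{H}_s$. Using Hodge theory to canonically identify each $\mc{H}_s$ with $H^\ast(X,\wedge^\ast TX)[[t]][2]$ and pulling back $\til\F[\infty]$ along $\mc{H}_\bullet\hookrightarrow\E(X)\otimes\cinfty([0,1])$, the family QME then reduces on harmonic fields to
$$
d_{dR}\,\til\F^{\mc{H}}[\infty]=0.
$$

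Finally, writing $\til\F^{\mc{H}}[\infty]=a(s)+b(s)\,ds$ with $a,b$ valued in $\Oo(H^\ast(X,\wedge^\ast TX)[[t]][2])[[\hbar]]$, the closedness above gives $\partial_s a(s)=0$, so the zero-form component is constant in $s$. Specializing at $s=0,1$ then shows that the correlators $\ip{-}^{\F}_{g,n}$ and $\ip{-}^{\F'}_{g,n}$, which are the Taylor coefficients of $a(0)$ and $a(1)$ at the origin, coincide. I expect the main technical obstacle to be the proper treatment of the family Hodge theory for $[\dbar+d_{dR},\dbar^\ast_s]$ --- in particular, checking that the $L\to\infty$ limit of the family heat kernel projects onto sections of the bundle of harmonic subspaces over $[0,1]$ and that the restriction operation commutes with this limiting procedure; this is handled by the methods of \cite{Cos11}, Chapter 5.
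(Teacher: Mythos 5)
Your strategy has a genuine gap at its central step: the assertion that the $L\to\infty$ limit of the \emph{family} heat kernel lies in the bundle of harmonic subspaces, so that $(\partial\otimes 1)K_{\infty,s,\d s}=0$ and hence $\Delta_\infty=0$. This is true for the form-degree-zero component (the harmonic projector for $g_s$, killed by $\partial$), but not for the $\d s$-component. Writing the family Laplacian as $[\dbar,\dbar^\ast_s]+\d s\,(\partial_s\dbar^\ast_s)$, the one-form part of the heat kernel at time $L$ is $-\d s\int_0^L e^{-u[\dbar,\dbar^\ast_s]}(\partial_s\dbar^\ast_s)e^{-(L-u)[\dbar,\dbar^\ast_s]}\d u$, whose $L\to\infty$ limit is schematically $\Pi_s(\partial_s\dbar^\ast_s)G_s+G_s(\partial_s\dbar^\ast_s)\Pi_s$, with $\Pi_s$ the harmonic projector and $G_s$ the Green's operator; $\partial$ does not annihilate the Green's-operator leg, so $\Delta_\infty$ has a nonzero one-form component. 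Consequently the form-degree-one part of the family QME restricted to harmonic fields is not $\partial_s a(s)=0$ but $\partial_s(\cdot)$ plus a BV-type correction. A second, related omission: $a(s)$ is obtained by pulling back along the $s$-dependent inclusion $\mc{H}_s\hookrightarrow\E(X)$, and $\d_{dR}$-closedness of the ambient functional does not give $\partial_s a(s)=0$ unless you also control the derivative of that inclusion. These two corrections are exactly where the dependence on the metric (i.e.\ on the polarization determined by the harmonic subspace) could enter, which is the whole content of the lemma, so they cannot be deferred as routine family Hodge theory.

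The paper avoids this by working at the level of the Fock space: it shows that $Z_{\F}[L]=\exp(\F[L]/\hbar)$ is closed in the family Fock module $\mc{F}(\mc{S}_+(X),g_s,\d s,L)$, so the cohomology class $[Z_{\F}]\in\mc{F}(H^\ast\mc{S}_+(X))[\hbar^{-1}]$ depends only on the class in $\pi_0\op{Quant}(X)$; it then identifies the harmonic-field correlators of section \ref{section quantization} with the correlators extracted from $[Z_{\F}]$ using the polarization $\Lag_g$, and proves $\Lag_g=\Lag_{\br{X}}$ (the complex-conjugate splitting) for \emph{every} K\"ahler metric $g$, so the extraction procedure itself is metric-independent. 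If you want to keep your direct $L=\infty$ computation, you would need an analogue of this last fact to show that the correction terms coming from the $\d s$-part of $K_{\infty,s,\d s}$ and from the variation of $\mc{H}_s$ do not affect the Taylor coefficients at $0$; as written, your argument silently assumes they vanish.
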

As we will see shortly, the correlators do depend on the choice of a complementary filtration to the Hodge filtration on $X$.  The correlators as defined above correspond to the complex conjugate to the Hodge filtration.  

\begin{lemma}
If the quantization $\{\F[L]\}$ satisfy the string and dilaton equation in the homotopical sense described above, then the correlators satisfy the strict string and dilaton equation:
\begin{align*}
\ip{t \cdot 1, t^{k_1} \alpha_1, \dots, t^{k_n} \alpha_n   }^{\F}_{g,n+1,\beta} &= (2g - 2 + n )\ip{t^{k_1} \alpha_1, \dots, t^{k_n} \alpha_n   }_{g,n,\beta}   \\
\ip{1, t^{k_1} \alpha_1, \dots, t^{k_n} \alpha_n   }^{\F}_{g,n+1,\beta} &= \sum_{i = 1}^n \ip{t^{k_1} \alpha_1, \dots, t^{k_i - 1} \alpha_i , \dots, t^{k_n} \alpha_n   }_{g,n,\beta}  
\end{align*}
\end{lemma}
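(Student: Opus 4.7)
The strategy is to pass each homotopical identity from arbitrary scale $L$ to the $L=\infty$ limit, and then restrict to the subspace $\mc H\subset\E(X)$ of harmonic polyvector fields (tensored with $\C[[t]][2]$). On $\mc H$ all the classical BV data vanishes, so the homotopy equations will force the right-hand sides to vanish there, and reading off Taylor coefficients will yield the strict identities.

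Concretely, the dilaton hypothesis provides $\mbf G^{Dil}[L]\in\hbar\Oo(\E)[[\hbar]]$ with
$$
Q\mbf G^{Dil}[L]+\hbar\Delta_L\mbf G^{Dil}[L]+\{\F[L],\mbf G^{Dil}[L]\}_L=\bigl(\partial_{Dil}-2\hbar^2\tfrac{\partial}{\partial\hbar}\hbar^{-1}\bigr)\F[L],
$$
and, in parallel with the author's string proposition, the homotopical string hypothesis provides $\mbf G^{Str}[L]$ with the analogous equation whose right-hand side is $\partial_{Str}[L]\F[L]$ (genus by genus, so that the homotopy is only forced where the $\delta$-perturbation acts nontrivially). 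I would send $L\to\infty$ and pull back along $\mc H\hookrightarrow\E(X)$. By the Hodge identities $\dbar|_{\mc H}=\partial|_{\mc H}=0$, hence the induced $Q$ on $\Oo(\mc H)$ is zero. Since $K_\infty$ is the orthogonal projector onto $\mc H\otimes\mc H$ and $\partial$ annihilates $\mc H$, the tensor $(\partial\otimes 1)K_\infty$ vanishes, so $\Delta_\infty|_{\mc H}=0$ and $\{-,-\}_\infty|_{\mc H}=0$. For the string equation I would further observe that $Y[\infty]$ vanishes on $\mc H$, because $e^{-u[\dbar,\dbar^*]}$ fixes harmonic fields and $\partial$ then kills them, so $\partial_{Str}[\infty]|_{\mc H}=\partial_{Str}[0]|_{\mc H}$. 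All terms on the left vanish after restriction, yielding
$$
\bigl(\partial_{Dil}-2\hbar^2\tfrac{\partial}{\partial\hbar}\hbar^{-1}\bigr)\F[\infty]\big|_{\mc H}=0,\qquad \partial_{Str}[0]\F[\infty]\big|_{\mc H}=0.
$$

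It remains to unpack these as correlator identities. Using $2\hbar^2\tfrac{\partial}{\partial\hbar}\hbar^{-1}=-2+2\hbar\tfrac{\partial}{\partial\hbar}$, the dilaton operator at genus $g$ is $\partial_{t\cdot 1}-\op{Eu}-2(g-1)$; the Taylor coefficient of $\partial_{t\cdot 1}\F_g$ on $n$ inputs is $\ip{t\cdot 1,\dots}^{\F}_{g,n+1}$, while $\op{Eu}$ multiplies the $n$-point coefficient by $n$. The vanishing identity then reads precisely $\ip{t\cdot 1,t^{k_1}\alpha_1,\dots,t^{k_n}\alpha_n}^{\F}_{g,n+1}=(2g-2+n)\ip{t^{k_1}\alpha_1,\dots,t^{k_n}\alpha_n}^{\F}_{g,n}$. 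Similarly, the Taylor coefficient of $\partial_1\F$ gives $\ip{1,\dots}^{\F}_{g,n+1}$ and that of $\partial_{t^{-1}\star}\F$ gives $\sum_i\ip{\dots,t^{k_i-1}\alpha_i,\dots}^{\F}_{g,n}$ (with $t^{-1}\cdot 1=0$), producing the string equation. One checks that $1\in\PV^{0,0}(X)$ is harmonic and $t^{-1}\star$ preserves $\mc H$, so each of $\partial_{t\cdot 1}$, $\partial_1$, $\op{Eu}$, $\partial_{t^{-1}\star}$ commutes with restriction to $\mc H$, justifying the extraction of Taylor coefficients after restriction.

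The main obstacle is purely bookkeeping: one must verify cleanly that each of $\Delta_\infty$, $\{-,-\}_\infty$, the induced $Q$, and $Y[\infty]$ restricts to zero on $\mc H$ (all Hodge-theoretic), and carefully track two different flavors of derivations appearing in $\partial_{Dil}$ and $\partial_{Str}[0]$ --- the degree-preserving $\partial_A$ associated to linear maps $A:\E\to\E$, versus the degree-lowering $\partial_f$ associated to elements $f\in\E$ --- together with the $\hbar\partial_\hbar$ contribution, so that the combinatorial coefficients $2g-2+n$ in the dilaton case and $\sum_i$ in the string case emerge with the correct signs.
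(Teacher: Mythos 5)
The paper states this lemma without proof, and your argument is precisely the intended one given the surrounding definitions: pass the cochain-homotopy form of the dilaton and string identities to $L=\infty$, restrict to the harmonic subspace $\mc{H}(\E(X))$ where the induced $Q$, the BV operator $\Delta_\infty$, the bracket $\{-,-\}_\infty$ and $Y[\infty]$ all vanish (the paper itself records $\Delta_\infty=0$, and harmonic inputs are $Q$-closed), and then read off Taylor coefficients of $\F[\infty]\mid_{\mc{H}}$, which are by definition the correlators; your bookkeeping $2\hbar^2\tfrac{\partial}{\partial\hbar}\hbar^{-1}=-2+2\hbar\tfrac{\partial}{\partial\hbar}$ and the distinction between derivations attached to elements versus to linear endomorphisms (both $1$, $t\cdot 1$ harmonic and $t^{-1}\star$ preserving $\mc{H}$, so restriction commutes with all of them) give exactly the coefficients $2g-2+n$ and the sum over $i$. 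The one caveat concerns genus zero in the string case: as the paper formulates it, the deformation $(\partial_{Str}\F)[L]$ leaves $\F_0[L]$ undeformed, so the homotopical string hypothesis constrains only $g>0$, and your argument does not by itself produce the $g=0$ string equation; that case, where the familiar exceptional low-point terms enter (the same terms responsible for the $\hbar^{-1}\partial_{(1)}I_3$ correction in the operator $\mc{L}_{-1}$ of section \ref{section virasoro}), must instead be extracted from the tree-level/classical statement, e.g. the Lagrangian-cone description of the classical action in section \ref{section_cone}. Apart from this boundary case (and the tacit, harmless assumption that the homotopies $\mbf{G}[L]$ admit an $L\to\infty$ limit, which holds on compact $X$), the proof is correct.
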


\section{BCOV theory and the Givental formalism}
\label{section givental}

In \cite{Giv01,CoaGiv01,Giv04} Givental and Coates showed that the generating function for the Gromov-Witten of a variety $X$ is most naturally viewed as a state in a Fock space constructed from the cohomology of $X$.   In genus $0$, Givental and Coates' work on the $A$-model is paralleled in the $B$-model by the work of Barannikov \cite{Bar00,Bar99}. 

 In \cite{Cos05} the first-named author showed that the partition function of any topological (conformal) field theory is naturally a state in a certain Fock space associated to an infinite-dimensional symplectic vector space. For the categorical $B$-model, where the field theory is constructed from the dg category $\op{Per}(X)$ of perfect complexes on a variety $X$, the symplectic vector space is the periodic cyclic homology of the category $\op{Per}(X)$.   

In this section, we will show that the partition function for the BCOV theory can be viewed as a state in a Fock space constructed from an infinite-dimensional symplectic vector space.  The symplectic vector space that appears is isomorphic to that coming from the categorical $B$-model, which allows one to compare the partition function of the categorical $B$-model with that of the BCOV theory.

We conjecture that the partition function for the BCOV theory on the elliptic curve coincides with that of the categorical $B$-model on the elliptic curve.    

\subsection{}
Let us start by explaining the symplectic vector space which relates to the BCOV theory.   Let $X$ be a compact Calabi-Yau manifold of dimension $d$.   As a cochain complex, this symplectic vector space is 
$$\mc{S}(X) = \PV(X)((t))[2],$$ with differential $\dbar - t \partial$.    The symplectic pairing on $\mc{S}(X)$ is
$$
\omega( \alpha f(t), \beta g(t)  ) =\left( \op{Res}_{t} f(t) g(-t) \d t \right) \op{Tr} (\alpha \wedge \beta). 
$$
Note that the symplectic pairing is of cohomological degree $6 -2d$.    Further, the subspace 
$$\mc{S}_+(X) = \PV(X)[[t]][2] = \E(X) $$
is Lagrangian.  Thus, the space of fields of the BCOV theory is a Lagrangian subspace of this symplectic vector space.

\subsection{}
We want to construct the Fock space for the differential graded symplectic vector space $\mc{S}(X)$.   Let us start by recalling the construction of the Fock module for a finite dimensional symplectic vector space. 
\begin{definition}
Let $V$ be a dg symplectic vector space with symplectic form $\omega$.  Let 
$$
\omega^{-1} \in \wedge^2 V
$$
denote the inverse to the symplectic form. Then, the Weyl algebra $\mc{W}(V)$ is the pro-free dg algebra generated by $V^\vee$, and a parameter $\hbar$, subject to the relation that
$$
[a,b] = \hbar \omega^{-1}(a,b)
$$
for all $a,b \in V^\vee$. 
\end{definition} 
\begin{definition}
 Let $L \subset V$ be a Lagrangian sub-cochain complex of a dg symplectic vector space $V$.  The Fock module $\mc{F}(L)$ is defined to be the quotient of $\mc{W}(V)$ by the left ideal generated by 
$$\op{Ann}(L) \subset V^\vee \subset \mc{W}(V),$$
where $\op{Ann}(L)$ is the annihilator of $L$. 
\end{definition}
Note that modulo $\hbar$, $\mc{W}(V)$ is the completed symmetric algebra $\what{\Sym}^\ast V^\vee$.  Further, $\mc{W}(V)$ is a flat $\C[[\hbar]]$ module, so that there is a non-canonical isomorphism 
$$\mc{W}(V) \iso \left(\what{\Sym}^\ast V^\vee \right)[[\hbar]].$$ 

Let us choose a complementary Lagrangian $L' \subset V$ to $L$.  We do not, however, assume that $L'$ is preserved by the differential. Thus, $V = L \oplus L'$, and the symplectic pairing induces an isomorphism $L' = L^\vee$.     

Once we choose such an isomorphism, we get a splitting of the map $V^\vee \to L^\vee$, and so a map of algebras
$$
\Oo(L) [[\hbar ]] = \what{\Sym}^\ast L^\vee [[\hbar]] \to \mc{W}(V). 
$$
By composing this map with the quotient map $\mc{W}(V) \to \mc{F}(L)$,  we find an isomorphism of graded vector spaces
$$
\Oo(L) [[\hbar]] \iso \mc{F}(L).
$$
This is not, however, an isomorphism of cochain complexes; for later purposes, it will be helpful to describe the differential on $\Oo(L)[[\hbar]]$ corresponding to that on $\mc{F}(L)$.

Define a pairing $P$ on $L'$ by
$$
P(l'_1, l'_2) = \omega (l_1, \d l_2 ) .
$$
Because the symplectic pairing identifies $L'$ with the dual of $L$, we can view $P$ as an element of $\Sym^2 L$.   Let $\partial_P : \Oo(L) \to \Oo(L)$ be the differential operator corresponding to $P$.  
\begin{lemma}
Under the isomorphism of graded vector spaces $\Oo(L) [[\hbar]] \iso \mc{F}(L)$, the differential $\d_{\mc{F}(L)}$ on $\Oo(L)[[\hbar]]$ is 
$$
\d_{\mc{F}(L)}  = \d_L + \hbar \partial_P.
$$
Here, $\d_L$ is the derivation of $\Oo(L)$ induced from the differential on $L$. 
\label{lemma fock space differential}
\end{lemma}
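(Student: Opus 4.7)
The plan is to compute the induced differential on $\Oo(L)[[\hbar]]$ by direct bookkeeping in the Weyl algebra, reducing every expression to a ``normal ordered'' form supported on $L^\vee$.

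First, I would exploit the splitting $V = L \oplus L'$ to write $V^\vee = L^\vee \oplus (L')^\vee$, noting that the summand $(L')^\vee$ coincides with $\op{Ann}(L)$. Because $L$ is a sub-cochain complex while $L'$ in general is not, the dual differential $d^\vee$ on $V^\vee$ has the block-triangular form
$$
d^\vee = \begin{pmatrix} d_L^\vee & 0 \\ \gamma & d_{L'}^\vee \end{pmatrix},
$$
where $\gamma : L^\vee \to (L')^\vee$ encodes the failure of $L'$ to be preserved by $d$. The differential on $\mc{W}(V)$ is the derivation extending $d^\vee$; it preserves the left ideal generated by $\op{Ann}(L)$ and so descends to $\mc{F}(L)$.

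Second, I would evaluate this derivation on a generic monomial $f_1 \cdots f_n$ with $f_i \in L^\vee$, which represents a typical element of $\Oo(L) \subset \mc{F}(L)$. The diagonal contributions $d_L^\vee f_i$ assemble into $d_L$ on $\Oo(L)$, giving the $\hbar$-independent term. Each off-diagonal contribution $\gamma(f_i) \in (L')^\vee$ must be pushed to the right using the Weyl relation $[\gamma(f_i), f_j] = \hbar\, \omega^{-1}(\gamma(f_i), f_j)$; once fully on the right it is annihilated in $\mc{F}(L)$, so only the scalar commutator terms survive. Summing over $i < j$ produces an expression of the form $\hbar \sum_{i<j} \pm c_{ij}\, f_1 \cdots \widehat{f_i} \cdots \widehat{f_j} \cdots f_n$, which is exactly a second-order constant-coefficient differential operator of order $\hbar$ acting on $f_1 \cdots f_n$.

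Finally, I would identify the tensor governing this operator with $P$. Using the symplectic identification $L' \iso L^\vee$, let $l'_i \in L'$ correspond to $f_i$. Unwinding adjoints gives $\omega^{-1}(\gamma(f_i), f_j) = \omega(l'_i, \beta(l'_j))$, where $\beta : L' \to L$ is the off-diagonal of $d$ on $V$. Since $L'$ is Lagrangian, $\omega(l'_i, d_{L'} l'_j) = 0$, hence $\omega(l'_i, \beta(l'_j)) = \omega(l'_i, d l'_j) = P(l'_i, l'_j)$. The graded symmetry of $P$ as an element of $\Sym^2 L$ follows from $d$ being a derivation of $\omega$ combined with the vanishing of $\omega$ on $L' \otimes L'$, and guarantees that the resulting operator is precisely $\hbar\, \partial_P$. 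The main technical hurdle is tracking Koszul signs under the dg Leibniz rule and the Weyl commutators; once these are handled, the identification $\d_{\mc{F}(L)} = \d_L + \hbar\, \partial_P$ is immediate.
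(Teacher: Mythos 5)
Your proof is correct and follows essentially the same route as the paper: both act the differential on monomials $f_1\cdots f_n$ applied to the vacuum, split $\d f_i$ into its $L^\vee$-component (giving $\d_L$) and its $\op{Ann}(L)$-component, and use the Weyl relation to reduce the latter to scalar commutators $\hbar\,\omega^{-1}(\d f_i, f_j)$, which assemble into $\hbar\,\partial_P$. Your explicit identification $\omega^{-1}(\gamma(f_i),f_j)=\omega(l'_i,\d l'_j)=P(l'_i,l'_j)$ and the symmetry check just spell out a step the paper leaves implicit.
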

\begin{proof}
This is a simple calculation.  Indeed,  let $\vac \in \mc{F}(L)$ be the vacuum vector, that is, the image of $1 \in \mc{W}(V)$ under the map $\mc{W}(V) \to \mc{F}(L)$.  Let $\alpha_1,\ldots, \alpha_n \in L^\ast = L'$.  Then, the space $\mc{F}(L)$ is spanned by $\alpha_1 \dots \alpha_n \vac$, where we view the $\alpha_i$ as elements of $\mc{W}(V)$. 

Now, 
$$
\d ( \alpha_1 \dots \alpha_n \vac ) = \sum \pm \alpha_1 \dots (\d \alpha_i ) \dots \alpha_n \vac.
$$
Recall that $L^\ast \subset V^\ast$ is not preserved by the differential (indeed, it is identified with $L' \subset V$ under the isomorphism $V \iso V^\ast$).  Let us decompose the differential $\d : L^\ast \to L^\ast \oplus (L')^\ast$ into $\d = \d_1 \oplus \d_2$, where $\d_1$ lands in $L^\ast$ and $\d_2$ lands in $(L')^\ast$.  Then, we find that
\begin{multline*}
\d (\alpha_1 \dots \alpha_n \vac ) = \sum \pm \alpha_1 \dots (\d_1 \alpha_i) \dots \alpha_n \vac \\ + \sum_{1 \le i < j\le n} \pm [\d \alpha_i, \alpha_j ] \alpha_1 \dots \what{\alpha_i} \dots \what{\alpha_j} \dots \alpha_n \vac
\end{multline*}
were, as usual, $\what{\alpha_i}$ indicates that we omit $\alpha_i$, and $\pm$ indicates the usual Koszul sign.  

Now, 
$$
[\d \alpha_i, \alpha_j ] = \hbar \omega^{-1} ( \d \alpha_i, \alpha_j ) .
$$
Thus, in the expression for $\d ( \alpha_1 \dots \alpha_n \vac)$ presented above, the first line corresponds to the differential $\d_L$ on $\Oo(L)[[\hbar]]$, and the second term is $\hbar \partial_P$.  
\end{proof}

\subsection{}
We want to mimic these definitions using the infinite-dimensional symplectic vector space $\mc{S}(X)$ and the Lagrangian subspace $\mc{S}_+(X) \subset \mc{S}(X)$.  We immediately run into a problem, however. Although the symplectic form $\omega$ on $\mc{S}(X)$ has no kernel, the inverse $\omega^{-1}$ does not exist.  

Indeed, the symplectic form on $\mc{S}(X)$ is built using the trace pairing 
$$
\ip{\alpha,\beta} = \op{Tr}(\alpha \beta) 
$$
on the space $\PV(X)$ of polyvector fields on $X$.  The inverse to this trace pairing is a distributional polyvector field 
$$
\ip{-,-}^{-1} \in \br{\PV}(X) \otimes \br{\PV}(X).
$$
 It follows that the inverse to the symplectic form on $\mc{S}(X) = \PV(X)((t))[2]$ will also be of a distributional nature.

This is not a major problem, however, because we can instead use any element of $\PV(X) \otimes \PV(X)$ which is cohomologous to $\ip{-,-}^{-1}$.  

Let $g$ be a K\"ahler metric on $X$, and let $L \in \R_{> 0}$.  The heat kernel 
$$
K_L^g \in \PV(X) \otimes \PV(X)
$$
is an inverse up to homotopy for the trace pairing.  Indeed, the scale $0$ heat kernel $K_0 \in \br{\PV}(X)^{\otimes 2}$ is the actual inverse for the trace pairing, and the expression
$$
\int_0^L (\dbar^\ast \otimes 1) K^g_t \d t \in \br{\PV}(X)^{\otimes 2}
$$
provides a cochain homotopy between $K_0$ and $K_L^g$.

The inverse to the symplectic form $\omega$ on $\mc{S}(X)$ is 
$$
\omega^{-1} = \sum_{k \in \Z} K_0 (-1)^k t^k \otimes t^{-1-k} \in \br{\mc{S}}(X) ^{\otimes 2}
$$
(where $\br{\mc{S}}(X)$ refers to the distributional completion $\br{\PV}(X)((t))[2]$ of $\mc{S}(X)$, and the tensor product is, as always, the completed projective tensor product). Thus, a homotopy inverse to $\omega$ is given by the formula
$$
\omega_{g,L}^{-1} = \sum_{k \in \Z} K_L^g (-1)^k t^k \otimes t^{-1-k} \in \mc{S}(X) ^{\otimes 2} 
$$

\subsection{}
We can define the Weyl algebra and the Fock module for $\mc{S}(X)$ using the homotopy inverse $\omega_{g,L}$ as follows.  
\begin{definition}
The Weyl algebra $\mc{W}( \mc{S}(X), g,L )$ associated to $\mc{S}(X)$, the metric $g$, and $L \in \R_{> 0}$, is the quotient of the tensor algebra
$$
\left( \prod_{n \in \Z_{\ge 0}} \left( \mc{S}(X)^\vee \right)^{\otimes n} \right)  \otimes \C[[\hbar]]
$$
by the topological closure of the two-sided ideal generated by
$$
[a,b] = \omega_{g,L}^{-1}(a,b) \hbar
$$
for $a,b \in \mc{S}(X)^\vee$. 
\end{definition}
Here, since $\omega_P^{-1}$ is of cohomological degree $2d - 6$, the parameter $\hbar$ is of cohomological degree $6-2d$.  

Note that we can use the symplectic pairing on $\mc{S}(X)$ to identify $\mc{S}(X)^\vee$ with $\br{PV}(X) ((t))[2d-4]$.  

\begin{definition}
The Fock space $\mc{F}  (\mc{S}_+(X), g, L)$ is the quotient of $\mc{W}( \mc{S}(X), g, L)$ by the left ideal generated topologically by the subspace
$$
\op{Ann}(\mc{S}_+(X), g, L) \subset \mc{S}(X)^\vee.
$$ 
\end{definition}

The fact that $\mc{S}_+(X)$ is a subcomplex implies that this left ideal is preserved by the differential $\dbar - t \partial$ on $\mc{W}(\mc{S}(X), g, L)$. 

\begin{lemma}
The Weyl algebra $\mc{W}(\mc{S}(X), g,L)$ and the Fock module $\mc{F}(\mc{S}_+(X), g, L)$ are independent, up to homotopy, of the choice of $g$ and $L$.   is independent, up to homotopy, of the choice of $P$.
\end{lemma}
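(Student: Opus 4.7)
The plan is to produce, for any two choices $(g_0, L_0)$ and $(g_1, L_1)$, a family of Weyl algebras and Fock modules over the dg base ring $\Omega^\ast([0,1])$ whose fibres at $s=0$ and $s=1$ are the given objects. Since for a fixed choice $(g,L)$ the Weyl algebra is a deformation quantization (in the $\hbar$-direction) of the pro-free symmetric algebra $\what{\Sym}^\ast \mc{S}(X)^\vee[[\hbar]]$, and the bilinear form $\omega^{-1}_{g,L}$ controlling the quantization is only defined as a homotopy inverse to $\omega$, changing the representative of this homotopy inverse should induce a gauge equivalence of quantizations. The homotopy-over-$\Omega^\ast([0,1])$ formulation is the natural way to package this.

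Pick a smooth path $s \mapsto (g_s, L_s)$ connecting the two choices. The first step is to produce a primitive: a smooth family $\alpha_s \in \mc{S}(X)^{\otimes 2}$ satisfying
$$
\partial_s\, \omega^{-1}_{g_s, L_s} \;=\; [\,Q,\, \alpha_s\,], \qquad Q = \dbar - t\partial.
$$
The $L$-derivative of $\omega^{-1}_{g_s, L_s}$ is primitived exactly by the propagator built from $(\dbar^\ast \partial \otimes 1)K^{g_s}_{L_s}$, in the same way that $P(\epsilon,L)$ gave a homotopy between $\Delta_\epsilon$ and $\Delta_L$ in Section~\ref{section quantization}; the $g$-derivative is handled by the family-of-metrics heat kernel formalism of \cite{Cos11}, Chapter 5, which produces a primitive built from the $s$-derivative of $\dbar^\ast_{g_s}$. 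With $\alpha_s$ in hand, define the family Weyl algebra $\til{\mc{W}}$ to be the quotient of the topological tensor algebra on $\mc{S}(X)^\vee \otimes \Omega^\ast([0,1])$, extended by $\hbar$, modulo the closure of the two-sided ideal generated by
$$
[a,b] \;-\; \hbar\Bigl( \omega^{-1}_{g_s, L_s}(a,b) \;+\; ds\cdot \alpha_s(a,b)\Bigr),
$$
equipped with total differential $Q + d_{\mathrm{dR}}$. The identity above together with $d_{\mathrm{dR}}^2=0$ and the symmetry of $\alpha_s$ ensure that the differential preserves the defining relations, so $\til{\mc{W}}$ is a well-defined dg algebra; its restrictions to $s=0,1$ recover $\mc{W}(\mc{S}(X), g_i, L_i)$.

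For the Fock module, the key observation is that the Lagrangian $\mc{S}_+(X) = \E(X)$ does not depend on $(g,L)$, so $\op{Ann}(\mc{S}_+(X))\otimes \Omega^\ast([0,1])$ is a canonically defined subspace of $\mc{S}(X)^\vee \otimes \Omega^\ast([0,1])$. I define $\til{\mc{F}}$ as the quotient of $\til{\mc{W}}$ by the left ideal it generates. To see the left ideal is $(Q + d_{\mathrm{dR}})$-closed, one checks that both kernels $\omega^{-1}_{g_s,L_s}$ and $\alpha_s$ have the property that their pairing with $\op{Ann}(\mc{S}_+)\otimes\op{Ann}(\mc{S}_+)$ vanishes: inspecting the formula $\omega_{g,L}^{-1}=\sum_k K^g_L(-1)^k t^k\otimes t^{-1-k}$ shows that in each summand one tensor factor always lies in $\mc{S}_+(X)$, and $\alpha_s$ inherits the same property because $\dbar^\ast$ and heat flow preserve $\mc{S}_+$. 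Restricting $\til{\mc{F}}$ to $s = 0, 1$ recovers the two Fock modules, so the homotopy equivalence is established. The main obstacle in this plan is the explicit identification of $\alpha_s$ and the verification of its smoothness under the variation of $g_s$; once this analytic input (essentially Chapter~5 of \cite{Cos11}) is granted, the remainder is a formal consequence of the Koszul/Weyl algebra machinery.
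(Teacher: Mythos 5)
Your proposal is correct and follows essentially the same route as the paper: one constructs a family of Weyl algebras and Fock modules over the de Rham complex of the parameter space, with the commutation relation given by a closed family extension of the homotopy-inverse kernel $\omega^{-1}_{g,L}$ built from the family heat kernel for $[\dbar + \d_{dR}, \dbar^\ast_{g_s}]$ (plus the $\d L\,\dbar^\ast K$ correction), and the Fock module as the quotient by the left ideal generated by the $(g,L)$-independent subspace $\op{Ann}(\mc{S}_+(X))$. The only cosmetic difference is that the paper works over $\Omega^\ast(\tr^k \times (0,\infty)_L)$, treating $L$ as a coordinate and allowing simplices of metrics, rather than over a chosen path $\Omega^\ast([0,1])$ with the primitive written as a $\d s$-component, but the underlying construction is the same.
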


\begin{proof}
Indeed, suppose we have a family of K\"ahler metrics $g_t$  parametrized by the $k$-simplex $\tr^k$.  We will show how to construct a family of dg algebras $\mc{W} ( \mc{S}(X), g_t, \d t, L , \d L)$ over $\Omega^\ast( \tr^k \times (0,\infty)_L)$ which specializes to $\mc{W}( \mc{S}(X), g_t, L)$ at $(t,L) \in \tr^k \times (0,\infty)$.  

To do this, we will construct a family of homotopy inverses 
$$
\omega^{-1}_{g_t, \d t, L, \d L} \in \mc{S}(X)^{\otimes 2} \otimes \Omega^\ast( \tr^k \times (0,\infty) )
$$
to the symplectic form $\omega$ on $\mc{S}(X)$.   This will be constructed from a family of heat kernels.  

Let
$$
K_{g_t, \d t, L} \in \PV(X)^{\otimes 2} \otimes \Omega^\ast( \tr^k \times (0,\infty)_L ) 
$$
be the heat kernel for the $\Omega^\ast( \tr^k \times (0,\infty)_L )$-linear differential operator
$$
[\dbar + \d_{dR}, \dbar^\ast_{g_t} ]
$$
where $\d_{dR}$ is the de Rham differential on $\Omega^\ast( \tr^k \times (0,\infty)_L )$.  Here 
$$
\dbar^\ast_{g_t} : \PV(X) \to \PV(X) \otimes \cinfty( \tr^k ) 
$$
is the family of operators associated to the metric $g_t$, for $t \in \tr^k$. 

Let 
$$
K_{g_t, \d t, L, \d L} = K_{g_t, \d t, L} + \d L \dbar^\ast K_{g_t, \d t, L} \in \PV(X)^{\otimes 2} \otimes \Omega^\ast( \tr^k, (0,\infty)_L ).
$$
This is a closed element, and provides a cochain homotopy between the various heat kernels. 

Let
$$
\omega^{-1}_{g_t, \d t, L, \d L} = K_{g_t, \d t, L, \d L}  \sum_{k \in \Z} (-1)^k t^k \otimes t^{-1-k} \in \mc{S}(X)^{\otimes 2} \otimes \Omega^\ast( \tr^k \times (0,\infty)_L ) .
$$

We define the Weyl algebra
$$
\mc{W}(\mc{S}(X),  g_t, \d t, L, \d L  ) 
$$
to be the $\Omega^\ast( \tr^k \times (0,\infty)_L )$-algebra which is a quotient of the tensor algebra 
$$\Omega^\ast( \tr^k \times (0,\infty)_L )  \otimes \C[[\hbar]] \otimes \prod_{n \ge 0} \left( \mc{S}(X)^\vee \right)^{\otimes n}$$
by the two-sided ideal generated by the topological closure of the relations
$$
[a,b] = \hbar \omega_{g,L, \d L}^{-1} ( a,b) .
$$
As before, we can define the Fock module $\mc{F} ( \mc{S}(X), g_t, \d t, L , \d L ) $  as the quotient of $\mc{W}(\mc{S}(X), g, L, \d L)$  by the left ideal generated by $\op{Ann}( \mc{S}_+(X))$. 

Thus, $\mc{W}(\mc{S}(X),  g_t, \d t, L, \d L  )$ defines a differential graded algebra over $\Omega^\ast( \tr^k \times (0,\infty)_L )$, specializing to the Weyl algebra $\mc{W}(\mc{S}(X),  g_t,  L )$ at $(t, L) \in \tr^k \times (0,\infty)_L$.  Similarly, $\mc{F} ( \mc{S}(X), g_t, \d t, L , \d L )$ defines a dg module over $\mc{W}(\mc{S}(X),  g_t, \d t, L, \d L  )$, flat over $\Omega^\ast( \tr^k \times (0,\infty)_L ) $, which specializes to the Fock module $\mc{F} ( \mc{S}(X), g_t, L)$ at $(t,L)$. 
\end{proof}

It follows that the cohomology of $\mc{W}(\mc{S}(X), g, L)$ is independent of $g$ and $L$.  Thus, we will refer to the cohomology of $\mc{W}(\mc{S}(X), g, L)$ are simply $H^\ast \mc{W}( \mc{S}(X))$.  Similarly, we will use the notation $H^\ast \mc{F}( \mc{S}_+(X)) $ to refer to the cohomology of $\mc{F}( \mc{S}(X), g, L)$.  

Note that (by the degeneration of the Hodge to de Rham spectral sequence) the map
$$
H^\ast \mc{S}_+(X) \to H^\ast \mc{S}(X)
$$
is injective, and the image is Lagrangian.  Thus, we can construct the Weyl algebra $\mc{W}( H^\ast(\mc{S}(X))$, and the Fock module $\mc{F} ( H^\ast\mc{S}_+(X))$.  
\begin{lemma}
There are canonical isomorphisms
\begin{align*}
\mc{W}( H^\ast(\mc{S}(X)) &\iso H^\ast \mc{W} ( \mc{S}( X) ) \\
\mc{F} ( H^\ast\mc{S}_+(X)) &\iso H^\ast  \mc{F}( \mc{S}_+(X)) .
\end{align*}
These isomorphisms are compatible with the algebra and module structures present. 
\end{lemma}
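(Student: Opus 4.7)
The plan is to exploit the $\hbar$-adic filtration on both sides and reduce the comparison to a symmetric-algebra statement, for which a K\"unneth-type argument applies. Fix a K\"ahler metric $g$. By the previous lemma, $H^\ast \mc{W}(\mc{S}(X))$ may be computed from $\mc{W}(\mc{S}(X), g, L)$ for any convenient $L$, and it is cleanest to think of the limiting case $L \to \infty$, where the heat kernel $K^g_L$ approaches the harmonic projector, so that $\omega^{-1}_{g,L}$ is supported on the harmonic subspace $\mc{H}(\mc{S}(X)) = \mc{H}(\PV(X))((t))[2]$. Because $X$ is K\"ahler, both $\dbar$ and $\partial$ vanish on $\mc{H}(\PV(X))$, so the inclusion $\mc{H}(\mc{S}(X)) \hookrightarrow \mc{S}(X)$ is a quasi-isomorphism of cochain complexes, and the restriction of the symplectic pairing $\omega$ to the harmonic subspace is precisely the induced pairing on $H^\ast \mc{S}(X)$, with literal inverse $\omega^{-1}$.

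Using harmonic representatives to identify $H^\ast \mc{S}(X)$ with $\mc{H}(\mc{S}(X)) \subset \mc{S}(X)$, I would first define a morphism of dg algebras
$$
\phi : \mc{W}(H^\ast\mc{S}(X)) \to \mc{W}(\mc{S}(X), g, L)
$$
by lifting generators along $H^\ast\mc{S}(X)^\vee \to \mc{S}(X)^\vee$. The Weyl relation is preserved because, in the limit, the restriction of $\omega^{-1}_{g,L}$ to harmonics agrees with $\omega^{-1}$; for finite $L$ one corrects $\phi$ by an $\hbar$-linear term built from the cochain homotopy $\int_0^L (\dbar^\ast \otimes 1) K^g_t\,\d t$ between the two inverses, which is straightforward order-by-order in $\hbar$.

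Next I would show $\phi$ is a quasi-isomorphism by running the $\hbar$-adic spectral sequence. Both sides are complete for the $\hbar$-filtration, and the $E_0$-page reduces to the completed symmetric algebras on $\mc{S}(X)^\vee$ and $H^\ast\mc{S}(X)^\vee$ respectively (the Weyl relation being of $\hbar$-order $1$), with differentials induced by $Q = \dbar - t\partial$ (respectively zero). On $E_0$ the map $\phi$ becomes $\what{\Sym}^\ast$ of the dual quasi-isomorphism $\mc{H}(\mc{S}(X))^\vee \to \mc{S}(X)^\vee$. Filtering further by polynomial degree $n$, each $\Sym^n$-piece is a finite completed tensor product of nuclear Fr\'echet complexes, and $\Sym^n$ preserves quasi-isomorphisms in characteristic zero (K\"unneth with trivial $S_n$-action). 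Hence $E_1$ on the source matches $E_1$ on the target, both spectral sequences collapse, and $\phi$ induces an isomorphism on cohomology.

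The Fock module case follows by the same strategy applied to the quotient construction. The key verification is that $\phi$ sends the left ideal generated by $\op{Ann}(H^\ast\mc{S}_+(X))$ into the left ideal generated by $\op{Ann}(\mc{S}_+(X))$, which uses the injection $H^\ast\mc{S}_+(X) \hookrightarrow H^\ast\mc{S}(X)$ with Lagrangian image (guaranteed by Hodge-to-de Rham degeneration, as remarked just before the statement). On $E_0$ the induced map of Fock modules is $\what{\Sym}^\ast$ of the quasi-isomorphism $H^\ast\mc{S}_+(X)^\vee \to \mc{S}_+(X)^\vee$ (with the differential on $\mc{S}_+(X)^\vee$ induced by $Q$, whose cohomology is $H^\ast \mc{S}_+(X)^\vee$ by the Hodge theorem for $\PV(X)[[t]]$), so the same spectral-sequence argument yields the isomorphism. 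The main obstacle will be the analytical care needed for the pro-free completed tensor algebras: one must check that $\Sym^n$ preserves quasi-isomorphisms in the nuclear Fr\'echet setting, that the $\hbar$-adic spectral sequence converges, and that the correction terms needed to handle finite $L$ (in place of the conceptually cleaner $L = \infty$ limit) assemble into a genuine dg algebra map; these verifications are routine but require bookkeeping that I have suppressed here.
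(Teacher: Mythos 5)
Your proof is correct, and it fills in a step the paper leaves implicit, but it is organized differently from the paper's argument. The paper does not choose harmonic representatives at all: it uses the canonical cochain map $\mc{S}(X) \to \mc{W}(\mc{S}(X),g,L)$ (smooth fields sit inside $\mc{S}(X)^\vee$ via the symplectic pairing), passes to cohomology, observes that the Weyl relations of $\mc{W}(H^\ast\mc{S}(X))$ hold among the resulting classes in $H^\ast\mc{W}(\mc{S}(X),g,L)$, and then simply asserts that the induced algebra map $\mc{W}(H^\ast\mc{S}(X)) \to H^\ast\mc{W}(\mc{S}(X),g,L)$ is an isomorphism; the Fock case is treated the same way. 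Your route instead lifts the comparison to the cochain level via the harmonic embedding $\mc{H}(\mc{S}(X)) \hookrightarrow \mc{S}(X)$ and then proves the quasi-isomorphism by the $\hbar$-adic filtration together with the fact that $\what{\Sym}^n$ of a quasi-isomorphism of nuclear complexes is a quasi-isomorphism -- which is exactly the content hiding behind the paper's one-line assertion, so your version is the more complete one (at the price of invoking the metric twice, once for $\omega^{-1}_{g,L}$ and once for representatives). One simplification you should make: no finite-$L$ correction term is needed. Since harmonic elements are fixed by $e^{-L[\dbar,\dbar^\ast]}$ (equivalently, $K_L - K_\infty$ is exact for $\dbar\otimes 1 + 1\otimes\dbar$ and hence pairs to zero against $\dbar$-closed harmonic inputs, $\dbar$ being skew self-adjoint for the trace pairing), the evaluation of $\omega^{-1}_{g,L}$ on harmonic generators is independent of $L$ and already equals the cohomological inverse pairing; so the uncorrected map on generators preserves the Weyl relation for every $L>0$. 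This matters because the phrase ``corrects $\phi$ by an $\hbar$-linear term \dots straightforward order-by-order in $\hbar$'' is the one genuinely vague step in your write-up, and it is fortunate that it can be deleted rather than justified. The rest (the ideal-preservation check for the Fock module via the Lagrangian image of $H^\ast\mc{S}_+(X)$, and the identification of the $E_0$/mod-$\hbar$ pages) is sound.
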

\begin{proof}
Indeed, there's a cochain map
$$
\mc{S}(X) \to \mc{W}(\mc{S}(X)).
$$
Passing to cohomology yields a map $H^\ast \mc{S}(X) \to H^\ast \mc{W}(\mc{S}(X)).$  The relations in the Weyl algebra $\mc{W}( H^\ast \mc{S}(X) )$ imply that this map extends uniquely to an algebra homomorphism 
$$
\mc{W}( H^\ast(\mc{S}(X)) \to H^\ast \mc{W} ( \mc{S}( X) ).
$$
This homomorphism is an isomorphism. A similar argument proves the result about the Fock modules. 
\end{proof}
\subsection{}
The relationship between the BCOV theory and the symplectic formalism developed in this section is provided by the following proposition.
\begin{proposition}
\label{proposition fock qme}
Let $g$ be a K\"ahler metric on $X$, and let $L \in \R_{> 0}$.  Then, there is an isomorphism of cochain complexes
$$
\mc{F}( \mc{S}_+(X), g, L ) \iso \left( \Oo(\mc{S}_+(X)[[\hbar]] , \dbar - t \partial + \hbar \Delta_L \right).
$$
\end{proposition}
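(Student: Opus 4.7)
The plan is to adapt the finite-dimensional Lemma \ref{lemma fock space differential} to the present topological vector space setting. I would fix the Lagrangian complement $\mc{S}_-(X) = t^{-1}\PV(X)[t^{-1}][2]$ to $\mc{S}_+(X)$ inside $\mc{S}(X)$. The symplectic form $\omega$ pairs $\mc{S}_+(X)$ and $\mc{S}_-(X)$ non-degenerately, yielding an identification of $\mc{S}_-(X)$ with the strong dual $\mc{S}_+(X)^\vee$ once the distributional versus smooth completions are handled as elsewhere in the paper (using $\br{\PV}(X)$). A Poincar\'e--Birkhoff--Witt style decomposition of $\mc{W}(\mc{S}(X), g, L)$ along the splitting $\mc{S}(X)^\vee = \mc{S}_+(X)^\vee \oplus \mc{S}_-(X)^\vee$, combined with the definition of $\mc{F}(\mc{S}_+(X), g, L)$ as the quotient of $\mc{W}(\mc{S}(X), g, L)$ by the left ideal generated by $\op{Ann}(\mc{S}_+(X))$, produces a graded vector space isomorphism
\[
\mc{F}(\mc{S}_+(X), g, L) \;\cong\; \Oo(\mc{S}_+(X))[[\hbar]]
\]
sending the vacuum vector to $1$ and the creation operators to the linear generators $\mc{S}_+(X)^\vee$ of $\what{\Sym}^\ast \mc{S}_+(X)^\vee$.

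The next step is to transport the Fock-module differential through this identification. Repeating the normal-ordering argument from the proof of Lemma \ref{lemma fock space differential} -- commuting $Q = \dbar - t\partial$ past creation operators, and picking up a correction $\hbar\,\omega_{g,L}^{-1}(-,-)$ from the Weyl relation each time this is done -- one finds that the transported differential takes the form $Q + \hbar\,\partial_P$, where $\partial_P$ is the order-two constant-coefficient operator associated to a bivector $P \in \mc{S}_+(X)^{\otimes 2}$ measuring the failure of $\mc{S}_-(X)$ to be $Q$-closed. A direct computation shows that $Q(\beta\, t^{-k-1})$ has a component in $\mc{S}_+(X)$ only when $k = 0$, and that this component is $-\partial\beta \in \PV(X) \subset \mc{S}_+(X)$. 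Pairing this failure against itself via $\omega_{g,L}^{-1} = \sum_k (-1)^k K_L^g \otimes t^k \otimes t^{-1-k}$ and extracting the residue isolates precisely the $k = 0$ summand and produces $P = (\partial \otimes 1) K_L^g$. By the very definition of $\Delta_L$ this gives $\hbar\,\partial_P = \hbar\,\Delta_L$, as required.

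The main obstacle is the PBW-style bookkeeping in infinite dimensions. One must verify that the iterated Weyl commutators, with their signs, Koszul conventions, and the residues coming from the infinite sum defining $\omega_{g,L}^{-1}$, assemble into exactly the operator $\hbar\,\Delta_L$ associated to $(\partial \otimes 1) K_L^g$, and that all tensor and symmetric algebras are formed in the correct category of complete nuclear topological vector spaces so that $\op{Ann}(\mc{S}_+(X))$ is a closed left ideal and the PBW argument descends cleanly to the quotient. Once these points are handled, consistency of the two differentials -- that is, $(Q + \hbar\,\Delta_L)^2 = 0$ -- follows from $Q^2 = 0$ together with a heat-kernel identity for $[Q, \Delta_L]$ that is a direct consequence of the homotopy equation (\ref{eqn_dagger}).
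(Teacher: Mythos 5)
Your proposal is correct and follows essentially the same route as the paper's proof: choose the complementary Lagrangian $\mc{S}_-(X) = t^{-1}\PV(X)[t^{-1}][2]$ to get the graded-vector-space identification $\mc{F}(\mc{S}_+(X),g,L) \cong \Oo(\mc{S}_+(X))[[\hbar]]$, then transport the Fock differential by normal ordering, observing that the only failure of $\mc{S}_-(X)$ to be a subcomplex is the $-\partial\beta$ term at $t^{-1}$, which, paired through $\omega_{g,L}^{-1}$, yields exactly $\hbar\,\Delta_L$ as the contraction with $(\partial\otimes 1)K_L^g$. This is precisely the paper's adaptation of the finite-dimensional Lemma \ref{lemma fock space differential}, so no further comparison is needed.
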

\begin{proof}
The proof is almost identical to that of lemma \ref{lemma fock space differential}, which is the finite dimensional analog of this proposition.  

In the course of this proof, we will omit the parameters $g$ and $L$ from the notation for the Weyl algebra and the Fock space.    Thus, $\mc{W}(\mc{S}(X))$ will indicate the Weyl algebra associated to the metric $g$ and $L$, and similarly $\mc{F}( \mc{S}_+(X) )$ will refer to the Fock module for $g$ and $L$. 

The first step in the proof is to construct an isomorphism of graded vector spaces
$$
\Oo(\mc{S}_+(X) )[[\hbar]] \iso \mc{F}(\mc{S}_+(X)).
$$
Then we will verify that this isomorphism is compatible with the differentials.

Let
$$
\mc{S}_-(X) = t^{-1} \PV(X) [t^{-1} ][2] \subset \mc{S}(X).
$$
Note that we have a direct sum decomposition
$$
\mc{S}(X) = \mc{S}_+(X) \oplus \mc{S}_-(X),
$$
and that both subspaces are Lagrangian.  However, $\mc{S}_-(X)$ is \emph{not} preserved by the differential $\dbar - t \partial$ on $\mc{S}(X)$.  

The choice of complementary Lagrangian $\mc{S}_-(X)$ gives a splitting of the inclusion $\mc{S}_+(X) \into \mc{S}(X)$, and so (dually) a splitting 
$$
\Phi : \mc{S}_+(X)^\vee \to \mc{S}(X)^\vee
$$ 
of the projection $\mc{S}(X)^\vee \to \mc{S}_+(X)^\vee$.

However, because $\mc{S}_-(X)$ is not a subcomplex of $\mc{S}(X)$, the map $\Phi$ is not a cochain map, but simply a map of graded vector spaces.  

Now, elements of $\mc{S}_+(X)^\vee$ commute in the Weyl algebra generated by $\mc{S}(X)^\vee$.  It follows that the map $\Phi$ extends to an algebra homomorphism
$$
\Phi : \Oo( \mc{S}_+(X) ) [[\hbar]] \to \mc{W}(\mc{S}(X)).
$$
However, this map is not compatible with differentials.

When we compose this algebra homomorphism $\Phi$ with the natural quotient map
$$
\mc{W}( \mc{S}(X) ) \to \mc{F}( \mc{S}_+(X) )
$$
we find the desired isomorphism of graded vector spaces
$$
\Oo(\mc{S}_+(X) ) [[\hbar]] \iso \mc{F}( \mc{S}_+(X) ).
$$

It remains to verify that, under this isomorphism, the differential on $\mc{F}( \mc{S}_+(X), g, L)$ corresponds to the differential
$$
\dbar - t \partial + \hbar \Delta_L
$$
on $\Oo(\mc{S}_+(X) )[[\hbar]]$.  

Let $\vac \in \mc{F}(\mc{S}_+(X))$ denote the vacuum vector, that is, the image of $1 \in \mc{W}(\mc{S}(X))$ under the quotient map $\mc{W}(\mc{S}(X)) \to \mc{F}(\mc{S}_+(X))$.    Let $\d_{\mc{F}}$ denote the differential on $\mc{F}(\mc{S}_+(X))$, and $\d_{\mc{W}}$ that on $\mc{W}(\mc{S}(X))$.   Let $\alpha_1, \ldots, \alpha_n \in \mc{S}_+(X)^\vee$.  What we need to verify is that
$$
\d_{\mc{F}} \left( \Phi(\alpha_1) \cdots \Phi(\alpha_n)  \right) \vac = \Phi \left(  (\dbar - t \partial + \hbar \Delta_L ) \alpha_1 \cdots \alpha_n \right) \vac.
$$
We can use the symplectic pairing on $\mc{S}(X)$ to identify
\begin{align*}
\mc{S}(X)^\vee &= \br{\mc{S}}(X) = \br{\PV}(X) ((t)) \\
\mc{S}_+(X)^\vee &= \br{\mc{S}}_-(X) = t^{-1} \br{\PV}(X) [t^{-1}]\\
\mc{S}_-(X)^\vee &= \br{\mc{S}}_+(X) =  \br{\PV}(X) [[t]].
\end{align*}
Under these identifications, the map $\Phi$ is the natural map 
$$
\Phi : t^{-1} \br{\PV}(X) [t^{-1}] \to \br{\PV}(X)((t)).
$$
Now, it is clear that
\begin{align*}
\Phi ( \dbar \alpha ) &= \dbar \Phi(\alpha) \\
\Phi ( \partial \alpha) &= \partial \Phi(\alpha) ,
\end{align*}
for all $\alpha \in \mc{S}_+(X)^\vee$. However, 
$$
t \Phi(\alpha) \neq \Phi (t \alpha),
$$
because the subspace $\mc{S}_+(X)^\vee \subset \mc{S}(X)^\vee$ is not preserved by the operator $t$.  

To prove the result, we thus need to verify that
$$
\sum \pm \Phi( \alpha_1) \cdots \left(  [t , \Phi ] \partial \alpha_i \right) \cdots \Phi (\alpha_n) \vac = \Phi ( \hbar \Delta_L \alpha_1 \cdots \alpha_n ) \vac.
$$
The operator $[t,\Phi]$ maps $\mc{S}_+(X)^\vee$ to $\mc{S}_-(X)^\vee$. Further, the action of any element of $\mc{S}_-(X)^\vee$ on the vacuum vector $\vac \in \mc{F}(\mc{S}_+(X))$ is zero.  Thus, 
$$
\sum \pm \Phi( \alpha_1) \cdots \left(  [t , \Phi ] \partial \alpha_i \right) \cdots \Phi (\alpha_n) \vac = \sum \pm [\Phi(\alpha_i) , [t,\Phi] (\partial \alpha_j ) ] \prod_{k \neq i,j} \Phi (\alpha_k) \vac.
$$
From this formula, we see that to prove the result we need to verify that, for all $\alpha, \beta \in \mc{S}_+(X)^\vee$, 
$$
[\Phi(\alpha) , [t,\Phi] (\partial \beta ) ] = \hbar \Delta_L ( \alpha \beta) \in \C[[\hbar]].
$$
In other words, we need to verify that 
$$
\omega_{g,L}^{-1} \left( \Phi(\alpha), t \partial \Phi(\beta )  \right)  = \Delta_L (\alpha \beta) \in \C.
$$
This is immediate, since $\Delta_L$ is defined in terms of $(\partial \otimes 1) K_L$, and $\omega_{g,L}^{-1}$ is defined in terms of $K_L$.  
\end{proof}

\subsection{}
Now suppose that we have a quantization $\F = (\{\F[L]\},g))$ of the BCOV theory on a Calabi-Yau $X$ (where $g$ is a K\"ahler metric on $X$).  Let
$$
Z_{\F} [L] = \exp ( \F[L] / \hbar ) \in \Oo ( \E(X)) ((\hbar)) = \Oo (\mc{S}_+(X)) ((\hbar)).
$$ 
 Then, the quantum master equation for $\F[L]$ says that 
$$
(\dbar - t \partial + \hbar \Delta_L ) Z_{\F}[L] = 0.
$$
Since this differential is the same as the differential on the Fock module $\mc{F}(\mc{S}_+(X))$, we see that
\begin{corollary}
$Z_{\F}[L]$ defines a state in the Fock space $\mc{F}( \mc{S}_+(X), g ,L) [\hbar^{-1}]$.   
\end{corollary}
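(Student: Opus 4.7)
The plan is to combine Proposition \ref{proposition fock qme} with the quantum master equation in a direct manner; indeed, the statement is essentially a repackaging of these two facts.

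First, I invoke Proposition \ref{proposition fock qme}, which identifies the Fock module $\mc{F}(\mc{S}_+(X), g, L)$ with the graded vector space $\Oo(\mc{S}_+(X))[[\hbar]]$ equipped with the differential $\dbar - t \partial + \hbar \Delta_L$. Under this identification (with $\hbar$ formally inverted on both sides), a \emph{state} in the Fock space is precisely a cocycle for the total differential. Since $\F[L]$ has positive powers of $\hbar$ appearing with non-negative exponents but we divide by $\hbar$ inside the exponential, the element $\exp(\F[L]/\hbar)$ naturally lies in $\Oo(\mc{S}_+(X))((\hbar))$ rather than $\Oo(\mc{S}_+(X))[[\hbar]]$; this is exactly why the corollary requires inverting $\hbar$ in the Fock space.

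Second, I recall that the quantum master equation for the collection $\{\F[L]\}$ was stated in the definition of a quantization in the equivalent exponential form
$$
(Q + \hbar \Delta_L) \exp (\F[L] / \hbar ) = 0,
$$
with $Q = \dbar - t \partial$. Comparing this with the differential on the Fock module supplied by Proposition \ref{proposition fock qme}, we conclude immediately that $Z_\F[L] = \exp(\F[L]/\hbar)$ is a cocycle in $\mc{F}(\mc{S}_+(X), g, L)[\hbar^{-1}]$, that is, a state.

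There is no real obstacle: the only thing one might wish to double-check is bookkeeping, namely that the completion used to form $\Oo(\mc{S}_+(X))[[\hbar]]$ in the proof of Proposition \ref{proposition fock qme} is the same completion in which $\exp(\F[L]/\hbar)$ lives after inverting $\hbar$, and that each functional $\F_g[L]$ is at least cubic so that the exponential makes sense order by order in $\hbar$. Both points follow from the locality axiom and the definition of a quantization, so the argument is complete.
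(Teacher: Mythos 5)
Your argument is exactly the paper's: the quantum master equation in exponential form $(Q+\hbar\Delta_L)\exp(\F[L]/\hbar)=0$, combined with the identification of the Fock-module differential from Proposition \ref{proposition fock qme}, shows $Z_{\F}[L]$ is closed and hence a state in $\mc{F}(\mc{S}_+(X),g,L)[\hbar^{-1}]$. The extra bookkeeping remarks about inverting $\hbar$ and the cubic condition are fine but not needed beyond what the paper already records.
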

Thus, the partition function for the BCOV theory is a state in the Fock space modelled on the infinite dimensional vector space $\mc{S}(X)$. 

Recall that the cohomology of $\mc{F}( \mc{S}_+(X), g ,L)$ is independent of $g$ and $L$, and can be identified with the Fock space $\mc{F} (H^\ast \mc{S}_+(X) )$ for the cohomological symplectic vector space $H^\ast \mc{S}(X)$. 
\begin{lemma}
The cohomology class of $Z_{\F}[L]$ in $\mc{F} (H^\ast \mc{S}_+(X) ) [\hbar^{-1}]$ is independent of $L$.  We will refer to this cohomology class as 
$$
[Z_{\F}] \in \mc{F} (H^\ast \mc{S}_+(X) ) [\hbar^{-1}].
$$
\end{lemma}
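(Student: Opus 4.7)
The strategy is to use the renormalization group flow as an explicit chain-level realization of the canonical isomorphism between the Fock modules at different scales. The preceding lemma showed that $\mc{F}(\mc{S}_+(X), g, L)$ is independent of $L$ up to homotopy, via the family Fock module $\mc{F}(\mc{S}_+(X), g, L, \d L)$ over $\Omega^\ast((0,\infty)_L)$; this gives, on passing to cohomology, canonical isomorphisms between the fibers at different values of $L$. The task is to exhibit these isomorphisms on the cochain level and verify that they intertwine $Z_{\F}[\eps]$ and $Z_{\F}[L]$.

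First, I would use Proposition \ref{proposition fock qme} to identify $\mc{F}(\mc{S}_+(X), g, L)$ with $(\Oo(\mc{S}_+(X))[[\hbar]], Q + \hbar \Delta_L)$, so that $Z_{\F}[L] = \exp(\F[L]/\hbar)$ is a closed element of the latter complex by the quantum master equation. Next, I would observe that the operator $\exp(\hbar \partial_{P(\eps,L)})$ defines a map
$$
\exp(\hbar \partial_{P(\eps,L)}) : (\Oo(\mc{S}_+(X))[[\hbar]], Q + \hbar \Delta_\eps) \to (\Oo(\mc{S}_+(X))[[\hbar]], Q + \hbar \Delta_L)
$$
which is a cochain map. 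This follows from the homotopy identity $[Q, \partial_{P(\eps,L)}] = \Delta_\eps - \Delta_L$ from equation (\ref{eqn_dagger}), combined with the fact that $\partial_{P(\eps,L)}$ is a constant-coefficient differential operator that commutes with both $\Delta_\eps$ and $\Delta_L$; exponentiating gives $(Q + \hbar \Delta_L) \exp(\hbar \partial_{P(\eps,L)}) = \exp(\hbar \partial_{P(\eps,L)})(Q + \hbar \Delta_\eps)$.

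Applying the renormalization group flow equation in its exponentiated form yields
$$
Z_{\F}[L] = \exp(\hbar \partial_{P(\eps,L)}) Z_{\F}[\eps].
$$
Thus the cohomology class of $Z_{\F}[L]$ in $H^\ast \mc{F}(\mc{S}_+(X), g, L)[\hbar^{-1}]$ corresponds, under the map induced on cohomology by $\exp(\hbar \partial_{P(\eps,L)})$, to the cohomology class of $Z_{\F}[\eps]$.

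The remaining step, which I expect to be the main technical point, is to verify that the cohomology isomorphism induced by $\exp(\hbar \partial_{P(\eps,L)})$ coincides with the canonical isomorphism $H^\ast \mc{F}(\mc{S}_+(X), g, \eps) \iso H^\ast \mc{F}(\mc{S}_+(X), g, L) \iso \mc{F}(H^\ast \mc{S}_+(X))[\hbar^{-1}]$ constructed via the family Fock module over $\Omega^\ast((0,\infty)_L)$. The natural approach is to promote $Z_{\F}[L]$ to a closed element of the family complex over an interval $[\eps, L] \subset (0,\infty)_L$: differentiating in $L$ and using that the propagator satisfies $\frac{d}{dL} P(\eps, L) = (\dbar^\ast \partial \otimes 1) K_L$, one checks that $Z_{\F}[L]$ together with the appropriate $\d L$-correction produced by $\partial_{P(\eps,L)}$ assembles into a closed element for the total differential on the family Fock module. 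Since a closed element of the total complex defines a locally constant section of the cohomology local system in $L$, this identifies the two cohomology classes canonically, and the cohomology class $[Z_{\F}] \in \mc{F}(H^\ast \mc{S}_+(X))[\hbar^{-1}]$ is well-defined.
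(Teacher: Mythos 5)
Your proposal is correct, and its decisive final step --- promoting $Z_{\F}[L]$ to a closed element of the family Fock module over $\Omega^\ast((0,\infty)_L)$, using $\tfrac{\d}{\d L}P(\eps,L) = (\dbar^\ast\partial\otimes 1)K_L$ so that the renormalization group equation becomes exactly the closedness condition --- is precisely the paper's proof. The only cosmetic difference is bookkeeping: in the paper the term $\hbar\,\d L\,\partial_{(\partial\dbar^\ast\otimes 1)K_L}$ sits in the family differential (where it is needed for that differential to square to zero), so $Z_{\F}[L]$ itself is closed with no $\d L$-correction, and your preliminary observation that $\exp(\hbar\partial_{P(\eps,L)})$ intertwines the scale-$\eps$ and scale-$L$ complexes, while correct, is subsumed by the family argument.
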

\begin{proof}

We construct a family $\mc{F} ( \mc{S}_+(X), g, L, \d L)$ of Fock modules over the base ring $\Omega^\ast( (0,\infty)_L)$.   As a graded vector space,
$$
\mc{F} ( \mc{S}_+(X), g, L, \d L) = \Oo(\mc{S}_+(X)) \otimes \Omega^\ast( (0,\infty)_L ) \otimes \C[[\hbar]].
$$ 
The differential on this graded vector space is 
$$
\dbar - t \partial + \hbar \Delta_L + \hbar \partial_{(\partial \dbar^\ast \otimes 1) K_L } .
$$
It is straightforward to check that the renormalization group equation for $\F[L]$ is equivalent to the statement that
$$
Z_{\F}[L] \in\Oo(\mc{S}_+(X)) \otimes \Omega^\ast( (0,\infty)_L ) \otimes \C((\hbar))
$$ 
is closed for this differential.  
\end{proof}

\subsection{}
Now suppose that we have a family of metrics $g_t$ depending smoothly on $t$ in the $k$-simplex.  Suppose that we have a homotopically trivial family of quantizations for this family of metrics.  Such a family of quantizations is given by a collection of functionals
$$
\{ \F[L]  \in \Oo ( \PV(X) ) \otimes \Omega^\ast(\Delta^k ) [[\hbar]] \mid L \in (0,\infty) \},
$$
which satisfy a renormalization group equation, quantum master equation, and locality axiom.  As we have seen in section \ref{section quantization}, by considering such families of quantizations, one can translate a quantization for one metric into a quantization for another. 

As before, we can define
$$
Z_{\F} [L] = \exp ( \F[L] / \hbar ]  \in \Oo ( \PV(X) ) \otimes \Omega^\ast(\Delta^k ) ((\hbar)).
$$
Let $\mc{F}( \mc{S}_+(X), g_t, \d t, L )$ denote the Fock module for the family $g_t$ of K\"ahler metrics.  Recall that this is a dg module over $\Omega^\ast (\Delta^k)$ which specializes to the Fock module $\mc{F}( \mc{S}_+(X), g_t, L )$ at specific values of $t$.  There is an isomorphism of graded vector spaces
$$
\mc{F}( \mc{S}_+(X), g_t, \d t, L ) =  \Oo ( \PV(X) ) \otimes \Omega^\ast(\Delta^k )[[\hbar]].
$$
\begin{lemma}
The partition function $Z_{\F}[L]$, when viewed as an element of 
$$
\mc{F}( \mc{S}_+(X), g_t, \d t, L ) [\hbar^{-1}],
$$
is closed for the natural differential on this space.
\end{lemma}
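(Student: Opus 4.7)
The plan is to reduce the lemma to a direct family-version translation of the quantum master equation, in exactly the same manner that Proposition \ref{proposition fock qme} treated the single-metric case.

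First I would establish a family analog of Proposition \ref{proposition fock qme}: under the canonical $\Omega^\ast(\Delta^k)$-linear isomorphism of graded modules
$$
\mc{F}(\mc{S}_+(X), g_t, \d t, L) \iso \Oo(\mc{S}_+(X)) \otimes \Omega^\ast(\Delta^k)[[\hbar]],
$$
the natural Fock-module differential corresponds to
$$
\dbar - t\partial + \d_{dR} + \hbar \Delta_{L, g_t, \d t},
$$
where $\d_{dR}$ is the de Rham differential on $\Omega^\ast(\Delta^k)$ and $\Delta_{L, g_t, \d t}$ is the order-two operator given by contraction with $(\partial \otimes 1) K_{g_t, \d t, L}$. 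The verification is a direct transcription of the proof of Proposition \ref{proposition fock qme}, using the same splitting $\mc{S}(X) = \mc{S}_+(X) \oplus \mc{S}_-(X)$ extended $\Omega^\ast(\Delta^k)$-linearly. The extra $\d_{dR}$ term appears because, by construction, the family Weyl algebra is built from the homotopy inverse $\omega_{g_t, \d t, L}^{-1}$ associated to the family heat kernel for $[\dbar + \d_{dR}, \dbar^\ast_{g_t}]$, and $\d_{dR}$ is the piece of the family differential acting trivially on $\mc{S}(X)$ but non-trivially on $\Omega^\ast(\Delta^k)$. The $\hbar \Delta_{L, g_t, \d t}$ contribution comes from the same commutator computation
$$
[\Phi(\alpha), [t, \Phi](\partial \beta)] = \omega_{g_t, \d t, L}^{-1}(\Phi(\alpha), t \partial \Phi(\beta))\, \hbar
$$
as in Proposition \ref{proposition fock qme}, but now with the family homotopy inverse.

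Second, I would appeal to the family quantum master equation. For a family of quantizations over $\Omega^\ast(\Delta^k)$ the QME takes the form
$$
(Q + \d_{dR} + \hbar \Delta_{L, g_t, \d t}) \exp(\F[L]/\hbar) = 0,
$$
which under the isomorphism above is precisely the assertion that $Z_{\F}[L]$ is closed for the natural differential on $\mc{F}(\mc{S}_+(X), g_t, \d t, L)[\hbar^{-1}]$. This is the content of the lemma.

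The main point requiring care, and the only potential obstacle, is verifying that the same formal manipulation as in Proposition \ref{proposition fock qme} survives the passage to the family setting: one must check that the family BV operator $\Delta_{L, g_t, \d t}$ appearing in the family QME coincides with the one arising from the family Fock-module differential. This is immediate because both are constructed from the same object, namely the family heat kernel $K_{g_t, \d t, L}$, and the algebraic identity used in Proposition \ref{proposition fock qme} is $\Omega^\ast(\Delta^k)$-linear. Once this compatibility is in place, no further computation is needed: the lemma is a direct rewriting of the family QME in Fock-space language.
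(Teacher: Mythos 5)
Your proposal is correct and follows exactly the route the paper intends: the paper's proof is simply the remark that this is the family version of Proposition \ref{proposition fock qme}, i.e.\ one identifies the family Fock differential with $\dbar - t\partial + \d_{dR} + \hbar\Delta_{L,g_t,\d t}$ (both BV-type terms being built from the same family heat kernel $K_{g_t,\d t,L}$) and then reads off closedness of $Z_{\F}[L]$ from the family quantum master equation. Your write-up just makes explicit the details the paper leaves to the reader.
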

\begin{proof}
This is the family version of Proposition \ref{proposition fock qme}. The proof is similar. 
\end{proof}

Recall that, in section \ref{section quantization}, we described a simplicial set $\op{Quant}(X)$ of quantizations of the BCOV theory on $X$. Zero simplices in this simplicial set are pairs $(\{\F[L]\}, g)$ where $g$ is a K\"ahler metric on $X$, and $\{\F[L]\}$ is a quantization for this metric.  Higher simplices are given by families of metrics, and homotopies of fields theories, as above.  We will let $\op{Quant}(X)[k]$ denote the set of $k$-simplices of $\op{Quant}(X)$.
\begin{corollary}
The map 
\begin{align*}
\op{Quant}(X)[0] &\to \mc{F} (H^\ast \mc{S}_+(X) ) [\hbar^{-1} ] \\
\left( \{\F[L]\}, g \right) & \mapsto [Z_{\F}]
\end{align*}
descends to a map
$$
\pi_0 \op{Quant}(X) \to \mc{F} (H^\ast \mc{S}_+(X) ) [\hbar^{-1} ]. 
$$
\end{corollary}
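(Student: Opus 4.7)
The plan is to show that if $(\{\F_0[L]\}, g_0)$ and $(\{\F_1[L]\}, g_1)$ lie in the same path-component of $\op{Quant}(X)$, then the classes $[Z_{\F_0}]$ and $[Z_{\F_1}]$ in $\mc{F}(H^\ast\mc{S}_+(X))[\hbar^{-1}]$ coincide. Since $\op{Quant}(X)$ is a Kan fibration over the contractible simplicial set $\op{Met}(X)$, being in the same component is witnessed by a $1$-simplex, i.e.\ a smooth family $\{g_t\}_{t\in[0,1]}$ of K\"ahler metrics together with a family of quantizations $\{\F[L]\} \in \Oo(\E(X))\otimes \Omega^\ast([0,1])[[\hbar]]$ that restrict at the endpoints to the given quantizations. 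The previous lemma is stated precisely to handle such $1$-parameter families and is therefore what I would use directly.

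First, I would invoke the family version of Proposition \ref{proposition fock qme} together with the preceding family lemma to conclude that the partition function
$$
Z_{\F}[L] = \exp(\F[L]/\hbar) \;\in\; \mc{F}(\mc{S}_+(X), g_t, \d t, L)[\hbar^{-1}]
$$
is closed for the natural differential on the family Fock module over $\Omega^\ast([0,1])$. Writing $Z_{\F}[L] = A[L] + B[L]\, \d t$ in the Koszul decomposition $\Oo(\mc{S}_+(X))\otimes \Omega^\ast([0,1])((\hbar))$, closedness says that the $\Omega^0$-part gives the parametrized quantum master equation for each $t$, while the $\d t$-part expresses that $A[L]|_{t=1} - A[L]|_{t=0}$ is exact in $\mc{F}(\mc{S}_+(X), g_t, L)$ up to the family gauge-fixing homotopy. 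Concretely, restricting to $t=0$ and $t=1$ produces cochain maps to $\mc{F}(\mc{S}_+(X), g_0, L)$ and $\mc{F}(\mc{S}_+(X), g_1, L)$ sending $Z_{\F}[L]$ to $Z_{\F_0}[L]$ and $Z_{\F_1}[L]$ respectively.

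Next, I would combine this with the fact established earlier that the family cohomology is canonically isomorphic to $\mc{F}(H^\ast\mc{S}_+(X))$, independently of $t$ and $L$. More precisely, both evaluation maps $\Omega^\ast([0,1]) \to \C$ (at $t=0$ and $t=1$) are quasi-isomorphisms of dg algebras, and the family Fock module is flat over $\Omega^\ast([0,1])$, so the induced maps on cohomology
$$
H^\ast\mc{F}(\mc{S}_+(X), g_t, \d t, L) \longrightarrow H^\ast\mc{F}(\mc{S}_+(X), g_i, L), \qquad i=0,1,
$$
are both identified, via the canonical isomorphism with $\mc{F}(H^\ast\mc{S}_+(X))$, with the identity. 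Applying these two evaluations to the single closed element $Z_{\F}[L]$ therefore yields the same class in $\mc{F}(H^\ast\mc{S}_+(X))[\hbar^{-1}]$. This gives $[Z_{\F_0}] = [Z_{\F_1}]$, which is exactly the statement that the map descends to $\pi_0 \op{Quant}(X)$.

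The only real subtlety, and what I would anticipate as the main obstacle, is verifying that the two endpoint evaluation maps really do induce the same isomorphism with $\mc{F}(H^\ast\mc{S}_+(X))$, rather than isomorphisms that differ by some nontrivial automorphism. This amounts to checking that the canonical isomorphism of the previous lemma is functorial in the base dg ring of gauge-fixing data, so that parallel transport along $[0,1]$ in $\op{Met}(X)$ acts trivially on the cohomology level. This follows from the construction of the isomorphism $H^\ast\mc{W}(\mc{S}(X)) \iso \mc{W}(H^\ast\mc{S}(X))$ being natural in the homotopy inverse $\omega^{-1}_{g,L}$, combined with the contractibility of the simplex $\Delta^1 = [0,1]$; once this naturality is spelled out, the conclusion is immediate.
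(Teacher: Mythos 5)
Your proposal is correct and is essentially the argument the paper intends: the corollary is an immediate consequence of the preceding family lemma (closedness of $Z_{\F}[L]$ in the family Fock module over $\Omega^\ast([0,1])$) together with the canonical, natural identification of the cohomology of the family Fock module with $\mc{F}(H^\ast \mc{S}_+(X))$, so the two endpoint evaluations yield the same class. The only minor imprecision is the claim that lying in the same path component is witnessed by a single $1$-simplex; in general it is witnessed by a zig-zag of $1$-simplices, but this is harmless since your argument applies edge by edge.
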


\section{Correlation functions and complements to the Hodge filtration}
\label{section correlation}
In this section we will describe how to understand the correlation functions of the BCOV theory from the point of view of the Fock space.  Although some aspects of the Fock space formalism work for non-compact $X$, throughout this section we need to assume that $X$ is compact.  

The correlation functions depend on an additional choice, which we now define.
\begin{definition}
A \emph{polarization} of $H^\ast \mc{S}(X)$ is a Lagrangian subspace
$$
\Lag \subset H^\ast (\mc{S}(X))
$$
such that 
\begin{enumerate}
\item 
$$\Lag \oplus H^\ast \mc{S}_+(X) = H^\ast \mc{S}(X)$$.
\item The subspace $\Lag$ is preserved by the operator 
$$t^{-1} : H^\ast \mc{S}(X)  \to H^\ast \mc{S}(X).$$
\end{enumerate}
\end{definition}
Note that $H^\ast \mc{S}_+(X)$ is preserved by the operator $t$.  Further, the subspace
$$
t^k H^\ast \mc{S}_+(X)  \subset H^\ast \mc{S}(X)
$$
(for $k\in \Z$) define a filtration, whose associated graded can be identified as 
$$
\op{Gr} H^\ast \mc{S}(X) = H^\ast \left( \PV(X) , \dbar \right) ((t)) [2]. 
$$
The choice of a polarization $\Lag$ gives a splitting of this filtration, and so an isomorphism
$$
H^\ast \mc{S}(X) \iso H^\ast (\PV(X), \dbar) ((t))[2].
$$
Explicitly, there's a natural isomorphism
$$
t^k H^\ast \mc{S}_+(X) /  t^{k+1} H^\ast \mc{S}_+(X) \iso t^k H^\ast \mc{S}_+(X) \cap t^{k+1} \Lag. 
$$
Thus, we get an isomorphism
$$
t^k H^\ast \mc{S}_+(X) \cap t^{k+1} \Lag = t^k H^\ast (\PV(X)).
$$

This isomorphism
$$
H^\ast \mc{S}(X) \iso H^\ast (\PV(X)) ((t)) [2]
$$
is symplectic, where $H^\ast (\PV(X)) ((t)) [2]$ is equipped with the symplectic form
$$
\omega ( \alpha t^k, \beta t^l )  = \op{Tr} (\alpha \beta) \op{Res} (t^k (-t)^l \d t ).
$$
Further, the subspace $H^\ast \mc{S}_+(X)$ of $H^\ast \mc{S}(X)$ corresponds to the subspace
$$
H^\ast (\PV(X)) [[t]] [2] \subset H^\ast (\PV(X)) ((t)) [2].
$$
The following lemma is an immediate corollary of these considerations.
\begin{lemma}
Let $\Lag$ be a polarization of $H^\ast \mc{S}(X)$.  Then there is a natural isomorphism
$$
\Phi_{\Lag} : \mc{F} ( H^\ast \mc{S}_+(X) ) \iso \Oo ( H^\ast (\PV(X) )[[t]] [2] ) [[\hbar]].
$$
Here, $\Oo ( H^\ast (\PV(X) )[[t]] [2] )$ denotes the algebra of formal power series on the graded vector space $H^\ast (\PV(X) )[[t]] [2]$.
\end{lemma}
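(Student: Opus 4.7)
The plan is to apply the general Fock space splitting construction, as in Lemma \ref{lemma fock space differential}, to the cohomological symplectic vector space $H^\ast \mc{S}(X)$ with its Lagrangian $H^\ast \mc{S}_+(X)$, using the polarization $\Lag$ as the complementary Lagrangian. Because we have passed to cohomology, the symplectic complex carries no differential, so no homotopical subtleties intervene and the construction is purely a matter of linear (actually topological) algebra.

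First I would use the polarization to produce an explicit symplectic isomorphism
$$
H^\ast \mc{S}(X) \iso H^\ast (\PV(X))((t))[2]
$$
that carries $H^\ast \mc{S}_+(X)$ onto $H^\ast(\PV(X))[[t]][2]$ and carries $\Lag$ onto $t^{-1}H^\ast(\PV(X))[t^{-1}][2]$. This is exactly the splitting of the $t$-adic filtration sketched just before the lemma statement: the condition $t^{-1}\Lag \subset \Lag$ guarantees that the piecewise isomorphisms $t^k H^\ast \mc{S}_+(X) \cap t^{k+1}\Lag \iso t^k H^\ast(\PV(X))$ assemble $\C[t,t^{-1}]$-linearly into the desired splitting, and a direct check against the formula $\omega(\alpha t^k,\beta t^l) = \op{Tr}(\alpha\beta)\op{Res}(t^k(-t)^l\,\d t)$ shows the splitting is symplectic.

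Next I would invoke the Weyl/Fock splitting from the finite-dimensional discussion: for a symplectic vector space with complementary Lagrangians $L\oplus L'$, the pairing identifies $L' \iso L^\vee$, and the resulting section of $V^\vee \twoheadrightarrow L^\vee$ lifts to an algebra map $\Oo(L)[[\hbar]] = \what{\Sym}^\ast(L^\vee)[[\hbar]] \to \mc{W}(V)$, which upon composing with $\mc{W}(V)\to \mc{F}(L)$ becomes a graded isomorphism. Applying this with $V = H^\ast \mc{S}(X)$, $L = H^\ast\mc{S}_+(X)$, and $L' = \Lag$ defines $\Phi_\Lag$. Transporting along the symplectic isomorphism of the previous paragraph then rewrites the target as $\Oo(H^\ast(\PV(X))[[t]][2])[[\hbar]]$.

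The one point requiring care, which I expect to be routine bookkeeping rather than a genuine obstacle, is verifying that the finite-dimensional argument carries over in the pro-finite topological setting of $H^\ast\mc{S}(X)$: the Lagrangian $L = H^\ast(\PV(X))[[t]][2]$ is a pro-finite-dimensional product while $L' = t^{-1}H^\ast(\PV(X))[t^{-1}][2]$ is an ind-finite-dimensional sum, so $\Oo(L) = \what{\Sym}^\ast(L^\vee)$ is well-defined as a topological algebra of formal power series and the Weyl relations $[a,b] = \hbar\,\omega^{-1}(a,b)$ close up in the appropriate topological completion of the tensor algebra. Because the cohomological differential is zero, the correction term $\hbar\partial_P$ appearing in Lemma \ref{lemma fock space differential} vanishes identically, so $\Phi_\Lag$ is automatically compatible with the (trivial) differentials on both sides, completing the proof.
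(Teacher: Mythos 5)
Your proposal is correct and follows essentially the same route as the paper: the paper's proof is the one-line observation that $\Oo(H^\ast(\PV(X))[[t]][2])[[\hbar]]$ \emph{is} the Fock space for $H^\ast(\PV(X))((t))[2]$ with Lagrangian $H^\ast(\PV(X))[[t]][2]$, relying on the polarization-induced symplectic identification established just before the lemma. You simply spell out the same two steps (transfer along that identification, then the finite-dimensional Weyl/Fock splitting with the vanishing $\hbar\partial_P$ correction) in more detail, which is fine.
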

\begin{proof}
Indeed, $\Oo ( H^\ast (\PV(X) )[[t]] [2] )$ is the Fock space for $H^\ast( \PV(X) ) ((t)) [2]$ corresponding to the Lagrangian subsapce $H^\ast (\PV(X)) [[t]] [2]$. 
\end{proof}
Thus, once we choose a polarization $\Lag$, we can consider the partition function $Z_\F$ as an element
$$
\Phi_{\Lag} (Z_\F) \in \Oo (H^\ast (\PV(X) ) [[t]] [2] ) ((\hbar)).
$$
It is not difficult to verify that 
$$
\hbar \log \Phi_{\Lag} (Z_{\F}) \in \Oo (H^\ast (\PV(X) ) [[t]] [2] ) [[\hbar]].
$$
\begin{definition}
Let $\F \in \pi_0( \op{Quant}(X) )$ be the (homotopy class) of a quantization of the BCOV theory on $X$.  Let $\Lag$ be a polarization of $H^\ast \mc{S}(X)$.  Let $\alpha_1, \ldots, \alpha_n \in H^\ast (\PV(X))$.  Define the \emph{correlation functions} associated to $\F$ and $\Lag$ by
$$
\sum \hbar^g \ip{t^{k_1} \alpha_1, \ldots, t^{k_n} \alpha_n }^{\F, \Lag}_{g,n}  =\left( \frac{\partial}{\partial (t^{k_1} \alpha_1 ) } \cdots \frac{\partial}{\partial (t^{k_1} \alpha_1 ) } \hbar \log \Phi_{\Lag} (Z_{\F}) \right) (0) \in \C[[\hbar]].
$$
\end{definition}
These correlators depend, of course, on $\Lag$, but in a controlled way.  If we change polarization $\Lag$ to $\Lag'$, then the isomorphism
$$
\Phi_{\Lag} : \mc{F} ( H^\ast \mc{S}_+(X) ) \iso \Oo ( H^\ast ( \PV(X) ) [[t]] [2] ) [[\hbar]]
$$
changes by an isomorphism
$$
\Psi_{\Lag \to \Lag' } : \Oo ( H^\ast ( \PV(X) ) [[t]] [2] ) [[\hbar]]
$$
of the type considered by Givental \cite{Giv04}.  In particular, the dependence of these correlators on $\Lag$ is of a polynomial nature. 

\subsection{}
In section \ref{section quantization} we gave a different definition of correlators, defined by considering the scale infinity effective interaction $\F[\infty]$ on the harmonic forms.  These correlators are simply a special case of the correlators considered here, corresponding to a particular choice of polarization $\Lag$.  

Indeed, let $g$ be a K\"ahler metric on $X$.  Let
$$
\mc{H} \mc{S}(X) \subset \mc{S}(X) = \PV(X)((t))[2]
$$
be the subspace of harmonic elements, defined using this metric.  The operators $\partial$ and $\dbar$ are zero on $\mc{H}(\mc{S}(X))$, so that one has  natural isomorphisms
$$
H^\ast \mc{S}( X) \iso \mc{H} \mc{S}(X)  \iso \mc{H} (\PV(X) ) ((t))[2] \iso H^\ast (\PV(X) ) ((t)) [2].
$$
Here, $\mc{H}(\PV(X))$ denotes the space of harmonic polyvector fields.  

Let 
$$
\Lag_g \subset H^\ast \mc{S} (X) 
$$
denote the subspace corresponding to 
$$t^{-1} \mc{H} ( \PV(X)) [t^{-1} ] [2]\subset \mc{H} (\PV(X) ) ((t)) [2].$$
\begin{lemma}
The correlation functions defined in section \ref{section quantization} are the correlation functions corresponding to this Lagrangian subspace $\Lag_g$. 
\label{lemma lagrangian metric}
\end{lemma}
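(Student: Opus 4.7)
The plan is to compare the two constructions directly at scale $L = \infty$, where the harmonic polarization $\Lag_g$ becomes most transparent. I would proceed in three steps.

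First, I would observe that at $L = \infty$ the heat kernel $K_\infty \in \PV(X)^{\otimes 2}$ is the integral kernel of the projection onto the harmonic polyvector fields $\mc{H}(\PV(X))$. Consequently, the BV operator $\Delta_\infty$, defined by contraction with $(\partial \otimes 1) K_\infty$, vanishes on any function whose Taylor coefficients are supported on harmonic fields, because $\partial$ annihilates $\mc{H}(\PV(X))$ by the Hodge lemma. For the same reason the differential $\dbar - t \partial$ restricts to zero on $\mc{H}(\PV(X))[[t]][2] = \mc{H}\mc{S}_+(X)$. Hence the restriction
$$
Z_{\F}[\infty]\big|_{\mc{H}\mc{S}_+(X)} = \exp\!\left(\F[\infty]\big|_{\mc{H}\mc{S}_+(X)} / \hbar \right) \in \Oo(\mc{H}\mc{S}_+(X))((\hbar))
$$
is automatically closed for the Fock module differential at $L = \infty$, and under the Hodge-theoretic identification $H^\ast\mc{S}_+(X) \cong \mc{H}\mc{S}_+(X)$ it represents the class $[Z_{\F}] \in \mc{F}(H^\ast \mc{S}_+(X))[\hbar^{-1}]$.

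Second, I would identify this harmonic restriction with $\Phi_{\Lag_g}(Z_{\F})$ by unwinding the definitions. The polarization $\Lag_g$ corresponds, via the canonical isomorphism $H^\ast \mc{S}(X) \cong \mc{H}(\PV(X))((t))[2]$, to the splitting
$$
\mc{H}(\PV(X))((t))[2] \;=\; \mc{H}(\PV(X))[[t]][2]\;\oplus\;t^{-1}\mc{H}(\PV(X))[t^{-1}][2].
$$
The section $\Phi : H^\ast\mc{S}_+(X)^\vee \to H^\ast\mc{S}(X)^\vee$ attached to this splitting is just extension by zero on $t^{-1}\mc{H}(\PV(X))[t^{-1}]$. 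Repeating the argument of Proposition \ref{proposition fock qme} with this choice of complementary Lagrangian, one sees that $\Phi_{\Lag_g}$ sends $[Z_{\F}]$ precisely to the restriction of any Fock-representative to $\mc{H}\mc{S}_+(X)$; taking $L = \infty$ as the representative then gives exactly $Z_{\F}[\infty]|_{\mc{H}\mc{S}_+(X)}$.

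Third, the equality of correlation functions is then immediate: both definitions compute the Taylor coefficients at $0$ of $\F[\infty]|_{\mc{H}\mc{S}_+(X)} = \hbar \log \Phi_{\Lag_g}(Z_{\F})$ along harmonic elements $\alpha_i \in H^\ast(X, \wedge^\ast TX) \cong \mc{H}(\PV(X))$.

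The main obstacle is a technical one: the quantization is defined for $L \in \R_{>0}$, not including $\infty$, so one must verify that the family Fock module and the functional $Z_{\F}[L]$ admit the correct $L \to \infty$ limit, and that this limit correctly represents the cohomology class of $Z_{\F}$. This relies on the fact that on a compact K\"ahler manifold the heat operator $e^{-u[\dbar,\dbar^\ast]}$ converges exponentially to the harmonic projection, so both $K_L \to K_\infty$ and the propagator $P(\eps,L)$ have smooth limits; once this is in hand, every step above is a straightforward manipulation with Hodge theory and the definition of $\Phi_{\Lag}$.
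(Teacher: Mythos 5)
Your argument is correct and is essentially the paper's own reasoning: the paper disposes of this lemma with ``This is immediate,'' precisely because the polarization $\Lag_g$ was defined so that, at $L=\infty$, the vanishing of $\Delta_\infty$ and of $\dbar - t\partial$ on harmonic fields makes the restriction of $Z_{\F}[\infty]$ to $\mc{H}\mc{S}_+(X)$ the $\Phi_{\Lag_g}$-image of the class $[Z_{\F}]$. Your write-up simply unpacks that immediacy (including the $L\to\infty$ convergence point, which the paper also takes for granted when it defines $\F[\infty]$), so no further comment is needed.
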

\begin{proof}
This is immediate. 
\end{proof}

\subsection{}
It turns out that the polarization $\Lag_g$ of $\mc{S}(X)$ does not, in fact, depend on the metric $g$.  However, it \emph{does not} vary holomorphically with $X$.  This leads to the famous ``holomorphic anomaly'' of \cite{BerCecOog94}.  

In order to see this, let us reinterpret the symplectic vector space $H^\ast \mc{S}(X)$, and the polarization $\Lag_g$, in terms of Hodge theory.   
\begin{proposition}
There is a natural isomorphism of symplectic vector spaces
$$
H^\ast \mc{S}(X) \iso H^\ast(X) ((t)) [d]
$$
where the right hand side is equipped with the symplectic pairing
$$
\omega(\alpha f(t), \beta g(t) = \int_{X} \alpha \wedge \beta \op{Res} \left( t^{2-d} f(t) g(-t) \d t \right). 
$$
This isomorphisms is $\C((t))$-linear. 

Further, under this isomorphism, the subspace
$$
H^\ast \mc{S}_+(X) \subset H^\ast \mc{S}(X)
$$
corresponds to the subspace spanned by the subspaces
$$
F^p H^\ast(X) \otimes t^{d - p  -1} \C[[t]] \subset H^\ast(X)((t))
$$
for each $p \ge 0$.  Here $F^p H^\ast (X) \subset H^\ast(X)$ is the $p^{th}$ piece of the Hodge filtration.
\end{proposition}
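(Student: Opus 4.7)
The plan is to construct the isomorphism explicitly at the chain level and pass to cohomology. Define
$$\phi : \mc{S}(X) = \PV(X)((t))[2] \longrightarrow \Omega^{\ast,\ast}(X)((t))$$
by $\phi(t^k \alpha) = t^{k + i - 1} (\alpha \vee \omega)$ for $\alpha \in \PV^{i,j}(X)$, extended $\C((t))$-linearly. The $t$-shift by $i-1$ is forced by degree matching: in $\mc{S}(X)$ the element $t^k \alpha$ has cohomological degree $2k + i + j - 2$, while in $H^\ast(X)((t))[d]$ an element $t^{k'} \beta$ with $\beta \in \Omega^{d-i,j}$ has degree $2k' + (d-i) + j - d$, so one needs $k' = k + i - 1$. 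Since $\vee\omega : \PV^{i,j} \iso \Omega^{d-i,j}$, the map $\phi$ is an isomorphism of graded $\C((t))$-modules.

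Next, I would verify that $\phi$ is a cochain map intertwining $\dbar - t\partial$ on the source with $\dbar - \partial$ on the target. The $\dbar$ calculation is immediate. For the $\partial$ part, the defining identity $(\partial \alpha) \vee \omega = \partial (\alpha \vee \omega)$ combined with the observation that $\partial$ lowers $i$ by one (so the target shift changes from $t^{i-1}$ to $t^{i-2}$) absorbs one power of $t$, converting $t\partial$ on the source to $\partial$ on the target. The sign twist $\alpha|_{\Omega^{p,q}} \mapsto (-1)^p \alpha$ identifies $(\Omega^{\ast,\ast}(X)((t)), \dbar - \partial)$ with the total de Rham complex tensored with $\C((t))$, so the cohomology is $H^\ast_{dR}(X) \otimes \C((t)) = H^\ast(X)((t))[d]$. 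To check that $\phi$ is symplectic, I would combine the residue identity $\Res_{t=0}\bigl(t^{2-d} t^{k+i-1}(-t)^{l+d-i-1} dt\bigr) = (-1)^{l+d-i-1} \delta_{k+l,-1}$ with the equality $\Tr(\alpha\beta) = (-1)^{d-i-1}\int_X (\alpha \vee \omega)\wedge(\beta \vee \omega)$ for $\alpha \in \PV^{i,j}$, $\beta \in \PV^{d-i,d-j}$, which follows from the definition of $\Tr$ and the compatibility of $\vee$ with the wedge product.

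For the Hodge filtration statement, computing the image gives
$$\phi(\mc{S}_+(X)) = \bigoplus_{p, j} \bigoplus_{k \geq d - p - 1} t^k \Omega^{p,j}(X),$$
which one checks is a subcomplex for $\dbar - \partial$ (since $\partial$ raises $p$ by one, compatible with the constraint $k \geq d-p-1$). The main obstacle will be to identify the cohomology of this subcomplex with the stated Rees submodule of $H^\ast(X)((t))$. For this I plan to invoke the degeneration of the Hodge-to-de-Rham spectral sequence on the compact K\"ahler manifold $X$, which ensures that $H^\ast(\PV(X)[[t]], \dbar - t\partial)$ is a free $\C[[t]]$-module isomorphic to $H^\ast_{Dol}(\PV(X))[[t]] = \bigoplus_{p,q} H^{p,q}(X)[[t]]$ under $\vee\omega$. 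Applying $\phi$, each $t^m \cdot H^{p,q}(X)$ with $m \geq 0$ maps to $t^{m + d - p - 1} \cdot H^{p,q}(X)$, and reparametrizing by Hodge type yields $\bigoplus_{p, q, k \geq d-p-1} t^k H^{p,q}(X) = \sum_{p \geq 0} t^{d-p-1} F^p H^\ast(X) \cdot \C[[t]]$, which completes the identification.
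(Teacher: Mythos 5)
Your proposal is correct and follows essentially the same route as the paper: the paper's proof also sends $\alpha \in \PV^{i,j}(X)$ to $(-t)^{i-1}(\alpha \vee \omega)$, checks that this intertwines $\dbar - t\partial$ with the de Rham differential, and leaves the symplectic and Hodge-filtration statements as straightforward verifications (which you carry out in more detail, via the residue computation and Hodge-to-de Rham degeneration). The only cosmetic difference is that you implement the sign through a separate $(-1)^p$ twist on $\Omega^{p,q}$ instead of building $(-t)^{i-1}$ into the map itself.
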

Thus, $H^\ast(S_+(X))$ is spanned by elements of the form $t^n \alpha$ where $\alpha \in F^p H^\ast(X)$ and $n \ge d-p-1$. 
\begin{proof}
Recall that $\mc{S}(X)$ is $\PV(X) ((t)) [2]$, with differential $\dbar - t \partial$.  The holomorphic volume form on $X$ gives an isomorphism
$$
\PV^{i,j}(X) \to \Omega^{d - i, j}(X)
$$
by sending
$$
\alpha \to \alpha \vee \Omega,
$$
where $\Omega \in \Omega^{d,0}(X)$ is the holomorphic volume form.  

Under this isomorphism, the operators $\dbar$ and $\partial$ on $\PV^{i,j}(X)$ correspond to the standard operators $\dbar$ and $\partial$ on $\Omega^{\ast,\ast}(X)$.

Let us define a map
\begin{align*}
\Gamma : \PV^{i,j}(X) & \to \Omega^{d-i,j} (X) ((t))   \\ 
\Gamma(\alpha) &= (-t)^{i-1} (\alpha \vee \Omega). 
\end{align*} 
We can extend $\Gamma$, by $\C((t))$-linearity, to an isomorphism
$$
\Gamma : \PV(X) ((t))[2] \to \Omega(X) ((t)) [d] .
$$
Note that 
\begin{align*}
\Gamma ( \dbar \alpha ) & = \dbar \Gamma ( \alpha ) \\ 
\Gamma (-t  \partial \alpha ) &= \partial \Gamma(\alpha). 
\end{align*}
Thus, $\Gamma$ is a cochain isomorphism, and induces the desired isomorphism
$$
H^\ast \mc{S}(X) \iso H^\ast (X) ((t)) [d].
$$
The remaining properties are straightforward to verify. 
\end{proof}

\begin{lemma}
Polarizations of $H^\ast \mc{S}_+(X)$ are in bijection with splittings of the Hodge filtration on $H^\ast (X)$.
\end{lemma}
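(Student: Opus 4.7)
The plan is to exploit the isomorphism $H^\ast \mc{S}(X) \iso H^\ast(X)((t))[d]$ of the preceding proposition, in which $H^\ast \mc{S}_+(X)$ is identified with the span of $F^p H^\ast(X) \otimes t^{d-p-1} \C[[t]]$. Under this identification, a polarization becomes a $t^{-1}$-stable Lagrangian complement of this span inside $H^\ast(X)((t))$. I will describe a splitting of the Hodge filtration as a bigraded decomposition $H^\ast(X) = \bigoplus_{p,q} H^{p,q}$ (with each piece $H^{p,q}$ contained in the weight-$(p+q)$ cohomology) such that $F^p H^\ast(X) = \bigoplus_{p' \ge p,\, q'} H^{p', q'}$, and construct an explicit bijection with such polarizations.

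For the forward direction, given a splitting $\br{F}$, I would set
$$
\Lag_{\br{F}} = \bigoplus_{p,q} H^{p,q} \otimes t^{d-p-2} \C[t^{-1}].
$$
The required properties can be checked summand by summand. The subspace $\Lag_{\br{F}}$ is manifestly $t^{-1}$-stable. Complementarity with $H^\ast \mc{S}_+(X)$ reduces, inside each $H^{p,q}((t))$, to the observation that the $\mc{S}_+$-part picks up exactly the powers $t^j$ with $j \ge d-p-1$ while $\Lag_{\br{F}}$ picks up the powers $j \le d-p-2$. Isotropy reduces, using that the Calabi-Yau pairing $\int_X \alpha \wedge \beta$ vanishes on $H^{p,q} \otimes H^{p',q'}$ unless $p+p' = d$ and $q+q' = d$, to the residue computation $\Res_{t=0} t^{2-d}\cdot t^{d-p-2-i}(-t)^{d-p'-2-j} = \pm \Res t^{-2-i-j} = 0$ for all $i, j \ge 0$. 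Maximality then follows from a dimension count against the Lagrangian $H^\ast \mc{S}_+(X)$.

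For the inverse direction, given a polarization $\Lag$, I would recover the bigrading by defining, for each $p$, the subspace
$$
H^{p,\bullet} := \pi_{d-p-2}\bigl(\Lag \cap t^{d-p-2} H^\ast(X)[t^{-1}]\bigr) \subset H^\ast(X),
$$
where $\pi_k$ denotes projection onto the $t^k$-coefficient; then separating by cohomological degree to obtain the individual pieces $H^{p,q}$. I would verify that $H^{p,\bullet}$ maps isomorphically onto $F^p/F^{p+1}$ by using complementarity $\Lag \oplus H^\ast \mc{S}_+(X) = H^\ast \mc{S}(X)$ restricted to the subspaces of $H^\ast \mc{S}(X)$ of $t$-degree at most $d-p-2$, combined with the fact that $t^{-1}$-stability forces the spaces at different $p$ to be compatible. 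Mutual inverseness is then a direct check: starting from $\br{F}$, the explicit formula for $\Lag_{\br{F}}$ makes it obvious that $\pi_{d-p-2}(\Lag_{\br{F}} \cap t^{d-p-2} H^\ast(X)[t^{-1}]) = \bigoplus_q H^{p,q}$; and starting from $\Lag$, the $t^{-1}$-stability together with the dimension match forces $\Lag$ to equal $\bigoplus_{p,q} H^{p,q} \otimes t^{d-p-2}\C[t^{-1}]$ for the bigrading we just constructed.

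The main obstacle will be the inverse direction: showing that the subspaces $H^{p,\bullet}$ extracted from an abstract $\Lag$ genuinely have the correct dimension and lift $F^p/F^{p+1}$ (rather than just landing somewhere in $F^p$). The three conditions on $\Lag$ play distinct roles here and must be used in concert: the Lagrangian property pins down dimensions via the non-degenerate residue pairing; $t^{-1}$-stability ensures that the filtration one reads off at different values of $p$ is internally consistent; and complementarity with $H^\ast \mc{S}_+(X)$ is what translates statements about $\Lag$ into statements about the Hodge filtration. Once the inverse is in hand, bijectivity is essentially tautological.
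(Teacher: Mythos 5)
Your overall route is the same as the paper's: transport everything through the isomorphism $H^\ast \mc{S}(X) \iso H^\ast(X)((t))[d]$, attach to a splitting the explicit $t^{-1}$-stable complement (your $\bigoplus_{p,q} H^{p,q}\otimes t^{d-p-2}\C[t^{-1}]$ agrees with the paper's span of $\til{F}^p\otimes t^{d-p-1}\C[t^{-1}]$ once one translates between the bigraded and the filtered descriptions), and recover the splitting from $\Lag$ by reading off $t$-coefficients. However, your isotropy step contains a real gap. You use that $\int_X\alpha\wedge\beta$ vanishes on $H^{p,q}\otimes H^{p',q'}$ unless $p+p'=d$, $q+q'=d$. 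Only half of this is automatic: for $p+p'>d$ it follows from $F^p\cup F^{p'}\subset F^{p+p'}$ and $F^{>d}H^{2d}=0$, but for $p+p'<d$ it is an \emph{extra} compatibility of the splitting with the Poincar\'e pairing, not a consequence of complementarity --- and that is exactly the range your residue argument needs, since $\Res_{t=0}\, t^{\,d-p-p'-2-i-j}\,\d t\neq 0$ precisely when $p+p'=d-1-i-j<d$. Concretely, for $d=2$ one may choose the line complementary to $F^1H^2$ to be spanned by a class $\alpha$ with $\int_X\alpha\wedge\alpha\neq 0$; then $\alpha t^0,\ \alpha t^{-1}$ both lie in your subspace and pair nontrivially, so it is not Lagrangian. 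Thus either ``splitting'' must be taken to include this orthogonality (it does hold for the complex-conjugate splittings $\br{F}_\sigma$ and their $\sigma\to i\infty$ limits, which are the only ones the paper uses, and the issue is vacuous on curves), or the Lagrangian condition genuinely restricts the splittings. To be fair, the paper's own proof never verifies the Lagrangian condition either; but in your write-up the orthogonality should be flagged as a hypothesis/verification rather than quoted as a known property of arbitrary splittings.

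A second, more minor, slip is in your converse: by $t^{-1}$-stability, $\pi_{d-p-2}\bigl(\Lag\cap t^{d-p-2}H^\ast(X)[t^{-1}]\bigr)$ is not the single piece $H^{p,\bullet}$ but the whole opposite-filtration step $\bigoplus_{p'\le p}H^{p',\bullet}$ (this is what the paper recovers directly, as $\til{F}^p=\pi_{d-p-1}\Lag$, and treats as a filtration). In particular it does not map isomorphically to $F^p/F^{p+1}$, and your closing ``obvious'' identity for $\Lag_{\br{F}}$ is off by the lower-indexed pieces. The fix is routine: recover the increasing filtration as the paper does, check it is complementary to $F$, and then obtain the bigraded pieces by intersecting with $F^p$ (or taking successive quotients).
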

\begin{proof}
A splitting of the Hodge filtration is described by a filtration
$$
\til{F}^p H^\ast (X) \subset  \til{F}^{p+1} H^\ast(X) \cdots
$$
for $p \ge 0$, such that
$$
\til{F}^p H^\ast(X)  \oplus F^p H^\ast(X) = H^\ast(X).
$$
Given such a splitting, we define the corresponding Lagrangian subspace $\Lag \subset H^\ast (X) ((t))$ to be the subspace spanned by the spaces
$$
\til{F}^p H^\ast (X) \otimes t^{d - p - 1} \C[t^{-1} ] \subset H^\ast(X) ((t)),
$$
for $p \ge 0$.  Thus, if $\alpha \in H^\ast(X)$, $t^n \alpha \in \Lag$ if $\alpha \in \til{F}^p H^\ast(X)$ and $n \le d-p-1$. 

We can transfer this subspace to subspace of $H^\ast \mc{S}(X)$ using the isomorphism $H^\ast \mc{S}(X) \iso H^\ast(X)((t))$, to obtain a polarization of $H^\ast \mc{S}(X)$. 

Conversely, the subspace $\til{F}^p H^\ast (X)$ is defined as follows.  Let
$$
\pi_k  : H^\ast(X)((t)) \to H^\ast(X)
$$
be the projection onto the coefficient of $t^k$.  Then, we let
$$
\til{F}^p H^\ast(X) = \pi_{d - p - 1} \Lag \subset H^\ast(X).  
$$
The fact that $\Lag$ is closed under multiplication by $t^{-1}$ implies that $\til{F}^p H^\ast(X)$ is an increasing filtration.  It is straightforward to verify that this filtration is complementary to the Hodge filtration. 
\end{proof}
\begin{definition}
The \emph{complex-conjugate splitting} of the Hodge filtration is given by the subspaces
$$
\til{F}^p H^n (X) = \br{F}^{n-p} H^n(X),
$$
where $\br{F}^{n-p}$ is the subspace complex conjugate to $F^{n-p} H^n(X)$.

We will let
$$
\Lag_{\br{X} } \subset H^\ast \mc{S}(X)
$$
denote the polarization corresponding to the complex-conjugate splitting.  
\end{definition}
\begin{lemma}
Let $g$ be a K\"ahler metric on $X$, and let $\Lag_g \subset H^\ast \mc{S}(X)$ be defined as in \ref{lemma lagrangian metric}.  Then,  $\Lag_g = \Lag_{\br{X}}$. \end{lemma}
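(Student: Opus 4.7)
The plan is to apply the cochain isomorphism $\Gamma$ from the preceding proposition to reduce everything to an explicit comparison inside $H^\ast(X)((t))[d]$, where both $\Lag_g$ and $\Lag_{\br{X}}$ decompose compatibly with the Hodge decomposition $H^n(X)=\oplus_{a+b=n}H^{a,b}(X)$. The claim $\Lag_g=\Lag_{\br{X}}$ is then equivalent to showing that, for each pure-type class $\beta\in H^{a,b}(X)$, the set of integers $m$ with $t^m\beta\in\Gamma(\Lag_g)$ coincides with the set of $m$ with $t^m\beta\in\Gamma(\Lag_{\br{X}})$.

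To compute $\Gamma(\Lag_g)$, I would first note that, since $\Omega$ is a nowhere vanishing holomorphic top form, contraction with $\Omega$ is an isomorphism of $\dbar$-complexes $\PV^{i,j}(X)\xrightarrow{\sim}\Omega^{d-i,j}(X)$ and hence induces, after passing to cohomology, an isomorphism of the harmonic representatives $\mc{H}(\PV^{i,j}(X))$ with $\mc{H}^{d-i,j}(X)=H^{d-i,j}(X)$. Combined with the formula $\Gamma(t^k\alpha)=(-1)^{i-1}t^{k+i-1}(\alpha\vee\Omega)$ for $\alpha\in\PV^{i,j}(X)$, this shows that an element $t^k\alpha\in t^{-1}\mc{H}(\PV(X))[t^{-1}][2]$ maps to $(-1)^{i-1}t^m[\alpha\vee\Omega]$ with $[\alpha\vee\Omega]\in H^{p,j}(X)$ (where $p=d-i$) and $m=k+i-1=k+d-p-1\leq d-p-2$. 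Thus $\Gamma(\Lag_g)$ is exactly the span of $\{\,t^m\beta:\beta\in H^{a,b}(X),\ m\leq d-a-2\,\}$.

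Next, I would unpack $\Lag_{\br{X}}$ via the bijection of the preceding lemma. The complex-conjugate splitting of the Hodge filtration is the unique filtration $\til F^p H^n$ complementary to $F^pH^n$ and consisting of classes of pure type $(a,b)$ with $a<p$ (equivalently, $b\geq n-p+1$); the corresponding Lagrangian is $\sum_p\til F^p\otimes t^{d-p-1}\C[t^{-1}]$. For a pure-type class $\beta\in H^{a,b}(X)$, membership $\beta\in\til F^p$ requires $p\geq a+1$, and coupled with the constraint $m\leq d-p-1$ this is saturated at $p=a+1$, giving exactly $m\leq d-a-2$. Comparing this with the description of $\Gamma(\Lag_g)$ yields $\Lag_g=\Lag_{\br{X}}$.

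The main obstacle is not analytic but book-keeping: one must carefully track the shift in powers of $t$ introduced by $\Gamma$ and line it up with the Hodge and complex-conjugate filtrations. The only substantive input is the identification of the $\dbar$-cohomology of polyvector fields with Dolbeault cohomology of forms via contraction with $\Omega$, which is immediate because $\Omega$ is a $\dbar$-closed nowhere vanishing section; once this is in place the equality $\Lag_g=\Lag_{\br{X}}$ is a direct matching of index ranges. An additional pleasant consequence of the computation is that the description of $\Gamma(\Lag_g)$ depends only on the Hodge decomposition of $H^\ast(X)$, not on the specific K\"ahler metric, which recovers the metric-independence of $\Lag_g$ claimed in the remark preceding the lemma.
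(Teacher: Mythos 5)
Your argument is correct and takes essentially the same route as the paper: both identify $\Lag_g$ inside $H^\ast(X)((t))$ by contracting harmonic polyvector fields with the holomorphic volume form and then recognize the resulting splitting of the Hodge filtration as the complex-conjugate one. Your version merely makes the $t$-power and Hodge-type bookkeeping explicit (and quietly uses the complementarity-consistent reading $\til{F}^p H^n = \oplus_{a < p} H^{a,n-a}$ of the conjugate splitting), whereas the paper passes directly to harmonic forms and cites the standard identification of that splitting as the conjugate one.
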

\begin{proof}
We defined $\Lag_g$ by using the metric $g$ to identify
$$
H^\ast\mc{S}(X) = \mc{H} (\PV(X)) ((t))
$$
where $\mc{H}(\PV(X)) $ is the space of harmonic polyvector fields.    The subspace $\Lag_g$ was defined to be simply $t^{-1} \mc{H}(\PV(X)) [t^{-1}]$. 

Under the isomorphism between polyvector fields and the de Rham algebra of $X$, harmonic polyvector fields correspond to harmonic forms.    It follows that the splitting of the Hodge filtration corresponding to $\Lag_g$ is the one where, when we identify the cohomology $H^\ast(X)$ with the space $\mc{H}(X)$ of harmonic forms, we set
$$
\til{F}^p H^\ast(X) = \oplus_{i \ge p} \mc{H}^{n-i,i} (X) .
$$
(Here $\mc{H}^{p,q}(X)$ denotes harmonic elements of $\Omega^{p,q}(X)$).  It is standard that this is the complex-conjugate splitting.  
\end{proof}

\begin{corollary}
The correlation functions defined in section \ref{section quantization} depend only on the homotopy class of the quantization $(\{\F[L]\}, g)\in \op{Quant}(X)$.
\end{corollary}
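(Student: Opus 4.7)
The plan is to reduce this statement to two facts that have already been established in the excerpt, plus the lemma immediately preceding the corollary. First I would observe that by Lemma \ref{lemma lagrangian metric}, the correlators constructed in section \ref{section quantization} from $\F[\infty]\mid_{\mc{H}(\E(X))}$ coincide with the polarization-dependent correlators $\ip{-}^{\F, \Lag_g}_{g,n}$ corresponding to the Lagrangian $\Lag_g \subset H^\ast \mc{S}(X)$ determined by the harmonic projection for the K\"ahler metric $g$.

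Next I would invoke the lemma just proved, which identifies $\Lag_g$ with the complex-conjugate polarization $\Lag_{\br{X}}$. The point is that $\Lag_{\br{X}}$ is defined purely in terms of the Hodge decomposition of $X$ and the complex-conjugation operator, and so is \emph{independent of the K\"ahler metric $g$}. In particular, for any two choices $g, g'$ of K\"ahler metric on $X$ we have $\Lag_g = \Lag_{g'} = \Lag_{\br{X}}$, so the polarization associated to the metric depends only on $X$.

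Now I would recall that the correlators $\ip{-}^{\F, \Lag}_{g,n}$ for a fixed polarization $\Lag$ were defined in this section as Taylor coefficients at $0$ of $\hbar \log \Phi_{\Lag}(Z_{\F})$, where $Z_{\F}$ is viewed as an element of $\mc{F}(H^\ast \mc{S}_+(X))[\hbar^{-1}]$ via the cohomology class $[Z_{\F}]$. The corollary of the previous subsection showed precisely that the assignment
$$
(\{\F[L]\}, g) \mapsto [Z_{\F}] \in \mc{F}(H^\ast \mc{S}_+(X))[\hbar^{-1}]
$$
factors through $\pi_0 \op{Quant}(X)$. Since the isomorphism $\Phi_{\Lag_{\br{X}}}$ depends only on the polarization $\Lag_{\br{X}}$ (which is metric-independent by the previous step), the Taylor coefficients at $0$ of $\hbar \log \Phi_{\Lag_{\br{X}}}([Z_{\F}])$ also descend to $\pi_0 \op{Quant}(X)$.

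Combining these three steps, the correlators of section \ref{section quantization} factor as the composition of the homotopy-invariant map $\F \mapsto [Z_{\F}]$ with the polarization-dependent but metric-independent procedure $\Phi_{\Lag_{\br{X}}}$ and the Taylor expansion, and hence depend only on the homotopy class of $(\{\F[L]\}, g)$. The only subtle point worth checking carefully is that the identification $\Lag_g = \Lag_{\br{X}}$ is compatible with the cohomology-level Fock space isomorphism used to define $[Z_\F]$ (so that the two "correlator" prescriptions really agree on the nose and not merely up to a further change-of-polarization transformation), but this is immediate from the explicit formulas in Lemma \ref{lemma lagrangian metric} and the lemma identifying $\Lag_g$ with $\Lag_{\br X}$.
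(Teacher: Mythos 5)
Your argument is correct and follows essentially the same route as the paper: identify the section-on-quantization correlators with the polarization-dependent correlators for $\Lag_g$ via the lemma on the harmonic-field Lagrangian, note that $\Lag_g = \Lag_{\br{X}}$ is metric-independent, and then use the fact that $[Z_{\F}]$ descends to $\pi_0 \op{Quant}(X)$ so that correlators for a fixed polarization depend only on the homotopy class. The paper's proof is just a terser statement of exactly these steps.
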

\begin{proof}
By lemma \ref{lemma lagrangian metric}, these correlation functions correspond to the choice $\Lag_{\br{X}}$ of polarization.  For a fixed polarization, the correlation functions only depend on the homotopy class of the quantization.  
\end{proof}

\subsection{}
\begin{definition}
Let $X$ be a manifold of dimension $d$. For any complex structure $J$ on $X$, let $F_J^k H^\ast (X)$ denote the Hodge filtration on $H^\ast(X)$ arising from $J$. 

Two complex structures $J, J'$ on $X$ are \emph{complementary} if
$$
F_J^p H^n(X) \oplus F^{n-p}_{J'} H^n (X) = H^n (X).
$$
\end{definition}
Note that for any complex structure $J$ on $X$, $J$ and $-J$ are complementary.  Note also that the filtration $F_J^k H^\ast(X)$ defined by $J$ only depends on the class of $J$ in the Teichmuller space of complex structures on $X$ (defined to be the space of complex structures divided by the connected component of the identity in $\op{Diff}(X)$). 

Now let $X$ denote a Calabi-Yau manifold, and let $Y$ be a complex manifold with a homotopy class of diffeomorphism $\phi : Y \iso X$.   Let us suppose that the complex structures on $X$ and $Y$ are complementary.   Let $\F$ be a quantization of the BCOV theory on $X$.  Then, we can define correlators depending on both $X$ and $Y$ by using the splitting of the Hodge filtration on $X$ given by $Y$.

\section{The classical action functional as a Lagrangian cone}
\label{section_cone}
We have seen how to translate between action functionals satisfying the classical master equation and Lagrangian cones on $\PV(X)((t))$.  It is natural to ask whether the Lagrangian cone associated to our classical BCOV action functional has a more geometric description.  In this section we will show that this is indeed the case.

\subsection{}
Consider the dg symplectic vector space $\PV(X)((t))$ with differential $\dbar - t \partial$, as before.  Let us define a formal graded submanifold $\mc{L} \subset \PV(X)((t))$, based at the origin $0 \in \PV(X)((t))$, by saying that
$$
\mc{L} = \left\{ t \left( 1- e^{g/ t } \right) \mid g \in \PV(X)[[t]] \right\}.
$$
This presentation is a little informal.  More precisely, we can define this formal submanifold via its functor of points.  This is a functor from nilpotent Artinian graded algebras $R$ (with maximal ideal $m \subset R$) to sets.  If $R$ is an Artinian graded algebra, then the $R$-points of $\PV(X)((t))$ is the set of degree $0$ elements of $\PV(X)((t)) \otimes m$.  We define $\mc{L}(R)$ to be the set of those $\alpha \in \PV(X)((t))\otimes R$ which are of degree $0$, and which can be expressed (necessarily in a unique way) in the form 
$$
\alpha = t \left ( 1 - e^{g / t }\right)
$$
for some $g \in \PV(X)[[t]] \otimes m$.  This expression makes sense because the maximal ideal $m \subset R$ is nilpotent. 

\subsection{}
Let us now list some fundamental properties of $\mc{L}$.
\begin{lemma}
The submanifold $\mc{L} \subset \PV(X)((t))$ is a Lagrangian submanifold, preserved by the differential on $\PV(X)((t))$.  
\end{lemma}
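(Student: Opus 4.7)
The plan is to parameterize $\mc{L}$ via the exponential map $\Phi: \PV(X)[[t]] \to \PV(X)((t))$, $g \mapsto t(1 - e^{g/t})$, and then reduce both claims to short computations using this parameterization. Varying $g \mapsto g + \epsilon h$ for $h \in \PV(X)[[t]]$ and $\epsilon$ a square-zero parameter, we get
$$\Phi(g + \epsilon h) = t(1 - e^{g/t} e^{\epsilon h/t}) = \Phi(g) - \epsilon\, e^{g/t} h,$$
so the tangent space at $\Phi(g)$ is $T_{\Phi(g)} \mc{L} = e^{g/t}\cdot \PV(X)[[t]]$. Here $e^{g/t}$ is a well-defined invertible element of $\PV(X)((t))\otimes R$ because $g$ lies in the nilpotent maximal ideal of the test ring $R$, so the exponential is a finite sum.

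For the Lagrangian property, I first check isotropy. Given $h_1, h_2 \in \PV(X)[[t]]$, extending the symplectic form bilinearly via the formula $\omega(A(t), B(t)) = \op{Res}_{t = 0} \Tr(A(t) B(-t))\, \d t$, one finds
$$\omega(e^{g/t} h_1, e^{g/t} h_2) = \op{Res}_{t=0} \Tr\!\bigl(e^{(g(t) - g(-t))/t}\, h_1(t) h_2(-t)\bigr)\, \d t.$$
Writing $g(t) = \sum_{n \geq 0} g_n t^n$, the key observation is that $(g(t) - g(-t))/t = 2g_1 + 2g_3 t^2 + 2g_5 t^4 + \cdots \in \PV(X)[[t]]$, so $e^{(g(t)-g(-t))/t} \in \PV(X)[[t]]$ as well. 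Combined with $h_1(t) h_2(-t) \in \PV(X)[[t]]$, the integrand lies in $\PV(X)[[t]]$, so it has no $t^{-1}$ term and the residue vanishes. Modulo the maximal ideal of the test ring, the tangent space reduces to the standard Lagrangian $\PV(X)[[t]]$, so $\mc{L}$ has the correct dimension and is Lagrangian.

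For invariance under $Q = \dbar - t\partial$, I apply the graded Leibniz rule to the finite sum $e^{g/t}$ (the signs simplify because $g$ has even total degree, being a parameter of a degree-zero formal submanifold with $t$ of degree $2$). This yields $\dbar\Phi(g) = -e^{g/t}\,\dbar g$ and $-t\partial\Phi(g) = t\, e^{g/t}\, \partial g$, whence
$$Q\Phi(g) = -e^{g/t}(\dbar g - t\partial g) = -e^{g/t}\, (Qg).$$
Since $Qg \in \PV(X)[[t]]$, this lies in $T_{\Phi(g)} \mc{L}$, showing that the vector field $Q$ is everywhere tangent to $\mc{L}$. The only real subtlety is the graded-algebra bookkeeping around $e^{g/t}$, but nilpotence of $g$ and its even degree make the standard exponential identities apply unchanged; everything else is a bookkeeping exercise around the parity of powers of $t$ in $g(t)-g(-t)$.
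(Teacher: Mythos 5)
Your handling of the Lagrangian half is essentially sound and runs parallel to the paper's: you verify isotropy of $T_{\Phi(g)}\mc{L}=e^{g/t}\,\PV(X)[[t]]$ directly via the parity observation that $(g(t)-g(-t))/t\in\PV(X)[[t]]$, whereas the paper reduces to the factor $e^{g_0/t}$ and notes that multiplication by $g_0/t$ is an infinitesimal symplectomorphism, so that the tangent space is the image of the Lagrangian $\PV(X)[[t]]$ under a symplectomorphism. Both arguments work; your appeal to ``correct dimension'' for maximality is loose in this infinite-dimensional setting and should be phrased either as the symplectomorphism argument or as the remark that the unit $e^{(g(t)-g(-t))/t}\in\PV(X)[[t]]\otimes R$ lets you identify the $\omega$-annihilator of the tangent space with $e^{g/t}\,\PV(X)[[t]]$ itself, but this is a minor point.

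The genuine gap is in the $Q$-invariance step. You apply the graded Leibniz rule to $\partial$ and assert that the ``standard exponential identities apply unchanged,'' obtaining $-t\partial\Phi(g)=t\,e^{g/t}\,\partial g$ and hence $Q\Phi(g)=-e^{g/t}(Qg)$. But $\partial$ on $\PV(X)$ is \emph{not} a derivation of the wedge product of polyvector fields: it is the Batalin--Vilkovisky operator, an order-two differential operator, and the failure of the Leibniz rule is exactly the Gerstenhaber bracket $\fbracket{-,-}$ (this is how the bracket is defined in section \ref{section classical bcov}). The correct identity is $\partial\bigl(e^{g/t}\bigr)=e^{g/t}\bigl(t^{-1}\partial g+t^{-2}\fbracket{g,g}\bigr)$ (up to the normalization of the bracket), so that $(\dbar-t\partial)\Phi(g)=e^{g/t}\bigl(-\dbar g+t\,\partial g+\fbracket{g,g}\bigr)$; your formula omits the $\fbracket{g,g}$ term and is false whenever $\fbracket{g,g}\neq 0$. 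The conclusion of the lemma survives, because $\fbracket{g,g}\in\PV(X)[[t]]$ and the corrected expression therefore still lies in $T_{\Phi(g)}\mc{L}=e^{g/t}\,\PV(X)[[t]]$ --- this extra term is precisely the content of the paper's computation --- but as written your derivation rests on an identity that fails, and the second-order nature of $\partial$ is the one nontrivial point of this half of the lemma. (The $\dbar$ half of your computation is fine, since $\dbar$ genuinely is a derivation of $\PV(X)$.)
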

\begin{proof}
First we will check that $\mc{L}$ is Lagrangian.  As before, let $(R,m)$ be an Artinian graded algebra, and let $g \in \PV(X)[[t]] \otimes m$ be a degree $0$ element.  Let $\alpha = t (e^{g/t} - 1)$.  Note that the tangent space to $\mc{L}$ at $\alpha$ is
$$
T_\alpha \mc{L} = e^{g/ t } \PV(X)[[t]] \subset \PV(X)((t)) \otimes m.
$$
Let us expand $g$ as 
$$
g = \sum t^i g_i
$$
where $g_i \in \PV(X)[[t]] \otimes m$.  Note that 
$$
e^{g/t} \PV(X)[[t]] = e^{g_0/t} \PV(X)[[t]].
$$
Finally, to verify that this is a Lagrangian subspace of $\PV(X)((t))$, we need to check that multiplying by $e^{g_0 / t}$ is a symplectomorphism. This amounts to verifying that multiplying by $g_0 / t$ is an infinitesimal symplectomorphism, which is immediate. 

Next, let us check that $\mc{L}$ is preserved by the differential.  We view the differential $\dbar - t \partial$ as a vector field on $\PV(X)((t))$; we need to check that this vector field, when restricted to $\mc{L}$, is tangent to $\mc{L}$. 

As before, let
$$
\alpha = t ( 1 - e^{g/t} ) \in \mc{L}
$$
for some $g \in \PV(X)[[t]] \otimes m$.   We need to verify that
$$
(\dbar - t \partial) \alpha \in T_\alpha \mc{L}= e^{g/t} \PV(X)[[t]].
$$

Then, 
$$
(\dbar - t \partial) \alpha =  -e^{g/t} \dbar g + t^2 \partial (e^{g/t}). 
$$
Recall that
$$
\partial (e^{g/t} ) = e^{g/t} (t^{-1} \partial g + t^{-2} \{g,g\} ).
$$
Thus, $(\dbar - t \partial) \alpha$ is in $T_\alpha \mc{L}$ as desired. 
\end{proof}
 
\subsection{}
Note that the projection
$$
\pi : \mc{L} \to \PV(X)[[t]]
$$
(defined using the polarization $\PV(X)((t)) = \PV(X)[[t]] \oplus t^{-1} \PV(X)[t^{-1} ]$)
is an isomorphism of formal graded manifolds. 

It follows that there is a functional
$$
F_{\mc{L}} \in \Oo(\PV(X)[[t]])
$$
such that 
$$
\mc{L} = \op{Graph} (\d F_{\mc{L}} ).
$$
More precisely, for all $f \in \PV(X)[[t]]$, and $g \in \PV(X)[[t]] \otimes m$, we have
$$
\frac{\partial \F_{\mc{L}}} {\partial f}  (g) = \Omega (f,\pi^{-1} (g ) ). 
$$

Our main theorem in this section is the following.
\begin{theorem}
$F_{\mc{L}}$ is the classical BCOV action functional $I$.  
\end{theorem}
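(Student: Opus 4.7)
The plan is to compare both sides as formal power series on $\mc{S}_+(X) = \PV(X)[[t]][2]$ and verify they agree coefficient by coefficient. Both $F_{\mc{L}}$ and $I$ vanish at least to cubic order at the origin, so it suffices to match the Taylor coefficients $D_n F_{\mc{L}}$ and $D_n I$ for each $n\geq 3$.

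First I would use the parametrization of $\mc{L}$ by $g\in\PV(X)[[t]]$ through $\alpha(g)=t(1-e^{g/t})$, and split $\alpha(g)=q(g)+p(g)$ into its $\mc{S}_+$ and $\mc{S}_-$ components. Since $q(g)=-g+O(g^2)$, the map $g\mapsto q(g)$ is a formal diffeomorphism, so we may work in either variable. The Lagrangian property identifies $p(g)$ with $dF_{\mc{L}}(q(g))$ under the symplectic duality $\mc{S}_-\cong\mc{S}_+^\vee$; combining this with the chain rule and the fact that $\mc{S}_-$ is Lagrangian (so the $\mc{S}_-$-component of $\partial_{\delta g}\alpha(g)=-e^{g/t}\delta g$ pairs trivially with $p(g)\in\mc{S}_-$) yields the compact identity
\begin{equation*}
d_g\bigl(F_{\mc{L}}(q(g))\bigr)\cdot\delta g \;=\; \omega\bigl(-e^{g/t}\delta g,\; p(g)\bigr).
\end{equation*}

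Next I would expand both sides as power series in the coefficients $g_k,\delta g_l\in\PV(X)$ of $g=\sum_{k\geq 0}t^k g_k$. On the left, the explicit formula $p(g)=-\sum_{n\geq 2}\frac{1}{n!}[g^n/t^{n-1}]_{<0}$ together with the residue pairing $\omega(\alpha f(t),\beta h(t)) = \Tr(\alpha\beta)\Res f(t)h(-t)\,dt$ produces a sum of trace monomials $\Tr(g_{k_1}\cdots g_{k_n}\delta g_l)$ with reciprocal-factorial weights. On the right, differentiating the definition of $I$, composing with $g\mapsto q(g)$, and invoking the classical genus-zero evaluation $\int_{\mbar_{0,n}}\psi_1^{k_1}\cdots\psi_n^{k_n}=\binom{n-3}{k_1,\ldots,k_n}$ whenever $\sum k_i=n-3$ produces another such sum.

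The main obstacle is matching these two combinatorial sums. The exponential form of $\alpha(g)=t(1-e^{g/t})$ is engineered precisely to reproduce the multinomial weights $\binom{n-3}{k_1,\ldots,k_n}/n!$ appearing in the $\psi$-class integrals: the $1/n!$ comes from the symmetrization, the $\prod 1/k_i!$ from expanding the exponential, and the constraint $\sum k_i=n-3$ from the residue selecting the $t^{-1}$ coefficient in the pairing. The cleanest way to organize the bookkeeping is to recognize that $\mc{L}$, after the dilaton shift $\alpha\mapsto\alpha-t\cdot\mbf{1}$ moving the base point to $-t\cdot\mbf{1}$, is precisely Givental's Lagrangian cone for the trivial cohomological field theory valued in the Frobenius algebra $(\PV(X),\Tr,\wedge)$; Givental's identification of this cone then gives its generating function as the genus-zero descendent potential, which is $I$ by construction. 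Either route --- the direct combinatorial identity or the identification with Givental's cone --- yields the result, and one should expect sign bookkeeping in the residue pairing to be the only delicate step.
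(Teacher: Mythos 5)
Your strategy is sound, and it is genuinely different from the paper's. The paper never matches Taylor coefficients directly: it first shows (via the universal series describing $\pi^{-1}$) that every Taylor coefficient of $F_{\mc{L}}$ has the universal form $C(k_1,\dots,k_n)\Tr(\alpha_1\cdots\alpha_n)$, then uses the cohomological grading of the symplectic form to force $C(k_1,\dots,k_n)=0$ unless $\sum k_i=n-3$, proves that $F_{\mc{L}}$ satisfies the dilaton and string equations by Givental's geometric criteria (the shifted cone $\mc{L}-t=\{-te^{f/t}\}$ is a cone and is preserved by the infinitesimal symplectomorphism $t^{-1}$), computes the single normalization $C(0,0,0)=1$ by hand, and concludes by uniqueness, since $I$ satisfies exactly the same constraints. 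That route buys a proof with essentially no combinatorics; your route buys an explicit identification of all coefficients. Your second option --- recognizing $\mc{L}-t$ as Givental's cone for the trivial CohFT on the Frobenius algebra $(\PV(X),\wedge,\Tr)$ and citing the identification of its generating function --- is legitimate but is essentially the theorem itself, proved in the literature by the same string/dilaton/grading arguments the paper redoes internally.

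One caveat on your first (direct) route: the matching of the two sums is not the clean ``multinomial weights straight from the exponential'' mechanism you describe, because the change of variables between $g$ and the $\mc{S}_+$-coordinate $h=q(g)$ is itself nonlinear, $q(g)=-g-\sum_{n\ge 2}\tfrac{1}{n!}\left[g^n t^{1-n}\right]_{\ge 0}$, and its inverse feeds back into $p(g(h))$. Already at quartic order the coefficient $\tfrac16\Tr(h_1h_0^3)$ (matching $\tfrac{1}{4!}\binom{4}{1}\int_{\mbar_{0,4}}\psi_1$) arises from a cancellation between the $n=3$ term of the exponential and the quadratic correction in $g_0(h)=-h_0-h_0h_1+\cdots$, not from a single multinomial; so the residue/sign bookkeeping is not the only delicate step. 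The identity you must verify in general is precisely the genus-zero string/dilaton/TRR structure of the $\psi$-integrals, so a self-contained version of your argument should be organized around those relations (which is, in effect, what the paper's uniqueness argument does), or else outsource it to Givental's description of the cone of the trivial theory.
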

Before we prove this theorem, it will be helpful to have a better understanding of the map $\pi^{-1}$.

Let us parametrize $\mc{L}$ as follows.  If $f \in \PV(X)$, and $g \in \PV(X)[[t]]$, then let
$$
B(f,g) = t - t e^{f/t} \left( 1 + g \right) \in \mc{L}.
$$
Further, sending $(f,g)$ to $B(f,g)$ is an isomorpism from $\PV(X) \oplus \PV(X)[[t]]$ to $\mc{L}$. 

We will try to understand $\pi^{-1}$ in these coordinates.  Let $h \in \PV(X)[[t]]$.  To define $\pi^{-1}(h)$, we need to find $f \in \PV(X)$ and $g \in  \PV(X)[[t]]$ such that $\pi B (f, g ) = h$.  

Let us expand $h = \sum_{i \ge 0} h_i t^i$, and $g = \sum_{i \ge 0} g_i t^i$.  Then the equation $\pi B(f,g ) = h$ amounts to the following system of equations:
\begin{align*}
-h_0 &= f + f g_0 +  \tfrac{1}{2}  f^2 g_1 + \tfrac{1}{3!}  f^3 g_2 + \dots  \\
-h_1 &= g_0 + f g_1 + \tfrac{1}{2}  f^2 g_2 + \tfrac{1}{3!}  f^3 g_3 + \dots \\
-h_2 &= g_1 + f g_2 + \tfrac{1}{2}  f^2 g_3 + \tfrac{1}{3!}  f^3 g_4 + \dots
\end{align*}
(Recall that we are dealing with formal manifolds: these expressions will terminate if $f$ and $g$ are accompanied by auxiliary nilpotent parameters).

It is clear from these formulae that given $h$, there is a unique $(f,g)$ satsfying these equations. Indeed, to determine $f$ and $g$, consider the algebra isomorphism
\begin{align*}
\Phi : \C[[x_0,x_1,x_2,\dots ]] & \to \C[[z, y_0,y_1, \dots]] \\
\Phi ( x_0 ) &= -z - \sum_{i \ge 1} \frac{z^i}{i!} y_{i-1}\\  
\Phi ( x_j ) &= -\sum_{i \ge 0} \frac{z^i}{i!} y_{i+j - 1} \text{ if } j > 0
\end{align*}
Note that our system of equations above says that
$$
h_i = \Phi ( x_i ) ( f, g_0, g_1, \dots ). 
$$
In other words, we determine $h_i$ by substituting $f$ for $z$ and $g_i$ for $y_i$ in the formal series $\Phi(x_i) \in \C[[z, y_j ]]$. 

In a similar way, we can express $f$ and $g$ in terms of $h$ by using the inverse
$$
\Phi^{-1} : \C[[z, y_0, y_1, \dots]] \to \C[[x_0, x_1, \dots]].
$$
Indeed,
\begin{align*}
\Phi^{-1} ( z ) (h_0, h_1, \dots ) &= f \\
\Phi^{-1} ( y_j ) (h_0, h_1, \dots ) &= g_j. 
\end{align*}
Thus, we have proved the following lemma.
\begin{lemma}
There exists a unique collection of formal series $\Gamma$, $\Lambda_0, \Lambda_1, \dots$ in the algebra $\C[[x_0, x_1, \dots, ]]$ such that, for all $h \in \PV(X)[[t]]$, 
$$
\pi^{-1} (h) = B\left( \Gamma ( h_0, h_1, \dots ) , \sum t^i \Lambda_i ( h_0, h_1, \dots )   \right).
$$
\end{lemma}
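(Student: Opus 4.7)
The plan is to reduce the lemma to the formal inverse function theorem, applied to the map $\Phi$ already constructed in the text. The substantive geometric work -- writing the equations $\pi B(f,g) = h$ as a homomorphism of formal power series rings -- has been done; what remains is to show this homomorphism is invertible and to unpack the consequences into the stated formulas.

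First, I would check that $\Phi$ defines a continuous ring homomorphism between the completions $\C[[x_0, x_1, \ldots]]$ and $\C[[z, y_0, y_1, \ldots]]$ in their maximal-ideal-adic topologies. This is immediate from the explicit formulas, since each $\Phi(x_i)$ lies in the maximal ideal $m = (z, y_0, y_1, \ldots)$ of the target.

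Second, I would compute the linearization: modulo $m^2$, the given formulas yield $\Phi(x_0) \equiv -z$ and $\Phi(x_j) \equiv -y_{j-1}$ for $j \geq 1$. This induces a bijection on cotangent spaces $m_{\mathrm{source}}/m_{\mathrm{source}}^2 \to m/m^2$ matching the generators $\bar{x}_i$ up to sign with the generators $\bar{z}, \bar{y}_{j-1}$. By the formal inverse function theorem -- equivalently, by solving for the Taylor coefficients of $\Phi^{-1}$ order by order in the $m$-adic filtration, each order being uniquely determined by a finite triangular linear system -- $\Phi$ is an isomorphism of complete local rings.

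Third, I would set $\Gamma := \Phi^{-1}(z)$ and $\Lambda_j := \Phi^{-1}(y_j)$, both elements of $\C[[x_0, x_1, \ldots]]$. The identity $\Phi \circ \Phi^{-1} = \op{id}$, evaluated under the substitution $(z, y_j, x_i) \mapsto (f, g_j, h_i)$, asserts precisely that taking $f = \Gamma(h_0, h_1, \ldots)$ and $g_j = \Lambda_j(h_0, h_1, \ldots)$ solves the system recorded in the text, which is equivalent to $\pi B(f, g) = h$. This gives the required formula
\[
\pi^{-1}(h) = B\!\left(\Gamma(h_0, h_1, \ldots),\; \sum_{i \geq 0} t^i \Lambda_i(h_0, h_1, \ldots)\right).
\]
Uniqueness is automatic because $\Phi^{-1}$ is the unique ring-theoretic inverse: any two collections of series with the stated property would both provide such an inverse and hence agree.

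The extension from scalar formal variables to polyvector-field inputs is routine via the functor-of-points formalism: the defining equations are polynomial identities valid over any graded-commutative base, and for $h_i \in \PV(X) \otimes m$ with $(R,m)$ Artinian graded, termwise convergence of $\Gamma(h_0, h_1, \ldots)$ and $\Lambda_j(h_0, h_1, \ldots)$ follows from nilpotence of $m$. I do not anticipate a serious obstacle here; the only mild point is tracking degrees consistently when substituting graded polyvector fields for the formal variables, which is dictated by the degree-zero constraint imposed by membership in $\mc{L}$.
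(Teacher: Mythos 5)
Your proposal is correct and follows essentially the same route as the paper: the paper's proof simply sets $\Gamma = \Phi^{-1}(z)$ and $\Lambda_i = \Phi^{-1}(y_i)$ for the algebra isomorphism $\Phi$ already introduced in the text. Your additional verification that $\Phi$ is invertible (via its linearization and the formal inverse function theorem) just fills in a step the paper takes for granted.
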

\begin{proof}
In the notation above, $\Gamma = \Phi^{-1}(z)$ and $\Lambda_i = \Phi^{-1}( y_ i )$. 
\end{proof}

\subsection{}
Let us consider again the generating function $\F_{\mc{L}} \in \Oo(\PV(X)[[t]])$ for our Lagrangian cone.
\begin{lemma}
There exists a unique series of constants $C(k_1,\dots, k_n ) \in \Q$, for $k_1, \dots, k_n \in \Z_{\ge 0}$ and each $n \ge 3$, such that, for each $\alpha_1,\dots, \alpha_k \in \PV(X)$ and $k_1,\dots,k_n \in \Z_{\ge 0}$, we have
$$
\left( \dpa{(t^{k_1} \alpha_1 ) }  \dpa{(t^{k_2} \alpha_2 ) }  \dots \dpa{(t^{k_n} \alpha_n ) }  \F_{\mc{L}} \right) (0) = \op{Tr} (\alpha_1 \dots \alpha_n ) C(k_1, \dots, k_n) .
$$
\end{lemma}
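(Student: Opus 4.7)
My plan is to exploit the universal parametrization of $\pi^{-1}$ provided by the preceding lemma together with the single-trace nature of the symplectic form $\Omega$ on $\PV(X)((t))$. Differentiating the defining identity $\partial_f \F_{\mc L}(g) = \Omega(f,\pi^{-1}(g))$ in $g$ at $g=0$ yields
$$D_{n+1}\F_{\mc L}\bigl(f; g_1,\ldots,g_n\bigr) = \Omega\bigl(f,\; D_n \pi^{-1}(g_1,\ldots,g_n)\bigr),$$
so everything reduces to computing the $n$-th Taylor coefficient of $\pi^{-1}:\PV(X)[[t]] \to \PV(X)((t))$ and then pairing with $\Omega$.

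By the preceding lemma, $\pi^{-1}(h) = B\bigl(\Gamma(h_0,h_1,\ldots),\; \sum_i \Lambda_i(h_0,h_1,\ldots)\, t^i\bigr)$ with universal $\Gamma,\Lambda_i \in \Q[[x_0,x_1,\ldots]]$, and the explicit expansion $B(f,g) = -tg - \sum_{k\ge 1} \tfrac{f^k}{k!\,t^{k-1}}(1+g)$. Since $\PV(X)$ is graded-commutative, one may substitute $h = \sum_j t^{k_j}\alpha_j$ into these formal power series unambiguously. Extracting the part multilinear in the $\alpha_j$ produces a finite sum of terms of the form $q\, t^r\, \alpha_1\cdots\alpha_n$, where $q \in \Q$ and $r \in \Z$ depend only on the exponents $k_1,\ldots,k_n$ (all reorderings of the monomial being equal up to Koszul signs). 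Applying $\Omega(t^k\alpha,-)$ to each such term yields
$$q \cdot \Res_{t=0}\bigl(t^{k}(-t)^r \d t\bigr)\cdot \Tr(\alpha\cdot\alpha_1\cdots\alpha_n),$$
because $\Omega$ is built from the single bilinear pairing $\Tr(\alpha\beta)$ together with a residue in $t$. Collecting contributions therefore gives
$$D_{n+1}\F_{\mc L}\bigl(t^k\alpha, t^{k_1}\alpha_1,\ldots,t^{k_n}\alpha_n\bigr) = C(k,k_1,\ldots,k_n)\,\Tr\bigl(\alpha\cdot\alpha_1\cdots\alpha_n\bigr)$$
for a rational constant $C$ depending only on the exponents, which is precisely the form asserted.

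Uniqueness of $C$ for $n \ge 3$ is immediate once one exhibits polyvector fields $\alpha_1,\ldots,\alpha_n$ with $\Tr(\alpha_1\cdots\alpha_n)\neq 0$ (take for instance a product whose total bidegree is $(d,d)$ and whose integrand is nonvanishing), so the proportionality constant is forced. The main point I expect to require real care is the bookkeeping of Koszul signs when reordering products and when several $k_j$'s coincide (so symmetrization occurs), and the verification that no multi-trace contribution of the form $\Tr(\alpha_{i_1}\cdots)\cdot\Tr(\alpha_{j_1}\cdots)$ can appear; the latter is automatic because $\Omega$ supplies only one factor of $\Tr$, but it is reassuring to check it term by term in the expansion. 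No analytic difficulty arises: the argument is a formal consequence of the universal parametrization of $\pi^{-1}$ and the explicit shape of $\Omega$.
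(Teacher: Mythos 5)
Your proposal is correct and follows essentially the same route as the paper: it differentiates the defining relation $\partial_\phi \F_{\mc L}(\psi)=\Omega(\phi,\pi^{-1}(\psi))$, uses the universal rational series $\Gamma,\Lambda_i$ parametrizing $\pi^{-1}$ from the preceding lemma, and concludes from the single-trace (and residue-in-$t$) form of $\Omega$ that each Taylor coefficient is a rational multiple of $\Tr(\alpha_1\cdots\alpha_n)$ depending only on $k_1,\dots,k_n$. The extra remarks on Koszul-sign bookkeeping and on uniqueness (choosing $\alpha_i$ with $\Tr(\alpha_1\cdots\alpha_n)\neq 0$) are fine additions but not a different method.
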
 
\begin{proof}
Consider the formal power series $\Gamma, \Lambda_0, \Lambda_1 \dots $ as in the previous lemma.  By definition of $\F_{\mc{L}}$, we must have, for each $\phi, \psi \in \PV(X)[[t]]$,
$$
\left( \dpa{\phi} \F_{\mc{L}} \right) (\psi) = \Omega (\phi,  \pi^{-1}(\psi) ).
$$
Now, we have seen that we can write $\pi^{-1}(\psi)$ as $B(f,g) \in \mc{L}$, where $f \in \PV(X)$ and $g = \sum g_i t^i$ are built from products of the terms $\phi_i$ in the expansion $\phi = \sum t^i \phi_i$.   Since the symplectic pairing on $\PV(X)((t))[2]$ is defined using the pairing $\op{Tr}(\alpha \beta)$ on $\PV(X)$,  the lemma follows immediately.
\end{proof}

Recall that we normally consider our symplectic vector space to be $\PV(X)((t))[2]$.  Let us view $\F_{\mc{L}}$ as an element of $\Oo(\PV(X)[[t]][2])$.
\begin{lemma}
$\F_{\mc{L}}$ is of cohomological degree $6-2d$, where $d = \dim_{\C} X$.
\end{lemma}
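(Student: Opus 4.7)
The plan is to deduce the degree of $\F_{\mc{L}}$ directly from the defining relation
$$\frac{\partial \F_{\mc{L}}}{\partial f}(g) = \Omega(f, \pi^{-1}(g)), \qquad f, g \in \mc{S}_+(X) := \PV(X)[[t]][2],$$
established earlier in this section, using (i) the cohomological degree of the symplectic pairing $\Omega$ on $\mc{S}(X) = \PV(X)((t))[2]$, and (ii) the fact, already proved, that $\pi \colon \mc{L} \to \mc{S}_+(X)$ is an isomorphism of formal graded manifolds. First I would record that $\Omega$ has cohomological degree $6-2d$: this combines the degree $-2d$ of the trace pairing (nonzero only on $\PV^{d,d}$), the degree $+2$ of the residue pairing on $\C((t))$ (with $t$ in degree $2$), and the two shifts $[2]$ on the factors, which together contribute an additional $+4$.

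Since $\pi$ preserves the grading, so does $\pi^{-1}$. I would then decompose $\pi^{-1}(g) = g + A(g)$ with $A(g) \in \mc{S}_-(X) := t^{-1}\PV(X)[t^{-1}][2]$ the $\mc{S}_-$-component; then $A \colon \mc{S}_+(X) \to \mc{S}_-(X)$ is a formal map of cohomological degree zero whose Taylor expansion starts in order two,
$$A(g) = \sum_{n \ge 2} \frac{1}{n!}\, R_n(g^{\otimes n}), \qquad R_n \colon \mc{S}_+(X)^{\otimes n} \to \mc{S}_-(X),$$
and each multilinear piece $R_n$ is itself of cohomological degree zero. Because $\mc{S}_+(X)$ is $\Omega$-isotropic, the defining relation reduces to $(\partial_f \F_{\mc{L}})(g) = \Omega(f, A(g))$, and matching $g$-Taylor coefficients yields, for $n \ge 2$,
$$D_{n+1}\F_{\mc{L}}(f, g_1, \dots, g_n) = \Omega\bigl(f, R_n(g_1, \dots, g_n)\bigr),$$
together with $D_1 \F_{\mc{L}} = D_2 \F_{\mc{L}} = 0$. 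The right-hand side is a symmetric multilinear functional of cohomological degree $|\Omega| + |R_n| = (6-2d) + 0 = 6-2d$. Hence every Taylor coefficient of $\F_{\mc{L}}$ has this same degree, so $\F_{\mc{L}}$ is homogeneous of cohomological degree $6-2d$.

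The argument is essentially an exercise in grading bookkeeping; the one point demanding real care --- and hence the main obstacle --- is tracking the three $[2]$-shifts on $\mc{S}$, $\mc{S}_+$ and $\mc{S}_-$ so that they enter the defining relation symmetrically and contribute the expected $+6$ to the net degree. As an internal consistency check, the selection rule imposed on the Taylor coefficient $D_n\F_{\mc{L}}(t^{k_1}\alpha_1,\dots,t^{k_n}\alpha_n) = C(k_1,\dots,k_n)\,\Tr(\alpha_1\cdots\alpha_n)$, combined with the fact that $\Tr$ forces $\sum_i|\alpha_i| = 2d$, recovers exactly the dimension constraint $\sum_i k_i = n-3$ for nonvanishing intersection numbers on $\mbar_{0,n}$, as anticipated by the forthcoming identification $\F_{\mc{L}} = I$.
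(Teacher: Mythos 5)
Your argument is correct, but it is organized differently from the paper's. The paper proves the lemma geometrically: it writes the grading $\C^\times$-action on $\PV(X)((t))[2]$ as $G(\lambda)=E(\lambda)\til{G}(\lambda)$, checks by the explicit computation $G(\lambda)\bigl(t(1-e^{f/t})\bigr)=t-t\exp\bigl(t^{-1}\lambda^{-2}\til{G}(\lambda)(f)\bigr)$ (using that $\til{G}(\lambda)$ is an algebra automorphism with $\til{G}(\lambda)(t)=\lambda^{2}t$) that the cone $\mc{L}$ is preserved by $G(\lambda)$, and then concludes homogeneity of $\F_{\mc{L}}$ of degree $6-2d$ from the invariance of $\mc{L}$ together with the degree of the symplectic form. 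You instead work at the level of Taylor coefficients of the defining relation $\partial_f\F_{\mc{L}}(g)=\Omega(f,\pi^{-1}(g))$, using isotropy of $\PV(X)[[t]][2]$ and the degree count for $\Omega$; the homogeneity then follows once the components $R_n$ of $\pi^{-1}$ are known to have degree zero. That last point is exactly where the content of the paper's $G(\lambda)$-computation lives: the degree-zero property of $\pi^{-1}$ is equivalent to $\mc{L}$ being a graded (equivalently, $G(\lambda)$-invariant) submanifold, and you justify it only by citing the earlier remark that $\pi:\mc{L}\to\PV(X)[[t]]$ is an isomorphism of formal \emph{graded} manifolds, which the paper asserts without proof. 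So your route is legitimate and arguably cleaner bookkeeping (and your consistency check recovering $\sum k_i=n-3$ is a nice touch), but a fully self-contained version should verify degree-preservation of the series $\Gamma,\Lambda_i$ (equivalently of $e^{g/t}$-multiplication with $t$ in degree $2$), which is precisely the small computation the paper carries out; the paper's version has the added benefit of exhibiting the grading symmetry of $\mc{L}$ itself, which is reused in spirit for the dilaton and string equations in the following lemma.
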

\begin{proof}
Let $G$ be the $\C^\times$ action on $\PV(X)((t))[2]$ induced by the cohomological grading.  Thus, $G(\lambda)(\alpha) = \lambda^k \alpha$ if $\alpha$ is of cohomological degree $k$.  Note that we can write $G(\lambda)$ as a product of $E(\lambda)$, defined by $E(\lambda)(\alpha) = \lambda^{-2} \alpha$, and $\til{G}(\lambda)$, which is the grading $\C^\times$ action on $\PV(X)((t))$.

We will remove the $[2]$ from the notation, and just write $\PV(X)((t))$.  The first thing to show is that
$$
\mc{L} = \{ t (1-e^{f/t} ) \mid f \in \PV(X)[[t]] \} \subset \PV(X)((t))
$$
is preserved by the action of $G(\lambda)$. 

Note that $\til{G}(\lambda)$ is an algebra automorphism of $\PV(X)((t))$. Now,
\begin{align*}
G(\lambda) \left( t (1-e^{f/t}  )  \right) &= \lambda^{-2} \til{G}(\lambda) \left( t (1-e^{f/t}  )  \right)\\
&= t - t \exp \left(t^{-1} \lambda^{-2} \til{G}(\lambda)(f)   \right) . 
\end{align*}
The second equality uses the fact that $\til{G}(\lambda)$ is an algebra homomorphism and that $\til{G}(\lambda)(t) = \lambda^2 t$.  

Now, if $f \in \PV(X)[[t]]$ then so is $\lambda^{-2} \til{G}(\lambda)(f)$, and so $G(\lambda)$ preserves the submanifold $\mc{L}$.

Note that the symplectic form $\Omega$ on $\PV(X)((t))[2]$ is of cohomological degree $6 - 2 d$.  Thus, for $\mc{L}$ to be invariant under the action of $G(\lambda)$, the generating function $\F_{\mc{L}}$ must be of cohomological degree $6-2d$.   
\end{proof}

\begin{lemma}
$\F_{\mc{L}}$ satisfies the classical master equation, the dilaton equation, and the string equation.
\end{lemma}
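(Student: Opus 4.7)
The plan is to establish each of the three equations from a corresponding geometric property of the Lagrangian cone $\mc{L}$, independently of the preceding theorem $\F_{\mc{L}} = I$.

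The classical master equation follows from the standard BV dictionary: a formal function $S$ on the Lagrangian $\PV(X)[[t]]$ satisfies $QS + \tfrac{1}{2}\{S,S\} = 0$ if and only if the graph of $\d S$ in $\PV(X)((t))$ is a formal Lagrangian submanifold preserved by the linear differential $Q = \dbar - t\partial$. Since $\mc{L}$ is both Lagrangian and $Q$-invariant, as was established in the earlier lemma of this section, the CME for $\F_{\mc{L}}$ is immediate.

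For the dilaton equation the geometric input is that $\mc{L}$, although based at the origin, is actually a cone with apex at $t$: a direct computation shows that for any scalar $\lambda$,
\begin{equation*}
t + \lambda\bigl(t(1 - e^{g/t}) - t\bigr) \;=\; t\bigl(1 - \lambda e^{g/t}\bigr) \;=\; t\bigl(1 - e^{(g + t\log\lambda)/t}\bigr) \;\in\; \mc{L}.
\end{equation*}
The infinitesimal generator of this rescaling about $t$ is the affine vector field $V(f) = f - t$ on $\PV(X)((t))$, which under the polarization $\PV(X)((t)) = \PV(X)[[t]] \oplus t^{-1}\PV(X)[t^{-1}]$ corresponds to the derivation $E - \partial_{t \cdot 1} = -\partial_{Dil}$ on $\Oo(\PV(X)[[t]])$. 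Because $\PV(X)[[t]]$ is Lagrangian, $\Omega(f, t) = 0$ for every $f \in \PV(X)[[t]]$, so the invariance $\pi^{-1}(t + \lambda(h - t)) = t + \lambda(\pi^{-1}(h) - t)$ collapses into the clean functional equation $\d\F_{\mc{L}}\big|_{t + \lambda(h - t)} = \lambda\, \d\F_{\mc{L}}\big|_h$. Differentiating at $\lambda = 1$ and matching homogeneous components in $h$ yields precisely the dilaton identity $C(1, k_1, \dots, k_n) = (n-2)\,C(k_1, \dots, k_n)$ on the Taylor coefficients of $\F_{\mc{L}}$.

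For the string equation I would seek, by analogy with Givental's cone formalism, a symmetry of $\mc{L}$ built from the operator $t^{-1}$ together with a translation by the identity $1 \in \PV^{0,0}(X)$: one natural candidate is $\alpha \mapsto 1 + t^{-1}(\alpha - t)$, which in the parametrization $\alpha = t(1-e^{g/t})$ becomes $1 - e^{g/t}$ and thus retains the exponential shape characteristic of $\mc{L}$. Translating the corresponding invariance into a functional equation for $\F_{\mc{L}}$ by the same generating-function procedure should produce $\partial_1 \F_{\mc{L}} - \partial_{t^{-1}\star} \F_{\mc{L}} = \tfrac{1}{2}\Tr(h_0^2)$, matching the $\psi$-class string axiom up to a quadratic boundary term that reflects the absence of $\mbar_{0,2}$. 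This step is the main obstacle of the proof: since $t^{-1}$ does not preserve the Lagrangian $\PV(X)[[t]]$, the proposed symmetry must be interpreted modulo the polarization, and one has to keep careful track of how the exponential $e^{g/t}$, with its essential singularity at $t = 0$, interacts with the constant shift by $1$ in order to extract a clean functional equation for $\F_{\mc{L}}$.
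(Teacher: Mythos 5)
Your handling of the first two parts matches the paper's own argument: the classical master equation is read off from the $Q$-invariance of the Lagrangian graph $\mc{L}$ established earlier in the section, and the dilaton equation from the fact that the shifted submanifold $\mc{L}' = \mc{L} - t$ is a cone (your computation $t+\lambda\bigl(t(1-e^{g/t})-t\bigr) = t\bigl(1-\lambda e^{g/t}\bigr)$ is the same check the paper performs, written multiplicatively there); you even spell out the passage from the cone property to the coefficient identity $C(1,k_1,\dots,k_n)=(n-2)C(k_1,\dots,k_n)$, which the paper leaves to a citation of Givental. So for the CME and the dilaton equation the proposal is correct and essentially the paper's route.

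The string equation, however, is left with a genuine gap, which you flag yourself. Your candidate symmetry $\alpha \mapsto 1 + t^{-1}(\alpha - t)$ is literally multiplication by $t^{-1}$ (the two shifts cancel because $t^{-1}\cdot t = 1$), and its image $1 - e^{g/t}$ of a point of $\mc{L}$ is \emph{not} in $\mc{L}$: to first order in the nilpotent parameters a point of $\mc{L}$ equals $-g \in \PV(X)[[t]]$, whereas $1-e^{g/t}$ equals $-g/t$, which has a $t^{-1}$-component whenever $g_0 \neq 0$. So no invariance of $\mc{L}$ has actually been established, and no functional equation for $\F_{\mc{L}}$ can be extracted from this map. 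The missing idea is to argue on the dilaton-shifted cone $\mc{L}' = \mc{L}-t = \{-t e^{f/t}\}$, exactly as you already did for the dilaton equation, and to use only the \emph{infinitesimal} symplectomorphism of multiplication by $t^{-1}$: with $\eps$ of square zero,
$$
\left(1+\eps t^{-1}\right)\left(-t e^{f/t}\right) = -t\, e^{f/t}\left(1+\eps/t\right) = -t \exp\left(t^{-1}(f+\eps)\right) \in \mc{L}',
$$
so the constant shift that worried you is simply absorbed into the exponent as $f \mapsto f + \eps\cdot 1$. Givental's geometric form of the string equation is precisely that $\mc{L}'$ is preserved by this infinitesimal symplectomorphism, and this one-line verification is the paper's entire proof of the string part; a non-infinitesimal use of $t^{-1}$ on $\mc{L}$ itself, as in your proposal, cannot work because $t^{-1}$ genuinely moves $\mc{L}$ off itself.
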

\begin{proof}
The classical master equation is equivalent to the statement that the submanifold $\mc{L} \subset \PV(X)((t))$ is preserved by the differential, and we have already seen that this is the case.  

According to Givental \cite{Giv01}, the dilaton equation has the following geometric interpretation. Let
$$
\mc{L}' = \mc{L} - t \subset \PV(X)((t))
$$
be the formal submanifold $\mc{L}$, translated so that it is defined near $t$.  The dilaton equation is then the statement that $\mc{L}'$ is a cone.

In our context, this is clear: after all, 
$$
\mc{L}' = \{-t e^{f/ t } \mid f \in \PV(X)[[t]]\}.
$$
Clearly, this is conic: if $\lambda \in \C$, then  
$$
- e^\lambda t e^{f/t} = -t \exp \left( t^{-1} (f + t \lambda)\right).
$$

Givental has also provided a geometric interpretation of the string equation.  The string equation is the statement that the cone $\mc{L}'$ is preserved by the infinitesimal symplectomorphism of $\PV(X)((t))$ given by multiplying by $t^{-1}$.  Again, it is clear that $\mc{L}'$ satisfies this property: if $\eps$ is a parameter of square $0$, we have
$$
(1+ \eps t^{-1} )\left(- t e^{f/t}\right) = - t \exp \left( t^{-1} (f + \eps)\right).
$$
\end{proof}

\begin{lemma}
For $\alpha,\beta,\gamma \in \PV(X)$, we have
$$
\left(\dpa{\alpha} \dpa{\beta}\dpa{\gamma} \F_{\mc{L}}\right)(0) =  \op{Tr}(\alpha \beta \gamma ).
$$
\end{lemma}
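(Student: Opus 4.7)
The plan is to reduce the three-point identity to a small piece of the Taylor expansion of $\pi^{-1}$. Starting from
$$\Bigl(\dpa{\phi}\F_{\mc{L}}\Bigr)(\psi) \;=\; \Omega\bigl(\phi,\pi^{-1}(\psi)\bigr),$$
differentiating twice more and evaluating at $\psi=0$ gives
$$\Bigl(\dpa{\alpha}\dpa{\beta}\dpa{\gamma}\F_{\mc{L}}\Bigr)(0) \;=\; \Omega\bigl(\alpha,\,D^2\pi^{-1}(0)(\beta,\gamma)\bigr),$$
so everything is controlled by the quadratic Taylor coefficient of $\pi^{-1}$ at the origin, restricted to directions lying in $\PV(X)\subset\PV(X)[[t]]$.

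Next I would compute this quadratic part directly from the parameterization $\mc{L}=\{t(1-e^{g/t}):g\in\PV(X)[[t]]\}$. For $h\in\PV(X)$ (viewed as a constant in $t$), the choice $g=-h$ produces
$$t\bigl(1-e^{-h/t}\bigr)\;=\;h-\frac{h^2}{2t}+O(h^3)\;\in\;\mc{L},$$
whose $\PV(X)[[t]]$-projection is $h$ itself, so this element equals $\pi^{-1}(h)$. This ansatz is correct to quadratic order: any correction $g=-h+g^{(2)}+\cdots$ would force $g^{(2)}$ to be minus the positive-$t$ part of $h^2/(2t)$, which vanishes whenever $h$ has no positive $t$-powers. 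Polarizing the quadratic coefficient now yields
$$D^2\pi^{-1}(0)(\beta,\gamma)\;=\;-\frac{\beta\gamma}{t}.$$

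The final step is to plug this into the symplectic pairing
$$\Omega\bigl(A\,f(t),\,B\,g(t)\bigr)\;=\;\Tr(AB)\,\Res_{t=0}f(t)g(-t)\,dt$$
of Section~\ref{section givental}: with $f(t)=1$ and $g(t)=-1/t$, so that $g(-t)=1/t$ and $\Res_{t=0}(1/t)\,dt=1$, this yields
$$\Omega\!\left(\alpha,\,-\frac{\beta\gamma}{t}\right)\;=\;\Tr(\alpha\beta\gamma),$$
which is the claim. The main obstacle is purely bookkeeping: tracking the sign alternation induced by $g(-t)$ inside the residue pairing, and justifying that the naive ansatz $g=-h$ truly captures $\pi^{-1}$ through quadratic order in $h\in\PV(X)$ without hidden quadratic corrections to $g$.
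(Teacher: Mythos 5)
Your argument is correct and is essentially the paper's own proof: both identify $\pi^{-1}(\psi)=t\bigl(1-e^{-\psi/t}\bigr)=\psi-\tfrac{1}{2t}\psi^2+\cdots$ for $\psi\in\PV(X)$, note that only the $t^{-1}$ coefficient survives the residue pairing against $\alpha$, and read off $\Tr(\alpha\beta\gamma)$ from the formula $\Omega(\alpha f(t),\beta g(t))=\Tr(\alpha\beta)\Res(f(t)g(-t)\,\d t)$. The only cosmetic difference is that you polarize $D^2\pi^{-1}(0)$ immediately, while the paper keeps $\psi$ and differentiates $\tfrac12\Tr(\alpha\psi^2)$ at the end.
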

\begin{proof}
By definition, we have 
$$
\dpa{\alpha}\F_{\mc{L}} (\psi)  = \Omega(\alpha, \pi^{-1} \psi)
$$
for all $\alpha,\psi \in \PV(X)[[t]]$.  Let us take $\alpha$ and $\psi$ to be in $\PV(X)$.  Then,
$$
\pi^{-1}(\psi) = t ( 1 - e^{-\psi/t}) = t ( \psi/t -\psi^2 /2  t^2 + \dots ) .
$$
Thus, 
$$
\dpa{\alpha}\F_{\mc{L}} (\psi) = -\tfrac{1}{2} \Omega(\alpha, \psi^2 / t ).
$$
Since the symplectic form is defined by 
$$
\Omega(\alpha f(t) , \beta g(t) ) = \Tr(\alpha \beta) \op{Res}(f(t) g(-t) \d t) 
$$
we see that
$$
\dpa{\alpha}\F_{\mc{L}} (\psi) = \tfrac{1}{2} \Tr (\alpha \psi^2).
$$
Taking derivaties with respect to $\psi$ completes the proof. 
\end{proof}

\begin{corollary}
The function $\F_{\mc{L}}$ is equal to the classical BCOV action functional.
\end{corollary}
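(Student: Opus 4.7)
The strategy is to compare Taylor coefficients directly. By the preceding lemma,
$$D_n \F_{\mc{L}}(t^{k_1}\alpha_1, \dots, t^{k_n}\alpha_n) = C(k_1,\dots,k_n)\,\Tr(\alpha_1 \cdots \alpha_n)$$
for some universal rational constants $C(k_1,\dots,k_n)$, whereas by definition $D_n I$ has the same form with $C(k_1,\dots,k_n)$ replaced by $\ip{\tau_{k_1}\cdots\tau_{k_n}}_0 = \int_{\mbar_{0,n}} \psi_1^{k_1}\cdots\psi_n^{k_n}$ for $n \geq 3$ and vanishes for $n < 3$. So everything reduces to showing these constants agree. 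For $n \leq 2$, the parametrization $t(1 - e^{g/t})$ shows that $T_0 \mc{L} = \PV(X)[[t]]$ is exactly the Lagrangian complement used to define $\F_{\mc{L}}$, so $\F_{\mc{L}}$ vanishes to order at least $3$ at the origin, matching the vanishing of $D_n I$ for $n<3$. The last lemma computed $C(0,0,0) = 1 = \int_{\mbar_{0,3}} 1$. Matching the cohomological degree $|\F_{\mc{L}}| = 6-2d$ against the degree $\sum_i(|\alpha_i| + 2k_i - 2)$ of the input, and using that $\sum|\alpha_i|=2d$ is necessary for $\Tr$ to be nonzero, one finds that $C(k_1,\dots,k_n)$ vanishes unless $\sum k_i = n-3$, which is the dimension of $\mbar_{0,n}$.

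The central step is to translate the geometric string equation for $\mc{L}$ (invariance of $\mc{L}' = \mc{L} - t$ under the infinitesimal symplectomorphism $t^{-1}$, proved in the previous lemma) into the combinatorial recursion
$$C(0, k_1, \dots, k_n) = \sum_{i=1}^n C(k_1, \dots, k_i - 1, \dots, k_n),$$
with the convention that $C(\dots,-1,\dots) = 0$. This is Givental's standard dictionary between geometric string invariance and the Taylor-level string equation; the classical Witten--Kontsevich string equation says that $\ip{\tau_0 \tau_{k_1}\cdots\tau_{k_n}}_0$ satisfies exactly this recursion. I would then induct on $n$: whenever $\sum k_i = n-3$ with $n \geq 4$, pigeonhole forces at least three of the $k_i$ to vanish (for otherwise $\sum k_i \geq n-2 > n-3$), so after relabeling the recursion reduces the $n$-point constant to a sum of $(n-1)$-point constants. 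Combined with the base case $C(0,0,0)=1$, induction gives $C(k_1,\dots,k_n) = \ip{\tau_{k_1}\cdots\tau_{k_n}}_0$ for all $n \geq 3$, hence $\F_{\mc{L}} = I$.

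The main obstacle is the derivation of the combinatorial string equation from its geometric counterpart. Concretely one must expand the flow generated by $t^{-1}$ on points $-te^{f/t} \in \mc{L}'$, re-express the tangency condition using the graph presentation $\mc{L} = \op{Graph}(\d\F_{\mc{L}})$, and match the induced equation on $\F_{\mc{L}}$ coefficient by coefficient. The dilaton equation established earlier provides a useful consistency check but is not logically required, since the pigeonhole argument guarantees enough zeros among the $k_i$ in the critical dimension in genus zero.
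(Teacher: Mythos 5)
Your proposal is correct and follows essentially the same route as the paper: the paper likewise reduces everything to the universal constants $C(k_1,\dots,k_n)$, uses the grading to force $\sum k_i = n-3$, and argues that the string and dilaton equations (in the form already established for $\F_{\mc{L}}$ via the geometric cone properties) pin the constants down from $C(0,0,0)=1$, which the $\psi$-class integrals also satisfy. Your only deviation is the observation that, thanks to the pigeonhole count of zero entries in the critical dimension, the string equation alone suffices and the dilaton equation is redundant in genus zero -- a minor streamlining of the paper's argument rather than a different one.
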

\begin{proof}
We have seen that $\F_{\mc{L}}$ satisfies
$$
\left( \dpa{(t^{k_1} \alpha_1 ) }  \dpa{(t^{k_2} \alpha_2 ) }  \dots \dpa{(t^{k_n} \alpha_n ) }  \F_{\mc{L}} \right) (0) = \op{Tr} (\alpha_1 \dots \alpha_n ) C(k_1, \dots, k_n) 
$$
for some constants $C(k_1,\dots,k_n) \in \Q$.  The grading conditions on $\F_{\mc{L}}$ imply that $C(k_1,\dots,k_n) = 0$ unless $\sum k_i = n-3$.  The dilaton and string equation imply that $C(k_1,\dots,k_n)$ are uniquely determined by $C(0,0,0)$ which is $1$.  Since the classical BCOV action functional $I$ satisfies all the same properties, it must coincide with $\F_{\mc{L}}$. 
\end{proof}

\section{Quantization on an elliptic curve}
\label{section elliptic quantization}
In this section we will prove the following theorem.
\begin{theorem}
There exists a unique translation invariant quantization of the BCOV theory on an elliptic curve $E$ which satisfies the dilaton equation.

Further, this quantization also satisfies the string equation, and the remaining Virasoro constraints. 
\end{theorem}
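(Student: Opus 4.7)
The plan is to take existence as given (it is established in the companion paper \cite{CosLi12}) and focus on the uniqueness statement together with the additional string and Virasoro constraints. The entire argument proceeds by obstruction theory in the genus parameter $\hbar$, combined with a Lie-algebra-cohomology calculation made possible by translation invariance on $E$.

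First I set up the obstruction complex. Suppose $\{\F[L]\}$ and $\{\F'[L]\}$ are two translation-invariant quantizations on $E$ satisfying the dilaton axiom. Write $\F[L]-\F'[L]=\sum_{g\ge 0}\hbar^g T_g[L]$ and argue by induction on the lowest nonzero $g$. Subtracting the renormalization group equations modulo $\hbar^{g+1}$ shows that $T_g[L]$ is compatible with RG flow, so the $L\to 0$ limit $T_g\defeq\lim_{L\to 0}T_g[L]$ exists as a translation-invariant local functional on $\E(E)$. Subtracting the quantum master equations shows $T_g$ is closed for the linearized BV operator $Q+\{I,-\}$, has the cohomological degree $(d-3)(2g-2)=-(2g-2)$ and Hodge weight $-(g-1)$ required by the dimension and Hodge-weight axioms (here $d=\dim_{\C}E=1$), and is fixed by the dilaton vector field $\partial_{Dil}-2\hbar^{2}\partial_{\hbar}\hbar^{-1}$ in the homotopical sense explained in section \ref{section quantization}.

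Next I reduce this to a finite-dimensional cohomology problem. Translation invariance on $E=\C/\Lambda$ implies that the computation takes place inside the graded Lie algebra $\mf{g}$ of translation-invariant polyvector fields on $E$, which is spanned as a bi-graded vector space by the four elements $1,\ \dbar\br{z},\ \partial_{z},\ \dbar\br{z}\,\partial_{z}$. Tensoring with $\C[[t]][2]$ and passing from translation-invariant local functionals to their underlying Lagrangian densities, the space of possible $T_g$ becomes computable as a Chevalley-Eilenberg style cohomology of a graded jet Lie superalgebra built from $\mf{g}[[t]][2]$, with differential induced by $Q+\{I,-\}$. The cohomological degree and Hodge weight constraints together with the dilaton constraint cut this down to a finite-dimensional cochain complex in each bi-degree, which I compute explicitly; the result (placed in the appendix) is that the relevant cohomology vanishes. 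Hence $T_g=0$ for every $g$, which proves uniqueness.

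The string equation and remaining Virasoro constraints now follow from uniqueness by a symmetry argument. Each Virasoro generator defines, via the analogue of Lemma \ref{lemma_derivation_rgflow}, a derivation of $\Oo(\E(E))[[\hbar]]$ commuting with $Q$, $\hbar\partial_{P(\eps,L)}$ and $\hbar\Delta_{L}$, and hence sends a quantization to a first-order family of quantizations over $\C[\delta]/\delta^{2}$. A direct check using the classical topological recursion relations on $\mbar_{0,n}$ (the same relations that appear in the proof that $I$ satisfies the classical master equation) shows that this deformation preserves translation invariance and the dilaton equation. By the uniqueness proved above, the deformation must be homotopically trivial, which is the statement of the quantum string/Virasoro constraints. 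The main obstacle is the Lie algebra cohomology calculation: the formal parameter $t$ together with the $-t\partial$ part of $Q$ couples infinitely many polynomial degrees, and one must verify the vanishing uniformly in $g$ in the prescribed range of cohomological degrees and Hodge weights. This is exactly what the appendix does, and it is the technical core on which the whole argument rests.
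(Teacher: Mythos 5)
Your uniqueness argument follows the paper's route in outline (order-by-order obstruction theory, with the dilaton constraint imposed homotopically, reduced to a Lie-algebra-cohomology vanishing statement), but the reduction you describe is not accurate: translation invariance does not collapse the problem onto the four-dimensional algebra of translation-invariant polyvector fields. Local functionals involve arbitrarily many holomorphic derivatives of the fields, so the relevant object is the (jet) $L_\infty$ algebra built from $\C[[z]]\oplus\C[[t]]\partial_z$, and the fixed-bidegree pieces of its Chevalley--Eilenberg complex are \emph{infinite}-dimensional (for example all the monomials $\delta_{b+q}e_0^{q}$, $q\ge 0$, in the paper's notation share one tridegree), so there is no "finite-dimensional cochain complex in each bi-degree" to compute directly; the vanishing requires filtrations and spectral sequences. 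Also, vanishing of the deformation group lets you conclude only that $T_g$ is exact, i.e.\ the two quantizations are homotopic, not that $T_g=0$.

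The genuine gap is in the second half. The parameter $\delta$ attached to the Virasoro deformation $\F[L]+\delta(\mc{L}_m\F)[L]$ carries cohomological degree $-2m$ and Hodge weight $-m$, so the obstruction to homotopy-triviality of this family at genus $g$ lies in $H^{4-4g+2m}\bigl(\Obs_{2-2g,\,2-2g+m}(E)\bigr)$, which for $m\neq 0$ (in particular for the string equation $m=-1$ and all higher Virasoro operators) is a \emph{different} graded piece from the group $H^{4-4g}\bigl(\Obs_{2-2g,2-2g}(E)\bigr)$ whose vanishing gives uniqueness. Uniqueness (even contractibility) of the space of dilaton-compatible quantizations in its own bidegree says nothing about families over $\C[\delta]/\delta^2$ with $\delta$ of nonzero degree and Hodge weight, so "by uniqueness the deformation is homotopically trivial" does not follow. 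This is exactly why the paper proves the broader vanishing theorem $\mc{H}^{i+2b}(\Obs_{a,b})=0$ for $a\le 0$ and $(i,b)\le(0,0)$ in the lexicographic order, which covers all $m\ge -1$ at once; your deferred appendix computation, as you describe it, is cut down only to the bidegrees of the $T_g$ and would not supply this. A secondary issue: $\mc{L}_{-1}[L]$ is scale-dependent and non-local, with the $\hbar^{-1}\partial_{(1)}I_3$ correction, so even showing that it produces a valid first-order family of quantizations requires the intertwining relation $e^{\hbar\partial_{P(\eps,L)}}\mc{L}_m[\eps]=\mc{L}_m[L]e^{\hbar\partial_{P(\eps,L)}}$ and a separate locality check (by Feynman diagrams), not merely the analogue of Lemma \ref{lemma_derivation_rgflow} for scale-independent derivations.
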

The existence part of this result relies on results of \cite{CosLi12}. 

The Virasoro constraints are generalizations of the string and dilaton equation; however, they are much harder to write down, and unlike the string and dilaton equation they depend on the dimension of the variety.   The Virasoro constraints will be defined precisely later. 

\subsection{}
Let us start by outlining the proof of this theorem.  The first step is to analyze the obstruction-deformation complex controlling quantizations of BCOV theory on an elliptic curve $E$.

In \cite{Cos11}, an obstruction-deformation complex is constructed which controls quantizations of any perturbative quantum field theory.    As explained in \cite{Cos11}, Chapter 5, this obstruction-deformation complex is, as a graded vector space, the space $\Ool(\Fields)$ of local functionals on our space $\Fields = \PV(E)[[t]]$ of fields.   The differential on $\Ool(\Fields)$ is given by 
$$
Q + \{I, - \}
$$
where, as before, $Q = \dbar - t \partial$ and $I \in \Ool(\Fields)$ is the classical BCOV action functional. 

Our grading convention is such that $H^0$ of this complex describes deformations of a quantization, $H^1$ contains obstructions to quantizations, and $H^{-1}$ consists of automorphisms of a quantization.   Thus, if one has a quantization defined modulo $\hbar^{k+1}$, the obstruction to quantizing to the next order in $\hbar$ is an element of $H^1$ of this complex; and the space of possible quantizations to the next order is a torsor for $H^0$ of this complex.

If one thinks of the space of possible quantizations as a simplicial set, then at each order in $\hbar$, if the obstruction vanishes, the simplicial set of lifts to the next order is a torsor for the simplicial abelain group constructed by the Dold-Kan correspondence from the cochain complex $\Ool(\Fields)$.  The $i$'th homotopy group of this simplicial set is $H^{-i} (\Ool(\Fields))$.  

\subsection{}
We are interested in quantizations which satisfy the dilaton equation.  The dilaton equation is defined by the operator
$$
\partial_{Dil} = \partial_{t \cdot 1} - \op{Eu} : \Ool(\Fields) \to \Ool(\Fields),
$$
which commutes with the differential $Q + \{I,-\}$.   

The obstruction-deformation complex for quantizing to order $g$ in $\hbar$ will be the homotopical version of the $2g-2$ eigenspace of the operator $\partial_{Dil}$ on $\Ool(\Fields(E))$.  More precisely, this obstruction-deformation complex is 
$$\left( \Ool(\Fields) [\eps] ,  Q + \{I,-\} + \eps (\partial_{Dil} + 2 - 2 g) \right).$$
This obstruction-deformation complex further breaks up under Hodge weight: recall that the subspace
$$
t^m \Omega^{0,\ast} ( \wedge^k T X ) \subset \E(X)
$$
has Hodge weight $k + m - 1$.  
\begin{definition}
If $U \subset E$ is an open subset, let 
$$
\op{Obs}_{a,b}(U) \subset \left( \Ool(\Fields) [\eps] ,  Q + \{I,-\} + \eps (\partial_{Dil} + a \right)
$$
be the subcomplex of Hodge weight $b$. (The number $a$ appearing here will be referred to as the dilaton weight).

As $U$ varies, these complexes form a sheaf of cocahin complexes which we denote $\op{Obs}_{a,b}(U)$.  We will let $\op{Obs}^i_{a,b}$ denote the sheaf of sections of cohomological degree $i$, and we will let $\mc{H}^i (\op{Obs}_{a,b} )$ denote the cohomology sheaf of the sheaf of complexes $\op{Obs}_{a,b}$. (Recall that the cohomology sheaf is defined by sheafifying the presheaf which sends $U$ to $H^i (\op{Obs}_{a,b}(U))$). 
\end{definition}
Because $\F_g[L]$ is required to be in cohomological degree $4 - 4g$ and Hodge weight $2 -2 g$, we see that the group controlling obstructions to quantizations to order $g$ is $H^{5 - 4g}(\Obs_{2-2g, 2-2g}(E))$.  

The following theorem summarizes the main features of the sheaf of complexes $\Obs_{a,b}$. The proof of this theorem is presented in the Appendix.  
\begin{theorem}
\label{theorem obstruction vanishing elliptic}
The cohomology sheaves $\mc{H}^i(\Obs_{a,b})$ are constant sheaves.  Further, 
$$
\mc{H}^{ i + 2 b  }(\Obs_{a,b}) = 0
$$
if $a \le 0$ and $(i,b) \le (0,0)$ in the lexicographical ordering on $\Z \times \Z$. 
\end{theorem}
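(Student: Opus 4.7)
The plan is to reduce the computation of $\mc{H}^i(\Obs_{a,b})$ to a Chevalley-Eilenberg cohomology computation on the formal one-dimensional disk, and then to use a weight-counting argument together with the dilaton perturbation to force vanishing.

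First I would establish constancy. The sheaf $\Obs_{a,b}$ is defined from local functionals on $\Fields$, and every ingredient used to build it --- the action $I$, the bracket $\{-,-\}$, the operators $Q$ and $\partial_{Dil}$, and the Hodge grading --- is translation invariant on $E$. Therefore $\Obs_{a,b}$ is equivariant for the transitive translation action of $E$ on itself, and the canonical translation isomorphisms between stalks show that each cohomology sheaf $\mc{H}^i(\Obs_{a,b})$ is constant. This reduces the problem to computing the cohomology at any fixed point $p \in E$.

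Next I would identify that stalk with a concrete algebraic complex. By the general description of local functionals in \cite{Cos11}, the stalk of $\Ool(\Fields)$ at $p$ is computed from the Chevalley-Eilenberg cochain complex of an $L_\infty$ algebra $\mathfrak{g}$ built from translation-invariant $\infty$-jets of $\Fields$, with differential $\dbar - t\partial$ and higher brackets determined by $\{I, -\}$. Taking translation-invariant sections turns $\Obs_{a,b}$ into the Hodge-weight-$b$ piece of the mapping cone of $\partial_{Dil}+a$ acting on $C^\ast(\mathfrak{g})$. A long-exact-sequence argument then reduces the required vanishing of $\mc{H}^{i+2b}(\Obs_{a,b})$ to invertibility of $\partial_{Dil}+a$ on the appropriate graded pieces of $H^\ast(C^\ast(\mathfrak{g}))$ at Hodge weight $b$.

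The third step is the weight-counting argument. I would filter $C^\ast(\mathfrak{g})$ by polynomial degree in the jet variables, reducing to the cohomology of the abelian version of $\mathfrak{g}$ with only the linear differential $\dbar - t\partial$. The resulting first page of the spectral sequence breaks up into summands indexed by multisets of generators of the form $t^k \alpha$ with $\alpha$ a jet of polyvector field and $k \ge 0$; each such multiset has well-defined Hodge weight, cohomological degree, and dilaton weight determined by the $t$-powers and polyvector indices of its constituents. An enumeration shows that in Hodge weight $b \le 0$ and cohomological degree at most $2b$, every surviving class has dilaton weight different from $-a$ for each $a \le 0$, so $\partial_{Dil}+a$ acts invertibly and the mapping cone is acyclic in the required degree. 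The shift by $2b$ reflects the degree convention coming from the isomorphism $\PV^{k,\ast} \iso \Omega^{1-k,\ast}$ used to convert a local functional into a top-degree density on $E$.

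The main obstacle is the explicit enumeration of surviving generators and verification of the dilaton-weight condition in the excluded bidegrees; this relies on the one-dimensionality of $E$, so that polyvector indices $k$ take only the values $0$ and $1$, together with the fact that the integrals against $\psi$-classes in $I$ impose $\sum k_j = n-3$ on the nonzero structure constants of $\mathfrak{g}$. The bookkeeping and case analysis required are delicate, which is why this computation is naturally relegated to the Appendix.
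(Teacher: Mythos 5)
Your first two steps track the paper's own setup: the appendix likewise reduces to jets at a point (the translation/flat-connection structure of the jet $D_E$-module is what makes the cohomology sheaves constant) and identifies the stalk complex with $C^\ast_{red}$ of an $L_\infty$ algebra $\g$ with differential $\dbar - t\partial$ and brackets coming from $\{I,-\}$, with the dilaton condition imposed as the cone of $\partial_{Dil}+a$. The genuine gap is in your third step. The filtration by polynomial (Sym) degree is not compatible with the cone differential: the brackets \emph{raise} Sym degree while the $\partial_{t\cdot 1}$ summand of $\partial_{Dil}$ \emph{lowers} it, so there is no legitimate spectral sequence whose first page is ``the abelian complex with only $\dbar - t\partial$'' carrying a well-defined dilaton weight of the kind you describe. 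Worse, if one runs the associated-graded computation anyway, $\partial_{Dil}$ degenerates to $-\op{Eu}$, whose eigenvalue is minus the Sym degree (it is \emph{not} read off from the $t$-powers and polyvector indices of the constituents), and $-\op{Eu}+a$ is then invertible on every reduced symmetric power for every $a\le 0$; your argument would therefore ``prove'' that $\Obs_{a,b}$ is acyclic in all degrees for all $a\le 0$. That is far too much: the paper's appendix computation produces nonzero classes of dilaton (scaling) weight $0$ outside the stated range (for instance the homology class of $\delta_1\delta_0 e_2^2\delta_2\delta_3$ in the notation there), and the obstruction groups in degree $1+2b$ are precisely the ones the authors do \emph{not} claim to vanish.

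The other missing ingredient is that the vanishing in the stated range is invisible at the abelian level and genuinely uses the interactions. For example, the chain $\delta_0 e_1$ (i.e.\ $(t^0\partial_z)\otimes z$) lies in the forbidden bidegree and is a nontrivial class for the linearized differential; it is killed only by the differential induced by the cubic vertex $I_3$ (the operator $\Phi_1$ in the appendix), and the next layer of cancellations uses the quartic vertex ($\Phi_2$). The paper's actual route is: first replace $\partial_{Dil}$ by the cochain-homotopic operator $\til{\partial}_{Dil}$ (homotopy $\partial/\partial(z\partial_z)$), pass to the quasi-isomorphic subalgebra $\g'=\C[[z]]\oplus\C[[t]]\partial_z$ on which this operator is the diagonal scaling operator $S$, thereby converting the homotopy eigenspace into an honest bigraded piece of $H^\ast_{red}(\g')$; and only then carry out the hard part, namely the computation of that cohomology with the bracket-induced differentials via two further filtrations and explicit tridegree bookkeeping. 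Without the homotopy replacement of the (non-diagonal) dilaton operator and without using the interaction-induced differentials, the weight-counting argument you propose cannot establish the theorem.
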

This implies that if there is a quantization, it is unique (even unique up to a contractible choice). However, there may be an obstruction to quantization; at genus $g$ the possible obstructions are sections of the sheaf $\mc{H}^{1 + 4 - 4g} (\Obs_{2-2g,2-2g})$.
The proof that all of these obstructions vanish is outsourced to \cite{CosLi12}.

\section{Correlation functions and the $\br{\tau} \to \infty$ limit}
\label{section elliptic correlators}
We are interested in the correlation functions for the BCOV theory on $E_\tau$.  Recall, as explained in section \ref{section givental}, that the corelation functions for the theory depend on the choice of a splitting of the Hodge filtration on $E_\tau$.

We will view the space $\mbb{H}$ as the Teichm\"uller space of complex structures on a fixed manifold $S^1 \times S^1$.   We will identify
$$
H^1( E_\tau ) = H^1 ( S^1 \times S^1) = \C \{a,b\},
$$
where $a,b$ are Poincar\'e dual to cycles $S^1 \times 1$, $1 \times S^1$.   For each $\tau \in \mbb{H}$, we get a Hodge filtration $F^1_\tau H^1(S^1 \times S^1)$.  In the basis given by $a$ and $b$, the Hodge filtration is simply
$$
F^1_{\tau} = \C ( a + b \tau ).
$$
If $\sigma \in \mbb{H}$, then the complex conjugate space $\br{F}^1_{\sigma}$ to $F^1_\tau$ defines a splitting of the Hodge filtration on $H^1(E_\tau)$.   Let
$$
\ip{-}_{g,n}^{E_\tau, \br{\sigma} } : H^\ast (\PV(E_\tau ))[[\tau]] ^{\otimes n} \to \C
$$
denote the correlation functions for the BCOV theory on $E_\tau$, defined using the splitting of the Hodge filtration given by $\sigma$.

\begin{lemma}
The correlation functions are $\ip{-}_{g,n}^{E_\tau, \br{\sigma} }$, viewed as functions of $\sigma$, are  polynomials in $1 / (\tau - \br{\sigma})$.
\end{lemma}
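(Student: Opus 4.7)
The plan is to trace the $\sigma$-dependence through the change-of-polarization story of Section~\ref{section correlation} and then to exhibit the relevant propagator as a polynomial of degree one in $1/(\tau - \br\sigma)$. Recall that $\ip{-}^{E_\tau,\br\sigma}_{g,n}$ is the $n$-th Taylor coefficient of $\hbar\log\Phi_{\Lag_\sigma}(Z_{\F})$, where $Z_{\F}\in\mc F(H^\ast\mc S_+(E_\tau))[\hbar^{-1}]$ is a cohomology class independent of $\sigma$ and $\Phi_{\Lag_\sigma}$ is the polarization-dependent isomorphism with $\Oo(H^\ast\mc S_+(E_\tau))[[\hbar]]$.

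Fix a reference polarization $\Lag_{\sigma_0}$. Since $\Lag_\sigma$ and $\Lag_{\sigma_0}$ are both Lagrangian complements to the same subspace $H^\ast\mc S_+(E_\tau)$, the two Fock-space identifications differ by a Bogoliubov (quadratic-Hamiltonian) transformation
$$
\Phi_{\Lag_\sigma}(Z_{\F}) \;=\; \exp\!\bigl(\tfrac{\hbar}{2}\,\partial_{P_{\sigma_0,\sigma}}\bigr)\,\Phi_{\Lag_{\sigma_0}}(Z_{\F}),
$$
where $P_{\sigma_0,\sigma}\in\Sym^2 H^\ast\mc S_+(E_\tau)$ is the symmetric tensor encoding the graph map $\Lag_{\sigma_0}\to H^\ast\mc S_+(E_\tau)$ whose graph equals $\Lag_\sigma$, and $\partial_{P}$ is the associated constant-coefficient second-order differential operator. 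This is the ``Givental change-of-polarization'' mentioned at the end of Section~\ref{section correlation}.

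The next step computes $P_{\sigma_0,\sigma}$ explicitly. By the preceding lemma applied with $d=1$,
$$
\Lag_\sigma \;=\; H^\ast(E_\tau)\otimes t^{-2}\C[t^{-1}] \;+\; \bigl(H^0(E_\tau)\oplus \C(a+b\br\sigma)\bigr)\otimes t^{-1},
$$
so the only $\sigma$-dependent summand is the line $t^{-1}\C(a+b\br\sigma)\subset t^{-1}H^1(E_\tau)$. The graph map $\Lag_{\sigma_0}\to H^\ast\mc S_+(E_\tau)$ therefore vanishes on every component except $t^{-1}\C(a+b\br{\sigma_0})$, and on this line it is determined by requiring that $t^{-1}(a+b\br{\sigma_0})$ plus its image lie in $t^{-1}\C(a+b\br\sigma)$. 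Solving this linear condition inside $t^{-1}H^1(E_\tau)$ yields the image $\tfrac{\br\sigma - \br{\sigma_0}}{\tau - \br\sigma}\,t^{-1}(a+b\tau)$, so $P_{\sigma_0,\sigma}$ is a polynomial of degree one in $1/(\tau-\br\sigma)$ with coefficients independent of $\sigma$.

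Finally I would combine the two inputs via the Feynman expansion. Setting $W_{\sigma_0} := \hbar\log\Phi_{\Lag_{\sigma_0}}(Z_{\F})$, the Bogoliubov formula rewrites as $\hbar\log\Phi_{\Lag_\sigma}(Z_{\F}) = \hbar\log(\exp(\tfrac{\hbar}{2}\partial_{P_{\sigma_0,\sigma}})\exp(W_{\sigma_0}/\hbar))$, which is formally the RG-flow operator of Section~\ref{section quantization}; its Taylor coefficients expand as a sum over connected stable graphs $\Gamma$ with $n$ external legs, vertices labelled by Taylor coefficients of $W_{\sigma_0}$, and internal edges labelled by $P_{\sigma_0,\sigma}$. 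The $\hbar^g$-contribution is supported on graphs with $\sum_v g_v + h_1(\Gamma) = g$, and the stability condition $2g_v-2+k_v\ge 1$ at each vertex together with $E = V + h_1(\Gamma) - 1$ forces $V\le 2g-2+n$ and $E\le 3g-3+n$, so only finitely many graphs contribute and each gives a monomial of degree at most $E$ in the entries of $P_{\sigma_0,\sigma}$. Combined with the previous paragraph, $\ip{-}^{E_\tau,\br\sigma}_{g,n}$ is a polynomial of degree at most $3g-3+n$ in $1/(\tau-\br\sigma)$. The main technical obstacle is making the Bogoliubov step precise --- verifying that the two Fock-space identifications differ by a pure $\exp(\tfrac{\hbar}{2}\partial_P)$ with symmetric $P$ and no lower-order or $\hbar^{-1}$ corrections --- but this follows from the shared complement $H^\ast\mc S_+(E_\tau)$ and the Lagrangian property of the two polarizations, and only needs to be aligned with the conventions of Section~\ref{section correlation}.
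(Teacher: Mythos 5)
Your argument reaches the correct conclusion, but by a genuinely different route from the paper. The paper's proof is a base-change argument: it observes that the correlators are produced by a purely algebraic construction from the splitting of the Hodge filtration, so that for any finitely generated commutative ring $R$ and any $R$-family of splittings the correlators are elements of $R$; it then simply notes that $\br{F}^1_\sigma$ is spanned by $(\tau-\br{\sigma})^{-1}\d z+\beta$, i.e.\ the family of splittings is polynomial in $(\tau-\br{\sigma})^{-1}$, and polynomiality of the correlators follows with no further computation. You instead make the change of polarization explicit as a quantized symplectic transformation and control it by a Feynman-graph expansion. Your identification of $\Lag_\sigma$ and of the graph map (image $\tfrac{\br{\sigma}-\br{\sigma_0}}{\tau-\br{\sigma}}\,t^{-1}\d z$ on the one moving line) is correct, and this buys an explicit picture of where $1/(\tau-\br{\sigma})$ enters together with a degree bound, at the cost of having to analyse the Givental operator carefully — which is exactly the analysis the paper's softer argument avoids.

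The one step you should repair is the claim that $\Phi_{\Lag_\sigma}\circ\Phi_{\Lag_{\sigma_0}}^{-1}$ is a pure $\exp\bigl(\tfrac{\hbar}{2}\partial_{P}\bigr)$ with no lower-order corrections. With the paper's conventions this is not quite so: $\Phi_{\Lag}$ uses $\Lag$ twice, once as the negative Lagrangian and once to split the filtration by $t^k H^\ast\mc{S}_+(E_\tau)$ and thereby identify $H^\ast\mc{S}_+(E_\tau)$ with $H^\ast(\PV(E_\tau))[[t]][2]$, which is the space in whose directions the Taylor coefficients defining $\ip{-}^{E_\tau,\br{\sigma}}_{g,n}$ are taken. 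Changing $\sigma$ therefore also produces a unipotent linear substitution on the arguments, not only a propagator insertion: for instance the $\Lag_\sigma$-lift of the class of $\beta\in H^1(E_\tau)$ is $\beta-(\tau-\br{\sigma})^{-1}\d z$, which visibly moves with $\sigma$. This does not endanger the lemma, since that substitution is again affine-linear in $1/(\tau-\br{\sigma})$, so polynomiality survives; but the assertion that the two Fock-space identifications differ by a pure Bogoliubov operator is false as stated, and your degree bound $3g-3+n$ should be relaxed accordingly (each external input can contribute an extra power of $1/(\tau-\br{\sigma})$). If you want to avoid this bookkeeping altogether, the paper's $R$-family argument does so.
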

\begin{proof}
Note that the correlation functions are constructed by an entirely algebraic procedure from the complementary filtration to the Hodge filtration.  It follows that the correlators depend in an algebraic way on the complementary filtration; so that if we have an algebraically varying family of complementary filtrations, parameterized by some algebraic variety $Z$, then the correlators will be algebraic functions on $Z$.

Let us explain this more precisely.  Let $R$ be a finitely-generated commutative algebra.  An $R$-family of splittings of the Hodge filtration on $E$ is an isomorphism of $R$-modles
$$
H^1(E) \otimes R \iso R \oplus F^1 H^1(E)\otimes R,
$$
which splits the inclusion of $R$-modules
$$
F^1 H^(E) \otimes R \into H^1(E)  \otimes R.
$$
If we have such an $R$-family of splittings of the Hodge filtration, then we can consider the corresponding $R$-family of polarizations of the symplectic vector spaces $H^\ast(\mc{S}(E))$, and the resulting isomorphism of $\mc{W}(H^\ast(\mc{S}(E))) \otimes R$-modules
$$
\mc{F}( H^\ast(\mc{S}_+(E))) \otimes R \iso \Oo(H^\ast(\mc{S}_+(E)))[[\hbar]]  \otimes R.
$$
(One needs to be a little careful when defining the tensor product with $R$ here: the Weyl algebra and Fock module are pro-vector spaces.  Every element of the inverse system is tensored with $R$, so  that the Weyl algebra and Fock module become pro-$R$-modules).

Because they are defined by differentiating the partition function
$$
Z \in  \Oo(H^\ast(\mc{S}_+(E)))[[\hbar]]  \otimes R
$$
with respect to elements of $H^\ast(\mc{S}_+(E))$ and then evaluating at zero, it is clear that the correlators are elements of $R$.

It remains to verify that the splitting of the Hodge filtration on $H^1(E)$ defined by $\br{F}^1_\sigma$ depends in a polynomial way on $(\tau - \br{\sigma})^{-1}$.   Let $\alpha, \beta \in H^1(E)$ denote the basis Poincar\'e dual to the standard $a$- and $b$-cycles. Let
$$
\d z = \alpha + \tau \beta
$$
be the holomorphic one form for complex structure $\tau$, which spans $F^1_{\tau}H^1(E)$.  Note $\beta$ and $\d z$ are linearly independent, and that the subspace $\br{F}^1_{\sigma}( H^1(E))$ is spanned by
$$
(\tau - \br {\sigma} ) ^{-1} \d z + \beta.
$$
Clearly this is a family of complementary subspaces which depends in a polynomial way on $(\tau - \br{\sigma})^{-1}$.
\end{proof}
Note that this lemma implies, in particular, that the $\br{\sigma} \to i \infty$ limit of the correlators exist. We will use the notation
$$
\ip{-}_{g,n}^{E_\tau, \infty  } = \lim_{\br{\sigma} \to \infty} \ip{-}_{g,n}^{E_\tau, \br{\sigma}  }  : H^\ast (\PV(E_\tau ))[[\tau]] ^{\otimes n} \to \C.
$$
to denote the correlators correspondong to the limiting splitting of the Hodge filtration $\lim_{\br{\sigma} \to \infty} \br{F}^1_{\sigma}$. Physicists often write formulae like
$$
\lim_{\br{\tau} \to \infty} \ip{-}_{g,n}^{E_\tau, \br{\tau}}
$$
to denote these correlators.    We will sometimes abuse notation in this way.

\begin{remark}
Although we only discuss the example of elliptic curves, the
$\bar\tau\to \infty$ actually makes sense on general Calabi-Yau
moduli spaces. In that case we need to specify a large complex
limit, and $\bar\tau\to \infty$ simply replaces the complex
conjugate Hodge filtration by the limiting monodromy filtration.
The mirror symmetry conjecture states that the higher genus B-model
correlation functions is equivalent to the higher genus
Gromov-Witten theory on the mirror under $\bar\tau\to \infty$
\cite{BerCecOog94}.
\end{remark}

\section{Holomorphic properties of correlation functions}
\label{section holomorphic correlators}
In this section we will show the following.
\begin{theorem}
Let $\alpha_1, \ldots, \alpha_n$ be holomorphic sections of the bundle on $\mbb{H}$ whose fibre at $\tau$ is $H^\ast (\PV(E_\tau))$.  Then, the correlation functions
$$
\ip{\alpha_1 t^{k_1}, \ldots, \alpha_n t^{k_n} }_{g,n}^{E_\tau, \br{\sigma} } \in \C
$$
are holomorphic functions of $\tau$.
\label{theorem holomorphic correlators}
\end{theorem}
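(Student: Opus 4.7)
The plan is to construct the quantization in a holomorphic family over the Teichm\"uller space $\mbb{H}$ of elliptic curves, and to deduce holomorphic dependence of the correlators from this family structure. First, I would organize the relative BCOV theory over the universal family $\pi \colon \mc{E} \to \mbb{H}$: the space of fields is the sheaf $\PV(\mc{E}/\mbb{H})[[t]]$ equipped with the relative differential $\dbar_{rel} - t \partial$, and the classical BCOV action from Section \ref{section classical bcov} extends universally (its construction only uses the fibrewise trace, wedge, and $\psi$-class integrals). Then I would enlarge the coefficient ring to the Dolbeault complex $\Omega^{0,*}(\mbb{H})$, so that the full differential becomes $\dbar_{rel} + \dbar_{\mbb{H}} - t \partial$, and a quantization in this setting is a collection of functionals valued in $\Omega^{0,*}(\mbb{H})$ that satisfies the $\Omega^{0,*}(\mbb{H})$-linear renormalization group equation, QME, and locality axiom, together with the dilaton equation.

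Second, I would invoke the family version of the uniqueness argument from Section \ref{section elliptic quantization}. The obstruction-deformation complex for this family theory at a point $\tau\in \mbb{H}$ is a Dolbeault refinement of $\Obs_{a,b}(E_\tau)$; Theorem \ref{theorem obstruction vanishing elliptic} asserts that the cohomology sheaves $\mc{H}^i(\Obs_{a,b})$ are constant and that the relevant groups controlling obstructions and automorphisms vanish. Applying a spectral-sequence argument to the filtration by form-degree on $\Omega^{0,*}(\mbb{H})$ shows that the same obstructions and deformation groups vanish in the family setting. Hence the absolute quantization of Section \ref{section elliptic quantization} lifts uniquely (up to contractible choice) to a family quantization $\F_{\mc{E}/\mbb{H}}[L]$ over $\Omega^{0,*}(\mbb{H})$.

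Third, I would extract the correlators from this family object. Taking $L \to \infty$ and restricting to the family of harmonic polyvector fields yields $\F^{\mc{H}} \in \Oo(\mc{H}(\E(\mc{E}/\mbb{H})))[[\hbar]] \otimes \Omega^{0,*}(\mbb{H})$. Under the flat Gauss--Manin identification $H^1(E_\tau) \iso H^1(E_\sigma)$, the polarization $\br{F}_\sigma$ depends only on $\sigma$, so the change-of-polarization isomorphism $\Phi_{\br{F}_\sigma}$ is an $\Omega^{0,*}(\mbb{H})$-linear operation and commutes with the $\mbb{H}$-dependence. Evaluating derivatives of $\hbar \log \Phi_{\br{F}_\sigma}(Z_{\F})$ at the origin on holomorphic sections $\alpha_i(\tau)$ produces elements of $\Omega^{0,*}(\mbb{H})$. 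The family QME forces the resulting elements to be $\dbar_{\mbb{H}}$-closed; since they manifestly lie in $\Omega^{0,0}(\mbb{H})$ (only the form-degree zero component survives the evaluation on holomorphic inputs), they must be holomorphic in $\tau$.

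The main obstacle is establishing the family obstruction theory rigorously, namely verifying that the vanishing statements of Theorem \ref{theorem obstruction vanishing elliptic} persist with coefficients in $\Omega^{0,*}(\mbb{H})$ and that the local-to-global step behaves well for the Fr\'echet sheaves involved. Once this is in hand, the holomorphicity of the correlators reduces to the identification of the zero-form part of a $\dbar$-closed object with a holomorphic function.
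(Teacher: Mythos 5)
Your overall strategy coincides with the paper's: pass to the relative BCOV theory over the universal curve, work over the base ring $\Omega^{0,\ast}(\mbb{H})$, produce a family quantization by obstruction theory, and read off holomorphy of the correlators from the resulting holomorphic (i.e.\ $\dbar_{\mbb{H}}$-closed) dependence of the partition function, the polarization $\br{F}_\sigma$ being constant in $\tau$ under the Gauss--Manin identification. However, your second step contains a genuine inaccuracy. Theorem \ref{theorem obstruction vanishing elliptic} does \emph{not} say that the groups controlling obstructions vanish: at genus $g$ the obstruction lives in $\mc{H}^{5-4g}(\Obs_{2-2g,2-2g})$, i.e.\ in the degree $i+2b$ with $i=1$, which lies outside the range $(i,b)\le (0,0)$ covered by the theorem; only the deformation/automorphism range $i\le 0$ vanishes, which is what gives uniqueness. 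The vanishing of the obstruction \emph{classes} (not groups) is precisely the existence statement that the paper outsources to \cite{CosLi12}. Consequently your claim that ``the same obstructions and deformation groups vanish in the family setting'' does not follow from the filtration-by-form-degree spectral sequence, and as stated the existence of the family quantization over $\Omega^{0,\ast}(\mbb{H})$ is not established.

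The repair is exactly what the paper does at this point. Since each fibrewise cohomology $H^{k}(\Obs_{a,b}(E_\tau))$ is a finite-rank holomorphic vector bundle on the contractible Stein base $\mbb{H}$, it is trivial, the spectral sequence $H^\ast_{\dbar}\bigl(\mbb{H}, H^\ast(\Obs_{a,b}(E_\tau))\bigr) \Rightarrow H^\ast(\Obs_{a,b}(E))$ degenerates, and the family obstruction group is identified with the space of \emph{holomorphic sections} of the bundle of fibrewise obstruction groups. The family obstruction class restricts at each $\tau$ to the fibrewise obstruction class, which vanishes because the quantization exists on every $E_\tau$; a holomorphic section vanishing at every point is zero, so the family quantization exists (and is unique by the vanishing that Theorem \ref{theorem obstruction vanishing elliptic} does provide). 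With that correction, your first and third steps go through and agree with the paper's argument, which phrases the conclusion as: the partition function is a holomorphic section of the Fock bundle $\mc{F}(H^\ast\mc{S}_+(E))[\hbar^{-1}]$ over the moduli of elliptic curves, from which holomorphy in $\tau$ of the correlators $\ip{\alpha_1 t^{k_1},\ldots,\alpha_n t^{k_n}}_{g,n}^{E_\tau,\br{\sigma}}$ follows, exactly as in your $\dbar_{\mbb{H}}$-closedness argument.
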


Now, the correlation functions are constructed directly from the partition function $Z(E)$ of the BCOV theory.  Recall from section \ref{section givental}  that the partition function $Z(E)$ of the BCOV theory is a class in the Fock space
$$
\mc{F} ( H^\ast \mc{S}_+(X) ) [\hbar^{-1} ]
$$
where
\begin{align*}
\mc{S}(E) & = \PV(E) ((t)) [2 ] \\
\mc{S}_+(E) &= \PV(E) [[t]] [2]
\end{align*}
(both of these cochain complexes are equipped with the differential $\dbar - t \partial$).    As explained in section \ref{section givental}, the notation $\mc{F}( H^\ast \mc{S}_+(X))$ indicates the Fock space for the symplectic vector space $H^\ast \mc{S}(X)$ based on the Lagrangian subspace $H^\ast \mc{S}_+(X)$.

The Fock space $\mc{F} ( H^\ast \mc{S}_+(E)) $ defines, as $E$ varies, a holomorphic bundle on the moduli space of elliptic curves.

In order to verify that the correlation functions vary holomorphically with $\tau$, it suffices to prove the following.
\begin{proposition}
The partition function
$$
Z(E) \in \mc{F} ( H^\ast \mc{S}_+(E)) [\hbar^{-1} ]
$$
is a holomorphic section of the holomorphic bundle $\mc{F} ( H^\ast \mc{S}_+(E)) [\hbar^{-1} ]$ on the moduli space of elliptic curves.
\end{proposition}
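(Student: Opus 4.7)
The plan is to work in families over $\mathbb{H}$ and leverage the uniqueness theorem of Section~\ref{section elliptic quantization}. Fix the underlying smooth manifold $S^1 \times S^1$ and consider the universal family $\mathcal{E} \to \mathbb{H}$, equipped with its relative holomorphic volume form $dz_\tau$. Then the sheaves of polyvector fields form a smoothly-varying family of nuclear Fr\'echet spaces over $\mathbb{H}$: the product structure, the operator $\partial_\tau$, and the trace pairing depend \emph{holomorphically} on $\tau$ (they are built only from $dz_\tau$), whereas the Dolbeault differential $\bar\partial_\tau$ and the gauge-fixing operator $\bar\partial^*_\tau$ (associated to a smooth family of flat K\"ahler metrics) depend on both $\tau$ and $\bar\tau$. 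In particular the BCOV differential $Q_\tau = \bar\partial_\tau - t\partial_\tau$ varies smoothly, and its $\partial$-part varies holomorphically.

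First, I would construct a smooth family quantization $\{\F[L]\}$ parametrized by $\tau \in \mathbb{H}$ by applying the family version of the renormalization formalism of Section~\ref{section quantization} with the heat kernels of the family operator $[\bar\partial_\tau,\bar\partial^*_\tau]$. The uniqueness part of the theorem in Section~\ref{section elliptic quantization} implies that at each fixed $\tau$ this family agrees, up to a homotopy which is pure gauge (coming from the metric), with the canonical translation-invariant, dilaton-satisfying quantization on $E_\tau$. The associated family partition function $Z[L] = \exp(\F[L]/\hbar)$ thus defines, after passing to the Fock-space cohomology as in Section~\ref{section givental}, a section $Z$ of $\mathcal{F}(H^*\mathcal{S}_+(E_\tau))[\hbar^{-1}]$ which is a priori only $C^\infty$ in $\tau,\bar\tau$, and whose fiberwise restriction is the partition function $Z(E_\tau)$ of the proposition.

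Next, to show that $Z$ is holomorphic, one computes its anti-holomorphic derivative. Recall that the Fock bundle inherits its holomorphic structure from the identification $H^*\mathcal{S}(E_\tau) \cong H^*(S^1 \times S^1;\mathbb{C})((t))[1]$ as a flat (Gauss--Manin) bundle, together with the Hodge filtration viewed as a holomorphic Lagrangian subbundle. Since $\partial_\tau$ and the trace pairing are holomorphic in $\tau$, the entire $\bar\tau$-dependence of the family QME and RGE resides in $\bar\partial_\tau$ and $\bar\partial^*_\tau$. Differentiating in $\bar\tau$ and using the QME, the quantity $\bar\partial_\tau \F[L] \, d\bar\tau$ defines a cocycle in the family obstruction-deformation complex of translation-invariant, dilaton-satisfying deformations of the BCOV theory, sitting in the cohomological degree and dilaton/Hodge weights covered by Theorem~\ref{theorem obstruction vanishing elliptic}. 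That theorem forces the class to be exact. Exactness means that $\bar\partial_\tau Z$ is the image under the Fock differential of another state, hence vanishes in the cohomology of the Fock bundle. The additional $\bar\tau$-dependence coming through $\bar\partial^*_\tau$ is pure gauge, by the contractibility of $\op{Met}(E)$ established in Section~\ref{section quantization}, and so also contributes trivially. Combining, $\bar\partial_\tau Z = 0$ as a section of the holomorphic Fock bundle, which is exactly the assertion.

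The main obstacle is the bookkeeping in the second step: identifying $\bar\partial_\tau \F[L]$ with a genuine cocycle in the deformation complex to which Theorem~\ref{theorem obstruction vanishing elliptic} applies (with the correct translation-invariance, dilaton weight, and cohomological degree), and disentangling the genuine anti-holomorphic dependence from the pure-gauge dependence induced by the anti-holomorphic variation of the metric. Once this identification is made, the proposition follows immediately from uniqueness together with the vanishing of the relevant obstruction groups.
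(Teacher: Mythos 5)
There is a genuine gap at the central step of your argument. You assert that, given a smooth family quantization, the quantity $\dbar_\tau \F[L]\, \d\br{\tau}$ ``defines a cocycle in the family obstruction-deformation complex.'' This is not true on the nose: when you differentiate the quantum master equation and the renormalization group equation in $\br{\tau}$, you pick up extra terms from the $\br{\tau}$-dependence of the Dolbeault differential in $Q_\tau = \dbar_\tau - t\partial_\tau$, of the gauge-fixing operator $\dbar^\ast_\tau$, and hence of the heat kernel $K_L$, the propagator $P(\eps,L)$ and the BV operator $\Delta_L$. So $\dbar_\tau \F[L]$ by itself does not satisfy the linearized QME/RGE at fixed $\tau$, and one cannot directly feed it into Theorem \ref{theorem obstruction vanishing elliptic}. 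The ``bookkeeping'' you defer is exactly the mathematical content, and the clean way to do it is to package all of this $\br{\tau}$-dependence into the structure of a theory over the differential graded base ring $\Omega^{0,\ast}(\mbb{H})$: one takes the relative polyvector fields $\PV(E,\pi)[[t]][2]$ of the universal curve $\pi : E \to \mbb{H}$ with differential $\dbar - t\partial$, and \emph{defines} holomorphic dependence on $\tau$ as the existence of a quantization over this base ring. This is the route the paper takes. Your separate first step (existence of a smooth family restricting fibrewise to the canonical quantization) is likewise asserted rather than proved, and in the paper's formulation it is not needed.

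Once the problem is set up over $\Omega^{0,\ast}(\mbb{H})$, the paper's argument is also simpler than the one you propose: the obstruction to quantizing the family at genus $g$ lives in $H^{5-4g}(\Obs_{2-2g,2-2g}(E))$, and the spectral sequence $H^\ast_{\dbar}(\mbb{H}, H^\ast(\Obs_{a,b}(E_\tau))) \Rightarrow H^\ast(\Obs_{a,b}(E))$, together with the triviality of finite-rank holomorphic bundles on $\mbb{H}$, identifies this group with holomorphic sections of the bundle $\tau \mapsto H^{5-4g}(\Obs_{2-2g,2-2g}(E_\tau))$. Since the fibrewise obstruction vanishes for every $\tau$ (this is the existence result, from \cite{CosLi12} --- note it is \emph{not} given by Theorem \ref{theorem obstruction vanishing elliptic}), the holomorphic section vanishes, a holomorphic family quantization exists, and holomorphy of $Z(E)$ follows. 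Your mechanism --- killing $\dbar_\tau Z$ by the vanishing of the deformation group $H^{4-4g}(\Obs_{2-2g,2-2g}(E_\tau))$, which does follow from Theorem \ref{theorem obstruction vanishing elliptic} --- could in principle be made to work, but only after the same repackaging over $\Omega^{0,\ast}(\mbb{H})$ that makes the direct existence argument available, at which point it is redundant.
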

\begin{proof}
It suffices to prove that $Z(E_{\tau})$ is a holomorphic section of this bundle on the upper half-plane $\mbb{H}$.     The partition function $Z(E_{\tau})$ is defined in terms of the quantization $\{\F[L]\}$ we have constructed of the BCOV theory.  Therefore, it suffices to verify that this quantization varies holomorphically with $\tau$.

A general formalism for quantum field theories over a wide class of Fr\'echet base rings is formulated in \cite{Cos11}.      Saying that our quantization of the BCOV theory depends holomorphically on $\tau$ amounts to saying that we can quantize the theory when we work over the base ring $\Omega^{0,\ast}(\mbb{H})$.

Let $\pi : E \to \mbb{H}$ be the universal elliptic curve over $\mbb{H}$.   Let $\PV(E, \pi)$ denote the Dolbeaut resolution of the sheaf of relative polyvector fields on $E$.  That is, if $T \pi$ denotes the relative tangent bundle,
$$
\PV(E, \pi ) = \Omega^{0,\ast}(E) \oplus \Omega^{0,\ast} ( E, T \pi ) [-1].
$$
The fields of the relative version of the BCOV theory on $E$ are the $\Omega^{0,\ast}(\mbb{H})$-module $\PV(E, \pi ) [[t]] [2]$, with differential $\dbar - t \partial$.   The action functional (valued in $\Omega^{0,\ast}(\mbb{H})$) is defined exactly as before.

For each $\tau \in \mbb{H}$, let $\Obs_{a,b} (E_\tau)$ be the obstruction complex for quantizing the BCOV theory on $E_\tau$, as defined in section \ref{section elliptic quantization}.  The general theory of \cite{Cos11} implies that the obstruction group for quantizing the family version of the BCOV theory is defined by an $\Omega^{0,\ast}(\mbb{H})$ module which we denote $\Obs_{a,b}(E)$.  The fibres of this, at $\tau \in \mbb{H}$, are the obstruction groups $\Obs_{a,b}(E_\tau)$.

It follows that the cohomology $H^\ast ( \Obs_{a,b} (E_\tau ))$ forms a holomorphic bundle on $\mbb{H}$, and that there is a spectral sequence
$$
H^\ast_{\dbar} (\mbb{H}, H^\ast(\Obs_{a,b}(E_\tau ) ) \Rightarrow H^\ast ( \Obs_{a,b} (E ) ).
$$
Now, each $H^k(\Obs_{a,b}(E_\tau ))$ is a finite-rank holomorphic vector bundle on $\mbb{H}$, and therefore trivial.   It follows that
$$
H^k (\Obs_{a,b} (E)) = \Gamma ( \mbb{H}, H^k (\Obs_{a,b}(E_\tau ) ) ),
$$
where $\Gamma$ indicates the bundle of holomorphic sections.

The obstruction to quantizing the family BCOV theory over $\mbb{H}$, to genus $g$, is therefore a holomorphic section of the bundle $H^{5-4g}  (\Obs_{2-2g, 2-2g } (E_\tau ) )$ on $\mbb{H}$.   Since the obstruction vanishes for each $\tau \in \mbb{H}$, this holomorphic section vanishes.

\end{proof}

\section{Modular properties}
\label{section modular correlation}
Let
$$
\omega(\tau) \in H^1( E_\tau, T E_\tau )
$$
be the unique element with
$$
\op{Tr} (\omega) = \int_E \d z (\omega \vee \d z ) = 1.
$$
(Here $\d z$ denotes the holomorphic volume form on $E_\tau$ which pulls back to $\d z$ on $\C$).

Note that, because the trace map
$$
\op{Tr} : H^1(E_\tau, T E_\tau ) \to \C
$$
is a map of holomorphic line bundles on $\mbb{H}$, $\omega(\tau)$ defines a holomorphic section of the line bundle $H^1(E_\tau, T E_\tau)$ on $\mbb{H}$.  Futher, $\omega(\tau)$ is $SL_2(\Z)$-equivariant.

In this section we will prove the following.
\begin{theorem}
The correlators
$$
\ip{1 \cdot t^{k_1}, \ldots, 1 \cdot t^{k_m}, \omega \cdot t^{l_1}, \ldots, \omega \cdot t^{l_n} }_{g,n+m}^{E_\tau, \br{\sigma} }
$$
are modular functions on $\mbb{H} \times \mbb{H}$ (using the diagonal $SL_2(\Z)$-action) of weight $(2g-2+2n,0)$.  In other words, if
$$
A = \left( \begin{array}{c c}
a & b\\
c & d
\end{array} \right) \in SL_2(\Z),
$$
then
\begin{multline*}
\ip{1 \cdot t^{k_1}, \ldots, 1 \cdot t^{k_m}, \omega \cdot t^{l_1}, \ldots, \omega \cdot t^{l_n} }_{g,n+m}^{E_{A (\tau)}, \br{A(\sigma)} } \\ = (c \tau +d)^{2g-2+2n} \ip{1 \cdot t^{k_1}, \ldots, 1 \cdot t^{k_m}, \omega \cdot t^{l_1}, \ldots, \omega \cdot t^{l_n} }_{g,n+m}^{E_\tau, \br{\sigma} }.
\end{multline*}
\label{theorem modular correlators}
\end{theorem}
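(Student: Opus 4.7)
The plan is to use the uniqueness of the translation-invariant quantization satisfying the dilaton equation (proved in Section \ref{section elliptic quantization}) to deduce $SL_2(\Z)$-covariance of the partition function, and then read off the modular weight by tracking how the inputs and the holomorphic volume form transform. Given $A \in SL_2(\Z)$ with bottom row $(c,d)$ and $\tau \in \mbb{H}$, multiplication by $(c\tau+d)^{-1}$ on the universal cover $\C$ descends to a biholomorphism $f \colon E_\tau \to E_{A\tau}$ intertwining the $SL_2(\Z)$-action on Teichm\"uller space. One computes $f^\ast(dw) = (c\tau+d)^{-1} dz$, and combining the normalization $\Tr\omega = 1$ with $\Im(A\tau) = \Im(\tau)/\abs{c\tau+d}^2$ gives $f^\ast\omega(A\tau) = (c\tau+d)^2\,\omega(\tau)$, while $f^\ast 1 = 1$ and $f^\ast t^k = t^k$. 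Since $f$ is a group homomorphism of elliptic curves that preserves the Hodge filtration, and parallel transport in the cohomology bundle on $\mbb{H}$ commutes with the $SL_2(\Z)$-action, the splitting $\br{F}_{A\sigma}$ on $H^1(E_{A\tau})$ pulls back via $f^\ast$ to $\br{F}_\sigma$ on $H^1(E_\tau)$.

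The central step is the observation that the pullback by $f$ of the quantization $\{\F[L]\}$ on $E_{A\tau}$ is a translation-invariant quantization of BCOV theory on $E_\tau$ satisfying the dilaton equation, but built from the pulled-back volume form $(c\tau+d)^{-1} dz$ rather than $dz$. By uniqueness, identifying this pullback reduces to understanding how the unique quantization depends on the choice of holomorphic volume form. Under $\Omega \mapsto \mu\Omega$ the trace pairing scales as $\Tr \mapsto \mu^2\Tr$, so the heat kernel $K_L$ (representing the identity for this pairing) scales as $\mu^{-2} K_L$; consequently the propagator $P(\eps,L)$, the BV operator $\Delta_L$ and the Poisson bracket $\{-,-\}_L$ all scale by $\mu^{-2}$, while the classical action $I$ scales by $\mu^2$. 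Expanding $\F_g[L]$ as a sum of Feynman graph weights, a connected graph of genus $g$ with $V$ vertices and $E$ internal edges satisfies $V - E = 1 - g$, so each graph contributes an overall factor $\mu^{2V - 2E} = \mu^{2-2g}$, independent of the number of external legs. Thus the genus-$g$ part of the quantization rescales by $\mu^{2-2g}$ under $\Omega \mapsto \mu\Omega$.

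Taking Taylor coefficients of $\F_g[\infty]$ at harmonic representatives and using multilinearity, the correlators on $E_{A\tau}$ with splitting $\br{A\sigma}$ equal $(c\tau+d)^{2g-2}$ (from the volume-form rescaling by $\mu = (c\tau+d)^{-1}$) times the correlators on $E_\tau$ with inputs replaced by their $f^\ast$-pullbacks; since each of the $n$ inputs equal to $\omega$ contributes an additional factor of $(c\tau+d)^2$ while each of the $m$ inputs equal to $1$ is invariant, the total factor is $(c\tau+d)^{2g-2+2n}$, yielding the claimed modular weight. The main technical obstacle is justifying the rescaling law $\F_g \mapsto \mu^{2-2g}\F_g$ under $\Omega \mapsto \mu\Omega$; while the Feynman diagram heuristic is transparent, a rigorous proof should proceed by induction on $g$, using the uniqueness theorem at each stage to conclude that the rescaled candidate is actually a solution of the renormalization group equation, quantum master equation and dilaton axiom at the next order.
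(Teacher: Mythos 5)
Your overall strategy is the same as the paper's: reduce modular covariance to the statement that the correlators attached to $(E,\Omega,\br F)$ are homogeneous of weight $2-2g$ under $\Omega\mapsto\mu\Omega$ (with the extra $2n$ coming from the normalization $\Tr\omega=1$, which forces $\omega\mapsto\mu^{-2}\omega$), and then prove that homogeneity from the scaling of the heat kernel and propagator together with uniqueness of the quantization. The reduction via the biholomorphism $E_\tau\cong E_{A\tau}$, the identity $f^\ast\omega(A\tau)=(c\tau+d)^{2}\omega(\tau)$, and the compatibility of the splittings $\br F_{A\sigma}\leftrightarrow\br F_\sigma$ are exactly what the paper uses implicitly when it says the correlators are intrinsic functions of $(E,\Omega,\br F)$.

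The one step that is not sound as written is your justification of the scaling law for $\F_g[L]$. The graph count $V-E=1-g$ only governs the naive RG flow of the classical action; the actual quantization involves choices (quantum corrections at each genus) that are not produced by that expansion, so it does not prove the on-the-nose identity $\F_g^{E,\mu\Omega}=\mu^{2-2g}\,\F_g^{E,\Omega}$ -- and you do not need it. Moreover, your proposed repair is logically inverted: uniqueness cannot be used to conclude that the rescaled candidate satisfies the RG equation and QME; one checks those directly and only then invokes uniqueness (really, homotopy-invariance of the correlators). The paper's check is one line: since $K_L$ and $P(\eps,L)$ scale by $\mu^{-2}$ when $\Omega\mapsto\mu\Omega$, the family $R_\mu(\F)[L]=\sum_g\hbar^g\mu^{2g-2}\F_g^{E,\mu\Omega}[L]$ is obtained from $\F^{E,\mu\Omega}$ by the substitution $\hbar\mapsto\mu^{2}\hbar$, hence satisfies the RG equation and QME for $(E,\Omega)$, and its classical limit is $I$ because $I^{E,\mu\Omega}=\mu^{2}I^{E,\Omega}$. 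Since the correlators depend only on the homotopy class of the quantization, this weaker statement (agreement up to homotopy rather than equality of the functionals) already yields $\mu^{2g-2}\ip{-}_{g,n}^{E,\mu\Omega,\br F}=\ip{-}_{g,n}^{E,\Omega,\br F}$, which is all the theorem requires; no genus-by-genus induction or Feynman-diagram estimate is needed.
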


Note that these correlators $\ip{-}_{g,n+m}^{E_\tau, \br{\sigma}}$ only depend on the elliptic curve $E$, the holomorphic volume form $\Omega = \d z$ on $E$, and the splitting of the Hodge filtration on $H^1(E)$ coming from $\br{F}^1_\sigma$.      The modular properties stated in the theorem say that these correlation functions are functions on the moduli space elliptic curves $E$, with holomorphic volume form $\Omega$ and a splitting of the Hodge filtration, which are homogeneous of degree $2g-2+2n$ under rescaling of $\Omega$.

More precisely, let $\ip{-}_{g,n+m}^{E, \Omega, \br{F} }$ refer to the correlaton functions above associated to an elliptic curve $E$, volume form $\Omega$ and splitting of the Hodge filtration $\br{F}$.  Then the theorem is equivalent to the statement that
\begin{multline*}
\ip{1 \cdot t^{k_1}, \ldots, 1 \cdot t^{k_m}, \omega \cdot t^{l_1}, \ldots, \omega \cdot t^{l_n} }_{g,n+m}^{E, \lambda \Omega, \br{F} } \\  = \lambda^{2-2g - 2n} \ip{1 \cdot t^{k_1}, \ldots, 1 \cdot t^{k_m}, \omega \cdot t^{l_1}, \ldots, \omega \cdot t^{l_n} }_{g,n+m}^{E, \Omega, \br{F}  } .
\end{multline*}

Note that if we change $\Omega$ to $\lambda \Omega$, then $\omega \in H^1(E, T E)$ changes to $\lambda^{-2} \omega$.    The $2 n$ which appears in the scaling factor $\lambda^{2-2g-2n}$ can be attributed to this.

Thus, we need to show that the correlators associated to $(E,\Omega, \br{F})$, when viewed as linear maps
$$
\ip{-}_{g,n}^{E,\Omega,\br{F}} : \left( H^\ast(\PV(E)) [[t]] \right)^{\otimes n} \to \C
$$
change by $\lambda^{2-2g}$ when $\Omega$ is replaced by $\lambda \Omega$.

\subsection{}
Since these correlation functions are constructed directly from the quantization $\{\F^{(E,\Omega)}[L]\}$ of the BCOV theory on $E$ with volume form $\Omega$.  Thus, it suffices to show a similar scaling property for the BCOV theory on $(E,\Omega)$, when $\Omega$ is replaced by $\lambda \Omega$.

Thus, let us consider the BCOV theory on a fixed elliptic curve with varying holomorphic volume form $\Omega$.   We will fix a flat metric on $E$, independent of $\Omega$, and always use the gauge fixing operator $\dbar^\ast$ for this metric.  As explained in section \ref{section quantization}, the simplicial set of quantizations is fibred over that of gauge fixing conditions, so we can freely turn a quantization for one gauge fixing condition into a quantization for any other.

Let $\F^{E,\Omega} = \{\F^{E,\Omega}[L]\}$ denote the unique quantization of the BCOV theory on $(E,\Omega)$, using this fixed gauge fixing condition.
\begin{lemma}
Define
$$
R_\lambda (\F^{E,\Omega} ) [L] = \sum \hbar^g \lambda^{2g  - 2} \F^{E,\lambda \Omega} [L]
$$
for $\lambda \in \C^\times$.  Then, $R_\lambda (\F^{E,\Omega} )$ is a quantization of the BCOV theory on $(E,\Omega)$.
\end{lemma}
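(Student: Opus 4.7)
The plan is to track the scaling behavior of every ingredient in the definition of a quantization under $\Omega\mapsto\lambda\Omega$, and then verify each axiom in turn. Because the metric (and hence the gauge fixing operator $\dbar^\ast$ and the Laplacian $[\dbar,\dbar^\ast]$) has been fixed independently of $\Omega$, the differentials $\dbar$, $\dbar^\ast$, $\partial$ on $\PV(E)$, and the Gerstenhaber bracket $\{\cdot,\cdot\}$ are intrinsic and do not change. What does scale is the trace: since $\Tr(\alpha)=\int_E(\alpha\vee\omega)\wedge\omega$ is quadratic in the holomorphic volume form, we have $\Tr^{\lambda\Omega}=\lambda^2\,\Tr^\Omega$. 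Because the heat kernel $K_L\in\PV(E)^{\otimes 2}$ is uniquely characterized by the requirement that contraction via the trace produces the intrinsic operator $e^{-L[\dbar,\dbar^\ast]}$, we deduce $K_L^{\lambda\Omega}=\lambda^{-2}K_L^\Omega$, and the same $\lambda^{-2}$ scaling then propagates to the propagator $P(\eps,L)$, the BV Laplacian $\Delta_L$, and the scale-$L$ bracket $\{\cdot,\cdot\}_L$. Finally, the classical interaction satisfies $I^{(E,\lambda\Omega)}=\lambda^2 I^{(E,\Omega)}$, since its Taylor coefficients are built from $\Tr(\alpha_1\cdots\alpha_n)$.

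The central observation is that rescaling $\Omega\mapsto\lambda\Omega$ is equivalent, up to an overall factor of $\lambda^{-2}$, to rescaling the formal parameter $\hbar\mapsto \hat\hbar:=\hbar\lambda^2$. Setting $G[L]:=R_\lambda(\F^{E,\Omega})[L]=\sum_g\hbar^g\lambda^{2g-2}\F_g^{E,\lambda\Omega}[L]$, a direct calculation gives
$$G[L]/\hbar=\sum_g\hat\hbar^{\,g-1}\F_g^{E,\lambda\Omega}[L]=\F^{E,\lambda\Omega}[L](\hat\hbar)/\hat\hbar.$$
Combining this with the identity $\hbar\,\partial_{P^\Omega(\eps,L)}=\hat\hbar\,\partial_{P^{\lambda\Omega}(\eps,L)}$, which follows from $P^{\lambda\Omega}=\lambda^{-2}P^\Omega$, one obtains
$$\exp(\hbar\,\partial_{P^\Omega(\eps,L)})\exp(G[\eps]/\hbar)=\exp(\hat\hbar\,\partial_{P^{\lambda\Omega}(\eps,L)})\exp(\F^{E,\lambda\Omega}[\eps](\hat\hbar)/\hat\hbar)=\exp(G[L]/\hbar),$$
where the second equality is the RG flow equation for $\F^{E,\lambda\Omega}$ evaluated at $\hat\hbar$. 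Thus the RG flow holds for $G$. An identical manipulation using $\hbar\Delta_L^\Omega=\hat\hbar\Delta_L^{\lambda\Omega}$ reduces $(Q+\hbar\Delta_L^\Omega)\exp(G[L]/\hbar)=0$ to the QME for $\F^{E,\lambda\Omega}$ at parameter $\hat\hbar$, which holds for every value of $\hat\hbar$.

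The remaining axioms are immediate. The classical limit yields $\lim_{L\to 0}R_\lambda(\F^{E,\Omega})_0[L]=\lambda^{-2}I^{(E,\lambda\Omega)}=I^{(E,\Omega)}$. Locality is preserved under multiplication by the constant $\lambda^{2g-2}\in\C^\times$, as are the cohomological degree $(\dim X-3)(2g-2)=4-4g$ and the Hodge weight $2-2g$ of each $\F_g$. The one point requiring care, more pedantic than obstacle, is establishing $K_L^{\lambda\Omega}=\lambda^{-2}K_L^\Omega$; once that is in hand, the uniform $\lambda^{-2}$ scaling of every $\Omega$-dependent ingredient ($P(\eps,L)$, $\Delta_L$, $\{\cdot,\cdot\}_L$) ensures the whole argument collapses to the tautology that the RG flow and QME for $\F^{E,\lambda\Omega}$ are invariant under the substitution $\hbar\leftrightarrow\lambda^2\hbar$.
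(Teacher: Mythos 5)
Your proposal is correct and follows essentially the same route as the paper: both arguments rest on the $\lambda^{-2}$ scaling of the heat kernel, propagator, and BV Laplacian (coming from the $\lambda^{2}$ scaling of the trace), reduce the RG equation and QME for $R_\lambda(\F^{E,\Omega})$ to those for $\F^{E,\lambda\Omega}$ via the substitution $\hbar \mapsto \lambda^{2}\hbar$, and use $I^{(E,\lambda\Omega)} = \lambda^{2} I^{(E,\Omega)}$ for the classical limit. Your explicit justification of $K_L^{\lambda\Omega} = \lambda^{-2} K_L^{\Omega}$ via the trace-characterization of the kernel is a small clarifying addition, but the argument is the same.
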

\begin{proof}
As we change $\Omega$ to $\lambda \Omega$, the heat kernel $K_L$ changes to $\lambda^{-2} K_L$.  Simiarly, the propagator $P(\eps,L)$ changes to $\lambda^{-2} P(\eps,L)$.

We want to check that $R_\lambda (\F^{E,\Omega} ) [L]$ satisfies the renormalization group equation and quantum master equation.   The renormalization group equation is the statement that
$$
\exp ( \hbar \partial_{P(\eps,L)} ) \exp ( R_\lambda (\F^{E,\Omega} ) [\eps] / \hbar ) = \exp ( R_\lambda (\F^{E,\Omega} ) [L] / \hbar).
$$
Note that $R_\lambda (\F^{E,\Omega} ) [L] /\hbar$ is obtained from $\F^{E, \lambda \Omega}$ by replacing $\hbar$ by $\lambda \hbar$.  The renormalization group equation for $\F^{E,\lambda \Omega}$ says that
$$
\exp ( \hbar \lambda^{-2} \partial_{P(\eps,L)} ) \exp ( \F^{E,\lambda \Omega}  [\eps] / \hbar ) = \exp (\F^{E,\lambda \Omega}  [L] / \hbar).
$$
By replacing $\hbar$ by $\lambda^2 \hbar$, we find the renormalization group equation for $R_\lambda (\F^{E,\Omega} [L] )$.

A similar argument proves the quantum master equation.  The locality axiom is automatic, as are the Hodge weight, cohomological degree and dilaton axioms.    It remains to verify that
$$
\F_0^{(E,\lambda \Omega)}[L] = \lambda^{2} F_0^{(E,\Omega)} [L]
$$
for all $L$.  Now, by the RG equation, it suffices to verify this when $L = 0$.  It is immediate from the definition of the classical BCOV action $I$ that changing $\Omega$ to $\lambda \Omega$ changes $I$ to $\lambda^2 I$.
\end{proof}

\begin{corollary}
For any splitting $\br{F}$ of the Hodge filtration on $H^1(E)$, the correlators
$$
\ip{-}_{g,n}^{E,\Omega,\br{F} } : \left( H^\ast (\PV(E))[[t]] \right)^{\otimes n} \to \C
$$
satisfy
$$
\lambda^{2g-2} \ip{-}_{g,n}^{E, \lambda \Omega,\br{F} } =  \ip{-}_{g,n}^{E,\Omega,\br{F} } .
$$
\end{corollary}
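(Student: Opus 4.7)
The plan is to deduce the scaling of correlators from the preceding lemma (that $R_\lambda(\F^{E,\Omega})$ is a quantization of BCOV on $(E,\Omega)$) together with the uniqueness of translation-invariant quantizations on elliptic curves satisfying the dilaton equation.

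First I would verify that $R_\lambda(\F^{E,\Omega})$ is not merely a quantization, but is in the same class as $\F^{E,\Omega}$: it is translation invariant (since $\F^{E,\lambda\Omega}$ is, and the transformation $R_\lambda$ acts only on the $\hbar$-expansion), and it satisfies the dilaton equation (since the dilaton operator $\partial_{Dil} - 2\hbar^2 \tfrac{\partial}{\partial\hbar}\hbar^{-1}$ is invariant under the substitution $\hbar \mapsto \lambda^2 \hbar$ combined with an overall $\lambda^{-2}$, which is exactly what $R_\lambda$ does). By the uniqueness half of the theorem on the elliptic quantization, it then follows that $R_\lambda(\F^{E,\Omega})$ is homotopic, as a quantization of BCOV theory on $(E,\Omega)$, to the canonical quantization $\F^{E,\Omega}$.

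Next I would invoke the lemma from the quantization section asserting that the correlators of a quantization depend only on its homotopy class. Applied to the fixed polarization $\br{F}$ and fixed gauge-fixing (the flat metric), this gives
\[
\ip{-}_{g,n}^{R_\lambda(\F^{E,\Omega}),\br{F}} \;=\; \ip{-}_{g,n}^{\F^{E,\Omega},\br{F}} \;=\; \ip{-}_{g,n}^{E,\Omega,\br{F}}.
\]
On the other hand, the left-hand side can be computed directly from the definition of $R_\lambda$: because $R_\lambda(\F^{E,\Omega})_g[L] = \lambda^{2g-2}\F^{E,\lambda\Omega}_g[L]$, the $\hbar^g$ correlators of $R_\lambda(\F^{E,\Omega})$ are exactly $\lambda^{2g-2}$ times those of $\F^{E,\lambda\Omega}$, computed with the same polarization $\br{F}$. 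Combining these two identities yields $\ip{-}_{g,n}^{E,\Omega,\br{F}} = \lambda^{2g-2}\ip{-}_{g,n}^{E,\lambda\Omega,\br{F}}$.

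The one subtlety to watch is that the symplectic form on $\mc{S}(E) = \PV(E)((t))[2]$ depends on $\Omega$ (through the trace pairing, hence scales as $\lambda^2$), so the Fock space and the map $\Phi_{\Lag}$ used to define correlators are not literally identical for $(E,\Omega)$ and $(E,\lambda\Omega)$. However, this rescaling of $\omega$ is precisely compensated by the rescaling of $\hbar$ implicit in $R_\lambda$: the Weyl relation $[a,b]=\hbar\omega^{-1}(a,b)$ on $(E,\lambda\Omega)$ with parameter $\hbar$ is identified with the Weyl relation on $(E,\Omega)$ with parameter $\lambda^{-2}\hbar$, and this is exactly the rescaling encoded in $R_\lambda$. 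I expect this bookkeeping — checking that the homotopy equivalence of quantizations descends to an equality of correlators with the correct powers of $\lambda$ after tracing through the Fock-space identifications — to be the main (though essentially routine) obstacle; everything else is a direct consequence of the uniqueness theorem.
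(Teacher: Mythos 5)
Your proposal is correct and follows essentially the same route as the paper: identify $\lambda^{2g-2}\ip{-}_{g,n}^{E,\lambda\Omega,\br{F}}$ as the correlators of the quantization $R_\lambda(\F^{E,\Omega})$ of BCOV theory on $(E,\Omega)$, then use uniqueness up to homotopy of quantizations on the elliptic curve together with the fact that correlators depend only on the homotopy class. Your extra remarks verifying the dilaton equation for $R_\lambda(\F^{E,\Omega})$ and tracking the $\hbar$/symplectic-form rescaling through the Fock-space identification are just a more explicit version of what the paper's two-line proof leaves implicit.
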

\begin{proof}
Indeed, the correlators $\lambda^{2g-2} \ip{-}_{g,n}^{E,\lambda \Omega,\br{F} } $ are those for the quantization $R_\lambda (\F^{E,\Omega})$ constructed in the previous lemma.  Since any two quantizations of the BCOV theory on $E$ are homotopic, and the correlators only depend on the homotopy class of the quantization, the result follows.
\end{proof}
This corollary completes the proof of theorem \ref{theorem modular correlators}.

Putting this result together with results of section \ref{section elliptic correlators} and section \ref{section holomorphic correlators} we see that the correlators
$$
\ip{1 \cdot t^{k_1}, \ldots, 1 \cdot t^{k_m}, \omega \cdot t^{l_1}, \ldots, \omega \cdot t^{l_n} }_{g,n+m}^{E_\tau, \br{\sigma} }
$$
are holomorphic in $\tau$, polynomail in $1 / (\tau - \br{\sigma})$, and modular of weight $2g-2+2n$.  Functions of this nature, which admit a $\tau \to i\infty$ limit, are called \emph{almost holomorphic modular forms}.

\section{Virasoro constraints}
\label{section virasoro}
In this section, we will prove that the $1$-dimensional BCOV theory satisfies the Virasoro constraints. These are mirror to the Virasoro constraints of Gromov-Witten invariants on elliptic curves, first discovered by \cite{Virasoro-GW}, and  proved by \cite{virasoro} in general.  In our setting, the Virasoro constraints will hold only up to homotopy, just like the string and dilaton equations hold only up to homotopy (as explained in section \ref{section quantization}).

Consider the following operators for each $m\geq 0$:
$$
       E_m:  \PV^{i,j}_X[[t]]\to \PV^{i,j}_X[[t]]\\
             t^k \alpha \to t^{m+k}\bracket{k+i}_{m+1} \alpha
$$
where $(n)_m=n(n+1)\cdots (n+m-1)$ is the Pochhammer symbol.   Let us define $E_{-1}$ by
$$
E_{-1} (t^k \alpha) = \begin{cases}
t^{k-1} \alpha & \text{ if } k > 0\\
0 & \text{ if } k = 0 
\end{cases} 
$$
The operators $E_m$ naturally induce operators acting on $\Oo(\E)$, which we denote by the same symbol. 

We will define Virasoro operators $\mc{L}_m[L]$  for $m \ge -1$, which act on the space $\Oo(\PV(E)[[t]])$ of functionals on the space of fields.  For $m \ge 0$, these operators will not depend on the scale $L$, whereas the operator $\mc{L}_{-1}[L]$ does depend on $L$. The operator $\mc{L}_{-1}[L]$ takes a little more work to define, so we will start by defining the operators $\mc{L}_m[L]$ for $m \ge 0$. 
\begin{definition} The operators $\mc{L}_m[L]$ are defined by 
$$      
\mc{L}_m[L] =-(m+1)!\dpa{(1\cdot t^{m+1})}+E_m.
$$
\end{definition}
If $m=-1$, the operator $\mc{L}_{-1}[L]$ will depend on the scale $L$.  This operator will be defined in terms of an auxiliary operator 
$$Y[L] : \PV(E)[[t]] \to \PV(E)[[t]],$$ 
defined by 
$$
Y[L](\alpha)=
\begin{cases}
 \int_0^L du \dbar^\ast \pa e^{-u[\dbar,\dbar^\ast]}\alpha & \alpha\in \PV(E)\\
 0 &\alpha\in t \PV(E)[[t]].
\end{cases}
$$
Recall that $I_3 \in \Oo(\PV(E)[[t]])$ is the local functional given by the order three component of the classical BCOV action.   This is defined by
$$
 \dpa{ \alpha \cdot t^k } \dpa{ \beta \cdot t^l } \dpa{ \gamma \cdot t^m }  I_3 = \delta_{k = 0} \delta_{l = 0} \delta_{m = 0} \Tr ( \alpha \beta \gamma).
$$

\begin{definition}
The Virasosro operator $ \mc{L}_{-1}[L]$ is defined by
$$
 \mc{L}_{-1}[L]=-\dpa{(1)}+E_{-1}-Y[L]+ \frac{1}{\hbar} \dpa{(1)}I_3.
$$
\end{definition}

Recall that our grading convention is that if $\alpha \in \PV^{i,j}(X)$, then $t^m \alpha$ is of cohomological degree $2m-2+i+j$ and Hodge weight $m-1+i$.  It follows that the operator $\mc{L}_m[L]$ is of cohomological degree $2m$ and Hodge weight $m$.
\begin{lemma}
The operators $\{\mc{L}_m[L] \mid m\geq -1\}$ satisfy the Virasoro relations
$$
\bbracket{\mc{L}_m[L],\mc{L}_n[L]}=(m-n)\mc{L}_{m+n}[L]
$$
for all $m,n\geq -1$.
\end{lemma}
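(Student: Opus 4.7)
The plan is to verify the Virasoro relation for every pair $(m,n)$ with $m,n \geq -1$ by decomposing each $\mc{L}_k[L]$ into basic pieces and computing their pairwise commutators. For $k \geq 0$ I would write $\mc{L}_k[L] = D_k + E_k$, where $D_k := -(k+1)!\,\partial/\partial(1\cdot t^{k+1})$ is a constant-coefficient partial derivative and $E_k$ is the derivation of $\Oo(\E)$ induced by the linear map of the same name on $\E$. For $k = -1$ there are also the induced derivation $-Y[L]$ and multiplication by $\mc{I} := \hbar^{-1}(\partial/\partial(1))I_3$; since $I_3(\phi) = \tfrac{1}{6}\Tr(\phi_0^3)$ with $\phi_0$ the $t^0$-component, $\mc{I}$ is just $(2\hbar)^{-1}\Tr(\phi_0^2)$, a quadratic function of the $t^0$-component only.

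The core step is the Witt-algebra identity $[E_m,E_n]_\E(t^k\alpha) = (n-m)E_{m+n}(t^k\alpha)$ on $\E$, which reduces to the Pochhammer calculation
\[
(x+n)_{m+1}(x)_{n+1} - (x+m)_{n+1}(x)_{m+1} = (n-m)(x)_{m+n+1},
\]
where each product is rewritten as $(x+n)(x)_{m+n+1}$ and $(x+m)(x)_{m+n+1}$ respectively. Passing to induced derivations of $\Oo(\E)$ flips the sign via the antihomomorphism rule $[\partial_A,\partial_B] = \partial_{[B,A]_\E}$, giving $[E_m,E_n] = (m-n)E_{m+n}$ on $\Oo(\E)$. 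In parallel the identity $E_m(1\cdot t^{n+1}) = (n+1)_{m+1}\,(1\cdot t^{m+n+1})$ (with $E_m(1) = 0$ because $(0)_{m+1} = 0$) yields $[E_m,D_n] = -(n+1)D_{m+n}$ and, by antisymmetry, $[D_m,E_n] = (m+1)D_{m+n}$; trivially $[D_m,D_n] = 0$. Summing the four pieces for $m,n \geq 0$ gives $(m-n)(D_{m+n}+E_{m+n}) = (m-n)\mc{L}_{m+n}[L]$.

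For $n = -1$ and $m \geq 1$ the same sum gives $(m+1)(D_{m-1}+E_{m-1})$, and I would verify that the extra pieces in $\mc{L}_{-1}[L]$ contribute nothing: $[D_m,Y[L]] = 0$ since $Y[L]$ vanishes on $t\PV(E)[[t]]$; $[E_m,Y[L]] = 0$ for $m\geq 1$ because the image of $E_m$ lies in $t\PV(E)[[t]]$; and both $[D_m,\mc{I}]$ and $[E_m,\mc{I}]$ vanish because $\mc{I}$ depends only on the $t^0$-component and $E_m$ kills every $t^0$-coordinate for $m \geq 1$. The truly delicate case is $[\mc{L}_0,\mc{L}_{-1}[L]] = \mc{L}_{-1}[L]$, where two extra identities must hold exactly: $[E_0,Y[L]] = Y[L]$, because $Y[L]$ has image in $\PV^{0,\ast}(E)$ (annihilated by $E_0$) and its nonzero source is $\PV^{1,\ast}(E)$ (on which $E_0$ acts as the identity); and $E_0(\mc{I}) = \mc{I}$, because $\Tr(\alpha\beta)$ is nonzero only when $i_\alpha + i_\beta = 1$, so the $E_0$-eigenvalue on each monomial of $\Tr(\phi_0^2)$ is exactly one. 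The case $m = n = -1$ is trivial and $m = -1,\, n \geq 0$ follows by antisymmetry. The main obstacle is the boundary behavior at $k = 0$ in $[E_m,E_{-1}]_\E$: the Pochhammer identity only produces $-(m+1)E_{m-1}$ on $\PV^{i,\ast}(E)|_{k=0}$ when $i \leq 1$, and this is precisely what the one-dimensionality of $E$ guarantees (since $\PV^{i,\ast}(E) = 0$ for $i \geq 2$).
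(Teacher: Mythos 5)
Your proposal is correct and follows essentially the same route as the paper: decompose $\mc{L}_m[L]$ into the derivation $E_m$ and the constant-coefficient piece, verify $[E_m,E_n]=(m-n)E_{m+n}$ on $\Oo(\E)$ via the Pochhammer identity (with the sign flip for induced derivations), compute the cross terms $[\partial_{(1\cdot t^{m+1})},E_n]$, and treat the $m=-1$ or $n=-1$ cases by the analogous bookkeeping. The only difference is that you spell out the $\mc{L}_{-1}[L]$ case (the interaction with $Y[L]$, the $I_3$ term, and the $k=0$ boundary where one-dimensionality of $E$ is used), which the paper dismisses as ``similar''; this is a welcome, and correct, elaboration.
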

\begin{proof}First we observe that, if $m,n \ge 0$,  
\begin{align*}
    \bbracket{E_m, E_n} t^k\alpha&=\bracket{(k+i)_{n+1}(k+i+n)_{m+1}-(k+i)_{m+1}(k+i+m)_{n+1} }t^{k+m+n}\alpha\\
    &=(n-m)(k+i)_{n+m+1}t^{k+m+n}\alpha\\
    &=(n-m)E_{n-m}t^k\alpha
\end{align*}
Therefore as operators acting on $\Oo(\E)$, we have
$$
       \bbracket{E_m, E_n}=(m-n)E_{m+n}
$$
On the other hand, 
$$
    \bbracket{\dpa{(1\cdot t^{m+1})}, E_n}=(m+1)_{n+1}\dpa{(1\cdot t^{m+n+1})}
$$
from which we can easily deduce the first equation. The other two are proved similarly.   The proof when one of $m$ or $n$ is $-1$ is similar. 
\end{proof}

\begin{lemma}
The operators $\{\mc{L}_m[L] \mid m \ge -1\}$ are compatible with the renormalization group flow,  quantum master equation and the dilaton operator in the following sense:
\begin{align*}
\exp\bracket{\hbar\partial_{P(\eps,L)}}\mathcal L_m[\epsilon]&=\mathcal L_m[L] \exp\bracket{\hbar\partial_{P(\eps,L)}} \\ 
\bbracket{\mathcal L_m[L], Q+\hbar \Delta_L}&=0\\\bbracket{\mathcal L_m[L], \partial_{Dil} -{2\hbar\dpa{\hbar}}}&=0.
\end{align*}
\end{lemma}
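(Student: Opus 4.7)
The plan is to verify each of the three identities summand by summand, decomposing $\mc{L}_m[L]$ into its constituent operators and treating $m\ge 0$ and $m=-1$ separately.

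For $m\ge 0$, where $\mc{L}_m=-(m+1)!\,\partial_{t^{m+1}}+E_m$ is $L$-independent, the constant-coefficient derivation $\partial_{t^{m+1}}$ commutes with $\partial_{P(\eps,L)}$ and $\Delta_L$ (constant-coefficient operators) and with $Q=\dbar-t\partial$ (since $\partial 1=0$). For $E_m$, the two key computations are: first, $[E_m,Q]=0$ on $\E$, because $\partial:\PV^{i,j}\to\PV^{i-1,j}$ shifts $(k,i)\to(k+1,i-1)$ and hence preserves the Pochhammer coefficient $(k+i)_{m+1}$; second, $[E_m,\Delta_L]=[E_m,\partial_{P(\eps,L)}]=0$ follows from the general identity $[\partial_A,\partial_K]=\partial_{(A\otimes 1+1\otimes A)K}$ together with the dimensional fact that on an elliptic curve the only surviving bigrading component has $\partial K_L^{(1)}\in\PV^{0,\ast}$ and $K_L^{(2)}\in\PV^{0,\ast}$, so the Pochhammer prefactor $(0)_{m+1}=0$ kills both terms. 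The dilaton identity is the cancellation $[\partial_{t\cdot 1},E_m]=\partial_{E_m(t\cdot 1)}=(m+1)!\,\partial_{t^{m+1}}$ against $[\op{Eu},-(m+1)!\,\partial_{t^{m+1}}]=-(m+1)!\,\partial_{t^{m+1}}$; and RG compatibility is immediate from $[\partial_{P(\eps,L)},\mc{L}_m]=0$ together with the $L$-independence of $\mc{L}_m$.

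The case $m=-1$ is more delicate because $[Q,E_{-1}]$ does not vanish: a direct computation yields $[Q,E_{-1}]=\partial\circ\pi_0$ on $\E$, where $\pi_0:\E\to\PV$ is the projection onto the $t^0$-component. The remaining three summands of $\mc{L}_{-1}[L]=-\partial_1+E_{-1}-Y[L]+\hbar^{-1}\partial_1 I_3$ are designed to cancel this deficit together with its heat-regularization. Using $\int_0^L[\dbar,\dbar^\ast]e^{-u[\dbar,\dbar^\ast]}\,du=\mathrm{id}-e^{-L[\dbar,\dbar^\ast]}$ one finds $[Q,-Y[L]]\big|_{\PV}=-\partial+\partial\,e^{-L[\dbar,\dbar^\ast]}$ (up to a $t\PV$-valued piece controlled by $t\partial Y[L]$), and the BV commutator $[\hbar\Delta_L,\hbar^{-1}\partial_1 I_3]=\Delta_L(\partial_1 I_3)+\{\partial_1 I_3,-\}_L$ produces the remaining scale-$L$ contribution via the Poisson bracket with the quadratic function $\partial_1 I_3=\tfrac12\Tr(\phi_0^2)$. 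The dilaton identity for $m=-1$ follows from $E_{-1}(t\cdot 1)=1$ (balancing $[\op{Eu},-\partial_1]=\partial_1$), the dilaton-inertness of $Y[L]$ (which vanishes on $t\PV[[t]]$), and the cancellation of the actions of $\partial_{Dil}$ and $-2\hbar\partial_\hbar$ on the multiplication operator $\hbar^{-1}\partial_1 I_3$ (both giving $-2\hbar^{-1}\partial_1 I_3$ by the polynomial degree $2$ and $t$-weight $0$ of $\partial_1 I_3$). RG compatibility is obtained by differentiating the desired identity in $L$ and matching $\partial_L(-Y[L])=-\dbar^\ast\partial\,e^{-L[\dbar,\dbar^\ast]}\big|_{\PV}$ against the commutator contribution of $\partial_L P(\eps,L)=(\dbar^\ast\partial\otimes 1)K_L$ with the $\hbar^{-1}\partial_1 I_3$ term.

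The main obstacle is the four-way cancellation in the QME identity for $m=-1$. A clean strategy is to verify the identity first at the limit $L\to 0^+$, where $Y[L]\to 0$ and $e^{-L[\dbar,\dbar^\ast]}\to\mathrm{id}$, so that the claim reduces to the algebraic identity $\{\partial_1 I_3,-\}_0=\partial\circ\pi_0$ on $\Oo(\E)$; this follows from the BV relation $\partial(\alpha\beta)=(\partial\alpha)\beta+(-1)^{|\alpha|}\alpha\partial\beta+\{\alpha,\beta\}$ applied to the cubic vertex $I_3=\tfrac16\Tr(\phi_0^3)$, reusing the same calculation that underlies the proof of the classical master equation earlier in the paper. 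Once the identity is established at $L\to 0^+$, one propagates it to arbitrary $L$ using the homotopy formula~(\ref{eqn_dagger}), $[Q+\hbar\Delta_L,\partial_{P(\eps,L)}]=\Delta_\eps-\Delta_L$, together with the RG compatibility established in parallel.
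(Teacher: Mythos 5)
Your term-by-term verification is correct and is precisely the check the paper leaves to the reader (its stated proof is only ``this is a straightforward check''): the identity $[Q,E_{-1}]=\partial\circ\pi_0$, the vanishing of the kernel contractions because both legs of $(\partial\otimes 1)K_L$ and of $(\dbar^\ast\partial\otimes 1)K_u$ lie in $t^0\PV^{0,\ast}(E)$ where the Pochhammer factor and the operators $E_{-1}$, $Y[L]$ vanish, and the cancellation of the residual $\partial\, e^{-L[\dbar,\dbar^\ast]}\pi_0$ against $\{\partial_1 I_3,-\}_L$ all hold as you describe, and your dilaton and RG bookkeeping for both $m\ge 0$ and $m=-1$ is sound. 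The only point left implicit is that the zeroth-order pieces $Q(\partial_1 I_3)$, $\Delta_L(\partial_1 I_3)$ and $\partial_{P(\eps,L)}(\partial_1 I_3)$ vanish (by skew self-adjointness of $\dbar$ for the trace pairing, and because $\partial$ and $\dbar^\ast\partial$ shift the $(i,j)$-bigrading while the heat kernel preserves it, so the relevant supertraces are zero), which is what completes the $m=-1$ identities.
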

Recall that the dilaton operator $\partial_{Dil}$ is defined by
$$
\partial_{Dil} = \frac{\partial}{\partial (t \cdot 1 ) } - \op{Eu}.
$$
\begin{proof} 
This is a straightforward check.  
\end{proof}

\begin{lemma}
Let $I \in \Ool (\E)$ denote the the classical BCOV action.  Then,
$$
\mc{L}_m[0] e^{I / \hbar}  = e^{I/\hbar} 
$$
for all $m \ge -1$.  
\label{lemma virasoro classical}
\end{lemma}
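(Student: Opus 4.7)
The plan is to reduce the identity to combinatorial relations among the $\psi$-class intersection numbers $\langle\tau_{k_1}\cdots\tau_{k_n}\rangle_0=\int_{\bar{\mc M}_{0,n}}\psi_1^{k_1}\cdots\psi_n^{k_n}$ that define the Taylor coefficients of $I$, and then to invoke the classical genus-zero Virasoro recursion on moduli of pointed curves.

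First I would simplify at $L=0$: since $Y[L]$ is an integral from $0$ to $L$, $Y[0]=0$, so $\mc{L}_m[0]=-(m+1)!\,\dpa{(1\cdot t^{m+1})}+E_m$ for $m\ge 0$ and $\mc{L}_{-1}[0]=-\dpa{(1)}+E_{-1}+\hbar^{-1}\dpa{(1)}I_3$. For $m\ge 0$ the operator is a first-order constant-coefficient derivation; for $m=-1$ there is the extra multiplication by the quadratic function $\hbar^{-1}\dpa{(1)}I_3$. Applying $\mc{L}_m[0]$ to $e^{I/\hbar}$ by the Leibniz rule and unpacking through
\[
D_n I(t^{k_1}\alpha_1,\ldots,t^{k_n}\alpha_n)=\langle\tau_{k_1}\cdots\tau_{k_n}\rangle_0\,\Tr(\alpha_1\cdots\alpha_n),
\]
the asserted equality becomes, at each order of Taylor expansion, a family of numerical identities on the symbols $\langle\tau_{k_1}\cdots\tau_{k_n}\rangle_0$; the trace factors pull through because $E_m$ acts only on $t$-degrees and because $\dpa{(1\cdot t^{m+1})}$ inserts the unit polyvector $1\in \PV^{0,0}$.

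Next I would handle the two base cases $m=-1$ and $m=0$ directly. For $m=0$ the combination of the derivation $E_0$ (which weights $t^{k_i}\alpha_i$ by the Pochhammer factor $(k_i+i_{\alpha_i})_1=k_i+i_{\alpha_i}$) with the insertion $\dpa{(1\cdot t)}$ collapses, on any nonvanishing input, to the classical dilaton equation $\langle\tau_1\prod\tau_{k_i}\rangle_0=(n-2)\langle\prod\tau_{k_i}\rangle_0$, using the dimension constraint $\sum k_i=n-3$ together with the trace constraint fixing $\sum i_{\alpha_i}$. The $m=-1$ case reduces, in the same way, to the classical string equation, with the scalar correction $\hbar^{-1}\dpa{(1)}I_3$ contributing exactly the cubic term that makes the recursion close at $n\le 2$ where $\bar{\mc M}_{0,n}$ is unstable. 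Once these two base identities are in hand, the remaining cases $m\ge 1$ follow by induction using the Virasoro commutation relations $[\mc{L}_m[0],\mc{L}_n[0]]=(m-n)\mc{L}_{m+n}[0]$ proved in the preceding lemma: bracketing two operators that both act on $e^{I/\hbar}$ with the stated prescription forces the bracket to act on $e^{I/\hbar}$ in the corresponding way, and this bootstraps the whole tower from $m\in\{-1,0\}$.

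The main obstacle will be the $m=-1$ case, where one must check that the multiplicative correction $\hbar^{-1}\dpa{(1)}I_3=\tfrac12\Tr(\phi\cdot\phi)$ produces, upon Leibniz expansion of $(\hbar^{-1}\dpa{(1)}I_3)\,e^{I/\hbar}$, precisely the boundary term the string equation leaves uncompensated at unstable moduli. The secondary difficulty is combinatorial bookkeeping: tracking Pochhammer factors from $E_m$, Koszul signs from the graded derivation structure, and multinomial coefficients arising from the Leibniz expansion of $e^{I/\hbar}$; the cleanest way to organize this is stratum-by-stratum along the boundary of $\bar{\mc M}_{0,n+1}$, using the topological recursion relation already exploited earlier in the paper to reorganize terms into the required stable splittings.
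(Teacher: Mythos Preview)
The paper's own proof is simply ``Again, this is a straightforward calculation,'' so there is no detailed argument to compare against; your outline is already more explicit than the paper. Your reduction to the genus-zero string and dilaton identities for $\psi$-class integrals on $\bar{\mc M}_{0,n}$ is exactly the right idea, and the handling of the $\hbar^{-1}\dpa{(1)}I_3$ correction in the $m=-1$ case is correctly identified.

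There is, however, a genuine gap in the induction step. You propose to bootstrap from the base cases $m\in\{-1,0\}$ to all $m\ge 1$ via the commutation relations $[\mc L_m,\mc L_n]=(m-n)\mc L_{m+n}$. This cannot work: the Lie subalgebra generated by $\mc L_{-1}$ and $\mc L_0$ is two-dimensional, since $[\mc L_0,\mc L_{-1}]=\mc L_{-1}$ and all other brackets vanish. No iterated commutator of $\mc L_{-1}$ and $\mc L_0$ ever produces $\mc L_m$ with $m\ge 1$. (The half-Witt algebra $\{\mc L_m : m\ge -1\}$ is generated by, e.g., $\mc L_{-1}$ and $\mc L_2$, or by $\mc L_1$ and $\mc L_2$, but not by $\mc L_{-1}$ and $\mc L_0$.)

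The fix is easy. Either (a) add $m=1$ and $m=2$ to your list of base cases---these follow from the same multinomial formula $\langle\tau_{k_1}\cdots\tau_{k_n}\rangle_0=\binom{n-3}{k_1,\ldots,k_n}$ together with the dimension constraint $\sum k_i=n-3$ and the trace constraint $\sum i_{\alpha_j}=\dim X$, exactly as in your $m=0$ computation---and then the bracket induction does generate everything; or (b) simply verify the identity for all $m\ge 0$ directly in one stroke, since the Pochhammer weights $(k+i)_{m+1}$ combine with that same multinomial formula to give the known genus-zero descendent Virasoro relations for a point. Option (b) is no harder than your $m=0$ case and avoids the induction entirely.
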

\begin{proof}
Again, this is a straightforward calculation, 
\end{proof}

Suppose that $\{\F[L] \mid L \in \R_{> 0} \} \in \Oo(\E)[[\hbar]]$ is a quantization of the classical BCOV theory.  Let us define a family of functinals $(\mc{L}_m \F)[L]$ by the equation
$$
\F[L] + \delta (\mc{L}_m \F) [L]= \hbar \log \left( (1 + \delta \mc{L}_m[L]   ) \exp ( \F[L]/ \hbar ),  \right) 
$$
where $\delta$ is a parameter of cohomological degree $-2m$, Hodge weight $-m$ and square zero. 

Note that, if $m \ge 0$, 
$$
(\mc {L}_m \F)[L] = \mc{L}_m[L] \F[L].
$$
However,
$$
(\mc {L}_{-1} \F ) [L] =\left( \mc{L}_{-1}[L] - \hbar^{-1} \dpa{(1)} I_3 \right) \F[L] + \dpa{(1)} I_3 .
$$
\begin{corollary}
Suppose that $\{\F[L] \mid L \in \R_{> 0} \} \in \Oo(\E)[[\hbar]]$ is a quantization of the classical BCOV theory (as defined in section \ref{section quantization}).  

Then, so is the family of functionals
$$
\F[L] + \delta (\mc{L}_m \F)[L]
$$ 
where $\delta$ is a parameter of cohomological degree $-2m$, Hodge weight $-m$ and square zero. Further, if $\F[L]$ satisfies the dilaton equation, then so does $\F[L] + \delta (\mc{L}_m\F)[L]$.
\end{corollary}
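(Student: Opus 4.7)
The plan is to verify each axiom of a quantization for the modified family $\F[L] + \delta(\mc{L}_m\F)[L]$ directly from the defining exponential formula
\[
\exp\bigl((\F[L] + \delta(\mc{L}_m\F)[L])/\hbar\bigr) = (1 + \delta\,\mc{L}_m[L])\,\exp(\F[L]/\hbar),
\]
using the three operator identities established in the preceding lemma. This parallels the argument of Lemma \ref{lemma_derivation_rgflow}, but must be carried out at the level of exponentials because for $m = -1$ the operator $\mc{L}_{-1}[L]$ is not a derivation: it contains the multiplication term $\hbar^{-1}\partial_{(1)}I_3$.

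For the renormalization group equation, I apply $(1 + \delta\,\mc{L}_m[L])$ to both sides of $e^{\F[L]/\hbar} = \exp(\hbar\partial_{P(\eps,L)})\,e^{\F[\eps]/\hbar}$ and use $\exp(\hbar\partial_{P(\eps,L)})\,\mc{L}_m[\eps] = \mc{L}_m[L]\,\exp(\hbar\partial_{P(\eps,L)})$ to move the Virasoro operator past the propagator; taking $\hbar\log$ produces the RG equation for the modified family. For the quantum master equation, $[\mc{L}_m[L], Q + \hbar\Delta_L] = 0$ allows one to commute $Q + \hbar\Delta_L$ through $(1 + \delta\,\mc{L}_m[L])$, and the QME for $\F[L]$ finishes the verification. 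Locality follows since each piece of $\mc{L}_m[L]$---the constant-coefficient derivations $\partial_{1\cdot t^{m+1}}$, $\partial_{(1)}$, $E_m$, $Y[L]$, together with the local multiplication $\hbar^{-1}\partial_{(1)}I_3$ for $m = -1$---preserves local functionals and their small-$L$ asymptotic expansions. The degree and Hodge weight axioms are automatic, because $\mc{L}_m[L]$ has cohomological degree $2m$ and Hodge weight $m$, exactly cancelling the grading of $\delta$. For the classical limit, Lemma \ref{lemma virasoro classical} gives $\mc{L}_m[0]\,e^{I/\hbar} = e^{I/\hbar}$, so the $L\to 0$ limit of the $\hbar^0$ part of the modified family remains $I$.

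For the dilaton equation, I would lift the given dilaton homotopy for $\F$ to one for the modified family. Given a family $\{H[L]\}$ of quantizations over $\Omega^\ast([0,1])\otimes\C[\delta']/(\delta')^2$ realizing the dilaton homotopy for $\F[L]$, the same Virasoro modification $(1 + \delta\,\mc{L}_m[L])\,e^{H[L]/\hbar}$ produces a quantization over $\Omega^\ast([0,1])\otimes\C[\delta,\delta']/(\delta^2,(\delta')^2)$ by the arguments above applied to $H$ in place of $\F$, and the commutator $[\mc{L}_m[L], \partial_{Dil} - 2\hbar\partial_\hbar] = 0$ (which is unchanged if we replace the second factor by $\partial_{Dil} - 2\hbar^2\partial_\hbar\hbar^{-1} = \partial_{Dil} + 2 - 2\hbar\partial_\hbar$, since scalars commute with $\mc{L}_m$) ensures that this family really does connect the dilaton-deformed modified quantization to the modified quantization itself. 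The main technical obstacle I anticipate is the careful tracking of the non-derivation term in $\mc{L}_{-1}[L]$; working uniformly at the operator level with $(1 + \delta\,\mc{L}_m[L])$ acting on exponentials sidesteps this issue across all $m \ge -1$.
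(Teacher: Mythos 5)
Your verification of the renormalization group equation, the quantum master equation, the classical limit, the grading axioms, and the dilaton statement follows essentially the same route as the paper: everything except locality is a formal consequence of the commutation identities $\exp(\hbar\partial_{P(\eps,L)})\mc{L}_m[\eps]=\mc{L}_m[L]\exp(\hbar\partial_{P(\eps,L)})$, $[\mc{L}_m[L],Q+\hbar\Delta_L]=0$, $[\mc{L}_m[L],\partial_{Dil}-2\hbar\partial_\hbar]=0$, together with $\mc{L}_m[0]e^{I/\hbar}=e^{I/\hbar}$ (this is exactly the content of Lemma \ref{lemma_derivation_rgflow} and Lemma \ref{lemma virasoro classical}, which the paper cites; your reworking at the level of exponentials to accommodate the non-derivation term $\hbar^{-1}\partial_{(1)}I_3$ in $\mc{L}_{-1}[L]$ is a reasonable and correct variant).

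The gap is in your treatment of locality for $m=-1$. You assert that each piece of $\mc{L}_{-1}[L]$, including $Y[L]$, ``preserves local functionals and their small-$L$ asymptotic expansions.'' This is false as stated: $Y[L]$ is the derivation associated to the operator $\alpha\mapsto\int_0^L\d u\,\dbar^\ast\partial e^{-u[\dbar,\dbar^\ast]}\alpha$, which is built from the heat kernel and is a smoothing operator rather than a differential operator, so for $L>0$ it does \emph{not} take local functionals to local functionals; indeed the paper explicitly flags that $\mc{L}_{-1}[L]$ is non-local for $L>0$ and that the locality axiom therefore requires a separate verification. What must actually be checked is that the family $(\mc{L}_{-1}\F)[L]$ still admits a small-$L$ asymptotic expansion in terms of local functionals, which does not follow from any scale-by-scale locality of the operator; it requires an argument (the paper invokes a short Feynman-diagram computation, exploiting that $Y[L]\to 0$ as $L\to 0$ and controlling how the non-local kernel propagates through the RG flow). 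Since your proposal disposes of precisely this point with an incorrect claim, the $m=-1$ case of the corollary is not established by your argument; the $m\ge 0$ cases, where $\mc{L}_m[L]$ is an $L$-independent local derivation, are fine.
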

\begin{proof}
For $m \ge 0$, this follows from Lemma \ref{lemma_derivation_rgflow} and Lemma \ref{lemma virasoro classical}.  For $m = -1$, the operator $\mc{L}_{-1}[L]$ is non-local for $L > 0$, so one needs in addition to verify the locality axiom.  This is an easy calculation using Feynman diagrams. 
\end{proof}
\begin{definition}
Let $\{\F[L]\}$ be a quantization of the classical BCOV theory.  Then we say $\{\F[L]\}$ satisfies the Virasoro constraint $\mc{L}_m$ if the deformation $\{\F[L] + \delta (\mc{L}_m \F)[L]\}$ is homotopically trivial. (Here $\delta$ is given the appropriate cohomological degree and Hodge weight).  

We say $\{\F[L]\}$ satisfies the Virasoro constraints if it satisfies the constraints $\mc{L}_m$ for each $m \ge -1$. 
\end{definition}
Just as with our discussion of the dilaton equation, to say that these first-order deformations of $\{\F[L]\}$ are homotopically trivial means (for $m \ge 0$) that there exists families of functionals $K_m[L] \in \hbar \Oo(\E)[[\hbar]]$ and $P_m[L] \in \hbar \Oo(\E)[[\hbar]]$ such that 
$$
    (\mc{L}_m \F)[L]= Q K_m[L]+\fbracket{\F[L], K_m[L]}_L+\hbar\Delta_L K_m[L] \label{Virasoro-L}\\
$$ 
The functionals $\{K_m[L]\}$ and $\{P_m[L]\}$ are required to satisfy a renormalization group equation and locality axiom similar to that satisfied by $\F_g[L]$. 

\subsection{}
One can analyze the obstruction to solving the Virasoro constraints order by order in $\hbar$. The obstruction-deformation complex for the BCOV theory on an elliptic curve is denoted by $\Obs_{a,b}(E)$ (in dilaton weight $a$ and Hodge weight $b$).     If we have a quantization $\{\F[L]\}$ of BCOV theory on an elliptic curve $E$ which satisfies all Virasoro constraints modulo $\hbar^g$, then the obstruction to satisfying the constraint $\mc{L}_m$, at genus $g$, is a class
$$
O_g(\mc{L}_m,E) \in H^{4-4g+m}(\Obs_{2-2g,2-2g+m} (E)).
$$
If all of these classes are zero, then the Virasoro constraints are satisfied. 

The main theorem of this section is that these classes do indeed vanish, so that the Virasoro constraints hold. In fact we will see that hte obstruction group vanishes.
\begin{theorem}
Let $\{\F[L]\}$ be our quantization of the BCOV theory on the elliptic curve $E$.  Then $\{\F[L]\}$ satisfies the Virasoro constraints. 
\end{theorem}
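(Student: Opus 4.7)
The plan is to combine obstruction theory, the corollary immediately preceding the theorem (which already guarantees that $\{\F[L] + \delta(\mc{L}_m\F)[L]\}$ is a valid translation-invariant quantization preserving the dilaton equation), and the cohomological vanishing result of Theorem \ref{theorem obstruction vanishing elliptic}. The content of the Virasoro constraint $\mc{L}_m$ is then reduced to showing that the canonical one-parameter family over $\C[\delta]/\delta^2$ is homotopic to the trivial family; equivalently, that the obstruction class $O_g(\mc{L}_m,E)$ at each genus $g\ge 1$ vanishes.

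I would proceed genus by genus. Assuming $\{\F[L]\}$ satisfies every $\mc{L}_m$ modulo $\hbar^g$, at order $\hbar^g$ the homotopy equation
\[
(\mc{L}_m\F)_g[L]\;=\;\bigl(Q+\{\F[L],-\}_L+\hbar\Delta_L\bigr)K_m^{(g)}[L]\;+\;\text{(lower genus)}
\]
is solvable exactly when a specific class $O_g(\mc{L}_m,E)\in H^{4-4g+m}(\Obs_{2-2g,\,2-2g+m}(E))$ vanishes. In the notation of the appendix's vanishing statement $\mc{H}^{i+2b}(\Obs_{a,b})=0$, one sets $a=2-2g$, $b=2-2g+m$, which forces $i=-m$. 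The dilaton-weight hypothesis $a\le 0$ is simply $g\ge 1$, and the lexicographic hypothesis $(i,b)\le(0,0)$ is immediate when $i=-m<0$ (i.e.\ $m\ge 1$) and also holds when $m=0$ (since then $i=0$ and $b=2-2g\le 0$). Thus for every $m\ge 0$ and every $g\ge 1$ the obstruction class is forced to vanish by Theorem \ref{theorem obstruction vanishing elliptic}, so $\mc{L}_m$ holds at genus $g$ and the induction proceeds.

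The main obstacle is the string case $m=-1$, where $(i,b)=(1,1-2g)$ violates the lexicographic hypothesis so the ambient cohomology group is \emph{not} known to vanish from the appendix. Here the plan is to make use of the extra structure that distinguishes $\mc{L}_{-1}$ from the purely algebraic $\mc{L}_{m\ge 0}$: namely, the identity
\[
\mc{L}_{-1}[L]\;=\;-\partial_{Str}[L]\;+\;\tfrac{1}{\hbar}\,\partial_{(1)}I_3
\]
expressing $\mc{L}_{-1}[L]$ in terms of the scale-$L$ string operator (plus a purely classical correction that is trivially exact). The proposition preceding the definition of the string axiom already shows that $\{(\partial_{Str}\F)[L]\}$ is a valid deformation of the quantization. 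Combining this with the uniqueness part of the theorem (extended to the nilpotent extension $\C[\delta]/\delta^2$ with $\delta$ of cohomological degree $2$ and Hodge weight $1$), together with translation invariance and the dilaton equation — both of which are automatically preserved by $\mc{L}_{-1}$ because the scale-$L$ string operator is built from translation-invariant operators and commutes with $\partial_{Dil}$ — one concludes that the string deformation must be homotopic to the trivial family. The specific obstruction class thus vanishes even though the ambient cohomology need not.

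Thus the overall structure is: step one, observe via the corollary that the $\delta$-deformation is automatically a quantization satisfying translation invariance and the dilaton equation; step two, invoke the appendix's vanishing result for $m\ge 0$ to kill the obstruction cohomologically; step three, handle $m=-1$ by rewriting $\mc{L}_{-1}$ in terms of $\partial_{Str}$ and appealing to uniqueness for the nilpotent extension. The delicate point throughout is bookkeeping of degrees and Hodge weights so that the shifts $(a,b,i)=(2-2g,\,2-2g+m,\,-m)$ come out correctly and the Virasoro relations $[\mc{L}_m,\mc{L}_n]=(m-n)\mc{L}_{m+n}$ are respected by the homotopies constructed at each stage.
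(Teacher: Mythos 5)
Your treatment of the constraints $\mc{L}_m$ with $m \ge 0$ is essentially the paper's argument: reduce each genus-$g$ step to the vanishing of an obstruction group and invoke Theorem \ref{theorem obstruction vanishing elliptic}. One minor omission there: the appendix theorem concerns the cohomology \emph{sheaves} $\mc{H}^i(\Obs_{a,b})$, so to kill the global group you must pass through the spectral sequence $H^i(E,\mc{H}^j(\Obs_{a,b})) \Rightarrow H^{i+j}(\Obs_{a,b}(E))$ and check vanishing of $\mc{H}^j$ in the top degree and the two below it (this is exactly what the paper does); with your indices this still goes through for $m\ge 0$, but the step should be said.

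The serious problem is your handling of $m=-1$, which rests on a degree miscount and then on an unjustified appeal to uniqueness. Since $\mc{L}_m[L]$ has cohomological degree $2m$ and $\F_g$ has degree $4-4g$, the deformation $(\mc{L}_m\F)_g[L]$ has degree $4-4g+2m$, so the obstruction to trivializing it lives in $H^{4-4g+2m}\left(\Obs_{2-2g,2-2g+m}(E)\right)$; this is the group the paper's proof actually uses (the display with $4-4g+m$ just before the theorem is a typo, inconsistent with the proof). With the correct degree, the $m=-1$ case requires sheaf vanishing in degrees $i+2b$ with $i\in\{0,-1,-2\}$ and Hodge weight $b=1-2g\le -1$, so the lexicographic hypothesis of Theorem \ref{theorem obstruction vanishing elliptic} is satisfied and the string constraint is covered uniformly with all the others — no special argument is needed, and none appears in the paper. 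Moreover, even granting your bookkeeping, your substitute argument for $m=-1$ does not close: the uniqueness theorem concerns deformations in the bidegree of genuine quantizations (cohomological degree $4-4g$, Hodge weight $2-2g$), and it says nothing by itself about families over $\C[\delta]/\delta^2$ with $\delta$ of cohomological degree $2$ and Hodge weight $1$; the homotopy-triviality of such a family at genus $g$ is precisely the vanishing of a class in $H^{2-4g}\left(\Obs_{2-2g,1-2g}(E)\right)$, i.e.\ the very cohomological statement you claim is unavailable. As written, your step three assumes what is to be proved and must be replaced by the uniform cohomological argument above.
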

\begin{proof}
The proof of this theorem is by obstruction theory, and is very similar to the proof of the existence and uniqueness of the BCOV theory on the elliptic curve.

Recall that the obstruction complex $\Obs_{a,b}(E)$ on $E$ is the global sections of a complex of fine sheaves $\Obs_{a,b}$ on $E$.  It follows that we have a spectral sequence
$$
H^i (E, \mc{H}^j (\Obs_{a,b} ) ) \twoheadrightarrow H^{i+j} ( \Obs_{a,b}(E)),
$$
where $\mc{H}^j (\Obs_{a,b})$ refers to the cohomology sheaf.  

Recall that the cohomology sheaves
$$
\mc{H}^{4-4g + 2m + k}\left( \Obs_{2-2g, 2-2g + m  }\right) = 0
$$
for all $ k \le  0$ and $g > 0$ (this vanishing statement is part of Theorem \ref{theorem obstruction vanishing elliptic}).  It follows that $H^{4 -4 g + 2m}\left( \Obs_{2-2g,2-2g+m} (E) \right) = 0$, so that the Virasoro constraints hold. 
\end{proof}

\subsection{}
Finally, we will analyze how the Virasoro constraints manifest themselves at the level of correlators.  
\begin{corollary}
For any splitting $\br{F}$ of the Hodge filtration on $H^1(E)$, and any $\alpha_i \in H^\ast (\PV(E))[[t]]$, the correlators satisfy the following equations.
$$
\ip{1 \cdot t^{l} , \alpha_1, \ldots, \alpha_n } _{g,n+1}^{E, \br{F}} 
=  \sum_{i = 1}^n \binom{l + \op{HW}(\alpha) }{l}  \ip{ \alpha_1 , \ldots,t^{l - 1} \alpha_i , \ldots, \alpha_n  } _{g,n}^{E, \br{F}} .
$$
Here, $\op{HW}(\alpha)$ denotes the Hodge weight of $\alpha$, defined by $\op{HW}(\alpha) = i + k-1$ if $\alpha \in t^k\PV^{i,j}(E)$. Also, the sign $\pm$ in the second equality is obtained by the usual Koszul rule, which contributes a $(-1)^{\abs{\alpha_j}}$ each time we move $\d \zbar$ past some $\alpha_j$. 
\end{corollary}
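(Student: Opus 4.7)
The strategy is to promote the homotopical Virasoro constraint established in the previous theorem to a strict identity on correlators, following exactly the pattern by which the homotopical string/dilaton axioms of section \ref{section quantization} translate into strict string/dilaton equations on correlators.

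For $l \ge 1$, set $m = l-1 \ge 0$. The previous theorem says that $\F[L] + \delta(\mathcal{L}_m\F)[L] = \F[L] + \delta\,\mathcal{L}_m[L]\F[L]$ is homotopically trivial over $\C[\delta]/\delta^2$. Combined with the homotopy invariance of correlators (the lemma at the end of section \ref{section quantization}, lifted to arbitrary polarizations via the Givental transformation $\Phi_{\Lag}$ of section \ref{section correlation}), this implies that all correlators of $\mathcal{L}_m[L]\F[L]$ vanish. Since for $m\ge 0$ the operator $\mathcal{L}_m[L] = -(m{+}1)!\,\partial_{1\cdot t^{m+1}} + E_m$ is $L$-independent, I can take $L=\infty$, giving the functional identity
$$
l!\,\partial_{1\cdot t^l}\F[\infty] \;=\; \partial_{E_{l-1}}\F[\infty] \pmod{\text{homotopy trivial terms}}.
$$

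The second step is to differentiate $n$ times and evaluate at $0$. For any continuous linear map $A:\E\to\E$, the derivation $\partial_A$ of $\Oo(\E)$ satisfies $[\partial_\alpha,\partial_A] = \partial_{A\alpha}$ (the constant vector field $\alpha$ bracketed against the linear vector field $v\mapsto Av$). Commuting $\partial_{E_{l-1}}$ past $\partial_{\alpha_1}\cdots\partial_{\alpha_n}$ and evaluating at $0$ — where the ``$\partial_{E_{l-1}}$ on the outside'' contribution dies because the linear vector field vanishes at the origin — yields
$$
\partial_{\alpha_1}\cdots\partial_{\alpha_n}\partial_{E_{l-1}}\F[\infty](0) \;=\; \sum_{i=1}^n(\pm)\,\partial_{\alpha_1}\cdots\partial_{E_{l-1}\alpha_i}\cdots\partial_{\alpha_n}\F[\infty](0),
$$
with Koszul signs from moving $E_{l-1}$ past the $\alpha_j$. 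Explicitly, for $\alpha_i = t^{k_i}\beta_i$ with $\beta_i\in\PV^{p_i,q_i}(E)$, the definition of $E_{l-1}$ gives $E_{l-1}(\alpha_i) = (k_i+p_i)_l\,t^{l-1}\alpha_i$. Since $\op{HW}(\alpha_i) = k_i+p_i-1$, the Pochhammer factor telescopes to $(\op{HW}(\alpha_i)+l)!/\op{HW}(\alpha_i)!$, and dividing through by $l!$ produces the binomial coefficient $\binom{l+\op{HW}(\alpha_i)}{l}$. This gives the identity for $l \ge 1$.

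The case $l = 0$ (the string equation) must be handled by the constraint $\mathcal{L}_{-1}$, which is where the main technical difficulty lies: $\mathcal{L}_{-1}[L]$ depends on $L$ via the nonlocal operator $Y[L]$, and carries the additional term $\hbar^{-1}\partial_{(1)}I_3$. The plan is to verify that $Y[L]$ contributes trivially to correlators in the $L\to\infty$ limit, because $\dbar^\ast$ and hence $Y[L]$ vanishes on the harmonic representatives used to compute correlators, and that the $\hbar^{-1}\partial_{(1)}I_3$ piece supplies exactly the cubic correction needed to turn the homotopical statement into the strict string axiom. In this case the binomial collapses to $\binom{\op{HW}(\alpha_i)}{0}=1$ and $t^{-1}\alpha_i$ is interpreted via $E_{-1}$, matching the statement. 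The bookkeeping of the nonlocal and cubic corrections in this $l=0$ case is the principal obstacle; everything else is a formal manipulation of derivations and Pochhammer symbols.
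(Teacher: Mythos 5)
Your proposal is correct and takes essentially the same route as the paper: the paper's entire proof of this corollary is the observation that correlators depend only on the homotopy class of the quantization, so the homotopically trivial deformation by $\mathcal{L}_{l-1}$ forces the strict identity, which is exactly your first step, and your unwinding of $E_{l-1}$ into the Pochhammer/binomial factor is detail the paper leaves implicit. The $l=0$ case you flag as the principal obstacle is handled in the paper by the same homotopy-invariance principle (cf.\ the lemma at the end of section \ref{section quantization} asserting the strict string equation for correlators), so the extra bookkeeping of $Y[L]$ and the $\hbar^{-1}\partial_{(1)}I_3$ term you propose is more care than the paper itself supplies.
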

\begin{proof}
This is immediate from the fact that the correlators only depend on the homotopy class of the quantization.  
\end{proof}

\section*{Appendix I : The obstruction group on an elliptic curve}

In this Appendix we will prove the following.

{
\renewcommand{\thetheorem}{\ref{theorem obstruction vanishing elliptic}}
\begin{theorem}
The cohomology sheaves $\mc{H}^i(\Obs_{a,b})$ are constant sheaves.  Further, 
$$
\mc{H}^{ i + 2 b  }(\Obs_{a,b}) = 0
$$
if $a \le 0$ and $(i,b) \le (0,0)$ in the lexicographical ordering on $\Z \times \Z$.  
\end{theorem}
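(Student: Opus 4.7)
The first step is to exploit translation invariance on $E$. With respect to any translation-invariant K\"ahler metric and translation-invariant holomorphic volume form on $E$, the operators $Q = \dbar - t\partial$, $\{I,-\}$, $\partial_{Dil}$, and the BV Laplacian $\Delta_L$ built from the flat heat kernel all commute with translations. Hence $\Obs_{a,b}$ is an $E$-equivariant sheaf of complexes on $E$ under the translation action of $E$ on itself, and the group action canonically trivializes its cohomology sheaves; this proves that $\mc{H}^i(\Obs_{a,b})$ is a constant sheaf.

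The second step identifies the stalk of $\Obs_{a,b}$ at a point $p \in E$ with a Chevalley--Eilenberg cochain complex, using the standard jet-space description of local functionals from \cite{Cos11}. A translation-invariant holomorphic coordinate $z$ at $p$ trivializes the jet bundle of $\Fields$ and writes the translation-invariant part of $\Ool(\Fields)$ as reduced Chevalley--Eilenberg cochains $C^*_{\op{red}}(\mf{g}_E)$ of the pro-nilpotent dg Lie algebra
$$
\mf{g}_E \;=\; \bigl( \C[[z,\bar z]]\, d\bar z \bigr) \otimes \bigl( \C \oplus \C \cdot \partial_z[-1] \bigr)[[t]][2],
$$
with differential combining $\dbar$, $-t\partial$, the Schouten--Nijenhuis bracket, and a Maurer--Cartan twist by the classical BCOV action $I$, taken modulo the image of the total derivative $\partial_z$. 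Adjoining $\eps$ in the definition of $\Obs_{a,b}$ corresponds to twisting this differential by $\eps(\partial_{Dil} + a)$, and the Hodge weight grading on fields descends cleanly to the cochain complex.

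The third step is the actual vanishing computation. Filter $C^*_{\op{red}}(\mf{g}_E)$ by polynomial degree in $\bar z$; the filtration is preserved by every operator in sight, and the associated graded decomposes as a direct sum of finite-dimensional pieces indexed by antiholomorphic jet order. On each graded piece the cohomological degree $i$ and the Hodge weight $b$ are linked by a linear relation, which is the origin of the shift $i+2b$ appearing in the statement. A direct bi-degree accounting shows that when $a \le 0$ the $\eps$-part of the complex cannot contribute in the prescribed bi-degree (producing such a cochain would force a strictly positive dilaton weight), and that when $(i,b) \le (0,0)$ lexicographically the non-$\eps$ part is ruled out on dimensional grounds: the only combinatorially possible cochain would require nonpositive Chevalley--Eilenberg arity.

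The main obstacle is carrying out this last bi-degree accounting in detail. One must match four simultaneous gradings -- cohomological degree, Hodge weight, $\bar z$-polynomial degree, and Chevalley--Eilenberg arity -- against the explicit form of the combined differential, and verify the vanishing range by direct combinatorics on the weight spaces of the $\C^\times_z \times \C^\times_t$ action scaling $z$ and $t$. The argument is mechanical but tedious, and constitutes the remainder of the Appendix.
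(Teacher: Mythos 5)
Your overall framing --- constancy via translation invariance, and identification of the stalks with a reduced Chevalley--Eilenberg complex of a jet $L_\infty$ algebra --- is consistent with the paper, which works with $\mf{g} = \C[[z,t,\partial_z]][1]$ and then with the quasi-isomorphic subalgebra $\mf{g}' = \C[[z]] \oplus \C[[t]]\partial_z$. The gap is in your third step: you assert that the vanishing follows from a bi-degree count showing that no cochains exist in the relevant multidegrees ("ruled out on dimensional grounds", "would require nonpositive arity"). That is false, and no purely dimensional argument can work. In the paper's chain model for $\mf{g}'$, namely $\C[[\delta_k,e_l]]$ with $e_l = z^l$ and $\delta_k = t^k\partial_z$, the monomial $e_1\delta_1$ has cohomological degree $-1$, scaling (dilaton) weight $0$ and Hodge weight $0$ --- squarely inside the range where vanishing is claimed --- and it is a nonzero closed chain; it disappears from cohomology only because it is (up to a factor) the boundary of $\delta_0\delta_1 e_2$, i.e.\ because of the differential induced by $\{I,-\}$. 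For this reason the paper's argument is a genuine cohomology computation: after reducing to $\mf{g}'$ it runs a two-stage spectral sequence (first filtering by divisibility by $e_l$ with $l \ge k$, then by powers of $e_1$), computes the kernels and images of the induced differentials explicitly, and only on the resulting pages $B_k$ does a tridegree count close the argument. Your proposal omits the differential entirely at this stage, so the vanishing is unproved.

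A second gap concerns the dilaton direction. The operator $\partial_{Dil} = \partial_{t\cdot 1} - \op{Eu}$ is not diagonal in any grading of $\Ool(\Fields)$ (the $\partial_{t \cdot 1}$ term lowers symmetric degree), so the phrase "producing such a cochain would force a strictly positive dilaton weight" does not typecheck: dilaton weight is only a homotopy eigenvalue, which is precisely why $\Obs_{a,b}$ carries the auxiliary parameter $\eps$. The paper deals with this by replacing $\partial_{Dil}$ with the cochain-homotopic derivation $\til{\partial}_{Dil}$ (the homotopy being $\partial/\partial(z\partial_z)$) and restricting to $\mf{g}'$, on which $\til{\partial}_{Dil}$ is the derivation of an honest scaling operator; only then does the $\eps$-twisted complex split into eigenspaces so that dilaton weight becomes a grading one can count with. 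Your proposal never addresses this step. Smaller points: the graded pieces of your $\bar z$-filtration are not finite dimensional; your formula for $\mf{g}_E$ keeps only the $d\bar z$ component of the jets; and local functionals are the full (shifted) de Rham complex of the jet Chevalley--Eilenberg complex rather than a quotient by $\partial_z$-total derivatives --- this shift by two is exactly why the sheaf-level range $(i,b) \le (0,0)$ corresponds to the Lie-algebra range $(i,b) \le (2,0)$ in the paper.
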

\addtocounter{theorem}{-1}
}

In \cite{Cos11}, Chapter 5, it is shown that the obstruction complex for any quantum field theory on a manifold $M$ can be written in terms of the Chevalley cochains of an $L_\infty$ algebra in the symmetric monoidal category of $D_M$ modules.  Here, $D_M$ denotes the algebra of $\cinfty$ differential operators.  The category of sheaves on $M$ of modules over $D_M$ is a symmetric monoidal category, where the tensor product is as modules over the sheaf $\cinfty_M$ of smooth functions on $M$.

This $D_M$ $L_\infty$-algebra has, as underlying differential graded $D_M$-module, the sheaf of jets of fields (with a shift). In the case of BCOV theory on an elliptic curve $E$, the $D_M$ module of jets of fields is
$$
\til{\mf g} = J(\cinfty_E) [\d \zbar, \partial_{z} ] [[t]] [1].
$$
Here $J(\cinfty_E)$ is the bundle of jets of smooth functions on $E$; and $\d \zbar$, $\partial_z$ are both situated in cohomological degree $1$, and $t$ has cohomological degree $2$.

If we choose coordinates on a domain $U$ on $E$, then the sections of $\til{\mf g}$ are
$$
\cinfty(U) \otimes \C[[z,\zbar, \d \zbar, \partial_z, t ]][1].
$$
If $w$ and $\wbar$ indicate coordinates on $U$, then the flat connection on this jet bundle differs from the trivial connection for this trivialization by the one-form
$$
\d w \frac{\partial}{\partial z } + \d \br{w} \frac{\partial}{\partial \zbar }.
$$

\subsection{}
As explained in \cite{Cos11}, the $D_E$-module $\g$ is equipped with the structure of $L_\infty$ algebra (in the symmetric monoidal category of $D_E$-modules) which arises from the classical action functional $I$, and from the differential $Q$ on the space of fields.  This $L_\infty$ algebra is constructed so that there is an isomorphism of cochain complexes
$$
\left( \Ool(\Fields(E)), Q + \{I,-\} \right) \simeq C^\ast_{red}(\til{\g}) \otimes_{D_E} \Omega^2_E.
$$
On the right hand side of this equation, $C^\ast_{red}(\til{\mf{g}})$ refers to the reduced Chevalley-Eilenberg complex of $\g$, taken in the symmetric monoidal category of $D_E$ modules equipped where tensor product is over $\cinfty_E$.  Since, strictly speaking, $\til{\g}$ is a topological $D_E$ module, one must use continuous multilinear maps to define this reduced Chevalley-Eilenberg complex.  The resulting $D_E$-module $C^\ast_{red}(\til{\g})$ is then tensored, over $D_E$, with the right $D_E$ module of $2$-forms on $E$ to obtain the complex of local functionals.

Further, $C^\ast_{red}(\til{g})$ is automatically flat as a $D_E$-module.  This means that there is a quasi-isomorphism
$$
\left( \Ool(\Fields(E)), Q + \{I,-\} \right) \simeq C^\ast_{red}(\til{\g}) \otimes^{\mbb L}_{D_E} \Omega^2_E.
$$
where on the right hand side we are taking the derived tensor product.  

Recall that, for any $D_E$-module $M$, one can define the de Rham complex $\Omega^\ast(E,M)$, and one has 
$$
\Omega^\ast(E,M) [2] \simeq M \otimes^{\mbb L}_{D_E} \Omega^2_E. 
$$
Thus, we find a quasi-isomorphism
$$
\Ool(\Fields(E)) \simeq \Omega^\ast (E, C^\ast_{red}(\til{\g})) [2].
$$
This quasi-isomorphism allows us to produce a spectral sequence converging to the cohomology of $\Ool(\Fields(E))$, as follows. 

Let $\mc{H}^i_{red}(\til{\g})$ denote the $D_E$-module obtained as the cohomology of the complex of $D_E$ modules $C^\ast_{red}(\til{\g})$.   Then, there is a spectral sequence
$$
H^i_{dR}( E,  \mc{H}^j_{red}( \til{\g} ) ) \Rightarrow H^{i+j-2} ( \Ool(\Fields(E) ).
$$
Use of this spectral sequence allows one to translate results about the vanishing of the cohomology of $\til{\g}$ into results about the vanishing of the cohomology of $\Ool(\Fields(E))$.  This will be essential for our obstruction computations.  

\subsection{}
Let us now describe the $L_\infty$ algebra structure on $\mf g$.  This is the structure of an $L_\infty$ algebra in the category of $D_E$ modules.  Thus, all of the structures are, in particular, linear over the sheaf $\cinfty_E$.  This means we can describe the $L_\infty$ structure explicitly by describing it on the fibre.

The differential, acting on a fibre
$$
\til{\g}_0 = \C[[z,\zbar,\d \zbar, \partial_z, t]] [1]
$$ 
is 
$$
Q = \d \zbar \frac{\partial}{\partial \zbar} - t  \frac{\partial}{\partial z} \frac{\partial}{\partial (\partial_z)}.
$$
The first term in $Q$ is simply the jet of the $\dbar$ operator, acting on the jets of the Dolbeaut forms. The second term in $Q$ is the jet of the operator
$$
t \partial : \Omega^i(E, T E ) [[t]]\to \Omega^i(E)[[t]].
$$

Because the classical action functional $I$ for the BCOV theory does not involve any anti-holomorphic derivatives, the $L_\infty$ structure on $\til{\mf g}$ is linear over $\C[[\zbar, \d \zbar]]$.  Further, the $L_\infty$ structure will be such that the subcomplex
$\g \subset \til{g}_0$ defined by
$$
\g = \C[[z, t, \partial_z]] [1]  \subset \C[[z,t, \partial_z, \zbar, \d \zbar]] [1]
$$
is, in fact, a sub $L_\infty$ algebra.  Since the inclusion $\g \to \til{\g}_0$ is clearly quasi-isomorphic to $\til{g}_0$, it suffices to describe explicitly the $L_\infty$ structure on $\g$. 

\subsection{}
The $L_\infty$ structure maps
$$
l_k : \g^{\otimes k} \to \g
$$
are defined as follows.  Firstly, $l_k$ is zero unless there is precisely one $\partial_z$ among the inputs.  Then,
$$
l_k ( f_1 (z) t^{n_1} \partial_z , f_2(z) t^{n_2}, \ldots, f_k(z) t^{n_k} ) = \frac{\d}{\d z} \left( \prod f_i(z) \right) \int_{\mbar_{0,k+1} } \psi_1^{n_1} \ldots \psi_k^{n_k},
$$
for $f_i \in \C[[z]]$.  Note that this is zero unless $\sum n_i = k-2$.  

In particular,  $l_2$ is defined by
$$
l_2( f(z) \partial_z, g(z)) = \frac{\d}{\d z} \left( f(z) g(z) \right),
$$
and $l_2$ is zero on any other inputs.

These formulae simply arise by considering how Poisson bracket with the classical action functional $\{I,-\}$ acts on the space of local functionals $\Ool(\Fields(\E))$, and then translating this into the language of jets. 

\subsection{}
We are interested in quantizing the BCOV theory in a way compatible with the Hodge weight and dilaton axioms.  We thus need to incorporate both of these into our $L_\infty$ algebra $\g$.

Let us first consider Hodge weight. The vector space $\g$ is bigraded, by cohomology degree and by Hodge weight.  A typical element of $\g$ is of the form $t^k z^l \partial^m$, where $k,l \ge 0$ and $m \in\{0,1\}$.  The cohomological degree and Hodge weight of this are   
\begin{align*}
\text{Hodge weight} (t^k z^l \partial^m ) &= k + m - 1\\
\text{Cohomological degree} (t^k z^l \partial^m) &= 2 k + m - 1.
\end{align*}
One can easily check that $l_n$ is of cohomological degree $2-n$ and Hodge weight $0$.  Thus, $\g$ is a graded $L_\infty$ algebra, with grading by Hodge weight.

\subsection{}
Now let us consider the dilaton axiom.  Let us define the dilaton operator, as before, by
$$
\partial_{Dil} = \partial_{t \cdot 1 } - \op{Eu} : C^\ast_{red} (\g) \to C^\ast_{red}(\g). 
$$
Here $\partial_{t\cdot 1}$ is the operator of differentiating with respect to the element $t  \in \g$, and $\op{Eu}$ is the Euler derivation, characterized by the property that for $\alpha \in \g^\vee \subset C^\ast_{red}(\g)$,
$$
\op{Eu}(\alpha) = \alpha. 
$$
We are interested in the homotopy $a$-eigenspace of $C^\ast_{red}(\g)$ under the action of the derivation $\partial_{Dil}$.  This is described by the complex
$$
\left( C^\ast_{red}(\g) [\eps], \d_\g + \eps \left( \partial_{Dil} - a\right) \right).  
$$
Here $\d_\g$ refers to the differential on $C^\ast_{red}(\g)$, and $\eps$ is a parameter of cohomological degree $1$ and square zero.  

We will let
$$
H^i_{a,b} (\g) \subset H^i \left( C^\ast_{red}(\g) [\eps], \d_\g + \eps \left( \partial_{Dil} - a\right) \right)
$$
be the part in Hodge weight $b$.   This group is of crucial importance, because of the spectral sequence
$$
H^i (E, H^j_{a,b}(\g)) \Rightarrow H^{i+j -2} (\Obs_{a,b}(E))
$$
where $\Obs_{a,b}(E)$, as before, is the part of  obstruction-deformation complex controlling quantizations of BCOV in dilaton weight $a$ and Hodge weight $b$.

\subsection{}
The following is the our main vanishing result concerning $\g$; this implies proposition 
\label{proposition_obstruction_elliptic}.
\begin{proposition}
$H^{i+2b}_{a,b}(\g) = 0$ if $a \le 0$ and $(i,b) \le (2,0)$  in the lexicographical ordering on $\Z  \times \Z$. 
\label{proposition lie algebra vanishing}
\end{proposition}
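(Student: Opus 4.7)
My plan is to compute $H^*_{a,b}(\g)$ by first analyzing the underlying cochain complex of $\g$ in each Hodge weight, then transferring to a minimal model, and finally computing the Chevalley--Eilenberg cohomology of the minimal model with the dilaton deformation. The underlying complex of $\g$ decomposes by Hodge weight: in weight $b=-1$, it is just $\C[[z]]$ concentrated in cohomological degree $-1$ (with zero differential); in weight $b\ge 0$, it is the two-term complex $\C[[z]]\partial_z \xrightarrow{d} \C[[z]]$ in degrees $2b, 2b+1$, where $d(f(z)\partial_z)= -f'(z)$. Since any formal power series has a formal antiderivative, $H^{2b+1}=0$ and $H^{2b}=\C\cdot t^b\partial_z$. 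Thus the cohomology $H^*(\g)$ is concentrated in bidegrees $(i,b)=(-1,-1)$ (where it equals $\C[[z]]$) and $(2b,b)$ for $b\ge 0$ (where it is one-dimensional).

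Next, I would transfer the $L_\infty$ structure along this homotopy, producing a formal $L_\infty$ algebra $\mf{h}$ quasi-isomorphic to $\g$. The transferred brackets inherit the combinatorial shape of the original brackets $l_k$: they are nonzero only on inputs with exactly one factor of $t^{n_0}\partial_z$-type, the remaining factors coming from the degree $-1$ part, and the output is controlled by $\frac{d}{dz}(\prod z^{l_i})$ together with the $\psi$-class integrals $\int_{\mbar_{0,k+1}}\psi_1^{n_1}\cdots\psi_k^{n_k}$. Since $H^*_{a,b}(\g)\cong H^*_{a,b}(\mf{h})$, the problem reduces to computing cohomology of $C^*_{\mathrm{red}}(\mf{h})[\eps]$ with differential $d_{\mf{h}} + \eps(\partial_{\mathrm{Dil}}-a)$ in the indicated bidegrees.

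The third step is to filter $C^*_{\mathrm{red}}(\mf{h})[\eps]$ by total polynomial degree in $z$ (equivalently, by the number of weight $-1$ generators). On the associated graded, the $L_\infty$ differential simplifies, while the dilaton operator $\partial_{\mathrm{Dil}}=\partial_{t\cdot 1}-\mathrm{Eu}$ decomposes according to bidegree: the Euler piece acts by the word length, and $\partial_{t\cdot 1}$ removes one factor of the generator dual to $t$. Because $a\le 0$, the operator $\partial_{\mathrm{Dil}}-a$ has nonzero eigenvalue on every factor with word length $\ge 1$ not proportional to $(t\cdot 1)^\vee$, so the potential contributions to $H^{i+2b}_{a,b}$ in the specified range are squeezed into a tiny explicit subspace. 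That remaining contribution is killed using the classical string and dilaton identities for the $\psi$-integrals on $\mbar_{0,n}$, which are the same identities that powered the proof that $I$ solves the classical master equation.

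The main obstacle will be the step of making the transferred brackets on $\mf{h}$ sufficiently explicit to run the filtration argument; the homotopy-transfer formulas involve summation over trees weighted by $\psi$-class integrals, and controlling these requires careful use of the topological recursion relations already exploited in Section~\ref{section classical bcov}. A possible bypass, which I would try if direct transfer becomes unwieldy, is to work on $C^*_{\mathrm{red}}(\g)[\eps]$ itself, filtered by polynomial degree in $z$: the $E_1$-page becomes the Chevalley--Eilenberg complex of a much simpler (essentially abelian) $L_\infty$ algebra, and the higher differentials on this spectral sequence repackage precisely the $\psi$-class data, so that the interaction with $\partial_{\mathrm{Dil}}-a$ can be analyzed page by page within the lexicographic range $(i,b)\le(2,0)$.
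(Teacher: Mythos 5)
Your first step is fine (the cohomology of the underlying complex of $\g$ is indeed $\C[[z]]$ in bidegree $(-1,-1)$ together with one class $t^b\partial_z$ in bidegree $(2b,b)$ for each $b\ge 0$; in fact no homotopy transfer is needed, since this cohomology sits inside $\g$ as the sub-$L_\infty$ algebra $\g'=\C[[z]]\oplus\C[[t]]\partial_z$, which is the shortcut the paper takes). The genuine gap is in how you handle the dilaton operator after passing to the small model. The element $t\cdot 1$ is exact in $\g$ (it is $Q$ of $z\partial_z$ up to sign), so there is no generator dual to $t$ in your minimal model $\mf{h}$ and the operator $\partial_{t\cdot 1}$ simply does not descend in the naive way you describe; your statement that on the transferred complex ``the Euler piece acts by the word length and $\partial_{t\cdot 1}$ removes one factor of the generator dual to $t$'' is therefore not correct. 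What actually descends is $\partial_{Dil}$ corrected by the homotopy $\dpa{(z\partial_z)}$: the resulting derivation $\til{\partial}_{Dil}$ has Taylor components $l_{n+1}(-,\dots,-,z\partial_z)$, and on $C^\ast_{red}(\g')$ it is the strict scaling derivation with $z^k\mapsto kz^k$, $t^l\partial_z\mapsto -t^l\partial_z$. This is the key idea that converts the homotopy $a$-eigenspace (the $\eps$-complex) into an honest extra grading, and it is absent from your plan. Without it, the subsequent eigenvalue argument has nothing to act on; and even granting some diagonal action, the claim that for $a\le 0$ the operator $\partial_{Dil}-a$ is invertible away from a ``tiny explicit subspace'' is unjustified and false as stated (for $a=2-2g<0$ there are many monomials on which the Euler part has eigenvalue exactly $a$, and the correct grading is scaling weight, in which $z^k$ has weight $k$, not word length).

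The second gap is the final step. After the reduction to $\g'$ with its scaling grading, the vanishing is not a formal consequence of the string and dilaton identities for $\psi$-integrals; in the paper it requires an explicit computation on the (completed) Chevalley chains $\C[[\delta_k,e_l]]$, namely a spectral sequence whose first differential $\frac{\d}{\d\delta_0}\Phi_1$ and second differential $\frac{\d}{\d\delta_1}\Phi_2$ are analyzed via filtrations by divisibility in the $e$-variables, leaving explicit surviving pieces $B_k$, followed by a tridegree count showing these pieces lie outside the range $a\le 0$, $(i,b)\le(2,0)$. Your proposal replaces this computation with an appeal to topological recursion/string/dilaton relations, which is not a mechanism that produces the vanishing; the relations enter only in checking that $\g'$ is closed under the brackets and in the structure of the differential, not in killing the remaining cohomology. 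So as written the proposal would not go through: you need (i) the homotopic replacement of $\partial_{Dil}$ by a strict scaling operator on the small model, and (ii) an actual computation of the bigraded Chevalley--Eilenberg cohomology of $\g'$ in the stated range.
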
 

\begin{proof}
As a first step, we will construct an operator $\til{\partial}_{Dil}$ on $C^\ast_{red}(\g)$ which is homotopic to $\partial_{Dil}$, and which will be more useful for calculation.

The operator $\til{\partial}_{Dil}$ is a derivation of $C^\ast_{red}(\g)$.  Thus, it is determined by its Taylor components
$$
\til{\partial}_{Dil}(n) : \Sym^n (\g[1] ) \to \g[1].
$$
These are defined by
\begin{align*}
\til{\partial}_{Dil}(1) (z^k  )  &= k z^k \\
\til{\partial}_{Dil}(1) (z^k  t^l \partial_z^m )  &= 0 \text{ unless } l = m = 0 \\
 \til{\partial}_{Dil}(n) (\alpha_1, \ldots, \alpha_n) &= l_{n+1} ( \alpha_1, \ldots, \alpha_n , z \partial_z ) \text{ for } \alpha_i \in \g, \ n \ge 2 \\
\til{\partial}_{Dil}(0) &= 0.
\end{align*}
\begin{lemma}
The operator $\til{\partial}_{Dil}$ on $C^\ast_{red}(\g)$ is a cochain map, and is cochain homotopic to $\partial_{Dil}$.
\end{lemma}
\begin{proof}
The cochain homotopy is the operator $\frac{\partial}{\partial (z \partial_z)}$. This is the derivation whose zero${}^{th}$ Taylor component is the element $z \partial_z \in \g$, and whose remaining Taylor components vanish.  
\end{proof}

Next, we will construct a smaller $L_\infty$ algebra $\g'$ which is quasi-isomorphic to $\g$, and on which the modified dilaton operator $\til{\partial}_{Dil}$ acts.   Explicitly, we let
$$
\g' = \C[[z]] \oplus \C[[t]] \partial_z  \subset \g.
$$
As before, of course, $z^k$ is in cohomological degree $-1$, and $t^l \partial_z$ is in cohomological degree $2 l$.   It is clear that $\g'$ is a sub $L_\infty$ algebra of $\g$, and that the inclusion $\g' \to \g$ is a quasi-isomorphism.  Further, the modified dilaton operator $\til{\partial}_{Dil}$ acts on $C^\ast_{red}(\g')$ in a way compatible with the map $C^\ast_{red}(\g) \to C^\ast_{red}(\g')$.

\begin{definition}
Let $S : \g' \to \g'$ be the \emph{scaling operator}, defined by
\begin{align*}
S (z^k ) &= k z^k \\
S( t^l \partial_z ) &= - t^l \partial_z.
\end{align*}
Let $\partial_S : C^\ast_{red}(\g') \to C^\ast_{red}(\g')$ be the derivation associated to $S$.
\end{definition}

\begin{lemma}
When restricted to $C^\ast_{red}(\g')$, the modified dilaton operator $\til{\partial}_{Dil}$ coincides with $\partial_S$.
\end{lemma}
\begin{proof}
The higher Taylor components of $\til{\partial}_{Dil}$ vanish on $\g'$, and the first Taylor component of $\til{\partial}_{Dil}$ is $S$.
\end{proof}

The $L_\infty$ algebra $\g'$ is bigraded; one grading by Hodge weight, and the other by the eigenvalues of the scaling operator $S$.  We will call the second grading the scaling weight.  The two gradings are as follows:
\begin{align*}
z^k & \text{ has Hodge weight } -1 \text{ and scaling weight k } \\
t^k \partial_z & \text{ has Hodge weight } k \text{ and scaling weight -1 }. 
\end{align*}
This bigrading induces a bigrading on the Chevalley-Eilenberg cohomology of $\g'$. We will let $H^k_{a,b}(\g')$ denote the reduced Chevalley-Eilenberg cohomology group in scaling weight $a$ and Hodge weight $b$. 

\begin{lemma}
$$
H^k_{a,b} (\g' ) = H^k_{a,b}(\g)
$$
where $H^k_{a,b}(\g)$ refers, as before, to the reduced cohomology groups of $\g$ in dilaton weight $a$ and Hodge weight $b$.
\end{lemma}
\begin{proof}
We have seen that $\g' \simeq \g$, so that $H^k(\g') = H^k(\g)$.  Further, the scaling weight operator on $H^k(\g')$ coincides with the dilaton operator on $H^k(\g)$.
\end{proof}

\subsection{}
Thus it remains to show the following.
\begin{proposition}
$$
H^{i + 2 b}_{a,b}(\g') = 0 
$$
if $a \le 0$ and $(i,b) \le (2,0)$ in the lexicographical ordering on $\Z \times \Z$.  
\end{proposition}

The rest of this section will be concerned with the proof of this proposition. 

 We will compute the (appropriately completed) Lie algebra homology of $\g'$ (because the notation here is more transparent).  Because cohomology is defined using the space of continuous multilinear maps, the cohomology is simply the dual of the homology.   Although we are using homology, our conventions are such that the differential is of degree $+1$.  If $H_k^{a,b}(\g')$ refers to the homology in degree $k$, scaling weight $a$ and Hodge weight $b$, we have
$$
H_k^{a,b}(\g')^\vee = H^{-k}_{-a,-b} (\g').
$$

\subsection{}
Let us change notation, and set $e_k = z^k$, and $\delta_k = t^k \partial_z$.   The Lie algebra chain complex of $\g$ can be identified with the algebra of power series in the variables $\delta_k,e_l$, where $k,l \ge 0$:
$$
A = \C[[\delta_k, e_l]].
$$

Here, $e_l$ has cohomological degree $-2$, and each $\delta_k$ has cohomological degree $2k-1$.  Also, $e_l$ has Hodge weight $-1$ and scaling weight $l$, whereas $\delta_k$ has Hodge weight $k$ and scaling weight $-1$.      

It will be convenient to arrange these three degrees into a integer vector which we call the tridegree; an element has tridegree $(i,a,b)$ if it has cohomological degree $i$, scaling weight $a$ and Hodge weight $b$.

\subsection{}
The differential on $A$ will be defined in terms of a sequence of auxiliary differential operators $\Phi_n$ on $A$.   We define $\Phi_n$ by the formula
$$
\Phi_n = \sum_{k_1,\ldots,k_n} (k_1 + \cdots + k_n) e_{\sum k_i - 1} \frac{\d}{\d e_{k_1} } \cdots \frac{\d}{\d e_{k_n}}.  
$$ 
The differential on $A$ is defined by the formula
$$
\d = \sum \frac{\d}{\d \delta_k} \Phi_{k+1}.
$$

Our aim is to show that $H_{i+2b}^{a,b} (A,\d ) = 0$ if $a \ge 0$ and $(i,b) \ge (-2,0)$ in the lexicographical ordering on $\Z \times \Z$.  

In other words, we aim to show that $A$ has no cohomology in tridegree $(i+2a, b, a)$ if $a \ge 0$ and either $i > -2$, or $i = -2$ and $b \ge 0$.  

\subsection{}
There is a spectral sequence converging to the cohomology of $A$, whose first term is given by the cohomology of $A$ with the differential
$$
\d_0 = \frac{\d}{\d \delta_0} \Phi_{1} = \frac{\d}{\d \delta_0} \sum_{k\ge 0} (k+1) e_{k} \frac{\d}{\d e_{k+1}}
$$
Thus, the first step is to calculate the cohomology of $A$  with respect to $\d_0$.

\subsection{$\d_0$ cohomology.}
To compute the $\d_0$ cohomology, we need an auxiliary spectral sequence.  If $k > 0$,  we will let $F^k A$ be the subspace of $A$ spanned by elements divisible by $e_l$, for some $l \ge k$.  Let $F^0 A = A$.   Let
$$
\Gr^k A = F^k A / F^{k+1} A.
$$
Thus,
$$
\Gr^k A = e_k \C[[e_0,e_1,\ldots, e_k , \delta_0,\delta_1,\ldots, ]],
$$
if $k > 0$.  Whereas,
$$
\Gr^0 A = \C[[e_0, \delta_0,\delta_1,\ldots]].
$$
Note that the map $\Phi_1$ sends $F^k A \to F^{k-1} A$.  Thus, it descends to a map
$$
\Gr^k \Phi_1 : \Gr^k A \to \Gr^{k-1} A.
$$  
\begin{lemma}  If $k > 0$ then $\Gr^k \Phi_1$ is surjective. The kernel of $\Gr^k \Phi_1$ is canonically isomorphic to the graded vector space
$$
e_k^2 \C[[ e_0,\ldots, e_k, \delta_0,\delta_1,\ldots]].
$$
\end{lemma}

We will let 
$$
A_k = \op{Ker} (\op{Gr}^k \d_0 ) / \op{Im}( \op{Gr}^{k+1} \d_0 ),
$$
if $k \ge 0$.  This lemma implies that, if $k > 0$,
$$
A_k = \delta_0 e_k^2 \C[[ e_0,\ldots, e_k, \delta_1, \delta_2, \ldots]].
$$
Further, 
$$
A_0 = \delta_0 \C[[e_0, \delta_1, \delta_2,\ldots]].
$$
 This lemma implies that the cohomology of the operator
$$
\Gr^\ast \d_0 = \frac{\d}{\d \delta_0} \Gr^k \Phi_1 : \oplus \Gr^k A \to \oplus \Gr^k A
$$
is canonically isomorphic to the graded vector space
$$
\oplus_{k \ge 0} A_k.
$$

\subsection{Cohomology with respect to $\d_1$.}

The next term in our spectral sequence is given by the operator
$$
\d_1 = \frac{\d}{\d \delta_1} \Phi_2.
$$ 
Recall that
$$
\Phi_2 = \sum (k_1 + k_2) e_{k_1 + k_2 - 1} \frac{\d}{\d e_{k_1} } \frac{\d}{\d e_{k_2}}.
$$

If $A_k$ is the $k$'th graded piece of the cohomology of $\d_0$, as defined above, the differential $\d_1$ maps $A_k$ to itself.    Thus, we will investigate the $\d_1$ cohomology of $A_k$. 

The operator $\d_1$ on $A_0$ is zero.   If $k > 0$, let us define a filtration on $A_k$ by saying that $F^l A_k$ is the subspace spanned by those $f$ which are divisible by $e_1^l$.  Note that the differential $\d_1$ maps $F^l A_k$ to $F^{l-1} A_k$.   Thus, we can consider the operator
$$
\Gr^l \d_1 : \Gr^l A_k \to \Gr^{l-1} A_k.
$$

Let 
$$
W = \sum_{n = 0}^k e_n \frac{\d}{\d e_n } : A_k \to A_k.
$$
Note that $W$ is diagonal on the natural basis of each $A_k$; and (as $k > 0$), the eigenvalues of $W$ are all non-zero.

It is easy to see that
$$
\Gr^l \d_1 = \frac{\d}{\d \delta_1} \frac{\d}{\d e_1} W : \Gr^l A_k \to \Gr^{l-1} A_k.
$$
This implies that, if $k > 1$, the cohomology of $A_k$ with respect to $\d_1$ is isomorphic to 
$$
B_k = \delta_1 \delta_0 e_k^2 \C[[\delta_2, \delta_3, \ldots, e_0, e_2, e_3 ,\ldots, e_k]].
$$
If $k = 1$, the cohomology of $A_1$ with respect to $\d_1$ is 
$$
B_1 = \delta_1 \delta_0 e_1^2 \C[[e_0, \delta_2, \delta_3, \ldots ]].
$$

If $k = 0$, the operator $\d_1$ is zero on $A_0$.  Thus, we set 
$$
B_0 = A_0 = \delta_0 \C[[e_0, \delta_1,\ldots]].
$$

\subsection{Completion of proof}
Recall that $\delta_k$ is in tridegree $(2k-1,-1,k)$, and $e_l$ is in tridegree $(-2,l, -1)$.  Recall further that we are interested in showing that there are no classes in tridegree $(i + 2b, a, b)$ when $(i,b) \ge (-2,0)$ and $a \ge 0$.  Note that this amounts to the following: either $i = -2$, $a \ge 0$ and $b \ge 0$ or when $i > -2$, $a \ge 0$ and $b \in \Z$.   

If $k = 0$, then $B_0$ is concentrated in tridegrees $(i+2b, i, b)$ when $i \le -1$.  Thus, $B_0$ contains no relevant classes.  

If $k > 1$, then $B_k$ is concentrated in tridegrees $(i+ 2 b, a, b)$ with $i \le -2$.  Further, the subspace of $B_k$ of tridegree $(-2+2b,a,b)$ consists of elements of the form $\delta_1 \delta_0 e_k^2 f$, where $f$ is in $\C[[e_0,e_2,\ldots,e_k]]$.  An element of this form is in tridegree $(-2+2b , a, b)$ where $a > 0$ and $b < 0$.  Such an element can not contribute to the cohomology groups of interest. 

The final case is when $k = 1$.  The subspace of $B_1$ of tridegree $(-2+2a, b, a)$ is spanned by the elements $\delta_1 \delta_0 e_1^2 e_0^m$.  This element is in tridegree $(-2m-4 , 0, -m-1)$, and so can not contribute to the cohomology groups of interest.  

This completes the proof. 
\end{proof}

\def\cprime{$'$}

\end{document}